\newcommand{\N}{\mathcal{N}}
\newcommand{\YY}{\mathbb{Y}}
\newcommand{\ZZ}{\mathbb{Z}}
\newcommand{\RR}{\mathbb{R}}
\newcommand{\CC}{\mathbb{C}}
\newcommand{\QQ}{\mathbb{Q}}
\newcommand{\A}{\mathcal{A}}
\newcommand{\B}{\mathcal{B}}
\newcommand{\C}{\mathcal{C}}
\newcommand{\XX}{\mathbb{X}}
\newcommand{\PP}{\mathbb{P}}
\newcommand{\M}{\mathcal{M}}
\renewcommand{\a}{\mathfrak{a}}
\renewcommand{\b}{\mathfrak{b}}
\renewcommand{\c}{\mathfrak{c}}
\renewcommand{\d}{\mathfrak{d}}
\newcommand{\f}{\mathfrak{f}}
\newcommand{\m}{\mathfrak{m}}
\newcommand{\n}{\mathfrak{n}}
\newcommand{\p}{\mathfrak{p}}
\newcommand{\cond}{\text{cond}}
\theoremstyle{plain}
\newtheorem{theorem}{Theorem}[section]
\newtheorem{lemma}[theorem]{Lemma}
\newtheorem{prop}[theorem]{Proposition}
\newtheorem{cor}[theorem]{Corollary}
\newtheorem{definition}{Definition}
\theoremstyle{remark}
\newtheorem{remark}{Remark}[section]
\numberwithin{equation}{section}
\newcommand{\eps}{\varepsilon}
\renewcommand{\Re}{\text{Re}}
\renewcommand{\Im}{\text{Im}}
\begin{document}
\title{On Gaussian primes in sparse sets}
\author{Jori Merikoski}
\address{Mathematical Institute,
University of Oxford,
Andrew Wiles Building,
Radcliffe Observatory Quarter,
Woodstock Road,
Oxford,
OX2 6GG}
\email{jori.merikoski@maths.ox.ac.uk}
\subjclass[2020]{11N32 primary, 11N36 secondary}

\begin{abstract} 
We show that there exists some $\delta > 0$ such that, for any set of integers $B$ with $|B\cap[1,Y]|\gg Y^{1-\delta}$ for all $Y \gg 1$, there are infinitely many primes of the form $a^2+b^2$ with $b\in B$.
We prove a quasi-explicit formula for the number of primes of the form $a^2+b^2 \leq X$ with $b \in B$ for any $|B|=X^{1/2-\delta}$ with $\delta < 1/10$ and $B \subseteq [\eta X^{1/2},(1-\eta)X^{1/2}] \cap \ZZ$, in 
 terms of zeros of Hecke $L$-functions on $\QQ(i)$. We obtain the expected asymptotic formula for the number of such primes provided that the set $B$ does not have a large subset which consists of multiples of a fixed large integer. In particular, we get an asymptotic formula if $B$ is a sparse subset of primes. For an arbitrary $B$ we obtain a lower bound for the number of primes with a weaker range for $\delta$, by bounding the contribution from potential exceptional characters.
\end{abstract} 

\maketitle
\tableofcontents
\section{Introduction}
The distribution of prime numbers in sparse sets is a central topic in modern analytic number theory. A key motivating question is Landau's fourth problem, which asks if there are infinitely many prime numbers of the form $n^2+1$. This is far beyond the current methods as the set is very sparse -- the number of integers up to $X$ of this form is of order $X^{1/2}$. 

As an approximation to Landau's question much attention has been given to Gaussian prime numbers $p=a^2+b^2$ with $b$ restricted to some specific sparse set $B$. A major breakthrough was achieved by Friedlander and Iwaniec \cite{FI} who proved that there are infinitely many primes of the form $a^2+b^4$, that is, with $B$ being the set of squares. Following this there have been many variants where $b$ is drawn from a sparse set $B$, for instance, the papers of Heath-Brown and Li, Pratt, and the author \cite{hbli,merikoskipoly,pratt}. We also point out the results of Heath-Brown \cite{hb}, Li \cite{li}, and Maynard \cite{maynard} for primes in other polynomial sequences, where Li's result has  the record for the sparsest polynomial sequence with primes, with size of the set being $X^{43/67 + \eps}$.

Notably, all of the above-mentioned results exploit heavily the structure of the  specific sparse set $B$, leaving open the question of what can be said about an arbitrary sparse set $B$. In this direction Fouvry and Iwaniec \cite{fouvryi} proved that one can take $B$ with density $(\log X)^{-C}$ for any $C >0$ and establish an asymptotic formula for the number of primes $a^2+b^2 \leq X$ with $b \in B$. Using the argument of Fouvry and Iwaniec one would be required to improve upon the famous Siegel-Walfisz theorem to reach sparser sets $B$. In comparison, our main result obtains unconditionally a power saving in the density of $B$ for the first time. 
\begin{theorem} \label{qualtheorem}
There is some (computable) $\delta > 0$ such that the following holds. If $B$ is a set of integers with $|B\cap[0,Y]| \gg Y^{1-\delta}$ for all $Y\gg 1$, then there are infinitely many primes of the form $a^2+b^2$ with $b \in B$.
\end{theorem}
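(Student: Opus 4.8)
The plan is to deduce Theorem~\ref{qualtheorem} from the unconditional lower bound stated above for primes $a^2+b^2\le X$ with $b$ drawn from a sparse set of \emph{arbitrary structure}. Concretely, I will use that there is a computable $\delta_\star>0$ and a fixed $\eta\in(0,1/2)$ such that, for any $B\subseteq[\eta X^{1/2},(1-\eta)X^{1/2}]\cap\ZZ$ with $|B|=X^{1/2-\delta}$ and $0<\delta<\delta_\star$, the number of primes of the form $a^2+b^2\le X$ with $b\in B$ is $\gg_{\eta,\delta}|B|\,X^{1/2}(\log X)^{-A}$ for some absolute $A$ — in any case a quantity that is positive and tends to infinity with $X$. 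All of the real difficulty (the Hecke $L$-function analysis over $\QQ(i)$ and the treatment of potential exceptional characters) is contained in that statement; what remains here is a two-step reduction from the global density hypothesis.

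\emph{Step 1 (localization).} Fix $\eta$ as above and pick $\delta_1$ with $\delta<\delta_1<2\delta_\star$, where $\delta$ is the exponent in the theorem, to be chosen at the end. Write $I_Z:=[\eta Z,(1-\eta)Z]$. Given a large $Y$, take geometrically spaced scales $Z_0:=Y/(1-\eta)$, $Z_{k+1}:=\frac{\eta}{1-\eta}Z_k$, so that consecutive windows abut; then $\bigcup_{k=0}^{K}I_{Z_k}\supseteq[1,Y]$ with $K\ll_\eta\log Y$, and
\[
\sum_{k=0}^{K}\bigl|B\cap I_{Z_k}\bigr|\ \ge\ \bigl|B\cap[1,Y]\bigr|\ \gg\ Y^{1-\delta}.
\]
Were $|B\cap I_Z|\le Z^{1-\delta_1}$ to hold at every scale $Z=Z_k$, the left-hand side would be $\le\sum_k Z_k^{1-\delta_1}\ll_{\eta,\delta_1}Z_0^{1-\delta_1}\ll_\eta Y^{1-\delta_1}$ (a convergent geometric series); since $\delta_1>\delta$ this contradicts the display once $Y$ is large. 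Hence for every large $Y$ there is a scale $Z\le Y$ with $|B\cap I_Z|>Z^{1-\delta_1}$, and the same estimate restricted to the scales above any fixed bound shows that these scales are unbounded. So there is a sequence $Z_j\to\infty$ with $|B\cap I_{Z_j}|>Z_j^{1-\delta_1}$.

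\emph{Step 2 (apply the lower bound and conclude).} For each $j$ set $X_j:=Z_j^{2}$, so $I_{Z_j}=[\eta X_j^{1/2},(1-\eta)X_j^{1/2}]$ and $B_j:=B\cap I_{Z_j}$ satisfies $|B_j|>X_j^{1/2-\delta_1/2}$. Thinning $B_j$ to a subset of the exact cardinality required by the unconditional lower bound only weakens the conclusion — a lower bound for primes with $b$ in a subset is one for $b\in B$ — and is legitimate because $\delta_1/2<\delta_\star$. The result then produces $\gg_{\eta,\delta_1}X_j^{1-\delta_1/2}(\log X_j)^{-A}$ primes of the form $a^2+b^2\le X_j$ with $b\in B_j\subseteq B$, which tends to infinity with $j$ since $\delta_1/2<1/2$. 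As the count of primes $a^2+b^2\le X$ with $b\in B$ is nondecreasing in $X$ and unbounded along $X=X_j$, there are infinitely many such primes. Finally any $\delta$ with $0<\delta<2\delta_\star$ makes the constraint $\delta<\delta_1<2\delta_\star$ satisfiable, which fixes the admissible exponent in Theorem~\ref{qualtheorem}.

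\emph{Expected obstacle.} Inside this reduction the only delicate point is the factor-$2$ loss in the density exponent in Step~2: a window $[\eta Z,(1-\eta)Z]$ of length $\asymp Z$ carrying $\gg Z^{1-\delta_1}$ elements of $B$ becomes, under the normalization $X=Z^{2}$, a set of size $\asymp X^{1/2-\delta_1/2}$, so the exponent available for the theorem is governed by $2\delta_\star$ rather than by $\delta_\star$ itself. The genuinely hard content — a power saving in the density of an \emph{arbitrary} $B$, obtained by bounding the contribution of possible exceptional characters — is entirely that of the quantitative results announced above and is not touched by this reduction.
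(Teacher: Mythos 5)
Your proof is correct and follows essentially the same route as the paper's own deduction of Theorem \ref{qualtheorem} from Theorem \ref{maintheorem}: localize $B$ to a window $[\eta Z,(1-\eta)Z]$ by a pigeonhole over $\ll\log Y$ geometrically spaced scales, observe that the resulting subset has exponent $1/2-\delta_1/2$ when renormalized via $X=Z^2$, and invoke the unconditional lower bound, with the admissible $\delta$ for Theorem \ref{qualtheorem} governed by $2\delta_\star$. The only cosmetic differences from the paper are that (i) the paper pigeonholes over dyadic windows $[Y_1/4,Y_1/2]$ with $Y_1\in[Y^{1-\delta-\eps},Y]$, whereas you use abutting windows covering all of $[1,Y]$ and then argue separately that the good scales are unbounded, and (ii) the paper concludes by exhibiting a prime in a window $(Y^{2-2\delta-8\eps},Y^2]$ for every large $Y$ (subtracting the trivial count of $p\le X^{1-4\eps}$), whereas you conclude via the monotonicity and unboundedness of the prime-counting function along your sequence $X_j$; both endgames are valid.
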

This result is a corollary of the following lower bound for the number of such primes, which is weaker than the expected by a factor of $\gg_\eps X^{-\eps}$.
\begin{theorem}  \label{maintheorem}
There is some (computable) $\delta > 0$ such that the following holds for any small $\eta > 0$. For all sufficiently large $X$ and for all $B \subseteq [\eta X^{1/2},(1-\eta)X^{1/2}] \cap \ZZ$ with $|B| \, \geq X^{1/2-\delta}$ we have for any $\eps > 0$
\[
\sum_{p=a^2+b^2 \leq X} \mathbf{1}_B(b) \gg_\eps X^{1/2-\eps}|B|. 
\]
\end{theorem}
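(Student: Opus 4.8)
\emph{Step 1: harmonic reduction and the quasi-explicit formula.} The plan is to pass to the Gaussian integers and produce an explicit-formula expansion. Since $p=a^2+b^2\le X$ prime forces $a+bi$ to be a prime of $\ZZ[i]$ of prime norm, $S$ equals — up to a bounded factor and a negligible error from the units and from primes of non-prime norm — the $\Lambda_{\QQ(i)}$-weighted count of primes $\pi$ of $\ZZ[i]$ with $N\pi\le X$ and $\operatorname{Im}(\pi)\in B$. As $B\subseteq[0,\sqrt X]$, I detect $b\in B$ exactly by additive characters to a modulus $H\asymp\sqrt X$: $\mathbf{1}_B(b)=H^{-1}\sum_{|h|<H}\widehat B(h)\,e(bh/H)$ with $\widehat B(h)=\sum_{\beta\in B}e(-\beta h/H)$ and $e(x)=e^{2\pi ix}$. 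On a Farey dissection $h/H=\a/\q+\vartheta$, the factor $e(\a\operatorname{Im}(\pi)/\q)$ is, through a Gauss sum, a ray-class character of $\ZZ[i]$ of modulus $\q$, while $e(\vartheta\operatorname{Im}(\pi))=e(\vartheta\,N\pi^{1/2}\sin\arg\pi)$ I expand in the argument into the angular Hecke characters $\lambda^k$ with Bessel-type coefficients, so that only $|k|\ll\vartheta\sqrt X$ matter. Feeding $N\pi\le X$ into Perron's formula and shifting contours then gives, for each Hecke character $\psi$ that occurs, a main term (only for principal $\psi$, from the pole of $\zeta_{\QQ(i)}$), a sum $-\sum_\rho X^\rho/\rho$ over its nontrivial zeros, and a small tail; summing over $h$ is the quasi-explicit formula.

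\emph{Step 2: the main term.} The principal-character contributions assemble into $\mathfrak{S}_B\,c\,X^{1/2}|B|/\log X$, where $\mathfrak{S}_B$ records the distribution of $B$ in residue classes against the local densities of Gaussian primes. Because $B\subseteq[\eta\sqrt X,(1-\eta)\sqrt X]$ the angular (archimedean) factor is bounded above and below, and every local factor lies in a fixed interval $[c_\ell,C]$ with $\prod_\ell\max(c_\ell,1/c_\ell)$ convergent; hence $\mathfrak{S}_B\gg(\log X)^{-O(1)}\gg X^{-\eps}$, even when $B$ concentrates on multiples of some $\q$. So the main term is always $\gg X^{1/2-\eps}|B|$, and the whole game is to prevent the sum over zeros from overwhelming it.

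\emph{Step 3: bounding the sum over zeros.} Away from exceptional zeros I bound $\sum_\psi\sum_\rho X^{\Re\rho-1}|\widehat B(h)|$ by a log-free zero-density estimate for the family $L(s,\psi)$ over $\QQ(i)$, uniform in the finite conductor and in the angular parameter $|k|$, combined with the large-sieve inequality for the coefficients $\widehat B$ (and its dual, $\sum_{\q}\sum_{\chi\bmod\q}|\sum_{\beta\in B}\chi(\beta)|^2$): the density estimate loses $(QT)^{A(1-\sigma)}$ zeros while $X^{\Re\rho-1}$, together with the $\approx|B|^{1/2}$ (rather than $|B|$) average size of $\widehat B$, wins, and the two balance provided the conductors that actually occur stay below a threshold — which is exactly the constraint $|B|\ge X^{1/2-\delta}$ with $\delta$ small (yielding $\delta<1/10$ for the asymptotic and a smaller value here). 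I expect this balancing — keeping $|k|$ and $\q$ small enough over the ranges that genuinely arise, and extracting a power saving off the minor arcs — to be the most laborious part of the argument and to be what fixes the admissible $\delta$.

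\emph{Step 4: the exceptional zero — the main obstacle.} The one term the density estimate cannot absorb is a single real zero $\beta_1=1-\eta_1$ of $L(s,\chi_1)$ for an exceptional real character $\chi_1$ of modulus $\q_1$; its contribution to $S$ is $\asymp-\tau(\chi_1)\,X^{\beta_1}/(\beta_1\log X)$ times the Fourier coefficients $\widehat B(h)$ with $h/H$ near a fraction of denominator $\q_1$ — negative and a priori as large as the main term. If $B$ has no large subset lying in $\q_1\ZZ$, those Fourier coefficients are $o(|B|)$ and the term is dominated (the asymptotic regime). Otherwise $\q_1\ll X^{\delta}$, since a subset of $B\cap\q_1\ZZ\cap[0,\sqrt X]$ of size $\gg|B|=X^{1/2-\delta}$ forces it, and I keep the term: main plus exceptional combine into $\mathfrak{S}_B'\,c\,X^{1/2}|B|/\log X$ with $0\le\mathfrak{S}_B'\gg(1-X^{-\eta_1})\mathfrak{S}_B$, and Siegel's theorem $\eta_1\gg_\eps\q_1^{-\eps}\ge X^{-\delta\eps}$ gives $1-X^{-\eta_1}\gg\eta_1\log X\gg X^{-\delta\eps}\log X$, so $S\gg X^{1/2-\eps}|B|$ after renaming $\eps$. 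In this case Deuring--Heilbronn repulsion pushes every other zero of every $L(s,\psi)$ further from $1$, so the Step 3 bound only improves, and the two cases together close the argument. (The $\eps$-dependence of the implied constant is ineffective through Siegel's theorem, but $\delta$ itself stays computable, being fixed only by the zero-density and large-sieve exponents.)
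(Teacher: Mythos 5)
Your broad strategy — reduce to Hecke $L$-functions, pull out a main term from the pole, control the remaining zeros by a log-free density estimate, and treat a Siegel zero as a special case — is in the same spirit as the paper. But the reduction to $L$-functions you propose is genuinely different from the paper's, and it contains a gap serious enough that the argument as written would not go through.

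The paper never touches a direct Fourier expansion of $\mathbf 1_B$ to modulus $\asymp\sqrt X$. It instead expands $\Lambda$ by Vaughan's identity and splits into Type~I sums $\sum_{d\leq D}\alpha(d)\sum_{d\mid a^2+b^2}\mathbf 1_B(b)$, treated via Poisson summation on the \emph{free variable $a$} (so the dual frequency runs only up to $H\asymp qD/\sqrt X$, which is tiny) together with the quadratic large sieve, and Type~II sums treated by Cauchy--Schwarz and a dispersion that produces the congruence $z_2\equiv az_1\ (f)$ with $f\leq X^{\delta+\eta}$ automatically. It is precisely this structure that guarantees that the only Hecke characters that appear have conductor $\M(u)\leq X^{\delta+\eta}$ and angular parameter $|k|\leq X^{\eta}$, which is what makes Lemma~\ref{zerodensitylemma} bite. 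Your proposal, by contrast, detects $b\in B$ with additive characters $e(bh/H)$, $H\asymp\sqrt X$, and a Farey dissection. The issue is that for a general $B\subseteq[0,\sqrt X]$ (even with $|B|\geq X^{1/2-\delta}$), the Fourier coefficients $\widehat B(h)$ have mass spread essentially uniformly over all $|h|<H$, including arcs at large denominators $q$ and with large $\vartheta$. On those arcs the Hecke characters coming from the Gauss-sum/Bessel expansion have conductor up to $\sqrt X$ or angular parameter up to $\vartheta\sqrt X\asymp\sqrt X$, and $(Q^2KT)^{c_2(1-\sigma)}$ is then a positive power of $X$ that the factor $X^{\sigma-1}$ cannot defeat inside the Landau--Page zero-free region. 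You acknowledge the constraint ``provided the conductors that actually occur stay below a threshold'', but no mechanism for enforcing this is given, and $|B|\geq X^{1/2-\delta}$ does not supply one: large $|B|$ does not make $\widehat B$ concentrate on small denominators. This is the principal obstruction the Fouvry--Iwaniec method was built to avoid, and it is not a detail to be filled in — it is the content of the theorem.

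Two further points. In Step~4 you assert $\mathfrak S_B'\geq 0$, i.e.\ that the Siegel-zero contribution combines \emph{constructively} with the main term. That is not automatic for an arbitrary $B$: the exceptional arc contributes a complex number $\propto\widehat B(aH/q_1)$ times Gauss sums, and nothing forces its phase to align with the main term unless $B$ is exactly a union of residue classes mod $q_1$. The paper secures the needed positivity in the exceptional case by a different route entirely: it writes $\lambda_1=1*\chi_1$, restricts to the subset $B_1$ (resp.\ $B_2$) of $b\in B$ for which $\sum_a\chi_1((b+ia))$ has the favourable sign (the hypothesis \eqref{B1condition}), and runs a low-dimensional sieve à la Linnik (Proposition~\ref{exceptionalprop}). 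Relatedly, the paper explicitly avoids Deuring--Heilbronn repulsion; your proposal leans on it. This is not an error per se, but it signals you are not reconstructing the paper's argument and would need to check that the repulsion-based alternative actually closes with the conductors that show up in your expansion — which, per the previous paragraph, it does not.
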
 
\begin{proof}[Deduction of Theorem \ref{qualtheorem} from \ref{maintheorem}]
 Let $\delta > 0$ be small and  let $B$ be a set of integers with $|B\cap[0,Y]| \gg Y^{1-\delta}$ for all $Y\gg 1$. Then by the pigeonhole principle for any $\eps > 0$ for $Y$ any sufficiently large there is some $Y_1 \in [Y^{1-\delta-\eps},Y]$ such that  $B_1:=B\cap[Y_1/4,Y_1/2]$ satisfies $ |B_1| \geq Y_1^{1-\delta-\eps}$. By Theorem \ref{maintheorem} with $X=Y_1^2$ and a trivial upper bound for $a^2+b^2 \leq X^{1-4\eps }$ we have
 \begin{align*}
     \sum_{X^{1-4\eps } < p=a^2+b^2 \leq X} \mathbf{1}_{B_1}(b) = &     \sum_{p=a^2+b^2 \leq X} \mathbf{1}_{B_1}(b)  -    \sum_{ a^2+b^2 \leq X^{1-4\eps }} \mathbf{1}_{B_1}(b) \\
     \gg_\eps& X^{1/2-\eps} |B_1|.
 \end{align*}
 Therefore, for all large $Y$ there exists a prime number $p=a^2+b^2 \in (Y^{2- 2\delta - 8\eps},Y^2]$ with $b \in B$, so that in particular there are infinitely many primes of the form $a^2+b^2$ with $b \in B$.
\end{proof}

\begin{remark}
A back-of-the-envelope estimate shows that it should be possible to
establish Theorem 1.2 for some $\delta \in (1/20,1/10)$ but we have not checked this as it depends on the  the numerical constants $c_1,c_2,c_3$ in Lemmas \ref{zerofreeregionlemma} and \ref{zerodensitylemma} as well as optimization of Theorem \ref{exceptionaltheorem} -- this would require a separate lengthy optimization similar to the arguments in \cite{hblinnik}.  It is possible to generalize our results to general binary quadratic forms instead of $a^2+b^2$ by adapting ideas from \cite{lsx}.
\end{remark}
\begin{remark}
     The lower bound in Theorem \ref{maintheorem} is the best that can be hoped for in general with current technology. In fact, improving the lower bound  in Theorem \ref{maintheorem} to the correct order of magnitude would imply the non-existence of Siegel zeros by a suitable application of Theorem \ref{asympregulartheorem} below. The implied constant in the lower bound $\gg_\eps$ in Theorem \ref{maintheorem} is ineffective but the result can be made effective with $\eps=\delta$ for some very small $\delta >0$ by using the class number formula, so that Theorem \ref{qualtheorem} is effective in all aspects.
\end{remark}
\begin{remark}
The argument we give works for $\eta = X^{-\eps}$ for some small $ \eps >0$. It is possible to extend the proof of Theorem \ref{maintheorem} to handle sets $B \subseteq [0,X^{1/2}]$. The possibility that $B$ or the variable $a$ is restricted to a narrow interval $[0,X^{1/2-\delta'}]$ for some $\delta' \leq \delta$ adds only a technical problem, namely, all  of the Gaussian primes $b+ia$ counted lie in a very narrow sector.  See Remark \ref{smallBremark} for further details on how to modify our argument. 
\end{remark}

In the case that $B$ is unbiased we are able to get an asymptotic formula similar to \cite{fouvryi} for $\delta < 1/10$.
Let
\[
\rho(d) := |\{\nu \in \ZZ/d \ZZ: \, \nu^2+1 \equiv 0\, (d) \}|
\]
and define
\[
\omega(b) := \prod_{\substack{ p\mid b 
 }}\bigg(1- \frac{\rho(p)}{p}\bigg)^{-1}.
\]
Then the following result is a corollary of our quasi-explicit formula (cf. Theorem \ref{asympregulartheorem}).
\begin{theorem} \label{asymptotictheorem}
Let $\eta > 0$ be small. Let $C''>0$ be large compared to $C'>0$ which is large compared to $C >0$. Let $\lambda_b$ be complex coefficients with $|\lambda_b| \, \leq X^{o(1)}$, supported on  $ [\eta X^{1/2},(1-\eta)X^{1/2}] \cap \ZZ$, and satisfying
\[
\sum_{b} |\lambda_b| \, \geq X^{2/5+\eps}.
\]
 Suppose that for all $(\log X)^{C''} < q \leq X^{\delta+\eta}$ we have 
\begin{align} \label{lambdaassumption}
   \sum_{b \equiv 0 \, (q)} |\lambda_b| \, \leq \frac{1}{(\log X)^{C'}}\sum_{b} |\lambda_b|.
\end{align}
Then
\[
\sum_{a^2+b^2\leq X} \lambda_b \Lambda(a^2+b^2) = \frac{4}{\pi}\sum_{\substack{ a^2+b^2 \leq  X \\  (a,b)=1}}  (\lambda_b\omega(b) +O_{C,C',C''}(|\lambda_b|(\log X)^{-C})).
\]
\end{theorem}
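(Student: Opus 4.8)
The plan is to insert the hypotheses into the quasi-explicit formula of Theorem \ref{asympregulartheorem}. That theorem writes $\sum_{a^2+b^2\le X}\lambda_b\Lambda(a^2+b^2)$ as the expected main term $\tfrac{4}{\pi}\sum_{(a,b)=1}\lambda_b\omega(b)$, plus an error $O((\log X)^{-C}\sum_{(a,b)=1,\,a^2+b^2\le X}|\lambda_b|)$, plus a contribution controlled by the nontrivial zeros $\rho=\beta+i\gamma$ of the Hecke $L$-functions of $\QQ(i)$ occurring in the problem -- the relevant characters being ray class (equivalently, Dirichlet-type) characters of conductor at most $\asymp X^{\delta+\eta}$, twisted by the Hecke Gr\"ossencharacters $\psi^j$ that detect the argument of $b+ia$. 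Writing $S:=\sum_b|\lambda_b|$, we have $\sum_{(a,b)=1,\,a^2+b^2\le X}|\lambda_b|\asymp_\eta \sqrt X\, S$ (each admissible $b$ has $\asymp_\eta\sqrt X$ companions $a$, and $(a,b)=1$ is automatic once $a^2+b^2$ is an odd prime), so it remains to show that, under \eqref{lambdaassumption}, the total zero-contribution is $\ll \sqrt X\,(\log X)^{-C}S$. The assumption $S\ge X^{2/5+\eps}$ -- which is just the translation of $\delta<1/10$, since $|B|\ge X^{1/2-\delta}>X^{2/5}$ -- is exactly what is needed so that the length of the $b$-sum dominates the square of the modulus range, putting us in the favourable regime of the large sieve / zero-density estimate for these $L$-functions.

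I would first handle the zeros belonging to \emph{non-exceptional} characters. After expanding the $b$-sum by orthogonality and applying the large sieve (this is where $S\ge X^{2/5+\eps}$ is used), the job reduces to a bound of the schematic shape
\[
\sqrt X\, S\, X^{o(1)}\sum_{q\le X^{\delta+\eta}}\ \sum^{\ast}_{\chi\bmod q}\ \sum_j\ \sum_{\substack{\rho\ \text{of}\ L(s,\chi\psi^j)\\ \beta\ge 1/2,\ \chi\ \text{non-exceptional}}} X^{\beta-1}\ \ll\ \sqrt X\,(\log X)^{-C}S .
\]
This is a Linnik-type estimate: the classical zero-free region (Lemma \ref{zerofreeregionlemma}, constants $c_1,c_2$) deletes the zeros with $\beta>1-c_1/\log\big(X^{\delta+\eta}(|\gamma|+2)\big)$ for all but at most one real character per modulus, and the log-free zero-density estimate (Lemma \ref{zerodensitylemma}, constant $c_3$), of the form $\sum_{q\le Q}\#\{\rho:\beta\ge\sigma,\ |\gamma|\le T\}\ll (QT)^{c_3(1-\sigma)}$, covers the remaining range; partial summation against $X^{\beta-1}$ turns the claim into the convergence of an integral that is $\ll(\log X)^{-C}$ precisely when $(\delta+\eta)c_3$ is bounded away from $1$. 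This is where $\delta<1/10$ enters, the precise permissible $\delta$ being dictated by $c_1,c_2,c_3$ (cf. the remark following Theorem \ref{maintheorem}).

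The remaining contribution -- that of a possible exceptional (Siegel) zero $\beta_1$ of a real character $\chi_1$ of conductor $q_1$, of which there is at most one relevant by Landau's theorem -- is the crux. If $q_1>X^{\delta+\eta}$ then $\chi_1$ is too ramified to appear at all. If $q_1\le(\log X)^{C''}$ then Siegel's theorem gives $\beta_1<1-c(\eps)q_1^{-\eps}\le 1-c(\eps)(\log X)^{-C''\eps}$, so $X^{\beta_1-1}\ll\exp\!\big(-c(\eps)(\log X)^{1-C''\eps}\big)$, which beats any power of $\log X$ once $\eps$ is small in terms of $C,C''$; this case accounts for the ineffectivity of the implied constant. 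Finally, if $(\log X)^{C''}<q_1\le X^{\delta+\eta}$, then $q_1$ lies in the range to which \eqref{lambdaassumption} applies, and the key structural fact -- which must be extracted from the proof of Theorem \ref{asympregulartheorem} -- is that the exceptional zero enters the formula only through those $b$ divisible by (the relevant part of) $q_1$; hence its total contribution is $\ll\sqrt X\,X^{\beta_1-1}X^{o(1)}\sum_{q_1\mid b}|\lambda_b|\ll\sqrt X\,(\log X)^{-C'+o(1)}S$ by \eqref{lambdaassumption}. Choosing $C'$ large compared to $C$, and $C''$ large compared to $C'$ so the three regimes for $q_1$ are disjoint, finishes the proof.

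I expect the main obstacle to be exactly this last structural point: threading the dependence on the conductor through the Type-I and bilinear decompositions underlying Theorem \ref{asympregulartheorem}, so that an exceptional character of conductor $q_1$ is weighted by $\sum_{q_1\mid b}|\lambda_b|$ and not by the full $S$ -- without \eqref{lambdaassumption} this is the term one cannot dispose of, which is why Theorem \ref{maintheorem} for arbitrary $B$ must instead \emph{bound} this contribution via Theorem \ref{exceptionaltheorem} and accordingly yields only a lower bound, for a smaller $\delta$. The remaining assertions are immediate from Theorem \ref{asympregulartheorem}: the constant $\tfrac4\pi$ is the density of Gaussian primes (the residue at $s=1$ of the relevant Hecke $L$-function), and $\omega(b)=\prod_{p\mid b}(1-\rho(p)/p)^{-1}$ is the arithmetic factor correcting the local densities of the sieve in $a$ at the primes dividing $b$, where the admissible residues for $a$ degenerate.
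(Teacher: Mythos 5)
Your high-level framing — substitute into the quasi-explicit formula, then kill the contribution from the bad characters $\xi_{k_j}\chi_j$, handling the Siegel character via \eqref{lambdaassumption} and small conductors via Siegel-Walfisz — overlaps with the paper's structure in spirit. But there is a genuine gap in your treatment of the non-exceptional characters, and you are missing the mechanism that the paper actually uses.

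The gap: your plan for the non-exceptional zeros is to bound $\sum_j\sum_{\rho}X^{\beta-1}$ directly using the Landau-Page zero-free region (Lemma~\ref{zerofreeregionlemma}) and the log-free zero-density estimate (Lemma~\ref{zerodensitylemma}), claiming that partial summation yields $\ll(\log X)^{-C}$. This cannot work. For moduli as large as $q\sim X^{\delta+\eta}$, the unconditional zero-free region only pushes zeros below $\sigma_0=1-c_1/((\delta+O(\eta))\log X)$, so $X^{\beta-1}$ can be as large as $\exp(-c_1/(\delta+O(\eta)))$, a \emph{constant}. The zero-density integral $\int_{1/2}^{\sigma_0}(Q^2KT)^{c_2(1-\sigma)}X^{\sigma-1}\log X\,d\sigma$ likewise evaluates to a constant (it is $O(1/(1-c_2(2\delta+O(\eta))))$ after the substitution $u=(1-\sigma)\log X$), not to a negative power of $\log X$. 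Indeed, this is exactly the quantitative outcome the paper gets in the proof of Theorem~\ref{regulartheorem}, where that constant $\exp(-1/(2\sqrt\delta))$ forces one to settle for a \emph{lower} bound with an $\eps_1$-factor rather than an asymptotic. The zero-density route gives $(\log X)^{-C}$ only under the conditional wide zero-free region of Theorem~\ref{GRHtheorem}; you have implicitly borrowed that hypothesis.

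What the paper does instead: it does not try to shrink $X^{\beta_j-1}$ at all. It kills the \emph{coefficient sum} $\sum_{\N\n\sim X}a_\n^\omega\,\overline{\xi_{k_j}\chi_j}(\n)(\N\n)^{\rho_j-1}$ by exploiting the length of the free variable $a$. After localizing $a$ to a short range so that $\xi_{k_j}$ and $(\N\n)^{\rho_j-1}$ are effectively constants, the inner sum is $\sum_a\chi_j(b+ia)$, which by a Gauss-sum argument (Lemma~\ref{gausssumlemma}, packaged as Lemma~\ref{charsumboundlemma}) saves a factor $|u_j/(u_j,b)|^{1/2-\eps}$. This applies uniformly to \emph{every} $\chi_j$ in the formula, exceptional or not. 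When $|(u_j,b)|\le|u_j|/(\log X)^{C'/2}$ the saving is at least $(\log X)^{C'/6}$, more than enough; when $|(u_j,b)|>|u_j|/(\log X)^{C'/2}$ the integer $b$ is divisible by a large fixed integer, and this is precisely where \eqref{lambdaassumption} is invoked. Small conductors $|u_j|^2\ll(\log X)^{C''}$ are disposed of by Siegel-Walfisz. So your intuition that ``the exceptional zero enters only through $b$ with a large common factor with $q_1$'' is directionally right, but the realization — cancellation in the $a$-sum quantified by $|u_j/(u_j,b)|^{1/2-\eps}$ — is missing, and as a consequence the non-exceptional case in your proposal is not actually covered. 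You should replace the zero-density argument with the Gauss-sum/Poly\'a-Vinogradov estimate over $\ZZ[i]$ and split $b$ according to $(b,u_j)$.
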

\begin{remark}
  With more work the assumptions that $|\lambda_b| \leq X^{o(1)}$  and $\sum_b |\lambda_b| \, \geq X^{2/5+\eps}$  may be replaced (here and later) by
  \[
  \|\lambda_b\|_1 \geq X^{1/5+\eps} \|\lambda_b\|_2,
  \]
  where $\|\lambda_b\|_p := \left( \sum_{b}|\lambda_b|^p \right)^{1/p}$. By adapting Harman's sieve \cite{harman} one can get a correct order lower bound for  $1/2-\delta$ for some $\delta \in (1/8,1/10)$.
\end{remark}
\begin{remark}
We note that $\rho(p)=1+\chi_4(p)$ with $\chi_4$ being the unique non-trivial character to modulus 4 and
\begin{align} \label{piproduct}
\frac{4}{\pi} = \prod_p \bigg( 1-\frac{\rho(p)}{p}\bigg) \bigg( 1-\frac{1}{p}\bigg)^{-1}. 
\end{align}
Also for all $b$
\[
\sum_{\substack{ a^2+b^2 \sim  X \\  (a,b)=1}}  \omega(b) \sim \sum_{\substack{ a^2+b^2 \sim  X }} \frac{\varphi(b)}{b} \omega(b) = \sum_{\substack{ a^2+b^2 \sim  X }}  \prod_{p|b} \bigg( 1-\frac{\rho(p)}{p}\bigg)^{-1} \bigg( 1-\frac{1}{p}\bigg),
\]
so that the main term is of the same form as in \cite[Theorem 1]{fouvryi}. 
\end{remark}

The assumption \eqref{lambdaassumption} in Theorem \ref{asymptotictheorem} is very mild since $C''$ can be taken to be large compared to $C'$ and thus \eqref{lambdaassumption} applies to most instances that come up in nature. For example, we immediately get 
 as a corollary \cite[Theorem 3]{merikoskipoly} for large $k$ and a weak version of \cite[Theorem 1.1]{pratt} where the base is taken to be large with $B$ being the set of numbers missing one digit.

In particular, we get an asymptotic formula if $B$ is a sparse subset of primes.

\begin{cor}
   Let \[
    B \subseteq [\eta X^{1/2},(1-\eta)X^{1/2}] \cap \PP
    \] 
    with $|B| \geq X^{2/5+\eps}.$
Then for any $C>0$
\[
\sum_{a^2+b^2\leq X}  \mathbf{1}_B(b)\Lambda(a^2+b^2) = \frac{4}{\pi}\sum_{\substack{ a^2+b^2 \leq  X }} \mathbf{1}_B(b)  + O_C\bigg(\frac{X^{1/2}|B|}{(\log X)^C}\bigg)
\]
\end{cor}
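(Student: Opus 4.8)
The plan is to deduce this directly from Theorem~\ref{asymptotictheorem} applied with $\lambda_b = \mathbf{1}_B(b)$, and then to simplify the resulting main term using that every element of $B$ is a prime exceeding $\eta X^{1/2}$.

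First I would verify the hypotheses of Theorem~\ref{asymptotictheorem}. The coefficients $\lambda_b = \mathbf{1}_B(b)$ are supported on $[\eta X^{1/2},(1-\eta)X^{1/2}]\cap\ZZ$, satisfy $|\lambda_b| \leq 1 \leq X^{o(1)}$, and obey $\sum_b|\lambda_b| = |B| \geq X^{2/5+\eps}$. The only substantive point is the divisibility condition~\eqref{lambdaassumption}, and here the key observation is that if $q > (\log X)^{C''} \geq 2$ and $b \in B$ is prime with $q \mid b$, then necessarily $b = q$; hence $\sum_{b \equiv 0\,(q)}|\lambda_b| = \#\{b \in B : q\mid b\} \leq 1$ for every $q$ in the range $(\log X)^{C''} < q \leq X^{\delta+\eta}$. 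Since $\sum_b|\lambda_b| = |B| \geq X^{2/5+\eps}$, the inequality $1 \leq (\log X)^{-C'}|B|$ holds for all sufficiently large $X$, so~\eqref{lambdaassumption} is satisfied (indeed for any value of $C'$). With the values of $C',C''$ furnished by Theorem~\ref{asymptotictheorem}, that theorem then yields
\[
\sum_{a^2+b^2\leq X}\mathbf{1}_B(b)\Lambda(a^2+b^2) = \frac{4}{\pi}\sum_{\substack{a^2+b^2\leq X\\(a,b)=1}}\Big(\mathbf{1}_B(b)\omega(b) + O_{C}\big(\mathbf{1}_B(b)(\log X)^{-C}\big)\Big).
\]

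Next I would simplify the main term to the clean shape in the statement. Since each $b \in B$ is prime with $b \geq \eta X^{1/2}$ and $\rho(b) \leq 2$, we have $\omega(b) = (1-\rho(b)/b)^{-1} = 1 + O(X^{-1/2})$, so replacing $\omega(b)$ by $1$ costs $O\big(X^{-1/2}\sum_{a^2+b^2\leq X}\mathbf{1}_B(b)\big) = O(|B|)$, using that $\sum_{a^2+b^2\leq X}\mathbf{1}_B(b) \asymp X^{1/2}|B|$ because $b \in [\eta X^{1/2},(1-\eta)X^{1/2}]$. Similarly, for a fixed prime $b$ the condition $(a,b) \neq 1$ forces $b \mid a$, and there are $O(X^{1/2}/b + 1) = O(1)$ such $a$ with $a^2 + b^2 \leq X$; hence dropping the coprimality constraint introduces an error of size $O(|B|)$. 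Finally, the error term inherited from Theorem~\ref{asymptotictheorem} contributes $O_C\big((\log X)^{-C}\sum_{a^2+b^2\leq X}\mathbf{1}_B(b)\big) = O_C\big(X^{1/2}|B|(\log X)^{-C}\big)$. Collecting these, and noting that for fixed $C$ the terms of size $O(|B|)$ are dominated by $X^{1/2}|B|(\log X)^{-C}$ once $X$ is large, gives the stated formula.

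There is no real obstacle here: the corollary is a direct specialization of Theorem~\ref{asymptotictheorem}, and the single idea that makes it go through is that primality trivializes the Bombieri--Vinogradov-type hypothesis~\eqref{lambdaassumption}. The only thing requiring attention is bookkeeping, namely checking that each elementary manipulation above produces an error that is $\ll_C X^{1/2}|B|(\log X)^{-C}$.
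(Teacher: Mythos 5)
Your proof is correct and is essentially the argument the paper intends: the corollary is stated immediately after Theorem~\ref{asymptotictheorem} without proof, and the natural route is exactly the one you take. You correctly observe that primality of the elements of $B$ trivializes condition~\eqref{lambdaassumption}, since a prime $b$ with $q\mid b$ and $q>1$ forces $b=q$, making the left-hand side at most $1$ (in fact it is $0$ in the relevant range $q\le X^{\delta+\eta}<\eta X^{1/2}$, since such $q$ are too small to lie in $B$); and the simplifications $\omega(b)=1+O(X^{-1/2})$ and the removal of $(a,b)=1$ at cost $O(1)$ per $b$ are both valid because $b\ge\eta X^{1/2}$ is prime, yielding a combined error $O(|B|)$ which is absorbed by $X^{1/2}|B|(\log X)^{-C}$ once $X$ is large.
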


The assumption \eqref{lambdaassumption} in Theorem \ref{asymptotictheorem} may be replaced by a wide zero-free region for Hecke $L$-functions (cf. Section \ref{lfunctionsection} for the relevant notations). For the statement we let $\M(u)$ denote the smallest integer $m$ with $u|m$.
\begin{theorem} \label{GRHtheorem}
For all $C > 0$ there is some $C'>0$ such that the following holds. Let $\lambda_b$ be complex coefficients with $|\lambda_b| \, \leq X^{o(1)}$, supported on  $ [\eta X^{1/2},(1-\eta)X^{1/2}] \cap \ZZ$, and satisfying
\[
\sum_{b} |\lambda_b| \, \geq X^{2/5+\eps}.
\]
Assume that Hecke $L$-functions $L(s,\xi_k \chi)$ on $\QQ(i)$ with $|k| \leq X^\eta$ and modulus $\M(u) \leq X^{1/10+\eta}$ have no zeros in the region
\begin{align} \label{zeroassumptionstrong}
  \sigma > 1- \frac{C' \log \log X}{\log X}, \quad |t| \leq  X^\eta.  
\end{align}
Then
\[
\sum_{a^2+b^2\leq X} \lambda_b \Lambda(a^2+b^2) = \frac{4}{\pi}\sum_{\substack{ a^2+b^2 \leq  X \\  (a,b)=1}}  (\lambda_b\omega(b) +O_{C}(|\lambda_b|(\log X)^{-C})).
\]
\end{theorem}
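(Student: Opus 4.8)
The plan is to deduce Theorem \ref{GRHtheorem} from the \emph{quasi-explicit formula} of Theorem \ref{asympregulartheorem}, whose proof is unconditional and uses no information on zeros. That formula should express $\sum_{a^2+b^2\le X}\lambda_b\Lambda(a^2+b^2)$ as the expected main term $\tfrac{4}{\pi}\sum_{(a,b)=1}\lambda_b\omega(b)$, plus an admissible error $O_A\big(X^{1/2}(\log X)^{-A}\sum_b|\lambda_b|\big)$ for every $A>0$, plus a sum of explicit ``zero terms'' indexed by the nontrivial zeros $\beta+i\gamma$ of the Hecke $L$-functions $L(s,\xi_k\chi)$ on $\QQ(i)$ with angular frequency $|k|\le X^\eta$, ray-class modulus $u$ satisfying $\M(u)\le X^{1/10+\eta}$, and height $|\gamma|\le X^\eta$ (zeros of larger height being already inside the error term, since the smooth test function underlying the explicit formula has rapid decay past height $X^\eta$). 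The contribution of such a zero will be of the shape $X^{1/2}(\log X)^{O(1)}X^{-c(1-\beta)}\sum_b|\lambda_b|$ for some absolute $c>0$.

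Granting this, Theorem \ref{GRHtheorem} follows quickly. Under hypothesis \eqref{zeroassumptionstrong} there are no zeros at all with $\beta>1-C'\log\log X/\log X$ in the relevant box, so the entire zero contribution runs over zeros with $1-\beta\ge C'\log\log X/\log X$. Summing the per-zero bound over the at most $X^{O(\eta)}$ characters $(\xi_k,\chi)$ and over their zeros, and using the log-free zero-density estimate of Lemma \ref{zerodensitylemma} to handle the sum over $\beta$ by partial summation, the total collapses to $\ll X^{1/2}(\log X)^{O(1)}\sum_b|\lambda_b|\cdot\sup_{\beta\le 1-C'\log\log X/\log X}X^{-c'(1-\beta)}$ for some $c'>0$, provided $\eta$ is small enough that the conductor-height budget $X^{O(\eta)}$ does not overwhelm the saving $X^{-c(1-\beta)}$. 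The supremum is $(\log X)^{-c'C'}$, so taking $C'$ large in terms of $C$ (and of the implied exponents) bounds the whole zero contribution by $\ll X^{1/2}(\log X)^{-C}\sum_b|\lambda_b|$; since $\tfrac{4}{\pi}\sum_{(a,b)=1}|\lambda_b|\asymp X^{1/2}\sum_b|\lambda_b|$, this is precisely the error term claimed in the statement.

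It remains to recall, this being the substance of Theorem \ref{asympregulartheorem}, where the relevant box of zeros comes from. One passes to the Gaussian integers via $a^2+b^2=N(b+ia)$, so that $\Lambda(a^2+b^2)$ on split primes becomes a weight on degree-one primes of $\QQ(i)$, and the condition that $\Im$ of a generator lies in the support of $\lambda_b$ is detected by expanding $\lambda_b$ in the Hecke characters of $\QQ(i)$: the angular Grossencharacters $\xi_k$ with $|k|\le X^\eta$ together with the ray-class characters $\chi$ to moduli $u$ with $\M(u)\le X^{1/10+\eta}$ --- the exponent $1/10$ being forced by the same arithmetic that underlies the constraints $\delta<1/10$ and $\sum_b|\lambda_b|\ge X^{2/5+\eps}$. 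One then splits $\Lambda$ by a Heath-Brown-type combinatorial identity into Type I and Type II bilinear sums over Gaussian integers, estimates these (the Type II estimate, with one variable confined to a short range, being the crux), and in the resulting smooth diagonal sums applies the explicit formula for $\sum_{N\mathfrak{a}\le x}(\text{weight})\,(\xi_k\chi)(\mathfrak{a})$; the main term produces $\tfrac{4}{\pi}\sum_{(a,b)=1}\lambda_b\omega(b)$ and the remainder is the sum over zeros above. The genuine difficulty lies entirely inside Theorem \ref{asympregulartheorem} --- the bilinear estimates for Gaussian primes with a short variable, and keeping the conductor and height bookkeeping tight enough that only the zeros of the stated box survive; given that, the deduction of Theorem \ref{GRHtheorem} is the routine combination of the wide zero-free region with the log-free density estimate sketched above.
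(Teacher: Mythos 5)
Your deduction is correct and is essentially the paper's own argument: invoke the unconditional quasi-explicit formula of Theorem \ref{asympregulartheorem}, observe that under the hypothesis \eqref{zeroassumptionstrong} every zero that appears satisfies $\beta \le 1 - C'\log\log X/\log X$, and then control the sum of $(\N\n)^{\beta-1}$ over zeros via the log-free zero-density estimate (Lemma \ref{zerodensitylemma}), the budget $(Q^2KT)^{c_2(1-\beta)}\cdot X^{-(1-\beta)}$ being summable with a net positive saving since $\delta,\eta$ are small, so that taking $C'$ large in terms of $C$ yields the $(\log X)^{-C}$ error. (One minor inaccuracy: in Theorem \ref{asympregulartheorem} the zero sum runs over only $J \le (\log X)^{O(1)}$ characters, not $X^{O(\eta)}$ of them; this does not affect your argument, since the density estimate controls the totality of zeros in any case, but it is the wrong reading of the formula.)
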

\subsection{Overview of the proof}

Our goal is to estimate
\begin{align} \label{mainsketchthingy}
  \sum_{p=a^2+b^2 \sim X} \mathbf{1}_B(b),  
\end{align}
where $|B| = X^{1/2-\delta}.$   It is immediately apparent that potential Siegel zeros can cause a problem, since if $\chi_1 \in \widehat{(\ZZ/q_1\ZZ)^\times}$ is an exceptional character (necessarily quadratic) to modulus $q_1 \leq X^{\delta}$ and if $B \subseteq q_1 \ZZ$, then for $b \in B$
\[
\mu(a^2+b^2) \approx \chi_1(a^2+b^2) = \chi_1(a^2) = 1,
\]
implying that the sequence $a^2+b^2$ is biased towards numbers with an even number of prime factors.  In this case we would expect the main term for \eqref{mainsketchthingy} to be multiplied essentially by $L(1,\chi_1)$. Note that the function
\[
b+ia \mapsto \chi(a^2+b^2)
\]
defines a Dirichlet character on $(\ZZ[i]/q_1\ZZ[i])^\times$.

Similarly as in the proof of Linnik's theorem \cite{linnik}, our argument splits into two cases depending on whether there is a Siegel zero or not. We choose a small parameter $\eps_1>0$ and we will  take $\delta$ to be small in terms of $\eps_1$.  If there is a zero
\[
\beta_1 \geq 1- \frac{\eps_1}{\log X},
\]
then we can use a similar argument as in \cite[Chapter 24.2]{odc} to give a lower bound for primes of the form $a^2+b^2$, using just Type I information. This works for a certain fixed $\eps_1$ once $\delta>0$ is sufficiently small.

For simplicity let us then assume in this sketch that we are in the situation of Theorem \ref{GRHtheorem}, that is, for Dirichlet characters $\chi$ to moduli $u \in \ZZ[i]$ with $\M(u) \leq X^{ \delta+\eta}$ Hecke $L$-functions on $\QQ(i)$ have no zeros in the wider region \eqref{zeroassumptionstrong}. By Vaughan's identity evaluating \eqref{mainsketchthingy} is reduced to estimating
\begin{align}
 \text{Type I sums}: \quad \quad   \label{typeisketchintro}
    \sum_{d \leq D} \alpha(d) \sum_{\substack{a^2+b^2 \sim X \\ a^2+b^2 \equiv 0 \, (d)}} \mathbf{1}_B(b)
\end{align}
and for $MN=X$
\begin{align}   \label{typeiisketchintro}
 \text{Type II sums}: \quad 
S(\alpha,\beta):= \sum_{\substack{m \sim M  \\ n \sim N}} \alpha(m) \beta(n) \sum_{\substack{mn=a^2+b^2}} \mathbf{1}_B(b),  
\end{align}
where $\alpha,\beta$ denote bounded coefficients.

For the Type I sums \eqref{typeisketchintro}  the argument goes back to \cite{fouvryi} and we get an asymptotic formula for $D=X^{1-\delta-o(1)}$ by applying Poisson summation for the free variable $a$ and the quadratic large sieve (cf. Lemma \ref{quadraticlargesievelemma}). For small $\delta$ the exponent of distribution approaches $1-o(1)$, so that we only need very little parity breaking Type II information.

For the Type II sums \eqref{typeiisketchintro} it suffices to consider the case when $\beta=\mu$, the M\"obius function. Similar to \cite{FI}, by unique factorization in $\QQ(i)$ we essentially have for $w,z \in \ZZ(i)$ 
\[
a^2+b^2 = mn = |\overline{w}z|^2 = |b+ia|^2
\]
and get
\[
S(\alpha,\mu) = \sum_{\substack{|w|^2 \sim M \\ |z|^2 \sim N}} \alpha(|w|^2) \mu(|z|^2) \mathbf{1}_B(\Re(\overline{w}z)).
\]
After using Cauchy-Schwarz similarly to \cite{FI} we essentially need to show cancellation in
\begin{align*}
    \sum_{|z_1|^2,|z_2|^2 \sim N}& \mu(|z_1|^2) \mu(|z_2|^2) \sum_{\substack{b_1,b_2 \in B \\b_1z_2\equiv b_2z_1 \, (\Im(z_2\overline{z_1}))}} 1  \\
    &=: \sum_{|z_1|^2,|z_2|^2 \sim N} \mu(|z_1|^2) \mu(|z_2|^2) T_B(z_1,z_2).
\end{align*}
The goal is to evaluate the sum $T_B(z_1,z_2)$ with a main term $M_B(z_1,z_2)$ and then bound
\begin{align} \label{musumintro}
 \sum_{|z_1|^2,|z_2|^2 \sim N} \mu(|z_1|^2) \mu(|z_2|^2) M_B(z_1,z_2)   
\end{align}
by catching the oscillations from $\mu(|z|^2)$. All of the previous works \cite{FI,hbli,merikoskipoly,pratt} rely on specific analytic and arithmetic properties of the set $B$ to evaluate the sum $T_B(z_1,z_2)$. 

For a general sparse set $B$ we first factor out $b_0=(b_1,b_2)$ to get, assuming for simplicity that $b_0| \Im (z_2 \overline{z_1})$,
\[
T_B(z_1,z_2) = \sum_{b_0}  \sum_{\substack{b_0b'_1,b_0b'_2 \in B \\b'_1z_2\equiv b'_2z_1 \, (\Im(z_2\overline{z_1})/b_0)}} 1.
\]
It is crucial to carefully track the dependency on $b_0$ since it is possible that a large subset of $B\times B$ has a large common factor $b_0$, for instance, if $B$ is in $q\ZZ$ for some fixed $q \leq X^{\delta}$. 

To evaluate $T_B(z_1,z_2)$ use Dirichlet characters to expand the congruence $b'_1z_2\equiv b'_2z_1 \, (\Im(z_2\overline{z_1})/b_0)$. The contribution from Dirichlet characters with a large conductor $d \geq X^{\delta+\eta}/b_0$ may be bounded by using the large sieve for multiplicative characters (Lemma \ref{largesievelemma}). The Dirichlet characters with a small conductor $d < X^{\delta +\eta}/b_0$ give the main term $M_B(z_1,z_2)$ and we can bound \eqref{musumintro} provided that for any $f=db_0 \leq X^{\delta+\eta}$ and $a \in (\ZZ/f \ZZ)^{\times}$ we have
\begin{align*} 
 \sum_{\substack{|z_1|^2, |z_2|^2 \sim N
 \\ z_2 \equiv a z_1  \, (f)}} \mu(|z_1|^2)\overline{\mu(|z_2|^2)} \ll \frac{N^2}{f^2(\log X)^C}.
\end{align*}
This follows by standard arguments (eg. using Heath-Brown's identity for $\mu$) from the assumption \eqref{zeroassumptionstrong}, provided that $N$ is sufficiently large compared to $f$, say, $N > X^\eta f^3$. We will give a more detailed sketch of the argument for the Type II sums in Section \ref{typeiisketch}. 

Without assuming the zero-free region \eqref{zeroassumptionstrong}, for the Type II sums we need to extract the contribution from the potential exceptional characters before applying Cauchy-Schwarz to $S(\alpha,\mu)$.  Denoting $\mu_z:= \mu(|z|^2)$, we write
\[
\mu_z= \mu^{\#}_z + \mu^{\flat}_z,
\]
where $\mu^{\#}_z$ is an approximation for $\mu_z$ and  $\mu^{\flat}_z$ is a function which is balanced along arithmetic progressions (cf. \eqref{eq:balanceddef} below). Importantly, $\mu^{\#}_z$ must be simple enough so that $S(\alpha,\mu^{\#})$ can be evaluated using only Type I information. 

 Let $\chi_j$ for $j \leq J=(\log X)^{O(1)}$ denote the worst of the possible exceptional characters $\chi_j$ of conductors $u_j \in \ZZ[i]$ with $\M(u_j) \leq X^{\delta+\eta}$ and denote \emph{the normalized 
correlation} of two coefficients $\alpha,\beta$ by
\[
\C(\alpha,\beta):= \bigg(\sum_{|z|^2\sim N} \alpha_z \overline{\beta_z} \bigg)\bigg(\sum_{|z|^2 \sim N} |\beta_z| \bigg)^{-1}
\]
and.
Then for our approximation we essentially choose 
\[
\mu^{\#}_z :=  \sum_{j\leq J} \overline{\chi_j}(z) \C(\mu, \overline{\chi_j} ).
\]
 The idea is similar in spirit to the dispersion method of Drappeau \cite{drappeau} which also takes into account contributions from multiple characters. This construction is also motivated by the Prime number theorem of Gallagher \cite[Theorem 7]{gallagher} and its use by Montgomery and Vaughan to get a power saving for the exceptional set in the binary Goldbach problem \cite{mvexceptional}.
For the  function $\mu^\flat := \mu_z-\mu^{\#}_z$ we can then unconditionally show  that for any $f \leq X^{\delta+\eta}$, $N > X^\eta f^3$, and $a \in (\ZZ/f \ZZ)^{\times}$ 
\begin{align} \label{eq:balanceddef} 
 \sum_{\substack{|z_1|^2, |z_2|^2 \sim N
 \\ z_2 \equiv a z_1  \, (f)}} \mu^\flat_z\overline{\mu^\flat_z} \ll \frac{N^2}{f^2(\log X)^C}.
\end{align}
Taking into account the bias $S(\alpha,\mu^\sharp)$ from the exceptional characters for the Type II sums, we obtain the quasi-explicit formula Theorem \ref{asympregulartheorem}.

The paper is structured as follows. In the Section \ref{lemmasection} we present some basic lemmas. In Section \ref{setupsection} we split the proof of Theorem \ref{maintheorem} into two cases depending on the existence of a Siegel zero (Theorems \ref{exceptionaltheorem} and \ref{regulartheorem}) and state the quasi-explicit formula Theorem \ref{asympregulartheorem}. In Section \ref{typeisection} we evaluate Type I sums and in Section \ref{exceptionalsection} we give a proof of Theorem \ref{maintheorem} under the assumption that a Siegel zero does exist. In Sections \ref{typeiipreliminarysection}, \ref{typeiimainsection}, and \ref{sharptypeiipropsection} we estimate Type II sums. In Sections \ref{asympregularproofsection}, \ref{asymptoticproofsection}, and \ref{grhsection} we deduce our main theorems.

\subsection{Notations and conventions}
\begin{itemize}
    \item $\a, \b ,\c ,\d,\f,\m, \n,\p$ -- ideals of $\ZZ[i]$, reserving $\p$ for prime ideals
    \item $u,v,w,z$ -- Gaussian integers 
    \item $(z,w)$ -- Greatest common primary divisor
    \item $\chi,\psi,u$ -- Dirichlet characters $\chi,\psi \in \widehat{(\ZZ[i]/u\ZZ[i])^\times},$ $u \in \ZZ[i]$ 
    \item $\xi_k(z)$ -- Character $\xi_k(z) = (z/|z|)^k=e^{ik\arg z}$
    \item $\N\n$ -- Norm of an ideal, $\N (z) = |z|^2$
    \item $\M(z)$ -- Smallest integer $m$ with $z | m$, $\N(z)^{1/2} \leq \M(z) \leq \N(z)$.
    \item $k,\ell,m,n,p,q,r,s$ -- Integers, reserving $p$ for a prime number
    \item $\nu_j$ -- Small power of $X$, $\nu_j= X^{-\eta_j}$
    \item $\eta$ -- A generic small constant
    \item $C$ -- A generic large constant
     \item $F$ -- A smooth compactly supported function
    \item $H_N(\n)$ -- Indicator of  $\N\n \in [N,N(1+\nu_2)]$ for $\nu_2=X^{-\eta_2}$.
    \item $W$ -- Denotes $X^{1/(\log\log X)^2}$.
    \item $P(W)$ -- Denotes $\prod_{p < W} p$
\end{itemize}

\section{Lemmas} \label{lemmasection}
\subsection{Introducing finer-than-dyadic smooth weights} \label{smoothweightsection}
Here we describe a device that allows us to partition a sum smoothly into finer-than-dyadic intervals. Let $\nu \in (0,1/10)$ be small (we will use $\nu=X^{-\eps}$ or $\nu= \log^{-C} X$), and fix a non-negative $C^\infty$-smooth function $F$ supported on $[1-\nu,1+\nu]$ and satisfying
\[
|F^{(j)}| \ll_j \nu^{-j},\, \, j \geq 0 \quad \text{and} \quad \int_{1/2}^2 F(1/t) \frac{dt}{t} = \nu.
\]
Suppose that we want to introduce a smooth partition to bound a sum of the form $\sum_{n \leq N} f_n$. We can write (using a change of variables $t \mapsto t/n$)
\[
\sum_{n \leq N} f_n = \frac{1}{\nu} \sum_{n \leq N} f_n \int_{1/2}^2 F(1/t) \frac{dt}{t} =\frac{1}{\nu}   \sum_{n \leq N} \int_{1/2}^{2N} f_n F(n/t)  \frac{dt}{t}= \frac{1}{\nu} \int_{1/2}^{2N}  \sum_{n \leq N} f_n F(n/t) \frac{dt}{t},
\]
so that 
\[
\bigg| \sum_{n \leq N} f_n \bigg| \leq \frac{1}{\nu} \int_{1/2}^{2N} \bigg| \sum_{n \leq N} f_n F(n/t) \bigg| \frac{dt}{t}.
\]
Hence, at the cost of a factor $\nu^{-1} \log N$, it suffices to consider sums of the form
\[
 \sum_{n \leq N} f_n F(n/t)
\]
for $t \leq  2N$. Naturally, if the original sum is
$ \sum_{n \sim N} f_n$ (dyadic $n$), then it suffices to consider $\sum_{n \sim N} f_n F(n/t)$ for $t \asymp N$ at a cost $\nu^{-1}$. The effect is the same as with the usual smooth partition of unity except that we did not need to explicitly construct the partition functions $F$. We will denote $F_t (n):=F(n/t)$.

We also have the following variant on $\RR/2 \pi \ZZ$. Fix $\nu_1=X^{-\eta_1}$ for some  small $\eta_1 >0$ and let $G: \RR/2 \pi \ZZ \to \CC$ be a non-negative $C^\infty$-smooth function supported on $[-\nu_1,\nu_1]$, satisfying
\[
|G^{(j)}| \ll_j \nu_1^{-j},\, \, j \geq 0 \quad \text{and} \quad \int G(\theta) d \theta = \nu_1.
\]
Then  we may write for a sum over Gaussian integers
\[
\sum_{z \in \ZZ[i]} f_z =  \frac{1}{\nu_1}\int_{\RR/2 \pi \ZZ}  \sum_{z \in \ZZ[i]} f_z G(\arg z - \theta) d\theta
\]
to obtain a smooth finer-than-dyadic partition in terms of $\arg z$.
\subsection{Elementary estimates}
\begin{lemma} \label{gcdlemma}
\[
\sum_{1 \leq a \leq A} (a,d)  \leq \tau(d) A.
\]
\end{lemma}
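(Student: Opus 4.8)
The plan is to classify the integers $a$ in the range $1 \le a \le A$ according to the value of $e := (a,d)$, which is necessarily a divisor of $d$. This gives the exact identity
$\sum_{1 \le a \le A} (a,d) = \sum_{e \mid d} e \, |\{1 \le a \le A : (a,d) = e\}|$.
I would then relax the exact-gcd condition $(a,d) = e$ to the weaker divisibility condition $e \mid a$, which only enlarges the count:
$|\{1 \le a \le A : (a,d) = e\}| \le |\{1 \le a \le A : e \mid a\}| = \lfloor A/e \rfloor \le A/e$.
Inserting this bound, the weight $e$ cancels against the $1/e$, and one is left with $\sum_{e \mid d} A = \tau(d) A$, which is exactly the claimed estimate. (Here $\tau(d)$ is the number of divisors of $d$.)

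The argument is entirely elementary and there is no genuine obstacle; the only points to keep straight are that weakening $(a,d) = e$ to $e \mid a$ moves the count in the correct direction for an upper bound, and that discarding the floor function $\lfloor A/e\rfloor \le A/e$ costs nothing here. An equivalent route would be to use the identity $(a,d) = \sum_{e \mid (a,d)} \varphi(e)$ and interchange the order of summation, bounding $\sum_{e \mid d}\varphi(e)\lfloor A/e\rfloor \le A\sum_{e\mid d} \varphi(e)/e \le A\sum_{e\mid d} 1 = \tau(d)A$; but the direct partition by gcd value above is cleaner and self-contained, so that is the form I would write up.
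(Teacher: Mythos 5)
Your argument is correct and is essentially identical to the paper's: both partition the sum according to the value $c = (a,d)$ (a divisor of $d$), write $a = ca'$, bound the count of such $a$ by $\lfloor A/c\rfloor \le A/c$, and observe that the factor $c$ cancels to give $\sum_{c\mid d} A = \tau(d)A$.
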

\begin{proof}
Denoting $c=(a,d)$ and $a=ca'$ we have
\[
\sum_{1 \leq a \leq A} (a,d)  \leq \sum_{c|d} c \sum_{1 \leq a' \leq  A/c} 1 \leq \tau(d) A.  \qedhere
\]
\end{proof}
In handling the weights $\mathbf{1}_{(n,P(W))=1}$ we can use the following standard bound for exceptionally smooth numbers, which effectively gives a version of the Fundamental lemma of the sieve
(see \cite[Chapter III.5, Theorem 1]{Ten}, for instance). 
\begin{lemma} \label{smoothlemma} For any $2 \leq Z \leq Y$ we have
\begin{align*}
\sum_{\substack{n \sim Y \\ P^+(n) < Z}} 1 \, \ll \, Y e^{-u/2},
\end{align*}
where $u:= \log Y/\log Z.$
\end{lemma}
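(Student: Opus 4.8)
The plan is to treat this as the classical upper bound for friable (smooth) integers --- it is essentially \cite[Chapter III.5, Theorem 1]{Ten} --- and to reprove it via Rankin's trick. First I would discard the lower constraint and bound instead $\Psi(2Y,Z):=|\{n\le 2Y:\,P^+(n)<Z\}|$, which dominates the displayed sum. The key inequality is, for any $\sigma\in(0,1)$, obtained by inserting $\mathbf{1}_{n\le 2Y}\le (2Y/n)^{\sigma}$ and factoring the Euler product over $Z$-friable integers,
\[
\Psi(2Y,Z)\ \le\ (2Y)^{\sigma}\prod_{p<Z}\bigl(1-p^{-\sigma}\bigr)^{-1}.
\]

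Next I would optimise $\sigma$. Setting $\xi:=(1-\sigma)\log Z$, so that $p^{1-\sigma}=e^{\xi\log p/\log Z}$, for $\sigma$ bounded away from $0$ and $1$ one has $\log\prod_{p<Z}(1-p^{-\sigma})^{-1}=\sum_{p<Z}p^{-\sigma}+O(1)$, and partial summation against Mertens' estimate $\sum_{p\le t}1/p=\log\log t+O(1)$ gives $\sum_{p<Z}p^{-\sigma}\ll e^{\xi}+\log\log Z$. Since $(2Y)^{\sigma}=2Y\,(2Y)^{-(1-\sigma)}$ with $(1-\sigma)\log(2Y)=\xi u+O(\xi)$, choosing $\xi=\log u$ (so $e^{\xi}=u$) should give
\[
\Psi(2Y,Z)\ \ll\ Y\exp\bigl(-u\log u+O(u)+O(\log\log Z)\bigr),
\]
which is $\ll Ye^{-u/2}$ as soon as $u$ exceeds an absolute constant and $u\log u$ dominates $\log\log Z$ --- in particular whenever $\log Z\le(\log Y)^{1/2}$, since then $u\ge(\log Y)^{1/2}$; this is also the only regime (besides $Z=O(1)$) in which the choice $\xi=\log u$ might push $\sigma$ outside $(0,1)$.

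Finally I would dispose of the leftover ranges by soft arguments: if $u=\log Y/\log Z$ is bounded then $e^{-u/2}\gg 1$ and the trivial bound $\Psi(2Y,Z)\le 2Y$ suffices; if $Z=O(1)$ then a direct divisor count gives $\Psi(2Y,Z)\le\prod_{p<Z}\bigl(1+\log(2Y)/\log p\bigr)\ll_Z(\log Y)^{O(1)}\ll Ye^{-u/2}$; and the one configuration not reached by Rankin's trick --- $u$ moderately large but $Z$ huge relative to $Y$, so that $\log\log Z\gtrsim u\log u$, in which case $Z$ exceeds every fixed power of $\log Y$ --- is covered by the de Bruijn--Hildebrand asymptotic $\Psi(x,z)=x\varrho(u)(1+o(1))$ combined with the elementary bound $\varrho(u)\ll e^{-u/2}$ for $u\ge1$ (which follows from $\varrho$ being decreasing and $\varrho(2)=1-\log2<e^{-1}$, or from $\varrho(u)\le 1/\lfloor u\rfloor!$). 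The main obstacle here is not any individual estimate but the bookkeeping needed to merge all these cases into a single bound that is uniform over the whole range $2\le Z\le Y$; since this uniformity is exactly what \cite{Ten} provides, in practice I would just cite it.
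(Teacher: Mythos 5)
The paper does not prove this lemma --- it simply cites \cite[Chapter III.5, Theorem 1]{Ten} --- and your proposal closes by doing the same, so at the level of what appears in the paper the two agree, and your Rankin sketch tracks the standard argument behind that citation. Since you offer it as a would-be reconstruction, though, it is worth flagging where it would actually need repair. The estimate $\log\prod_{p<Z}(1-p^{-\sigma})^{-1}=\sum_{p<Z}p^{-\sigma}+O(1)$ needs $\sigma$ bounded strictly above $1/2$ (so that $\sum_p p^{-2\sigma}$ converges), not merely away from $0$ and $1$; with $\xi=\log u$ one has $\sigma=1-\log u/\log Z>1/2$ only when $u<Z^{1/2}$, so the band $Z^{1/2}\le u<Z$ (where $u$ and $Z$ are both unbounded) is missed by all four of your cases --- there a blunter choice such as $\sigma=1/2$ works, but you do not use it. Similarly, the divisor-count fallback actually has to cover the whole range $u\ge Z$, i.e.\ $Z\lesssim\log Y/\log\log Y$, rather than just bounded $Z$, since that is precisely where $\sigma\le0$ breaks the Rankin step; the same computation does extend, you have merely stated its reach too narrowly. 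Lastly, ``$\varrho$ decreasing and $\varrho(2)<e^{-1}$'' only yields $\varrho(u)<e^{-1}$ for $u\ge2$, which is not $\varrho(u)\ll e^{-u/2}$ as $u\to\infty$; for large $u$ the factorial bound $\varrho(u)\le1/\lfloor u\rfloor!$ is the one you actually need. None of this is fatal: you flag the bookkeeping concern yourself and defer to \cite{Ten}, which is exactly what the paper does.
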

We also require the following elementary bound (see \cite[Lemma 1]{fisieve}, for instance).
\begin{lemma} \label{divisorlemma0} For every square-free integer $n$ and every $k \geq 2$ there exists some $d|n$ such that $d \leq n^{1/k}$ and
\begin{align*}
\tau(n) \leq 2^k \tau(d)^k.
\end{align*}
\end{lemma}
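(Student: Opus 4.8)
The plan is to reduce the statement to an elementary counting fact about the number of prime factors. Since $n$ is square-free I would write $n = p_1 \cdots p_r$ with $p_1 < \cdots < p_r$ distinct primes, so that $\tau(n) = 2^r$ and, for every divisor $d \mid n$, $\tau(d) = 2^{\omega(d)}$, where $\omega(d)$ denotes the number of prime divisors of $d$. Then the desired inequality $\tau(n) \le 2^k \tau(d)^k$ is equivalent to $r \le k + k\,\omega(d)$, so it suffices to exhibit a divisor $d \mid n$ with $d \le n^{1/k}$ and $\omega(d) \ge r/k - 1$.

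To produce such a $d$ I would split the primes of $n$ into $k$ blocks of nearly equal size --- for instance, place $p_j$ into block $S_i$ whenever $i \equiv j \pmod{k}$, so that each $|S_i|$ equals $\lfloor r/k \rfloor$ or $\lceil r/k \rceil$. Setting $d_i := \prod_{p \in S_i} p$ we have $d_1 d_2 \cdots d_k = n$, hence by the pigeonhole principle (equivalently, by AM--GM) at least one $d_i$ satisfies $d_i \le n^{1/k}$; take $d$ to be such a $d_i$. Then $d \le n^{1/k}$ and, since every block --- in particular the chosen one --- contains at least $\lfloor r/k \rfloor \ge r/k - 1 + 1/k$ primes, we get $\omega(d) > r/k - 1$, so that $k + k\,\omega(d) > r$ and therefore $\tau(n) = 2^r < 2^{k + k\,\omega(d)} = 2^k \tau(d)^k$, as needed. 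The same computation also covers the degenerate ranges $r \le k$ without change.

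The one point requiring care --- and what keeps the statement from being completely trivial --- is the opposing pull of the two requirements: making $d$ small favours few prime factors, while making $\tau(d)$ large favours many. Balancing the block sizes reconciles them, because the identity $\prod_i d_i = n$ forces some block product to be $\le n^{1/k}$ no matter how the primes are distributed, leaving us free to keep each block equally rich in prime factors. No deeper input is needed (no bound on the primes themselves, no use of the smoothness lemmas above), so I do not anticipate any genuine obstacle beyond setting up this partition cleanly.
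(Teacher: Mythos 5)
Your argument is correct. Writing $n = p_1\cdots p_r$, partitioning the primes into $k$ blocks of size $\lfloor r/k\rfloor$ or $\lceil r/k\rceil$, picking the block $d$ with smallest product (so $d \le n^{1/k}$ since the block products multiply to $n$), and then using $k\lfloor r/k\rfloor \ge r - (k-1)$ to get $2^k\tau(d)^k \ge 2^{k + k\lfloor r/k\rfloor} \ge 2^{r+1} > \tau(n)$ is sound, and the degenerate case $r \le k$ (where the chosen block may be empty, $d=1$) is indeed covered by the same computation. The paper itself gives no proof of this lemma — it simply cites \cite[Lemma 1]{fisieve} — and your balanced-block pigeonhole argument is precisely the standard one from that reference, so there is no methodological discrepancy to report.
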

From this we get the more general version.
\begin{lemma} \label{divisorlemma} For every integer $n$ and every $k \geq 2$ there exists some $d|n$ such that $d \leq n^{1/k}$ and
\begin{align*}
\tau(n) \leq 2^{k^2} \tau(d)^{k^3}.
\end{align*}
\end{lemma}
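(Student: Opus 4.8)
The statement to prove is Lemma \ref{divisorlemma}: for every integer $n$ and every $k \geq 2$ there exists some $d \mid n$ with $d \leq n^{1/k}$ and $\tau(n) \leq 2^{k^2}\tau(d)^{k^3}$.

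The plan is to reduce to the squarefree case already handled in Lemma \ref{divisorlemma0}. Write $n = n_1 n_2^2$ where $n_1$ is squarefree (the ``squarefree kernel'' part) — more precisely, factor $n = \prod_p p^{a_p}$ and set $n_1 = \mathrm{rad}(n) = \prod_{p \mid n} p$, the radical. Then $\tau(n) = \prod_p (a_p + 1) \leq \prod_{p \mid n} 2^{a_p} = 2^{\Omega(n)}$ is not quite the right route; instead I would use that $\tau$ is submultiplicative and compare $n$ with its radical directly. The cleanest approach: every integer $n$ can be written as $n = m \ell$ where $m$ is squarefree, $\ell \mid m^\infty$ (i.e. $\mathrm{rad}(\ell) \mid m$), and moreover one can arrange $\tau(n) \leq \tau(m)^2$ — indeed if $n = \prod p^{a_p}$ then $a_p + 1 \leq 2^{a_p} \leq (\text{something})$; the bound $a_p+1 \le 2^{a_p}$ gives $\tau(n) \le 2^{\Omega(n)}$, which is too lossy. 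Better: use $a_p + 1 \le p^{a_p/2}\cdot C$? No. The standard trick is $\tau(n) \le \tau(\mathrm{rad}(n))^2$ is FALSE in general (take $n = 2^{10}$: $\tau(n) = 11$, $\tau(\mathrm{rad}(n))^2 = 4$). So I must be more careful.

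Here is the route I would actually take. Given $n$, let $r = \mathrm{rad}(n)$ be its radical, so $r$ is squarefree and $n \mid r^K$ for $K = \lceil \log_2 n \rceil$ or so. Apply Lemma \ref{divisorlemma0} to $r$ with parameter $k' := 2k$ (or some multiple), obtaining $e \mid r$ with $e \leq r^{1/k'} \leq n^{1/k'}$ and $\tau(r) \leq 2^{k'}\tau(e)^{k'}$. Now I want to ``lift'' $e$ to a divisor $d$ of $n$: take $d$ to be the largest divisor of $n$ supported on the primes dividing $e$, subject to $d \leq n^{1/k}$. Then control $\tau(d)$ from below in terms of $\tau(e)$ and the exponents, and control $\tau(n)$ from above. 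The key inequality is that for the ``$e$-part'' $n_e$ of $n$ (product of $p^{a_p}$ over $p \mid e$) versus the ``$(r/e)$-part'' $n_{r/e}$, we have $\tau(n) = \tau(n_e)\tau(n_{r/e})$, and $\tau(n_{r/e})$ is controlled because $n_{r/e} \mid n$ and its radical $r/e$ is small; whereas $\tau(n_e)$ is comparable to a power of $\tau(d)$. The exponents $k^2$ and $k^3$ in the target are generous, which signals that a fairly crude multiplicative bookkeeping — taking $k$-th powers of $k$-th powers at two stages — is exactly what is intended.

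The main obstacle, and the step I would spend the most care on, is the lifting step: passing from a good squarefree divisor $e$ of $\mathrm{rad}(n)$ to a genuine divisor $d$ of $n$ with $d \le n^{1/k}$ while not losing too much in $\tau(d)$, and simultaneously bounding $\tau(n)/\tau(d)^{O(k^3)}$. The point is that $\tau$ of a prime power $p^a$ is only $a+1$, which is tiny compared to $2^a$, so when a prime divides $n$ to a high power the divisor-function contribution is small and one can afford to either include the whole prime-power block in $d$ (cheap in size only if $a$ is not too large) or discard it (cheap in $\tau$). Balancing these — essentially a greedy argument choosing, prime by prime, how much of each prime-power block to put into $d$ so that the size stays $\le n^{1/k}$ while $\tau(d)$ captures a fixed fractional power of $\tau(n)$ — is the crux; once set up, the constants $2^{k^2}$ and exponent $k^3$ absorb all the slack. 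I expect the final estimate to follow by combining $\tau(r) \le 2^{k'}\tau(e)^{k'}$ with the elementary bounds $\tau(p^a) \le \tau(p)^a = 2^a$ and $\tau(p^a) = a+1 \le 2\tau(p^{\lfloor a/2\rfloor + \cdots})$ type telescoping, arranged so that every factor of $n$ is accounted for either inside $d$ (up to a bounded power) or inside a part whose radical divides $r/e$ and is hence small.
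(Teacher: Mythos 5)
Your sketch correctly identifies Lemma \ref{divisorlemma0} as the engine and correctly notes that $\tau(n)\le\tau(\mathrm{rad}(n))^2$ fails, but the scaffolding you propose --- apply Lemma \ref{divisorlemma0} to the radical $r=\mathrm{rad}(n)$ to obtain $e\mid r$, then lift $e$ to a divisor $d\mid n$ supported on the primes of $e$ --- has a concrete failure mode that the ``crux'' you flag cannot resolve. Take $n=p^a$ with $a$ large and $k=2$: here $r=p$, and since $p>p^{1/k'}$ for every $k'\ge 2$, Lemma \ref{divisorlemma0} applied to $r$ can only produce $e=1$; any $d$ supported on the primes of $e$ is then $d=1$, so $\tau(d)^{k^3}=1$, while $\tau(n)=a+1$ is unbounded. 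The problem is structural: passing to the radical discards exactly the exponent information that $\tau(n)$ is made of, and your closing idea of ``including the whole prime-power block in $d$'' has no home inside a construction whose first step is to forget which blocks exist.

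The paper's proof uses a different decomposition that retains that information. Write $n=b_1b_2^2\cdots b_{k-1}^{k-1}b_k^k$ where $b_k$ is the largest integer with $b_k^k\mid n$ and $b_1,\dots,b_{k-1}$ are the pairwise coprime squarefree exponent layers of the $k$-free quotient $n/b_k^k$; a prime-by-prime check yields $\tau(n)\le\tau(b_1)\tau(b_2)^2\cdots\tau(b_k)^k$. Applying Lemma \ref{divisorlemma0} to each squarefree $b_j$ ($j\le k-1$) gives $d_j\mid b_j$ with $d_j\le b_j^{1/k}$ and $\tau(b_j)\le 2^k\tau(d_j)^k$; one then takes $d=d_1\cdots d_{k-1}b_k$, for which $d\le(b_1\cdots b_{k-1})^{1/k}b_k\le(b_1\cdots b_{k-1}b_k^k)^{1/k}\le n^{1/k}$ and the stated divisor bound follows. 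The crucial structural difference from your plan is that $b_k$ --- which carries all the high-exponent data --- is automatically $\le n^{1/k}$ and goes into $d$ wholesale, so $\tau(b_k)$ appears with full strength on the right-hand side and costs nothing in the comparison; your radical never even sees it.
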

\begin{proof}
Write $n=b_1 b_2^2 \cdots b_{k-1}^{k-1} b_k^k$ with $b_1, \dots, b_{k-1}$ square-free, by letting $b_k$ be the largest integer such that $b_k^k | n$, so that $n/b_k^k$ is $k$-free and splits uniquely into $b_1 b_2^2 \cdots b_{k-1}^{k-1}$ with $b_j$ square-free. We have
\[
\tau(n) \leq \tau(b_1) \tau(b_2)^2 \cdots \tau(b_k)^k.
\]
By Lemma \ref{divisorlemma0} for all $j \leq k-1$ there is some $d_j|b_j$ with $d_j \leq b_j^{1/k}$ and $\tau(b_j) \leq 2^k\tau(d_j)^k$. Hence, for $d=d_1\cdots d_{k-1} b_k$ we have 
\[
d \leq (b_1\cdots b_{k-1})^{1/k} b_k  \leq (b_1\cdots b_{k-1}b_k^k)^{1/k}  \leq n^{1/k}
\]
and
\[
\tau(n) \leq (2\tau(d_1)\cdots \tau(d_{k-1}) \tau(b_k))^{k^2} \leq 2^{k^2}\tau(d)^{k^3}.  \qedhere
\]
\end{proof}
\subsection{Sieve bounds}
The following version of the Fundamental lemma of the sieve follows from applying \cite[Theorem 6.9]{odc} to the real and the imaginary parts of $a_n$.
\begin{lemma} \emph{(Fundamental lemma of the sieve).} \label{flsievelemma}
Let $\A=(a_n)$ be a sequence of complex coefficients and let $\kappa \geq 0, Z \geq 2,$ $D \geq Z^{9\kappa +2}$. Suppose that for some $\XX$ and for some real-valued multiplicative function $g(d)$ we have for all square-free $d$
\[
\sum_{n \equiv 0 \, (d)} a_n = g(d) \XX + r_d.
\]
and suppose that for all $p$ we have $0 \leq g(p) < 1$. Suppose that for some $K > 1$ we have for all $W < Z$
\[
\prod_{W \leq p < Z} (1-g(p))^{-1} \leq K \bigg( \frac{\log Z}{\log W}\bigg)^\kappa.
\]
Denote
\[
V(Z) := \prod_p (1-g(p)).
\]
Then for some bounded coefficients $\lambda_d$ depending only on $\kappa$ we have
\[
\sum_{(n,P(Z))=1} a_n = \XX V(Z)(1+O_{\kappa,K}(e^{ -s})) + O\bigg( \bigg|\sum_{\substack{ d < D \\d|P(Z)}} \lambda_d  r_d\bigg|\bigg).
\]
 \end{lemma}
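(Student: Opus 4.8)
The plan is to deduce this from the classical Fundamental Lemma for real sequences, namely \cite[Theorem 6.9]{odc}, by splitting $a_n$ into its real and imaginary parts; the only content added by this lemma is the passage to complex-valued coefficients.

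First I would write $a_n = \Re(a_n) + i\,\Im(a_n)$, and similarly $\XX = \Re(\XX) + i\,\Im(\XX)$ and $r_d = \Re(r_d) + i\,\Im(r_d)$. Since $g(d)$ is real-valued by hypothesis, taking real and imaginary parts of $\sum_{n\equiv 0\,(d)} a_n = g(d)\XX + r_d$ shows that the real sequence $(\Re a_n)_n$ satisfies $\sum_{n\equiv 0\,(d)}\Re(a_n) = g(d)\,\Re(\XX) + \Re(r_d)$ for every square-free $d$, and likewise $(\Im a_n)_n$ with $\Im(\XX),\Im(r_d)$. The multiplicative function $g$, the condition $0\le g(p)<1$, the dimension bound $\prod_{W\le p<Z}(1-g(p))^{-1}\le K(\log Z/\log W)^\kappa$, the level condition $D\ge Z^{9\kappa+2}$, and the definition $V(Z)=\prod_p(1-g(p))$ are exactly the hypotheses of \cite[Theorem 6.9]{odc}, now applied to each of these two real sequences with $\XX$ replaced by $\Re(\XX)$, resp. $\Im(\XX)$.

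Next I would invoke \cite[Theorem 6.9]{odc} for each real sequence. This furnishes combinatorial sieve weights $\lambda_d$, supported on $d\mid P(Z)$ with $d<D$, bounded, and — crucially — depending only on $\kappa$ and the level $D$, not on the sifted sequence; hence the \emph{same} $\lambda_d$ may be used for both $(\Re a_n)_n$ and $(\Im a_n)_n$. It gives
\[
\sum_{(n,P(Z))=1}\Re(a_n) = \Re(\XX)\,V(Z)\bigl(1+O_{\kappa,K}(e^{-s})\bigr) + O\Bigl(\Bigl|\sum_{\substack{d<D\\ d\mid P(Z)}}\lambda_d\,\Re(r_d)\Bigr|\Bigr),
\]
with $s=\log D/\log Z\ge 9\kappa+2$, and the identical statement with $\Re$ replaced by $\Im$. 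Adding $i$ times the imaginary identity to the real one and using $\sum_d \lambda_d r_d = \sum_d\lambda_d\Re(r_d) + i\sum_d\lambda_d\Im(r_d)$, so that $\bigl|\sum_d\lambda_d\Re(r_d)\bigr| + \bigl|\sum_d\lambda_d\Im(r_d)\bigr| \ll \bigl|\sum_d\lambda_d r_d\bigr|$, collapses the two remainders into a single $O\bigl(\bigl|\sum_{d<D,\,d\mid P(Z)}\lambda_d r_d\bigr|\bigr)$ and yields the claimed formula.

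I do not anticipate a genuine obstacle: the analytic substance lives entirely in \cite[Theorem 6.9]{odc}, and what remains is bookkeeping. The one point deserving care is the \emph{compatibility} of the sieve weights across the two parts — the recombination of the remainders into a single $O(|\sum_d\lambda_d r_d|)$ is legitimate precisely because the weights produced by the combinatorial sieve are constructed independently of the input sequence (they depend on $\kappa$ and $D$ alone), so one common choice of $\lambda_d$ governs both $\Re(a_n)$ and $\Im(a_n)$; one should also verify that the hypotheses listed above translate verbatim into those of the cited theorem and that its sieve parameter coincides with $s=\log D/\log Z$.
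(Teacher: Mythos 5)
Your proposal matches the paper's proof: the paper's entire justification is the sentence preceding the lemma, namely that it ``follows from applying \cite[Theorem 6.9]{odc} to the real and the imaginary parts of $a_n$,'' which is precisely the route you take. The bookkeeping you supply (same $\lambda_d$ for both parts because the weights depend only on $\kappa$, recombination of the two remainders via $|\Re w|+|\Im w|\ll|w|$) is exactly the point the paper leaves implicit and then highlights in Remark \ref{flsieveremark}.
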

\begin{remark} \label{flsieveremark}
    The fact that $\lambda_d$ depend only on $\kappa$ and not the sequence $a_n$ will be important for us since we will be applying the same sieve to several sequences $a_n^{(b)}$ indexed by $b \in B$ and then using the summation over $b \in B$ while bounding the remainder $\sum_{b \in B}\left|\sum_{d < D} \lambda_d  r^{(b)}_d\right|$ (cf. Proposition \ref{typeiprop}). In our set-up the functions $g^{(b)}(d)$ and $V^{(b)}(Z)$ also depend on $b$ with $g^{(b)}(d)= \mathbf{1}_{(d,2b)=1} \rho(d)/d$. Thus, we have
\[
\sum_{b \in B} V^{(b)}(Z) = V(z)  \sum_{b \in B} \prod_{\substack{p| b \\ 2 < p < Z}} (1-\rho(p)/p)^{-1},
\]
where the product over $p|b,2 < p<  Z$ may be completed to  all prime factors $p|b, p\neq 2$ with a negligible error term if $Z$ is not too small.
\end{remark}

\subsection{Poisson summation}
The following lemma gives a truncated version of the Poisson summation formula.
\begin{lemma}\emph{(Poisson summation).}  \label{poissonlemma} Let $F$ be as in Section \ref{smoothweightsection} for some $\nu \in (0,1/10)$ and denote $F_N(n):=F(n/N)$. Let $x \gg 1$ and let $q \sim Q$ be an integer. Let $\eps >0$ and denote
\[
H:= \nu^{-1} (QN)^{\eps} Q/N
\]
Then for any $A >0$
\begin{align*}
\sum_{n \equiv a \, (q)} F_N(n) = \frac{N}{q} \hat{F}(0) + \frac{N}{q} \sum_{1 \leq |h| \leq H} \widehat{F} \left( \frac{h N}{q}\right) e_q (-ah) + O_{A,\epsilon,F}((QN)^{-A}),
\end{align*}
where $\hat{f}(h):= \int f(u)e(hu) du$ is the Fourier transform.
\end{lemma}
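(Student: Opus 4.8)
The plan is to unfold the arithmetic progression and apply the Poisson summation formula. Writing $n = qm + a$ with $m$ ranging over $\ZZ$, the left-hand side becomes $\sum_{m \in \ZZ} G(m)$ where $G(x) := F((qx+a)/N)$ is $C^\infty$ and compactly supported. Poisson summation then gives $\sum_{m \in \ZZ} G(m) = \sum_{h \in \ZZ} \widehat{G}(h)$ (both sides absolutely convergent), and the change of variables $u = (qx+a)/N$ computes
\[
\widehat{G}(h) = \int F\!\left(\frac{qx+a}{N}\right) e(hx)\, dx = \frac{N}{q}\, e_q(-ah) \int F(u)\, e\!\left(\frac{hN}{q}u\right) du = \frac{N}{q}\, e_q(-ah)\, \widehat{F}\!\left(\frac{hN}{q}\right).
\]
Hence $\sum_{n \equiv a\,(q)} F_N(n) = \frac{N}{q}\sum_{h \in \ZZ} \widehat{F}(hN/q)\, e_q(-ah)$; the $h = 0$ term gives $\frac{N}{q}\widehat{F}(0)$, the terms $1 \le |h| \le H$ form the displayed sum, and it remains only to show that the tail $|h| > H$ contributes $O_{A,\eps,F}((QN)^{-A})$.

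For the tail I would invoke the standard fact that the Fourier transform of a smooth, compactly supported function decays faster than any power: since $F$ is supported on an interval of length $\asymp \nu$ with $|F^{(j)}| \ll_j \nu^{-j}$, integrating by parts $j$ times gives $|\widehat{F}(\xi)| \ll_j \nu\,(\nu|\xi|)^{-j}$ for every $j \ge 0$ and $\xi \neq 0$. Putting $\xi = hN/q$ and using $q \asymp Q$, the very choice $H = \nu^{-1}(QN)^\eps Q/N$ guarantees that $\nu|\xi| \gg (QN)^\eps$ whenever $|h| > H$. Summing $\frac{N}{q}|\widehat{F}(hN/q)|$ over $|h| > H$ with $j \ge 2$ (so that the resulting series $\sum_{|h|>H}|h|^{-j}$ converges), one obtains after elementary simplification a bound of the form $\ll_j (QN)^{(1-j)\eps}$, where the implied constant depends on $j$ and $F$. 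Choosing $j = j(A,\eps)$ large enough that $(j-1)\eps \ge A$ then yields the claimed error term $O_{A,\eps,F}((QN)^{-A})$.

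The argument is entirely routine truncated Poisson summation; the only point meriting attention is the bookkeeping in the tail estimate, where one must verify that the $\nu$-loss in the decay rate of $\widehat{F}$ (which degrades as $\nu \to 0$) is precisely absorbed by the definition of the cutoff $H$, and that every estimate is uniform over $q \sim Q$. I do not anticipate any genuine obstacle.
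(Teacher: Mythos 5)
Your proof is correct and follows essentially the same approach as the paper: Poisson summation after unfolding the progression, then integration by parts $j$ times giving $|\widehat{F}(\xi)|\ll_j \nu(\nu|\xi|)^{-j}$, with the choice of $H$ making $\nu|\xi|\gg (QN)^\eps$ for $|h|>H$ and $j=j(A,\eps)$ taken large. This matches the paper's proof word for word in substance; you have merely spelled out the substitution $n=qm+a$ that the paper leaves implicit.
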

\begin{proof}
By the usual Poisson summation formula we have
\[
\sum_{n \equiv a \, (q)} F_N(n)=\frac{N}{q} \sum_{h} \widehat{F} \left( \frac{h N}{q}\right) e_q (-ah)
\]
For $|h| > H$ we have by integration by parts $j\geq 2$ times
\[
\widehat{F} \left( \frac{h N}{q}\right) = \int F(u)e(u hN/q) du \ll_j \nu^{-j+1} (hN/q)^{-j} \ll_j \nu (QN)^{-j\eps}(h/H)^{-2},
\]
which gives the result.
\end{proof}

We will also need the two-dimensional Poisson summation formula.
\begin{lemma} \label{poisson2dlemma}
Let $F: \RR^2 \to \CC$ be a $C^\infty$-smooth compactly supported function such that
\[
|F^{(j,k)}| \ll \nu_1^{-j} \nu_2^{-k}, \quad \int_{\RR^2} |F|  \ll \nu_1\nu_2
\]
and let $ F_{\bm{N}}(\n) := F(n_1/N_1,n_2/N_2)$. Then
\[
\sum_{\bm{n} \in \ZZ^2} F_{\bm{N}}(\n) = \sum_{\bm{h} \in \ZZ^2} \widehat{F_{\bm{N}}}(\bm{h})
\]
and
\[
\widehat{F_{\bm{N}}}(\bm{h}) \ll_C \nu_1\nu_2 N_1 N_2 (1+\nu_1|h_1|/N_1)^{-C} (1+\nu_2|h_2|/N_2)^{-C}.
\]
\end{lemma}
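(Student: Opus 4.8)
The plan is to derive the identity from the classical Poisson summation formula and the estimate from repeated integration by parts. Since $F$ is $C^\infty$ and compactly supported, so is $F_{\bm N}$; hence $F_{\bm N}$ is Schwartz, its Fourier transform is Schwartz, and in particular both series $\sum_{\bm n}|F_{\bm N}(\n)|$ and $\sum_{\bm h}|\widehat{F_{\bm N}}(\bm h)|$ converge absolutely. Applying the one-dimensional Poisson summation formula in each coordinate (with the normalization $\widehat{f}(\bm h)=\int_{\RR^2} f(\bm u)\, e(h_1 u_1+h_2 u_2)\,d\bm u$ used in Lemma \ref{poissonlemma}) gives the asserted identity $\sum_{\bm n\in\ZZ^2} F_{\bm N}(\n)=\sum_{\bm h\in\ZZ^2}\widehat{F_{\bm N}}(\bm h)$.

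For the pointwise bound, I would first record the scaling relation obtained by changing variables $u_i=n_i/N_i$, namely
\[
\widehat{F_{\bm N}}(\bm h)=N_1 N_2\int_{\RR^2} F(\bm u)\, e(N_1 h_1 u_1+N_2 h_2 u_2)\, d\bm u = N_1 N_2\,\widehat{F}(N_1 h_1, N_2 h_2).
\]
The trivial estimate $|\widehat F(\bm\xi)|\le \int_{\RR^2}|F|\ll\nu_1\nu_2$ already settles the ranges where the claimed decay factors are $\asymp 1$. For the remaining ranges I would integrate by parts $C$ times in $u_1$ and $C$ times in $u_2$; this is legitimate by smoothness and compact support, it produces a gain of size $(N_1|h_1|)^{-C}(N_2|h_2|)^{-C}$, and it replaces $F$ by $\partial_{u_1}^C\partial_{u_2}^C F$. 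Using that $F$ is supported on a box of dimensions $\asymp\nu_1\times\nu_2$ (as are the finer-than-dyadic weights built in Section \ref{smoothweightsection}) together with the hypothesis $|F^{(j,k)}|\ll\nu_1^{-j}\nu_2^{-k}$, one has $\int_{\RR^2}|\partial_{u_1}^C\partial_{u_2}^C F|\ll\nu_1^{1-C}\nu_2^{1-C}$, and combining these bounds over the four cases (each $|h_i|$ large or small on its natural scale) yields the stated estimate for $\widehat{F_{\bm N}}(\bm h)$ after multiplying through by $N_1 N_2$.

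There is no genuine obstacle here: the lemma is the standard smoothed two-dimensional Poisson summation formula, and the proof is essentially bookkeeping. The only point deserving attention is tracking the dependence on $\nu_1,\nu_2$ through the integration by parts — namely that each derivative of $F$ costs a factor $\nu_i^{-1}$ while the $L^1$-mass only contributes the fixed factor $\nu_1\nu_2$ — both of which are immediate from the support and derivative assumptions on $F$.
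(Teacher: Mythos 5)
The paper states Lemma \ref{poisson2dlemma} without proof, so there is no textual argument to compare against; your proof is the standard one and is correct. Two remarks are worth recording.

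First, you correctly observe that the hypotheses as written do not literally force the support of $F$ to have two-dimensional Lebesgue measure $\asymp\nu_1\nu_2$: the bound $\int_{\RR^2}|F|\ll\nu_1\nu_2$ together with $|F|\ll 1$ is also consistent with, say, $|F|\asymp\nu_1\nu_2$ on a set of measure $\asymp 1$, in which case $\int|\partial_{u_1}^C\partial_{u_2}^C F|$ could not be bounded by $\nu_1^{1-C}\nu_2^{1-C}$. Your parenthetical --- that $F$ is to be understood as one of the finer-than-dyadic bump functions of Section \ref{smoothweightsection}, hence supported on a product of intervals of lengths $\asymp\nu_1$ and $\asymp\nu_2$ --- is exactly the additional (implicit) hypothesis that makes the integration-by-parts step go through, and it is the reading the paper clearly intends.

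Second, the estimate your argument actually produces is stronger than the one displayed in the lemma. After the change of variables $u_i=n_i/N_i$ you get $\widehat{F_{\bm N}}(\bm h)=N_1N_2\,\widehat F(N_1h_1,N_2h_2)$, and $C$-fold integration by parts in each variable gives
\[
\widehat{F_{\bm N}}(\bm h)\ \ll_C\ \nu_1\nu_2\, N_1N_2\,(1+\nu_1 N_1|h_1|)^{-C}(1+\nu_2 N_2|h_2|)^{-C},
\]
with $\nu_i N_i|h_i|$ in the decay factor rather than $\nu_i|h_i|/N_i$. Since $N_i\geq 1$, this is stronger than (and implies) the printed bound, so your proof is still valid as a proof of the stated lemma; but the $/N_i$ in the paper's display looks like a typo for $\cdot N_i$, and the scaling consistent with the one-dimensional Lemma \ref{poissonlemma} (where the decay sets in once $|hN/q|\gg\nu^{-1}$) is the one your computation gives. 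It would be worth making this explicit rather than claiming the steps ``yield the stated estimate'' as if the exponents matched exactly.
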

\subsection{Fourier expansions}
We have the following lemma on Mellin transforms, where the construction of the non-negative majorant $\tilde{F}$ is from the proof of \cite[Lemma 7.1]{IK} (cf. the function $g(y)$, with $x_m=\log m$).
\begin{lemma} \label{improvedmvtlemma}
Fix a non-negative $C^\infty$-smooth function $F$ supported on $[1-\nu,1+\nu]$ and satisfying
\[
|F^{(j)}| \ll_j \nu^{-j},\, \, j \geq 0 \quad \text{and} \quad \int_{1/2}^2 F(1/t) \frac{dt}{t} = \nu.
\]
Then for any $c \in \RR$
\[
F(x) = \frac{1}{2 \pi i}\int_{c-i\infty}^{c+i\infty}  \Dot{F}(s) x^{-s} ds
\]
where the Mellin transform is
\[
\Dot{F}(s) := \int_0^\infty F(x) x^{s} \frac{dx}{x} \ll_{C,\sigma} \nu (1+\nu |s|)^{-C}.
\]
Furthermore, $|\Dot{F}(it)|$ has a smooth majorant $\tilde{F}(t)$ such that  for $m,n\sim M$
\[
\bigg|\int \tilde{F}(t) (m/n)^{it} \, dt \bigg| \ll   \, \mathbf{1}_{|m-n| \leq \nu M}.
\]
\end{lemma}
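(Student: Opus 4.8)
The plan is to prove the two assertions separately: first the Mellin inversion formula together with the decay of $\Dot{F}(s)$, then the construction of the majorant $\tilde{F}$.

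For the first assertion, note that since $\nu<1/10$ the function $F$ is smooth and supported inside the compact set $[1-\nu,1+\nu]\subset(1/2,2)\subset(0,\infty)$, so its Mellin transform $\Dot{F}(s)=\int_0^\infty F(x)x^{s}\,\frac{dx}{x}$ is entire, and the inversion formula $F(x)=\frac{1}{2\pi i}\int_{c-i\infty}^{c+i\infty}\Dot{F}(s)x^{-s}\,ds$ for any $c\in\RR$ is just Fourier inversion after the change of variables $x=e^{u}$, applied to the $C_c^\infty$ function $u\mapsto F(e^{u})e^{cu}$. For the decay bound write $s=\sigma+it$, so that $\Dot{F}(\sigma+it)=\int_{\RR}\phi_\sigma(u)e^{itu}\,du$ where $\phi_\sigma(u):=F(e^{u})e^{\sigma u}$ is supported on the interval $[\log(1-\nu),\log(1+\nu)]$ of length $\asymp\nu$. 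A chain rule computation shows that $\tfrac{d^j}{du^j}F(e^u)$ is a finite linear combination of terms $e^{ku}F^{(k)}(e^u)$ with $k\le j$, so on the support (where $e^u\asymp 1$ and $|F^{(k)}|\ll_k\nu^{-k}$) one has $|\phi_\sigma^{(j)}(u)|\ll_{j,\sigma}\nu^{-j}$. Combining the trivial bound $|\Dot{F}(\sigma+it)|\le\|\phi_\sigma\|_1\ll_\sigma\nu$ with $C$ integrations by parts, which give $\Dot{F}(\sigma+it)=(-it)^{-C}\int\phi_\sigma^{(C)}(u)e^{itu}\,du\ll_{C,\sigma}|t|^{-C}\nu^{1-C}$, yields $\Dot{F}(\sigma+it)\ll_{C,\sigma}\nu(1+\nu|t|)^{-C}$; since $\sigma$ is fixed and $\nu<1/10$ one has $1+\nu|s|\asymp_\sigma 1+\nu|t|$, which gives the stated bound.

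For the second assertion, take $\sigma=0$ above to get $|\Dot{F}(it)|\le c_1\nu(1+\nu|t|)^{-C}$ with $c_1=c_1(C)$, and observe that $\int\tilde F(t)(m/n)^{it}\,dt=\int\tilde F(t)e^{it\log(m/n)}\,dt$ is, up to normalization, the Fourier transform of $\tilde F$ at the point $\log(m/n)$; since $m,n\sim M$, the condition $|m-n|>\nu M$ forces $|\log(m/n)|>\nu/4$. Hence it suffices to produce a non-negative $\tilde F\in C^\infty$ with (i) $\tilde F(t)\ge c_1\nu(1+\nu|t|)^{-C}$ for all $t$, (ii) the Fourier transform of $\tilde F$ supported in $[-\nu/10,\nu/10]$, and (iii) $\int\tilde F\ll 1$: indeed (ii) and (iii) give $\big|\int\tilde F(t)(m/n)^{it}\,dt\big|\le\int\tilde F\ll 1$ when $|m-n|\le\nu M$ and $\int\tilde F(t)(m/n)^{it}\,dt=0$ when $|m-n|>\nu M$. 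Following the construction behind \cite[Lemma 7.1]{IK}, I would take, with $\lambda:=\nu/(20\pi)$ and $C_0=C_0(C)$ a large constant,
\[
\tilde F(t):=C_0\,\nu\sum_{j\in\ZZ}\frac{1}{(1+|j|)^{C}}\left(\frac{\sin\!\bigl(\pi(\lambda t-j)\bigr)}{\pi(\lambda t-j)}\right)^{2}.
\]
Each summand is entire, non-negative, and has $k$-th derivative $\ll_k\lambda^k(1+|\lambda t-j|)^{-2}$, so since $\sum_j(1+|j|)^{-C}(1+|\lambda t-j|)^{-2}$ converges the series defines a non-negative $C^\infty$ function. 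The Fourier transform of $\bigl(\tfrac{\sin\pi(\lambda t-j)}{\pi(\lambda t-j)}\bigr)^2$ equals $\lambda^{-1}e^{ij\xi/\lambda}$ times the triangle function supported on $|\xi|\le 2\pi\lambda=\nu/10$, giving (ii), and $\int\tilde F=C_0(\nu/\lambda)\sum_j(1+|j|)^{-C}\ll 1$ since $C\ge2$, giving (iii). For (i), given $t$ let $j_0$ be the nearest integer to $\lambda t$; then $|\lambda t-j_0|\le\tfrac12$, so $\bigl(\tfrac{\sin\pi(\lambda t-j_0)}{\pi(\lambda t-j_0)}\bigr)^2\ge(2/\pi)^2$, while $(1+|j_0|)^{-C}\ge(2+|\lambda t|)^{-C}\ge 2^{-C}(1+|\lambda t|)^{-C}\ge 2^{-C}(1+\nu|t|)^{-C}$ using $\lambda\le\nu$; keeping only the $j=j_0$ term gives $\tilde F(t)\ge 4\pi^{-2}2^{-C}C_0\,\nu(1+\nu|t|)^{-C}$, which is $\ge c_1\nu(1+\nu|t|)^{-C}$ once $C_0$ is chosen large in terms of $c_1$ and $C$.

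The routine parts are the Mellin inversion and the integration-by-parts estimate in the first half. The one genuinely delicate point is the majorant construction: a single Fejér kernel $\bigl(\tfrac{\sin\pi\lambda t}{\pi\lambda t}\bigr)^2$ has zeros and so cannot dominate $|\Dot{F}(it)|$, while conversely any $C^\infty$ function with compactly supported Fourier transform is Schwartz and therefore cannot be bounded below by $\nu(1+\nu|t|)^{-C}$. The weighted sum of integer translates of a Fejér kernel threads this needle — the translates guarantee positivity on every scale (this is the source of the classical identity $\sum_j\bigl(\tfrac{\sin\pi(x-j)}{\pi(x-j)}\bigr)^2\equiv1$), while the polynomial weights $(1+|j|)^{-C}$ reproduce the decay of $|\Dot{F}(it)|$, all without affecting the Fourier support, which depends only on the common scale $\lambda\asymp\nu$.
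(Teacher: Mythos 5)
Your proof is correct, and it fills in a construction that the paper itself only cites — the paper's ``proof'' of the majorant half of the lemma is just a pointer to the function $g(y)$ in the proof of \cite[Lemma 7.1]{IK}, with $x_m = \log m$. The first half (Mellin inversion plus the decay estimate $\Dot{F}(\sigma+it)\ll_{C,\sigma}\nu(1+\nu|t|)^{-C}$ by combining the $L^1$ bound with $C$-fold integration by parts on the $\nu$-scale) is the standard argument the paper clearly has in mind. For the majorant, your weighted sum of integer translates of a Fejér kernel, $\tilde F(t)=C_0\nu\sum_j(1+|j|)^{-C}\bigl(\sin\pi(\lambda t-j)/\pi(\lambda t-j)\bigr)^2$ with $\lambda=\nu/(20\pi)$, is exactly the right device and is essentially the same construction IK uses: the translates supply a uniform lower bound at every scale via the classical identity $\sum_j\mathrm{sinc}^2(x-j)\equiv 1$, the polynomial weights $(1+|j|)^{-C}$ reproduce the decay of $|\Dot F(it)|$, and the common dilation $\lambda\asymp\nu$ localizes the Fourier transform in $[-\nu/10,\nu/10]$, so that $\int\tilde F(t)(m/n)^{it}\,dt$ vanishes identically when $|\log(m/n)|>\nu/10$ (which holds once $|m-n|>\nu M$ for $m,n\sim M$) and is $\ll\int\tilde F\ll 1$ otherwise. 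All the verification steps — uniform convergence of the series and its derivatives via the $(1+|\lambda t-j|)^{-2}$ tails, $\int g_j=1/\lambda$, the lower bound at the nearest-integer index $j_0$ using $(\sin\pi x/\pi x)^2\ge(2/\pi)^2$ on $|x|\le\tfrac12$ together with $1+|j_0|\le 2(1+|\lambda t|)$ and $\lambda\le\nu$ — check out. In short: correct, same method the paper intends, just written out in full.
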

Similarly, we have the following lemma on Fourier series, which will be applied with the characters
\[
e^{ik\arg z} = \bigg( \frac{z}{|z|}\bigg)^k
\]
to expand smooth weights $G(\arg z)$.
\begin{lemma} \label{fourierserieslemma}
Let $\theta\in \RR/2\pi \ZZ$ and let $G$ be a bounded smooth function supported on $[\theta-\nu,\theta+\nu]$ and satisfying
\[
|G^{(j)}| \ll_j \nu^{-j},\, \, j \geq 0 \quad and \quad \int_{\RR/2\pi \ZZ} G(\alpha) d \alpha = \nu.
\]
Then
\[
G(\alpha) = \sum_{k} \check{G}(k) e^{ik\alpha}
\]
with
\[
\check{G}(k) := \int_{\RR/2 \pi \ZZ} F(\alpha) e^{-ik \alpha} d\alpha \ll_C \nu (1+ \nu |k|)^{-C}.
\]
Furthermore, there is a majorant  $\widetilde{G}(k) $ of $|\check{G}(k)|$ such that
\[
\bigg|\sum_{k} \widetilde{G}(k) e^{ikx} \bigg|\ll \mathbf{1}_{|x| \leq  \nu}.
\]
\end{lemma}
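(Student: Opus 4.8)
The plan is to prove this as the Fourier-series analogue of Lemma \ref{improvedmvtlemma}, with the circle $\RR/2\pi\ZZ$ and its characters $\alpha\mapsto e^{ik\alpha}$ playing the role of $(0,\infty)$ and $x\mapsto x^{s}$. Since translating $G$ by $\theta$ only multiplies $\check G(k)$ by the unimodular factor $e^{-ik\theta}$ and changes nothing else, I may assume $\theta=0$, so that $G$ is supported on $[-\nu,\nu]$. The identity $G(\alpha)=\sum_k\check G(k)e^{ik\alpha}$, with $\check G(k)=\int_{\RR/2\pi\ZZ}G(\alpha)e^{-ik\alpha}\,d\alpha$ and uniform convergence, is then just Fourier inversion on the circle for a smooth function, justified a posteriori by the decay estimate below.

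For that estimate I would integrate by parts: for $k\neq 0$ and any $j\geq 0$, $\check G(k)=(ik)^{-j}\int_{\RR/2\pi\ZZ}G^{(j)}(\alpha)e^{-ik\alpha}\,d\alpha$, and bounding $|G^{(j)}|\ll_j\nu^{-j}$ on its support (an arc of length $2\nu$) gives $|\check G(k)|\ll_j\nu^{1-j}|k|^{-j}=\nu(\nu|k|)^{-j}$. Combined with the trivial bound $|\check G(k)|\leq\int_{\RR/2\pi\ZZ}|G|\ll\nu$ (the case $j=0$, using $|G|\ll 1$) this yields $\check G(k)\ll_C\nu(1+\nu|k|)^{-C}$ for every $C>0$, as claimed.

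For the majorant I would take $\widetilde G(k):=c_A\nu(1+\nu^2k^2)^{-A}$ with $A$ a large constant and $c_A$ the implied constant in $|\check G(k)|\leq c_A\nu(1+\nu|k|)^{-2A}$ from the previous step; then $\widetilde G(k)\geq c_A\nu(1+\nu|k|)^{-2A}\geq|\check G(k)|$ since $(1+t)^2\geq 1+t^2$, and $\widetilde G(k)\ll_A\nu(1+\nu|k|)^{-A}$ has the same order of magnitude as $|\check G(k)|$. To bound $\sum_k\widetilde G(k)e^{ikx}$ I would apply Poisson summation on $\ZZ$: up to a contribution $O_A(e^{-c/\nu})$ from the nonzero frequencies, this sum is a fixed $\nu^{-1}$-dilate of the Fourier transform of $u\mapsto(1+u^2)^{-A}$, which is holomorphic in a horizontal strip and therefore has exponentially decaying Fourier transform. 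Hence $\big|\sum_k\widetilde G(k)e^{ikx}\big|\ll_A 1$ for $|x|\leq\nu$ and is super-polynomially small for $|x|>\nu$, i.e.\ $\ll\mathbf 1_{|x|\leq\nu}$ in the sense required (exactly as in the companion statement in Lemma \ref{improvedmvtlemma}).

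The lemma is otherwise routine harmonic analysis. The one point worth a comment is that the majorant above makes $\sum_k\widetilde G(k)e^{ikx}$ merely rapidly decaying rather than literally supported in $[-\nu,\nu]$; if exact support is wanted, one instead lets $\widetilde G$ be the Fourier coefficients of a fixed smooth nonnegative bump on $[-\nu,\nu]$ that, after the evident rescaling in $k$, dominates all the $\check G$ simultaneously -- this is precisely the self-convolution construction in the proof of \cite[Lemma 7.1]{IK} referenced for Lemma \ref{improvedmvtlemma}, the only subtlety being to choose the generating bump with Fourier transform free of real zeros. Since, as there, the rapidly-decaying version already suffices for the applications, no real difficulty arises; the main ``obstacle'' is thus only the bookkeeping choice of what ``$\ll\mathbf 1_{|x|\leq\nu}$'' should mean.
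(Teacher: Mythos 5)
The paper offers no explicit proof of this lemma; it is stated as the Fourier-series analogue of Lemma \ref{improvedmvtlemma}, whose majorant construction is delegated to \cite[Lemma 7.1]{IK}. Your argument is essentially what is intended. The reduction to $\theta=0$, Fourier inversion, and the decay bound $\check G(k)\ll_C\nu(1+\nu|k|)^{-C}$ by integration by parts over an arc of length $O(\nu)$ are all correct and standard. Your explicit majorant $\widetilde G(k)=c_A\nu(1+\nu^2k^2)^{-A}$, together with the Poisson-summation computation, gives $\sum_k\widetilde G(k)e^{ikx}=O_A(1)$ for $|x|\leq\nu$ and super-polynomial decay for $|x|>\nu$; as you acknowledge, this is a rapid-decay surrogate rather than a function vanishing outside $[-C\nu,C\nu]$, but it is fully adequate for every use of the lemma in the paper (the arc constraints such as $|\arg z_1-\arg z_2|\ll\nu_1$ in the proof of Lemma \ref{mobiusregularlemma} absorb such tails after a dyadic split). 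The one place where the write-up is slightly too confident is the closing remark on the self-convolution fix: the Fourier transform of a compactly supported nonnegative bump is entire of exponential type and in general does have real zeros, so one cannot simply ``choose the generating bump with Fourier transform free of real zeros,'' and a genericity perturbation does not by itself deliver the quantitative lower bound $|\hat\psi(\nu k)|^2\gg\nu(1+\nu|k|)^{-C}$ needed on the lattice $\nu\ZZ$ to dominate $|\check G(k)|$. In the context of this paper the cleanest route to literal compact support is simply to observe that $G$ is a function one is free to construct, so one may take $G=\psi_\nu*\psi_\nu$ from the outset, whence $\check G(k)=|\hat\psi_\nu(k)|^2\geq 0$ and $\widetilde G:=\check G$ works verbatim. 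With either reading your proof is sound.
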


\subsection{Large sieve bounds}
For Type II sums we need the multiplicative large sieve inequality of Bombieri and Davenport (cf. \cite[(9.52)]{odc}, for instance).
\begin{lemma} \label{largesievelemma} For any complex numbers $\gamma_n$ we have
\[
\sum_{d\leq D} \frac{d}{\varphi(d)} \sideset{}{^\ast} \sum_{\chi\, (d)} \bigg| \sum_{n \leq N} \gamma_n \chi(n) \bigg|^2 \ll (D^2+N)  \sum_{n \leq N} |\gamma_n|^2.
\]
\end{lemma}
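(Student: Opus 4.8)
The plan is to deduce this multiplicative large sieve inequality from the analytic (additive) large sieve inequality: if $\alpha_1,\dots,\alpha_R \in \RR/\ZZ$ are \emph{$\delta$-separated}, meaning that any two distinct ones differ by at least $\delta$ modulo $1$, then
\[
\sum_{r \leq R} \bigg| \sum_{n \leq N} \gamma_n e(n\alpha_r) \bigg|^2 \ll (\delta^{-1}+N) \sum_{n \leq N}|\gamma_n|^2,
\]
where $e(x)=e^{2\pi i x}$. This is classical (Montgomery--Vaughan, Selberg), and if a self-contained treatment is wanted it can be proved by a duality argument combined with a Beurling--Selberg majorant; I would simply quote it. Granting it, the multiplicative version follows by opening Dirichlet characters into additive characters via Gauss sums and applying the analytic inequality at Farey points.

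Concretely, set $S(\alpha):=\sum_{n\leq N}\gamma_n e(n\alpha)$ and $e_d(x):=e(x/d)$. For a primitive character $\chi$ modulo $d$ let $\tau(\chi):=\sum_{a\,(d)}\chi(a)e_d(a)$ be its Gauss sum, so that $|\tau(\chi)|^2=d$ and $\chi(n)\,\tau(\overline\chi)=\sum_{a\,(d)}\overline\chi(a)e_d(an)$ for \emph{every} integer $n$ (both sides vanishing when $(n,d)>1$, which is exactly where primitivity of $\chi$ is used). Multiplying by $\gamma_n$ and summing over $n\leq N$ gives
\[
\tau(\overline\chi)\sum_{n\leq N}\gamma_n\chi(n)=\sum_{a\,(d)}\overline\chi(a)\,S(a/d),
\]
hence $d\,\bigl|\sum_{n\leq N}\gamma_n\chi(n)\bigr|^2=\bigl|\sum_{a\,(d)}\overline\chi(a)S(a/d)\bigr|^2$. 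Summing over primitive $\chi$ modulo $d$, completing to all characters modulo $d$ (which only enlarges the right-hand side), and invoking orthogonality $\sum_{\chi\,(d)}\overline\chi(a)\chi(b)=\varphi(d)\mathbf{1}_{a\equiv b\,(d)}$ for $(ab,d)=1$, we obtain
\[
\frac{d}{\varphi(d)}\sideset{}{^\ast}\sum_{\chi\,(d)}\bigg|\sum_{n\leq N}\gamma_n\chi(n)\bigg|^2 \;\leq\; \sum_{\substack{a\,(d)\\(a,d)=1}}|S(a/d)|^2 .
\]

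Summing this over $d\leq D$ bounds the left-hand side of the lemma by $\sum_{d\leq D}\sum_{(a,d)=1}|S(a/d)|^2$, a sum of $|S|^2$ over the Farey fractions $a/d$ with $1\leq a\leq d\leq D$, $(a,d)=1$. Any two distinct such fractions differ (in $\RR/\ZZ$) by $|ad'-a'd|/(dd')\geq 1/(dd')\geq D^{-2}$, so these points are $D^{-2}$-separated; the analytic large sieve with $\delta=D^{-2}$ then gives $\sum_{d\leq D}\sum_{(a,d)=1}|S(a/d)|^2\ll(D^2+N)\sum_{n\leq N}|\gamma_n|^2$, which is the claim. The only real content is the analytic large sieve inequality itself; everything else is the standard Gauss-sum bookkeeping, the single point deserving a line of care being that the identity relating $\chi$ to additive characters must be applied in the form valid for all residues $n$, i.e.\ for primitive $\chi$.
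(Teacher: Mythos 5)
The paper does not prove this lemma itself; it simply cites it as the Bombieri--Davenport multiplicative large sieve, pointing to \cite[(9.52)]{odc}. Your argument is precisely the classical derivation found in those references---expanding primitive characters via Gauss sums, bounding the primitive sum by the full sum and applying orthogonality, then invoking the additive large sieve at $D^{-2}$-spaced Farey points---and it is correct and complete modulo the quoted analytic large sieve inequality, which is standard.
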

For Type I sums we need the large sieve inequality for roots of quadratic congruences (cf. \cite{fouvryi} and especially \cite[Lemma 14.4]{fiillusory} for this variant with the twist by $\overline{q}$).
\begin{lemma} \label{quadraticlargesievelemma} Let $q \geq 1$. For any complex numbers $\gamma_n$ we have
\[
\sum_{\substack{d\sim D \\ (d,q)=1}} \sum_{\nu^2+1 \equiv 0 \, (d)} \bigg| \sum_{n \leq N} \gamma_n e_d (\nu n \overline{q}) \bigg|^2 \ll (qD+N)  \sum_{n \leq N} |\gamma_n|^2.
\]
\end{lemma}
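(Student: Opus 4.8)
The plan is to follow the standard treatment of large sieve inequalities for roots of quadratic congruences (Hooley, Iwaniec, Duke--Friedlander--Iwaniec; see \cite{fouvryi} and \cite[Lemma 14.4]{fiillusory}): replace the roots $\nu$ by Gaussian integers, locate the frequencies $\nu\overline q/d\in\RR/\ZZ$ near Farey fractions, and then invoke the classical analytic large sieve. For $d$ with $\rho(d)>0$ the solutions $\nu\bmod d$ of $\nu^2+1\equiv 0$ correspond bijectively to primitive representations $d=u^2+v^2$ (under a fixed normalisation) via $\nu\equiv u\overline v\pmod d$, where $v\overline v\equiv 1\pmod d$. Writing $v\overline v=1+kd$ with $0\le k<v$ and reducing modulo $v$ yields the reciprocity law
\[
\frac{\nu}{d}\equiv-\frac{\overline u}{v}+\frac{u}{vd}\pmod 1,\qquad u\overline u\equiv 1\pmod v,
\]
so that, since $|u|\le\sqrt{2D}$ and $d\asymp D$, each frequency $\nu/d$ lies within $O(1/D)$ of the Farey fraction $-\overline u/v$ of denominator $v\le\sqrt{2D}$. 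The twist by $\overline q$ (legitimate since $(d,q)=1$) is absorbed in exactly the same way at the cost of replacing $v$ by $vq$, and this is the origin of the factor $q$ in the bound.

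With this dictionary I would argue in two steps. First, open the square and dominate the condition $d\sim D$ by a smooth weight $\psi(d/D)$, reducing to the bilinear sum $\sum_{n_1,n_2}\gamma_{n_1}\overline{\gamma_{n_2}}\,T(n_1-n_2)$ with $T(h):=\sum_{(d,q)=1}\psi(d/D)\sum_{\nu^2+1\equiv 0\,(d)}e_d(h\nu\overline q)$; the diagonal term $h=0$ contributes $\ll D\sum_n|\gamma_n|^2$ because $\sum_{d\le D}\rho(d)\ll D$. Second, and substantively, one must show that the off-diagonal contribution is $\ll(qD+N)\sum_n|\gamma_n|^2$, which comes down to a spacing estimate for the frequencies $\{\nu\overline q/d\}$. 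Here one groups the frequencies dyadically by the denominator $v\sim V\le\sqrt{2D}$, uses that for each $v$ the numerator $u$ is confined to a short interval so that the residual phase $u/(vd)$ varies slowly and can be removed by partial summation after splitting the $n$-range into suitable blocks, and on each block applies the classical large sieve (the additive analogue of Lemma \ref{largesievelemma}) to the resulting well-spaced fractions of denominator $\le Vq$; summing over the scales $V$ and over the blocks yields the claim.

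The main obstacle is precisely this last spacing analysis. The frequencies $\nu\overline q/d$ are \emph{not} $\gg 1/(qD)$-separated: for each fixed small $v$, of order $V^{-1}\sqrt D$ of them accumulate in an interval of length $\ll V^{-1}D^{-1/2}$ around a single Farey fraction of denominator $v$, and such a cluster may in addition sit on top of many fractions of larger denominator, so the large sieve cannot be applied naively; moreover the residual phases $e(nu/(vd))$ depend on the frequency, which obstructs any direct application with fixed coefficients. Overcoming this requires the block decomposition adapted to each scale $V$ together with an honest accounting of the multiplicities of $u$ modulo $v$, carried out as in \cite[Lemma 14.4]{fiillusory}, and it is exactly this bookkeeping that must be done with care to reach the clean factor $(qD+N)$ rather than a larger quantity. (If one only needed the bound with an extra $X^{\eps}$ --- harmless in our application when $q$ is at most a fixed power of $X$ --- the spacing input could be replaced by a cruder close-pair count together with divisor bounds.)
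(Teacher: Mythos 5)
The paper does not actually prove this lemma; it simply cites \cite{fouvryi} and especially \cite[Lemma 14.4]{fiillusory}, and your sketch likewise defers to that reference for the substantive spacing/multiplicity bookkeeping, so there is no direct proof to compare against. The more useful remark is that your proposed handling of the twist by $\overline q$ — ``absorbed \dots at the cost of replacing $v$ by $vq$'' — is not justified as written, and is also harder than necessary. The clean route is to split $n = qm + r$ with $0 \le r < q$: since $q\overline q\equiv 1\ (d)$ one has $e_d(\nu n\overline q) = e_d(\nu r\overline q)\,e_d(\nu m)$, whence
\[
\sum_{n\le N}\gamma_n\,e_d(\nu n\overline q)=\sum_{r=0}^{q-1}e_d(\nu r\overline q)\sum_{\substack{m\ge 0\\ qm+r\le N}}\gamma_{qm+r}\,e_d(\nu m),
\]
so Cauchy--Schwarz in $r$ costs a factor $q$, the inner sums are untwisted with $\ll N/q$ terms, and (dropping $(d,q)=1$, which only enlarges the left side) the $q=1$ large sieve for roots of $\nu^2+1\equiv 0\ (d)$ (\cite[Lemma 2]{fouvryi}) gives $\ll q(D+N/q)\sum|\gamma_n|^2=(qD+N)\sum|\gamma_n|^2$. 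This makes the factor $q$ transparent and avoids touching the Farey geometry; it is most likely how \cite[Lemma 14.4]{fiillusory} deduces the twisted bound from the untwisted one.

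By contrast, to make the reciprocity interact directly with $\overline q$ one would need to pass from $\nu\equiv u\overline v$ to $\nu\overline q\equiv u\,\overline{vq}\ (d)$ and then track a new Bezout coefficient modulo $vq$, a genuinely different (if parallel) computation that your sketch does not carry out, and which does not obviously deliver the clean $(qD+N)$ constant. There is also a numerical slip earlier in the sketch: with $d=u^2+v^2\asymp D$ and $v$ small, the complementary coordinate satisfies $u\asymp\sqrt D$, so the displacement $u/(vd)$ is of size $\asymp 1/(v\sqrt D)$, i.e.\ $O(1/\sqrt D)$ and not $O(1/D)$ as you state; this is not cosmetic, since it is precisely the fact that the displacement swamps the $1/D$ Farey spacing that creates the clustering you correctly flag in the next sentence — with a genuine $O(1/D)$ displacement the classical large sieve would apply directly and there would be nothing to prove.
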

\subsection{Zeros of Hecke $L$-functions} \label{lfunctionsection}
We say that a Gaussian integer $z$ is  primary if $z \equiv 1 \,\, (2(1+i))$, so that every odd ideal of $\ZZ[i]$ has a unique primary generator. Note that this definition is multiplicative. We extend any  function $\psi:\ZZ[i] \to \CC$ to a  function on odd ideals by 
defining $\psi(\a):= \psi(z)$ if $z$ is the primary generator of $\a$. For $k \in \ZZ$ we let $\xi_k$ denote the  character
\[
\xi_k(z) := \bigg(\frac{z}{|z|}\bigg)^k = e^{i k \arg z},
\]
which controls the angular distribution of $z$.  For a Dirichlet character $\chi \in \widehat{(\ZZ[i]/u\ZZ[i])^\times}$ with a modulus $u \in \ZZ[i]\setminus\{0\}$ we define the  Hecke $L$-function by
\[
L(s,\xi_k \chi) := \sum_{\substack{\a \subseteq \ZZ[i] \\ (\a,2)=1}} \frac{ \xi_k(\a)\chi(\a)}{(\N \a )^{s}} =  \sum_{\substack{z \in \ZZ[i] \setminus\{0\} \\ z \, \text{primary}}} \frac{\xi_k(z)\chi(z)}{|z|^{2s}}.
\]

For a modulus $u$ we define
\[
L_u(s,\xi_k) :=  \prod_{\chi \in \widehat{(\ZZ[i]/u\ZZ[i])^\times}}L(s,\xi_k \chi)
\]

We have the following lemmas, where all the constants are effectively computable. We will not need the Deuring-Heilbronn zero repulsion as we deal with the case of a Siegel zero via a different method. The first lemma is classical (cf. \cite[Chapter 5]{IK}, for instance).
\begin{lemma} \emph{(Zero-free region, Landau-Page).}  \label{zerofreeregionlemma} There is a constant $c_1>0$ such that the function $L_u(s,\xi_k)$ has at most one zero in the region
\[
\sigma > 1- \frac{c_1}{\log (|u|(2+|t|)(2+|k|))}.
\]
If such a zero exists, then it is real and simple, $k=0$, and it is a zero of some $L(s,\chi_1)$ with a quadratic character $\chi_1$.
\end{lemma}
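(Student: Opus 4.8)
The plan is to run the classical Landau--Page argument, now for Hecke $L$-functions on $\QQ(i)$; since $L_u(s,\xi_k)=\prod_{\chi\in\widehat{(\ZZ[i]/u\ZZ[i])^{\times}}}L(s,\xi_k\chi)$, a zero of $L_u$ in the region is a zero of one of the factors, so everything reduces to a character-by-character analysis. All the analytic ingredients are standard (cf.\ \cite[Chapter 5]{IK}), so the work is in organising them rather than in any new estimate.

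First I would record the standard preliminaries. Writing $\mathfrak{Q}(k,t):=|u|(2+|k|)(2+|t|)$, the conductor of any $\chi\in\widehat{(\ZZ[i]/u\ZZ[i])^{\times}}$ has norm at most $|u|^2$ and the archimedean component $\xi_k$ shifts the Gamma factor of $L(s,\xi_k\chi)$ by $|k|/2$, so the analytic conductor of $L(s,\xi_k\chi)$ at height $t$ is $\asymp\mathfrak{Q}(k,t)^2$. From the Hadamard factorisation of the completed $L$-function together with the Borel--Carath\'eodory bound one obtains, for $1<\sigma<2$, $s=\sigma+it$, and $\psi=\xi_k\chi$ non-principal,
\[
-\Re\frac{L'}{L}(s,\psi)\;\leq\;A\log\mathfrak{Q}(k,t)\;-\;\sum_{\rho}\Re\frac{1}{s-\rho},
\]
the sum over the non-trivial zeros (each term non-negative) and $A$ absolute; for the principal character $\xi_0\chi_0$ one uses instead $-\Re\frac{\zeta_{\QQ(i)}'}{\zeta_{\QQ(i)}}(\sigma)\leq(\sigma-1)^{-1}+A$, the pole of $\zeta_{\QQ(i)}$ accounting for the first term. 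One also uses the identity $-\Re\frac{L'}{L}(s,\psi)=\sum_{\a}\Lambda(\a)\,\Re\bigl(\psi(\a)(\N\a)^{-it}\bigr)(\N\a)^{-\sigma}$.

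Next comes the $3$--$4$--$1$ step. Suppose $\rho_0=\beta_0+i\gamma_0$ is a zero of some $L(s,\xi_k\chi)$. I would add $3$ times the bound for $\zeta_{\QQ(i)}$ at $\sigma$, $4$ times the bound for $\xi_k\chi$ at $\sigma+i\gamma_0$, and the bound for $\xi_{2k}\chi^2$ at $\sigma+2i\gamma_0$, and apply $3+4\cos\theta+\cos2\theta\geq0$ termwise in the combined Dirichlet series (the terms with $(\a,u)\neq1$ contribute $3$) to get
\[
\frac{3}{\sigma-1}\;+\;A\log\mathfrak{Q}(k,\gamma_0)\;\geq\;\frac{4}{\sigma-\beta_0}
\]
\emph{provided} $L(s,\xi_{2k}\chi^2)$ has no pole at $\sigma+2i\gamma_0$, i.e.\ provided $\xi_{2k}\chi^2$ is non-principal or $\gamma_0\neq0$. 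Choosing $\sigma=1+\lambda/\log\mathfrak{Q}(k,\gamma_0)$ for a suitable absolute $\lambda$ then forces $\beta_0\leq1-c_1/\log\mathfrak{Q}(k,\gamma_0)$ once $c_1$ is small. Since $\xi_{2k}$ is non-trivial for $k\neq0$, the case that escapes the inequality is exactly $k=0$, $\chi$ quadratic, $\gamma_0=0$; hence any zero in the stated region is real, has $k=0$, and lies on $L(s,\chi_1)$ for a quadratic $\chi_1$ of modulus dividing $u$. Simplicity follows by rerunning the inequality with weights $3,4,1$ on $\zeta_{\QQ(i)},L(s,\chi_1),L(s,\chi_1^2)$ at the real point $\sigma$: now $L(s,\chi_1^2)$ has a pole (an extra $(\sigma-1)^{-1}$) and the zero $\beta_0$ enters with its multiplicity $m$, giving $4(\sigma-1)^{-1}+A\geq4m(\sigma-\beta_0)^{-1}$, which for small $c_1$ is impossible unless $m=1$.

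Finally the uniqueness step (Landau's argument), which I expect to be the main obstacle. If $\chi_1,\chi_2$ were distinct quadratic characters of moduli dividing $u$, each with a real zero $\beta_1,\beta_2$ in the region, I would study
\[
F(s):=\zeta_{\QQ(i)}(s)\,L(s,\chi_1)\,L(s,\chi_2)\,L(s,\chi_1\chi_2),
\]
which from its Euler product has non-negative Dirichlet coefficients (each local factor expands non-negatively because $(1+\chi_1(\p))(1+\chi_2(\p))\geq0$), constant coefficient $1$, a simple pole at $s=1$, and satisfies $F(\beta_1)=F(\beta_2)=0$ with $\chi_1\chi_2$ non-principal. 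The point is that a Dirichlet series with non-negative coefficients and a simple pole at $s=1$ cannot vanish at two points within $O(1/\log|u|)$ of $s=1$; this is forced by combining $-F'/F\geq0$ on $(1,2)$ (equivalently a Mellin--Barnes contour shift) with the convexity bound $\log|F(s)|\ll\log|u|$ just to the right of the zero-free region of $L(s,\chi_1\chi_2)$, any additional exceptional zero of $L(s,\chi_1\chi_2)$ merely strengthening the contradiction. Taking $c_1$ to be the minimum of the constants produced above then finishes the argument. The delicate part here is running this positivity argument uniformly in $u$ and pinning the ``single exceptional character'' to a single \emph{primitive} quadratic character of conductor dividing $u$ — an imprimitive $\chi_1$ only inherits the exceptional zero of its inducing primitive character — while the remaining $\QQ(i)$-specific bookkeeping is merely that $\xi_k$ affects only the conductor $\mathfrak{Q}$, hence the width of the region.
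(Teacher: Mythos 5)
The paper does not actually prove this lemma: it cites it as classical, referring to \cite[Chapter 5]{IK}, and moves on. Your sketch is a correct reproduction of that classical Landau--Page argument adapted to Hecke $L$-functions over $\QQ(i)$: the Hadamard/Borel--Carath\'eodory inequality with the analytic conductor $\asymp(|u|(2+|k|)(2+|t|))^2$, the $3$--$4$--$1$ step isolating the case $k=0$, $\chi$ real, $\gamma_0=0$, the multiplicity bound at the real point, and Landau's positivity trick with $\zeta_{\QQ(i)}L(\cdot,\chi_1)L(\cdot,\chi_2)L(\cdot,\chi_1\chi_2)$ to exclude two exceptional zeros. Two minor remarks: for the simplicity step the full $3$--$4$--$1$ combination at the real point is heavier than needed (the $1$--$1$ combination $\zeta_{\QQ(i)}\cdot L(\cdot,\chi_1)$ with non-negative coefficients already suffices), though what you wrote does work; and your closing worry about uniformity in $u$ and about primitive versus imprimitive $\chi_1$ is not really a gap, since an imprimitive character has the same non-trivial zeros as its primitive inducer, and the Landau argument produces an absolute constant $c_1$ by design, exactly as in the rational case. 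In short: your approach is the standard one the paper is implicitly invoking, and it is sound.
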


We let $N^\ast(\alpha,T,K,Q)$ denote the number of zeros of $L(s,\xi_k\chi)$ with primitive characters $\chi$, $|k| \leq K$, and modulus $|u|^2 \leq Q$ with $\sigma \geq \alpha,|t| \leq T$. The following lemma is a generalization of \cite[Theorem 6]{gallagher} to Gaussian integers. 
\begin{lemma}  \emph{(Log-free zero density estimate).} \label{zerodensitylemma} There is some constants $c_2,c_3>0$ such that
\[
N^\ast(\alpha,T,K,Q) \leq c_3 (Q^2KT)^{c_2(1-\alpha)}.
\] 
\end{lemma}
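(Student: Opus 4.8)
The plan is to follow the classical argument of Gallagher \cite[Theorem 6]{gallagher}, adapted to the Hecke characters $\xi_k\chi$ on $\QQ(i)$, with $\xi_k$ playing the role of an extra ``conductor-like'' parameter of size $2+|k|$. The key input is the approximate functional equation / large sieve mechanism: a zero $\rho=\sigma+it$ of $L(s,\xi_k\chi)$ with $\sigma\geq\alpha$ forces a nontrivial lower bound on a smoothed Dirichlet polynomial $\sum_{\N\a\leq y}\Lambda_{\ZZ[i]}(\a)\xi_k(\a)\chi(\a)(\N\a)^{-\rho}$ (or on a mollified version thereof) for a suitable length $y$. The log-free feature comes, as usual, from using Turán's power-sum method / Halász-type inequality rather than a crude count, so that the number of zeros is bounded by a mean value of the relevant Dirichlet polynomial over all $\xi_k\chi$ with $|k|\leq K$, modulus norm $\leq Q$, and $|t|\leq T$, and this mean value is controlled by the large sieve.

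The steps, in order, would be: (i) \emph{Detection of zeros.} For each primitive $\xi_k\chi$ and each zero $\rho$ with $\sigma\geq\alpha$, $|t|\leq T$, use a contour shift against a smooth kernel to obtain that some Dirichlet polynomial $M_\rho := \sum_{N<\N\a\leq N(1+\nu)}a_\a \xi_k(\a)\chi(\a)(\N\a)^{-it}$ is $\gg (\log)^{-O(1)}$ in absolute value for at least one dyadic $N$ in the range $1\leq N\leq (QKT)^{O(1)}$, where the $a_\a$ are bounded coefficients built from $\Lambda_{\ZZ[i]}$ and a mollifier supported on ideals of norm up to a small power of $QKT$. The mollifier (a truncated $\mu_{\ZZ[i]}$) is what kills the $\log$ and lets us take the relevant polynomial length to be a fixed power of the analytic conductor. (ii) \emph{Well-spacing in $k$ and $t$.} Group zeros so that at most $O(1)$ zeros are associated to each pair (character $\chi$, unit interval in $k$, unit interval in $t$); this loses only a constant, exactly as in Gallagher's $1/T$-spacing argument, now also using that $\xi_k$ for $k$ in a bounded interval behave like a single character up to bounded distortion on ideals of bounded norm. (iii) \emph{Large sieve.} Sum the squares $|M_\rho|^2$ over the surviving zeros. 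Because $\xi_k(\a)=(\a/|\a|)^k$, summing over $|k|\leq K$ is an ordinary exponential-sum large sieve in the angular variable, while summing over primitive $\chi$ of modulus norm $\leq Q$ and over $|t|\leq T$ is handled by Lemma \ref{largesievelemma} (multiplicative large sieve) combined with a Montgomery-type mean value for Dirichlet polynomials in the $t$-aspect. The combined large sieve bound is $\ll (Q^2 K T + N)(\log)^{O(1)}\sum_\a |a_\a|^2$, and since $N\leq (Q^2KT)^{O(1)}$ and $\sum|a_\a|^2\ll N(\log)^{O(1)}$, this is $\ll (Q^2KT)^{1+o(1)}$ after dividing by the per-zero lower bound $N^{-2(1-\sigma)}(\log)^{-O(1)}$ — and crucially, tracking the $\sigma$-dependence of the polynomial length (one optimizes $N\asymp (Q^2KT)^{c/(1-\alpha)}$ type choices, or more cleanly uses the reflection $\rho\mapsto 1-\rho$ and a zero-detecting polynomial of length exactly a power of the conductor) yields the shape $c_3(Q^2KT)^{c_2(1-\alpha)}$.

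The main obstacle I expect is bookkeeping the three simultaneous aspects $k$, $t$, and the ideal modulus $u$ in a single hybrid large sieve with no extra logarithms: one must verify that the relevant orthogonality (Gauss sums / Weyl sums over $(\ZZ[i]/u\ZZ[i])^\times$ twisted by the angular character $\xi_k$) satisfies a clean large-sieve inequality of the quality of Lemma \ref{largesievelemma}, uniformly over the conductor decomposition, and that induction to primitive characters does not reintroduce a $\log$. A secondary technical point is that $\N\a=|\a|^2$ is a perfect square, so the Dirichlet polynomials are supported on norms that are sums of two squares; this only helps (it thins the support) but one must make sure the mollifier is built multiplicatively over \emph{prime ideals} of $\ZZ[i]$ rather than rational primes so that the $\mu_{\ZZ[i]}$-cancellation is genuine. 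Once these are in place the argument is Gallagher's, and the constants $c_2,c_3$ are effectively computable as claimed.
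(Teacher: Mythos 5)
The paper does not prove this lemma: it states it and cites it as ``a generalization of \cite[Theorem 6]{gallagher} to Gaussian integers,'' so there is no in-paper proof to check your sketch against. That said, your blueprint is the right one -- it is essentially the route one would take to adapt Gallagher's argument to $\QQ(i)$, and the three ingredients you name (mollified zero-detecting Dirichlet polynomial over prime ideals, well-spacing in the $k$- and $t$-aspects, and a hybrid large sieve) are the correct ones. Your remark that the mollifier must be built from $\mu_{\ZZ[i]}$ on prime ideals rather than rational primes, and that $\xi_k$ behaves like an extra conductor parameter of size $2+|k|$, are both genuinely necessary observations.

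Two points deserve flagging before this could be called a proof. First, the large sieve you invoke cannot be the paper's Lemma \ref{largesievelemma}, which is for rational Dirichlet characters; what is actually needed is a hybrid large sieve over $(\ZZ[i]/u\ZZ[i])^\times$-characters of modulus norm $\leq Q$, angular characters $\xi_k$ with $|k|\leq K$, and well-spaced $t\in[-T,T]$, with a clean bound of the shape $(Q^2KT+N)\sum|a_\a|^2$ and no stray $\log$. Such an inequality is known (it follows from number-field large sieve results in the style of Schaal or Huxley, or can be assembled from a one-dimensional large sieve plus duality over the angular and $t$-aspects), but it has to be stated and proved or cited; your proposal gestures at it without nailing it down, and this is exactly where a hidden logarithm could creep back in. Second, the ``per-zero lower bound'' step (your (i)) is the place where log-freeness is really earned: a bare contour argument gives a lower bound with $\log$-factors, and one must use Turán's power sum or Halász's method (or Gallagher's exponential-kernel trick) to get a uniform $\gg 1$ lower bound over a bounded number of dyadic $N$. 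You name this correctly but treat it as routine; in the Hecke setting the density of zeros near $1$ in the $k$-aspect also needs a preliminary crude bound (a Jensen/Littlewood-type count) to control the multiplicity per unit box, and this should be spelled out. Neither obstacle is conceptual, so the sketch is sound, but the burden is exactly in the two places you labeled ``the main obstacle I expect,'' and a referee would want those written out in full.
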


As a corollary to the zero-free region (Lemma \ref{zerofreeregionlemma}) we get the following lemma, by taking $\delta>0$ small enough in terms of $c_1$
and for two different moduli $u_1,u_2$ applying Lemma \ref{zerofreeregionlemma} with $u=u_1u_2$
\begin{lemma} \label{differentmodulilemma} Let $\delta > 0$ be sufficiently small in terms of $c_1$. Then there is at most one modulus $|u_1| \leq X^{2\delta}$ with a primitive character $\chi_1$ such that $L(s,\xi_k \chi_1)$ has a zero $\beta_1 \geq 1-\frac{1}{\sqrt{\delta} \log X}$. If such a zero exists, then it is real and simple, $k=0$, and it is a zero of some $L(s,\chi_1)$ with a real character $\chi_1$.
\end{lemma}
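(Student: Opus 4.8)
The statement to prove is Lemma \ref{differentmodulilemma}, which we deduce from the Landau--Page zero-free region (Lemma \ref{zerofreeregionlemma}) by the standard ``product trick''. First I would recall the conclusion of Lemma \ref{zerofreeregionlemma}: for the product $L$-function $L_u(s,\xi_k) = \prod_{\chi} L(s,\xi_k\chi)$ there is at most one zero in the region $\sigma > 1 - c_1/\log(|u|(2+|t|)(2+|k|))$, and if it exists it is real, simple, has $k=0$, and comes from a quadratic character. Now suppose, for contradiction, that there are two distinct moduli $|u_1|, |u_2| \leq X^{2\delta}$ with primitive characters $\chi_1,\chi_2$ such that $L(s,\xi_{k_1}\chi_1)$ and $L(s,\xi_{k_2}\chi_2)$ each have a zero $\beta_j \geq 1 - 1/(\sqrt{\delta}\log X)$.

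**Key step: apply the zero-free region to the combined modulus.** I would set $u = u_1 u_2$, so that $|u| \leq X^{4\delta}$, and observe that both $L(s,\xi_{k_1}\chi_1)$ and $L(s,\xi_{k_2}\chi_2)$ divide $L_u(s,\xi_{k_1})$ respectively $L_u(s,\xi_{k_2})$ — one must be slightly careful that $\chi_1,\chi_2$, viewed modulo $u$, are among the characters of $(\ZZ[i]/u\ZZ[i])^\times$, which holds since $u_1 \mid u$ and $u_2 \mid u$. If $k_1 = k_2 =: k$, then both zeros lie in the zero-free region of the single function $L_u(s,\xi_k)$: indeed, for $\delta$ small enough in terms of $c_1$ we have
\[
1 - \frac{1}{\sqrt{\delta}\log X} > 1 - \frac{c_1}{\log(|u|(2+|t|)(2+|k|))}
\]
whenever $|t| \leq 1$ (say) and $|k|, |u| \leq X^{O(\delta)}$, because the right-hand denominator is $O(\delta \log X)$ while $\sqrt{\delta}\log X \gg \delta \log X \cdot \delta^{-1/2}$; concretely one needs $\sqrt{\delta} \leq c_1/(C\delta + O(1))$ for the relevant implied constants, which holds for all sufficiently small $\delta$. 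The ``at most one zero'' clause of Lemma \ref{zerofreeregionlemma} then forces the two zeros, hence the two characters $\chi_1,\chi_2$ and the moduli $u_1,u_2$, to coincide — a contradiction. The case $k_1 \neq k_2$ is handled by noting that the exceptional zero, when it exists, must have $k = 0$; so if say $k_1 \neq 0$ then $L(s,\xi_{k_1}\chi_1)$ has no zero so close to $1$ at all (apply Lemma \ref{zerofreeregionlemma} to $u = u_1$ alone), and similarly for $k_2$, so at most one of the $k_j$ can be nonzero-free-violating, leaving us in the previous case with $k_1 = k_2 = 0$.

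**The remaining assertions.** Once uniqueness of the modulus is established, the properties of the zero — real, simple, $k=0$, attached to a quadratic (equivalently real) character $\chi_1$ — are inherited verbatim from the corresponding clause of Lemma \ref{zerofreeregionlemma} applied to $L_{u_1}(s,\xi_0)$, since the exceptional zero of $L_{u_1}(s,\xi_0)$ is a zero of $L(s,\chi_1)$ for a quadratic $\chi_1$. The only quantitative point to verify is the precise threshold on $\delta$: one must check that with $|u_1|,|u_2| \leq X^{2\delta}$ (so $|u_1 u_2| \leq X^{4\delta}$) and $|t| \leq 1$, $k = 0$, the inequality $1/(\sqrt{\delta}\log X) < c_1/\log(X^{4\delta}\cdot 3 \cdot 2) = c_1/(4\delta \log X + O(1))$ holds; this rearranges to $4\delta\log X + O(1) < c_1 \sqrt{\delta}\log X$, i.e. $\sqrt\delta(4\sqrt\delta - c_1) \log X < -O(1)$, which is satisfied once $\delta < c_1^2/16$ and $X$ is large.

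**Main obstacle.** There is no serious obstacle here — the argument is routine. The only mild subtlety is bookkeeping the angular parameter: one has to dispose of the possibility that the near-$1$ zeros come from $\xi_k$-twists with $k \neq 0$ before the product trick applies cleanly, but this is immediate from the ``$k=0$'' clause of Lemma \ref{zerofreeregionlemma}. The other point requiring a line of care is that a primitive character mod $u_j$ induces a (not necessarily primitive) character mod $u_1 u_2$ whose $L$-function differs from the primitive one only by finitely many Euler factors — none of which vanish in $\Re s > 0$ — so the zero persists; this is standard and I would state it without elaboration.
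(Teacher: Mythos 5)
Your proof is correct and follows exactly the route the paper indicates (the paper does not spell out a full proof but states the key idea immediately before the lemma: ``for two different moduli $u_1,u_2$ applying Lemma \ref{zerofreeregionlemma} with $u=u_1u_2$''). You supply the missing details — the Euler-factor caveat when inducing $\chi_j$ to modulus $u_1u_2$, the reduction of the $k_1\ne k_2$ case to $k_1=k_2=0$ via the ``$k=0$'' clause of Lemma \ref{zerofreeregionlemma}, and the numerical check $\delta < c_1^2/16$ — all of which are what one would need to write a complete argument. The one point you handle slightly informally (``whenever $|t|\le 1$ (say) and $|k|\le X^{O(\delta)}$'') reflects a looseness already present in the lemma statement, which leaves the ranges of $t$ and $k$ implicit; in the paper's actual use (Theorem \ref{asympregulartheorem} and Section \ref{sec:regularthmproof}) these are $\le X^{\eta}$ or $(\log X)^{O(1)}$, so the inequality $\log(|u_1u_2|(2+|t|)(2+|k|)) < c_1\sqrt\delta\log X$ indeed holds for $\delta$ small in terms of $c_1$ (and $\eta$ suitably small), as you correctly assert.
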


The following lemma is proved by the same argument as in \cite[(11.7)]{mv}.
\begin{lemma} \label{Lnearzerolemma} Let $\chi \in \widehat{(\ZZ[i]/u\ZZ[i])^\times}$ for some $|u|^2 \leq Q$ and let $|k| \leq Q$. If $L(s,\xi_k\chi)$ has no zeros counted by $N^\ast(\alpha,T,K,Q) $, then for all $\sigma >  (1+\alpha)/2, |t| \leq T, |k| \leq K$ we have
\[
|L(s,\xi_k\chi)|^{-1} \, \ll \log Q
\]
\end{lemma}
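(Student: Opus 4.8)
The plan is to imitate the classical argument bounding $|L(s,\chi)|^{-1}$ to the left of the zero-free region, as in \cite[(11.7)]{mv}, carried over to the Hecke $L$-function $L(s,\xi_k\chi)$ on $\QQ(i)$. Write $f(s) := L(s,\xi_k\chi)$. The starting point is the observation that, since $f$ has no zeros in the box $\{\sigma \geq \alpha, |t| \leq T\}$ (this is exactly what "no zeros counted by $N^\ast(\alpha,T,K,Q)$" with $|k|\le K$ and conductor $\le Q$ means), the function $\log f(s)$ is holomorphic in a neighbourhood of the slightly smaller box, and in particular $f(s)\neq 0$ there. The goal is the quantitative lower bound $|f(s)| \gg (\log Q)^{-1}$ for $\sigma > (1+\alpha)/2$, $|t|\le T$.

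First I would fix a point $s_0 = \sigma_0 + it$ with $\sigma_0 > (1+\alpha)/2$ and $|t|\le T$, and set up the standard comparison circles: let $s_1 = 1 + \tfrac{1}{\log Q} + it$ (or a similarly normalized point just to the right of $\Re s = 1$), where $\log f$ is bounded, $|\log f(s_1)| \ll \log\log Q \ll \log Q$, by absolute convergence of the Dirichlet series and Euler product. Then I would apply the Borel–Carathéodory theorem together with the Hadamard three-circles / Landau-type lemma to the function $\log f$ on a disc centered at $s_1$ of radius comparable to a fixed small constant (so that it contains $s_0$ and stays within the zero-free box, using that $\sigma_0$ is bounded away from $\alpha$), after bounding $\Re \log f = \log|f|$ from above on the larger circle. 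The upper bound $\log|f(s)| \ll \log Q$ on the larger circle comes from the convexity/trivial bound for Hecke $L$-functions with conductor $\ll Q$ and analytic conductor $\ll Q^{O(1)}(1+|k|+|t|)^{O(1)} \ll Q^{O(1)}$ in this range; one also needs $f(s)\ne 0$ on that circle, which is guaranteed by the zero-free hypothesis provided the radius is chosen small enough that the larger circle still lies in $\{\sigma \ge (1+\alpha)/2 + \text{(gap)}, |t| \le T + O(1)\}$ — here one must be slightly careful near $|t| = T$, but shrinking the relevant region in $t$ by a bounded amount, or noting the statement only needs $|t|\le T$ with a little room built into $N^\ast$'s definition, handles this. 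From the resulting bound $|\log f(s_0)| \ll \log Q$ one reads off both $|f(s_0)| \ll Q^{O(1)}$ (not needed) and, crucially, $\Re \log f(s_0) \ge -C\log Q$, i.e. $|f(s_0)|^{-1} \ll \exp(C\log Q)$ — wait, that is too weak, so the argument must instead be run to get $\log|f(s_0)| \ge -C\log\log Q$; this is achieved by applying Borel–Carathéodory to get $|\log f|\ll \log\log Q$ rather than $\ll \log Q$ on the inner disc, which works because on the outer circle the relevant bound on $\Re\log f$ that feeds Borel–Carathéodory is $\log|f| \le \log(Q^{O(1)}) = O(\log Q)$ while the value at $s_1$ is $O(\log\log Q)$, and the three-circles interpolation with the right radii upgrades this to $O(\log\log Q)$ at $s_0$ once $s_0$ is strictly inside. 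The cleanest route, which I would actually follow, is the textbook one: differentiate, bound $\frac{f'}{f}(s_0)$ via the explicit-formula/partial-fractions expansion over zeros near $s_0$ (of which there are none in the box, so only the "background" $\ll \log Q$ term survives), integrate from $s_1$ to $s_0$ along a horizontal segment, and use $\log f(s_1) = O(1)$ to conclude $\log f(s_0) = O(\log Q \cdot |\sigma_1 - \sigma_0|) + O(1)$; since $\sigma_1 - \sigma_0 \le 1 - (1+\alpha)/2 + o(1) = (1-\alpha)/2 + o(1)$ this gives $\log|f(s_0)| \ge -C(1-\alpha)\log Q + O(1)$, and to get the stated $\ll \log Q$ bound on $|f|^{-1}$ one just exponentiates — in fact the lemma as stated only asserts $|L(s,\xi_k\chi)|^{-1} \ll \log Q$, so even the crude bound $\log|f(s_0)| \ge -\log\log Q - O(1)$, which follows from the partial-fractions bound on $\Re\frac{f'}{f}$ integrated over the short interval of length $\asymp 1/\log Q$ between $s_0$ and $\Re s = 1 + 1/\log Q$, is enough. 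So the key mechanism is: no zeros in the box $\Rightarrow$ $\Re\frac{f'}{f}(\sigma+it) \ll \log Q$ throughout $\sigma \ge (1+\alpha)/2 + \epsilon$, $|t|\le T$, obtained from the Hadamard factorization / partial fraction formula for $\frac{f'}{f}$ with the zero-sum omitted; then integrate.

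The two ingredients I would invoke without reproving are: (i) the standard zero-counting / Hadamard factorization for Hecke $L$-functions $L(s,\xi_k\chi)$, giving $\frac{f'}{f}(s) = \sum_{|\rho - s|\le 1} \frac{1}{s-\rho} + O(\log(Q(1+|k|)(2+|t|)))$, which in our range with conductor and $|k|$ bounded by $Q$ and $|t|\le T$ has error $O(\log Q)$ (here one uses $T, K \le Q^{O(1)}$, consistent with the way $N^\ast$ is parametrized); and (ii) the partial summation of the Dirichlet series at $s_1 = 1 + 1/\log Q + it$ to get $\log f(s_1) = O(1)$ — this uses only that the coefficients of $\log f$ are supported on prime powers of ideals with the usual size bounds, so the series is dominated by $\sum_{\N\p \le Q} \N\p^{-1 - 1/\log Q} + O(1) = O(\log\log Q)$, which is certainly $O(\log Q)$. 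The main obstacle, such as it is, is purely bookkeeping: making sure the box in which we have no zeros is large enough (in both the $\sigma$ and $t$ directions, and in the $|k| \le K$ direction) that the Borel–Carathéodory / three-circles circles, or equivalently the horizontal integration path, stays safely inside it — this is why the hypothesis is stated with the half-way line $(1+\alpha)/2$ rather than $\alpha$ itself, leaving a margin of width $\asymp (1-\alpha)$ to absorb the fixed-radius circles. Everything else is a verbatim transcription of the $\QQ$ case, since $\QQ(i)$ has class number one and the analytic theory of Hecke $L$-functions over it is entirely parallel.
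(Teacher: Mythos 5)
The paper offers no written proof of this lemma --- it simply cites the Montgomery--Vaughan argument behind their (11.7) --- so I can only compare against the method, and your ``cleanest route'' is indeed the right one: write $L'/L(s)=\sum_{|\rho-s|\le 1}(s-\rho)^{-1}+O(\log(Q(|t|+|k|+2)))$ by the Hadamard/partial-fraction expansion, use the no-zero hypothesis to control the sum at $s_0$, and integrate $\Re(L'/L)$ horizontally from $s_0$ to $s_1=1+\tfrac{1}{\log Q}+it$, where $|\log L(s_1)|\le\log\zeta_{\QQ(i)}(1+\tfrac{1}{\log Q})+O(1)=\log\log Q+O(1)$ gives $|L(s_1)|^{-1}\ll\log Q$. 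The Borel--Carath\'eodory and three-circles digression, however, should be cut: with inner radius $\asymp 1/\log Q$ and outer radius $\asymp 1-\alpha$, the three-circles exponent tends to $0$, so the interpolated bound at $s_0$ tends to the outer-circle value $O(\log Q)$, not to $O(\log\log Q)$, and exponentiating gives only the useless $|L|^{-1}\ll Q^{O(1)}$ --- your initial instinct that this route is too weak was correct.

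There is also a concrete arithmetic gap in the route you settle on. You assert the integration runs over a ``short interval of length $\asymp 1/\log Q$'', but in fact $\sigma_1-\sigma_0\le 1+\tfrac{1}{\log Q}-\tfrac{1+\alpha}{2}\asymp 1-\alpha$. Integrating $|\Re L'/L|\ll\log Q$ over this interval yields $|\log L(s_0)-\log L(s_1)|\ll(1-\alpha)\log Q$, hence $|L(s_0)|^{-1}\ll\log Q\cdot\exp(O((1-\alpha)\log Q))$. This is $\ll\log Q$ only when $1-\alpha\ll 1/\log Q$, is $(\log Q)^{O(1)}$ when $1-\alpha\ll\log\log Q/\log Q$ (the regime the paper actually invokes, with $\alpha=1-C_2'\log\log Q/\log Q$ in the proof of Lemma \ref{mobiusregularlemma}), and degrades to $Q^{O(1-\alpha)}$ for $1-\alpha$ bounded away from zero. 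So the argument as written does not deliver the advertised $\ll\log Q$ for general $\alpha$; one needs an explicit smallness hypothesis on $1-\alpha$, or to weaken the conclusion to $(\log Q)^{O(1)}$. (The lemma as printed shares this imprecision, and $(\log Q)^{O(1)}$ is all the downstream application requires since $C_2'$ is then chosen large compared with $C_1$, but a proof should make the dependence visible.) Finally, your caveat about $|t|$ near $T$ is real and worth retaining: zeros with $|\Im\rho|>T$ are not excluded by the hypothesis yet may lie within distance $\asymp 1-\alpha$ of $s_0$, so the partial-fraction bound strictly needs $|t|\le T-O(1-\alpha)$.
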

\subsection{Character sums}
We need the following lemma for computing sums over primitive characters.
\begin{lemma} \label{primitivecharactersum}
For any $a$ we have
\[
\bigg|\sideset{}{^\ast}\sum_{\chi \, (d)} \chi(a)  \bigg| \leq  (a-1,d),
\]
where the sum extends over primitive Dirichlet characters of $(\ZZ/d\ZZ)^\times$.
\end{lemma}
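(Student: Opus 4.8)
The plan is to rewrite the sum over primitive characters in terms of sums over \emph{all} characters to moduli dividing $d$, where orthogonality applies cleanly, and then bound the resulting divisor sum. First dispose of the degenerate case: if $(a,d)>1$ then $\chi(a)=0$ for every Dirichlet character modulo $d$, in particular for every primitive one, so the left-hand side is $0$ and the bound is trivial. Hence assume $(a,d)=1$, which also forces $(a,f)=1$ for every $f\mid d$.

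Now use the fact that a character modulo $d$ is primitive exactly when it is not induced from a character modulo $d/p$ for some prime $p\mid d$; the standard inclusion–exclusion over the conductor gives
\[
\sideset{}{^\ast}\sum_{\chi\,(d)}\chi(a)=\sum_{e\mid d}\mu(e)\sum_{\chi\,(d/e)}\chi(a),
\]
where on the right each character modulo $d/e$ is simply evaluated at $a$ (legitimate since $(a,d/e)=1$). Apply the orthogonality relation $\sum_{\chi\,(f)}\chi(a)=\varphi(f)\,\mathbf{1}_{a\equiv 1\,(f)}=\varphi(f)\,\mathbf{1}_{f\mid a-1}$, valid here because $(a,f)=1$, and substitute $f=d/e$ to get
\[
\sideset{}{^\ast}\sum_{\chi\,(d)}\chi(a)=\sum_{f\mid d}\mu(d/f)\,\varphi(f)\,\mathbf{1}_{f\mid a-1}=\sum_{f\mid (a-1,d)}\mu(d/f)\,\varphi(f).
\]

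Finally, take absolute values, bound $|\mu(d/f)|\le 1$, and invoke the elementary identity $\sum_{f\mid n}\varphi(f)=n$ with $n=(a-1,d)$:
\[
\bigg|\sideset{}{^\ast}\sum_{\chi\,(d)}\chi(a)\bigg|\le\sum_{f\mid (a-1,d)}\varphi(f)=(a-1,d),
\]
which is the claim. There is no genuine obstacle here; the only point requiring attention is the case split on $(a,d)$ together with the observation that, once $(a,d)=1$, the conductor–M\"obius bookkeeping and orthogonality are valid on \emph{every} divisor $f\mid d$ without the vanishing terms that would otherwise appear when $a$ shares a factor with the modulus.
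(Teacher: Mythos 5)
Your proof is correct. The key difference in approach: the paper exploits multiplicativity by first factoring $S(a;d) = \prod_{p^k\|d} S(a;p^k)$ via the Chinese Remainder Theorem, then evaluates each prime-power block directly as $\varphi(p^k)\mathbf{1}_{a\equiv 1\,(p^k)} - \varphi(p^{k-1})\mathbf{1}_{a\equiv 1\,(p^{k-1})}$. You instead work globally: M\"obius inversion over all divisors $e\mid d$ gives the primitive sum as $\sum_{f\mid(a-1,d)}\mu(d/f)\varphi(f)$, and then the classical identity $\sum_{f\mid n}\varphi(f)=n$ closes the argument in one line. The two routes are of course two views of the same multiplicative structure (your M\"obius expansion, restricted to $d=p^k$, reproduces exactly the paper's two-term formula), but your version is arguably cleaner in that it avoids the CRT factorization step and yields the bound as an immediate consequence of $\sum_{f\mid n}\varphi(f)=n$, whereas the paper's version requires a small case check at each prime power. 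One minor point worth noting in your write-up: the validity of the M\"obius inversion $\sideset{}{^\ast}\sum_{\chi\,(d)}\chi(a)=\sum_{e\mid d}\mu(e)\sum_{\chi\,(d/e)}\chi(a)$ rests on the identity $\sum_{\chi\,(d)}\chi(a)=\sum_{f\mid d}\sideset{}{^\ast}\sum_{\psi\,(f)}\psi(a)$, which in turn uses that $\chi(a)=\chi^{*}(a)$ when $(a,d)=1$ and $\chi^{*}$ is the primitive character inducing $\chi$ --- you handle this correctly by disposing of the case $(a,d)>1$ first, so the argument is complete.
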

\begin{proof}
By the Chinese remainder theorem we have
\[
S(a;d):=\sideset{}{^\ast}\sum_{\chi \, (d)} \chi(a) = \prod_{p^k|| d}S(a; p^k).
\]
Thus, it suffices to show that
\[
| S(a; p^k) | \leq (a-1,p^k)
\]
This follows from
\begin{align*}
     S(a; p^k) &= \sum_{\chi \, (p^k)} \chi(a) - \sum_{\chi \, (p^{k-1})} \chi(a)  \\
     &=  \varphi(p^{k})\mathbf{1}_{a \equiv 1 \, (p^k)} -    \varphi(p^{k-1}) \mathbf{1}_{a \equiv 1 \, (p^{k-1})}.  \qedhere
\end{align*}

\end{proof}

We will need the smoothed Poly\'a-Vinogradov bound on Gaussian integers, which is a consequence of Poisson summation (Lemma \ref{poisson2dlemma}) and the Gauss sum bound on $\ZZ[i]$. 
\begin{lemma} \label{pvongaussianlemma}
Let $F:\RR^2 \to \CC$ be as in Lemma \ref{poisson2dlemma} and for $z=x+iy \in \CC$ define $F(z) := F(x,y)$. Let $\chi$ be a character of modulus $u \in \ZZ[i]$ Then
\begin{align*}
    \sum_{z} F(z)\chi(z) \ll |u|.
\end{align*}
\end{lemma}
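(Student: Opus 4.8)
The plan is to reduce to the one-dimensional smoothed Pólya–Vinogradov inequality via the standard Fourier/Gauss-sum device, but carried out directly on $\ZZ[i]$ viewed as the lattice $\ZZ^2$. First I would write $\chi$ as a function of $z \bmod u$ and split the sum over $z \in \ZZ[i]$ into residue classes modulo $u$: writing $z = r + u w$ with $r$ running over $\ZZ[i]/u\ZZ[i]$ and $w \in \ZZ[i]$,
\[
\sum_{z} F(z)\chi(z) = \sum_{r \bmod u} \chi(r) \sum_{w \in \ZZ[i]} F(r + u w).
\]
Next I would apply the two-dimensional Poisson summation formula (Lemma \ref{poisson2dlemma}), identifying $\ZZ[i]$ with $\ZZ^2$ and $u$-multiplication with the corresponding $2\times 2$ integer matrix (determinant $\N u = |u|^2$), to the inner sum over $w$. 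This turns the inner sum into $\frac{1}{|u|^2}\sum_{\bm h \in \ZZ^2} \widehat{F}\!\left(\text{(dual frequency depending on }u\text{)}\right) e\!\left(\text{(linear phase in }r, \bm h\text{)}\right)$, where the phase, as a function of $r$, is a nontrivial additive character of $\ZZ[i]/u\ZZ[i]$ provided $\bm h \neq 0$; the term $\bm h = 0$ forces $\sum_{r}\chi(r)=0$ since $\chi$ is nontrivial (if $\chi$ is trivial the bound $\ll |u|$ is immediate from the rapid decay and compact support of $F$, so one may assume $\chi$ nontrivial).

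Then, for each nonzero dual frequency $\bm h$, the $r$-sum $\sum_{r \bmod u} \chi(r) e(\langle \text{phase} \rangle)$ is exactly a Gauss sum for the character $\chi$ on $\ZZ[i]$, which by the standard Gauss sum bound over $\ZZ[i]$ is $\ll |u|$ in absolute value (with equality, up to the conductor issue, for primitive $\chi$; for imprimitive $\chi$ one gets the same or better after accounting for the conductor — in either case the bound $\ll |u|$ holds). Summing over $\bm h$, the decay estimate for $\widehat{F_{\bm N}}$ from Lemma \ref{poisson2dlemma}, rescaled to the present normalization, makes the $\bm h$-sum converge to $O(1)$ after the $|u|^{-2}$ from Poisson and the $|u|^2$ from counting residues cancel against the Gauss sum's $|u|$; putting the pieces together yields $\sum_z F(z)\chi(z) \ll |u|$. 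A clean way to organize the bookkeeping is to absorb the smooth weight $F$ into a single application of Poisson on the lattice $u\ZZ[i] + r$ and note that the total number of frequencies $\bm h$ contributing more than $|u|^{-A}$ is $O(1)$ since $F$ is fixed (its support and derivative bounds do not depend on $X$ here, only on the fixed $\nu_1,\nu_2$).

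The main obstacle I anticipate is purely organizational rather than deep: correctly setting up the two-dimensional Poisson summation with the non-diagonal lattice $u\ZZ[i]$ (equivalently, diagonalizing or just directly computing the dual lattice of $u\ZZ[i]$, which is $\frac{1}{|u|^2}\overline{u}\,\ZZ[i]$ after identifying $\CC$ with $\RR^2$) and tracking the resulting phase so that it is visibly a character of $\ZZ[i]/u\ZZ[i]$ in the variable $r$, so that the Gauss sum bound on $\ZZ[i]$ applies verbatim. One must also be slightly careful that $F$ is supported on a region of size $O(\nu_1\nu_2)$ rather than $O(1)$, but since that only helps (it shrinks, not enlarges, the relevant sums) and the final bound $\ll |u|$ has no dependence on $\nu_1,\nu_2$, this causes no trouble; alternatively one simply invokes Lemma \ref{poisson2dlemma} as a black box with $N_1=N_2$ of size comparable to $|u|$ if desired. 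No new analytic input beyond Lemma \ref{poisson2dlemma} and the elementary Gauss sum bound is needed.
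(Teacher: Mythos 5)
Your proposal is correct and matches the route the paper indicates: the paper gives no proof of Lemma \ref{pvongaussianlemma} but states it is ``a consequence of Poisson summation (Lemma \ref{poisson2dlemma}) and the Gauss sum bound on $\ZZ[i]$,'' which is precisely the decomposition into residue classes modulo $u$, Poisson on the lattice $u\ZZ[i]$ (with dual lattice $\frac{1}{|u|^2}\overline{u}\,\ZZ[i]$), and the Gauss sum bound $\ll(\N u)^{1/2}=|u|$ that you describe. The bookkeeping with the $\nu$-parameters and the dual-lattice spacing does cancel to give $\ll|u|$ uniformly, as you anticipate.
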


The following lemma is required for the proof of Theorem \ref{asymptotictheorem}.
\begin{lemma}\label{gausssumlemma} Let $k\geq 1$ and $p$ be a prime number. Let $\chi \in \widehat{(\ZZ[i]/p^k\ZZ[i])^\times}$ with  conductor $p^k$  or $\pi^{k_1}\bar{\pi}^{k_2}$ with $k=\max\{k_1,k_2\}$. Then
\begin{align*} 
\sum_{r \, (p^k)}\chi(r+ip^\ell) \leq 2 p^{k/2+\ell/2}.    
\end{align*}
\end{lemma}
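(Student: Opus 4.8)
Write $S:=\sum_{r\,(p^k)}\chi(r+ip^\ell)$. The plan is to regard $S$ as a complete character sum on $R:=\ZZ[i]/p^k\ZZ[i]$ and, via Fourier expansion of $\chi$ against a non-degenerate additive character, reduce it to a one-dimensional Gauss sum on $\ZZ/p^k\ZZ$ whose size is classical. Fix $\psi_0(z):=e(\Re(z)/p^k)$ on $R$; writing $z=x+iy$ and $\alpha=u+iv$ one has $\psi_0(\alpha z)=e((ux-vy)/p^k)$, so $\alpha\mapsto\psi_0(\alpha\,\cdot)$ runs over all additive characters of $R$. Hence $\psi_0$ is non-degenerate and the Gauss sum $g(\chi):=\sum_{z\in R}\chi(z)\psi_0(z)$ has $|g(\chi)|=(\N(p^k\ZZ[i]))^{1/2}=p^k$ for every $\chi$ primitive modulo $p^k\ZZ[i]$. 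I distinguish whether or not $\chi$ is primitive modulo $p^k\ZZ[i]$.

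\emph{Case 1: $\chi$ primitive modulo $p^k\ZZ[i]$} (this covers $p\equiv3\ (4)$, $p=2$, and $p\equiv1\ (4)$ with $k_1=k_2=k$). Fourier inversion gives $\chi(z)=g(\overline\chi)^{-1}\sum_{\alpha\in R}\overline\chi(\alpha)\psi_0(\alpha z)$ for all $z\in R$. Inserting this and exchanging the order of summation, the inner sum over the integer variable $r$ collapses: for real $r$ one has $\psi_0(\alpha r)=e_{p^k}(ur)$, so $\sum_{r\,(p^k)}\psi_0(\alpha r)=p^k\mathbf 1_{p^k\mid u}$, and only $\alpha=iv$ with $v\bmod p^k$ survives. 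Writing $\chi_0$ for the Dirichlet character modulo $p^k$ obtained by restricting $\chi$ to $(\ZZ/p^k\ZZ)^\times$, and using $|p^k/g(\overline\chi)|=1$, this yields
\[
|S|=\Big|\sum_{v\,(p^k)}\overline{\chi_0}(v)\,e_{p^k}(-vp^\ell)\Big|.
\]
The right-hand side is a classical Gauss sum whose additive character has level $p^{k-\ell}$; reducing $v$ modulo $p^{k-\ell}$, it vanishes unless $\cond(\chi_0)\le p^{k-\ell}$ and otherwise equals $p^\ell$ times a Gauss sum to modulus $p^{k-\ell}$, hence is $\le p^\ell\cdot p^{(k-\ell)/2}=p^{k/2+\ell/2}$ (the range $\ell\ge k$ being trivial). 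This gives the claimed bound.

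\emph{Case 2: $p\equiv1\ (4)$, $p=\pi\overline\pi$, and $\cond(\chi)=\pi^{k_1}\overline\pi^{k_2}$ with $\min(k_1,k_2)<k=\max(k_1,k_2)$.} After possibly swapping $\pi$ and $\overline\pi$ we may take $k_1=k$. Under $\ZZ[i]/p^k\ZZ[i]\cong\ZZ/p^k\ZZ\times\ZZ/p^k\ZZ$ the image of $i$ is $(j,-j)$ with $j^2\equiv-1\ (p^k)$, and $\chi$ corresponds to a pair $(\psi_1,\psi_2)$ with $\psi_1$ primitive modulo $p^k$ and $\cond(\psi_2)=p^{k_2}<p^k$. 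Thus $S=\sum_{r\,(p^k)}\psi_1(r+jp^\ell)\psi_2(r-jp^\ell)$, and the substitution $s=r+jp^\ell$ turns this into $\sum_{s\,(p^k)}\psi_1(s)\psi_2(s-2jp^\ell)$. Expanding $\psi_1$ by its Gauss sum and carrying out the $s$-sum gives a combination of terms $\psi_1(a)\,G_2(a)$ with $G_2(a)=\sum_{u\,(p^k)}\psi_2(u)e_{p^k}(au)$, which is supported on multiples of $p^{k-k_2}$ (resp.\ of $p^{k-1}$ when $k_2=0$); since $k_2<k$ this forces $p\mid a$, hence $\psi_1(a)=0$, so $S$ vanishes — except in the solitary corner $k=1$, $k_2=0$, $\ell=0$, where $|S|\le1\le\sqrt p$ directly. (The range $\ell\ge k$ is again immediate.)

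The step demanding care is the Gauss-sum bookkeeping in Case 1: one must track how $\cond(\chi_0)$ compares with the level $p^{k-\ell}$ of the additive character, so that the factor $p^\ell$ produced on reducing the sum modulo $p^{k-\ell}$ is exactly compensated by the square-root cancellation in the resulting shorter Gauss sum; and the evaluation $|g(\chi)|=p^k$, together with the vanishing in Case 2, rests on the standard primitivity/conductor calculus for Gauss sums over $\ZZ/p^k\ZZ$ and $\ZZ[i]/p^k\ZZ[i]$ (cf.\ \cite[Ch.~3]{IK}). The prime $p=2$ requires only the observation, immediate from $\psi_0(\alpha z)=e((ux-vy)/p^k)$, that $\psi_0$ remains non-degenerate there.
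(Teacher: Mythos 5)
Your proof is correct, and it takes a genuinely different (and cleaner) route than the paper's. The paper starts by averaging over $r\mapsto r/s$ with $(s,p)=1$, which introduces the restriction $\chi_0:=\chi|_{\ZZ}$ as a Dirichlet character modulo $p^k$; it then Fourier-expands $\chi_0$ into additive characters, leaving a double sum over $z$ and a multiplicity variable $t$, and ultimately bounds the result by a product of a one-dimensional Gauss sum of $\chi_0$ (from \eqref{integergausssum}) and a two-dimensional Gauss sum of $\chi$ on $\ZZ[i]/p^k\ZZ[i]$ (from \eqref{Gausssum}), picking up the harmless factor $q/\varphi(q)\le 2$ along the way. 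You instead Fourier-invert $\chi$ itself against the non-degenerate additive pairing $\psi_0$ on $R=\ZZ[i]/p^k\ZZ[i]$ and observe that summing the original integer variable $r$ kills the real part of the dual variable $\alpha$, collapsing the whole expression to the single one-dimensional Gauss sum $\sum_v\overline{\chi_0}(v)e_{p^k}(-vp^\ell)$ with $|g(\overline\chi)|=p^k$ cancelling $|R|^{1/2}$ exactly; the $\ell$-dependence then comes out of the level drop from $p^k$ to $p^{k-\ell}$ in the additive character. Your bookkeeping in Case~1 is sound: summing over $v_2\,(p^\ell)$ yields $p^\ell$ only when $\cond(\chi_0)\mid p^{k-\ell}$, in which case the remaining modulus-$p^{k-\ell}$ Gauss sum (with a possibly imprimitive character) still has modulus $\le p^{(k-\ell)/2}$, giving $|S|\le p^{k/2+\ell/2}$ with constant $1$ rather than $2$. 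Your Case~2 reduction via CRT to $S=\sum_s\psi_1(s)\psi_2(s-2jp^\ell)$, followed by expanding $\psi_1$ by its Gauss sum and noting that the support of $G_2(a)$ on multiples of $p^{\max(k-k_2,\,k-1)}$ forces $p\mid a$ (hence $\overline{\psi_1}(a)=0$), is also correct, as is the identification of the single non-vanishing corner $k=1,k_2=0,\ell=0$ where $|S|=1$. Net effect: you recover the lemma with a sharper constant and the extra information that $S$ vanishes whenever $\min(k_1,k_2)<k$ (outside the trivial corner), while the paper's averaging method is more uniform in that it avoids the case split on primitivity of $\chi$ modulo $p^k\ZZ[i]$.
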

\begin{proof}
Let $q:=p^k$. We have by $r \mapsto r/s$
\begin{align*}
  \sum_{r \, (q)} \chi(r+i p^\ell) =  \frac{1}{\varphi(q)} \sum_{(s,q)=1}  \sum_{r \, (q)} \chi(r+ip^\ell) = \frac{1}{\varphi(q)} \sum_{r,s \, (q)} \overline{\chi}(s)\chi(r+isp^\ell).
\end{align*}
The function $ s \mapsto \overline{\chi}(s)$ defines a multiplicative character over the integers and we let $q_1|q$ denote its conductor. By expansion of $\overline{\chi}(s)$ into additive characters we get
\[
\bigg|\sum_{r\, (p^k)} \chi(r+ip^\ell) \bigg| \leq  \frac{1}{q_1 \varphi(q)} \sum_{a \, (q_1) } \bigg|\sum_{t \, (q_1)}  \overline{\chi}(t)e_{q_1}(-at) \bigg| \bigg|\sum_{r,s \, (q)} e_{q_1}(as)\chi(r+isp^\ell)\bigg|
\]
We have (writing $s \mapsto s /p^{\ell}+tp^{k-\ell}, p^\ell|s$)
\[
\sum_{r,s \, (q)} e_{q_1}(as)\chi(r+isp^\ell) = \sum_{\substack{z \, (q) \\ s \equiv 0\, (p^\ell)}} e_{q_1}(as/p^\ell) \chi(z)\sum_{t\, (p^\ell)} e_{q_1}(at p^{k-\ell}).
\]
Here
\[
\sum_{t\, (p^\ell)} e_{q_1}(at p^{k-\ell}) = p^\ell \mathbf{1}_{a \equiv 0 \, (q_1/(q_1,p^{k-\ell}))}.
\]
Hence, denoting $q_2=(q_1,p^{k-\ell})$ and making the change of variables $a\mapsto a q_1/q_2$ we get
\[
\bigg|\sum_{r\, (p^k)} \chi(r+ip^\ell)\bigg| \leq \frac{p^{\ell}}{q_1 \varphi(q)} \sum_{a \, (q_2) } \bigg|\sum_{t \, (q_1)}  \overline{\chi}(t)e_{q_2}(-at) \bigg| \bigg|\sum_{\substack{z \, (q) \\ s \equiv 0\, (p^\ell)}} e_{p^\ell q_2}(as) \chi(z) \bigg|.
\]
Since $\chi(t)$ is of conductor $q_1$ we have the Gauss sum bound \cite[Theorem 9.12]{mv}
\begin{align} \label{integergausssum}
 \bigg|\sum_{t \, (q_1)}  \overline{\chi}(t)e_{q_2}(-at)\bigg| \leq q_1^{1/2} .   
\end{align}
For the second sum we expand the condition $s \equiv 0 \, (p^\ell)$ with additive characters to get
\[
\sum_{\substack{z \, (q) \\ s \equiv 0\, (p^\ell)}} e_{p^\ell q_2}(as) \chi(z) =  \frac{1}{p^\ell}\sum_{b \, (p^\ell)} \sum_{\substack{z \, (q)}} e_{p^\ell q_2}(as) e_{p^\ell}(bs) \chi(z) 
\]
The function 
\[z \mapsto e_{p^\ell q_2}(as) e_{p^\ell}(bs) = e \bigg( \frac{(a+q_2 b)s}{q_2p^{\ell}} \bigg) =: e_{q}(cs)
\]
is an additive character on $\ZZ[i]/q\ZZ[i]$ since $q_2|p^{k-\ell}$. If $\chi$ is of conductor $\pi^{k_1}\bar{\pi}^{k_2}$, then we get by the Gauss sum bound (a direct generalization of \cite[Theorem 9.12]{mv} to $\ZZ[i]$, denoting $r:=q/(c,q)$)
\begin{align} \label{Gausssum}
  \bigg|\sum_{\substack{z \, (q)}} e_{q}(cs) \chi(z) \bigg| \leq \mathbf{1}_{\pi^{k_1}\bar{\pi}^{k_2} | r}  \frac{\varphi_{\ZZ[i]}(q)}{\varphi_{\ZZ[i]}(r)} q \leq q .  
\end{align}
since $\pi^{k_1}\bar{\pi}^{k_2} | r$ and $k=\max\{k_1,k_2\}$ implies that $r=p^k=q$. We also get the same bound if the conductor of $\chi$ is $p^k$ as then $\chi$ is primitive. Putting the bounds \eqref{integergausssum} and \eqref{Gausssum} together and using $q_2=(q_1,p^{k-\ell})$ we get (noting that $q/\varphi(q) \leq 2$ for $q=p^k$)
\begin{align*}
    \bigg|\sum_{r \, (p^k)}\chi(r+ip^\ell)  \bigg|
 &\leq \frac{p^{\ell}}{q_1 \varphi(q)} q_2 q_1^{1/2} q \leq 2 p^{\ell} q_2 q_1^{-1/2}  \\
 &\leq 2 p^\ell q_2^{1/2} \leq 2 p^{k/2+ \ell/2} . \qedhere
\end{align*}
\end{proof}
The previous lemma implies the following.
\begin{lemma} 
\label{charsumboundlemma}
Let $b \in \ZZ$ and $u\in \ZZ[i]$ and let $Y > |u|^{4}$ . Let  $\chi$ be a primitive character modulo $u$ and let $v=(u,b)$. Then for any integer $a_0$
\[
\sum_{\substack{a \in (Y,Y+Y^{1-\eta}] \\ a \equiv a_0 \, (4)\\ (a,b)=1}} \chi(b+ia) \ll_\eps \frac{Y^{1-\eta}}{|u/v|^{1/2-\eps}}.
\]
\end{lemma}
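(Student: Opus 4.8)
The plan is to bound the sum by completion (a Pólya--Vinogradov argument), reducing it to complete character sums over $\ZZ[i]$ that are handled by Lemma \ref{gausssumlemma}. First I would remove the coprimality condition by M\"obius: writing $\mathbf 1_{(a,b)=1}=\sum_{e\mid (a,b)}\mu(e)$ and substituting $a=ea'$, $b=eb'$, multiplicativity of $\chi$ on $\ZZ[i]$ gives $\chi(b+ia)=\chi(e)\chi(b'+ia')$ (with $\chi(e)=0$ when $(e,u)>1$, which correctly kills those terms). This reduces matters to bounding, for each $e\mid b$ with $(e,u)=1$, a sum $T_e=\sum_{a'\in I_e,\, a'\equiv c_e\,(4)}\chi(b'+ia')$ over an interval $I_e$ of length $Y^{1-\eta}/e$ (the congruence $ea'\equiv a_0\,(4)$ being a union of at most four residues mod $4$), and then summing $\sum_{e\mid b}|T_e|$. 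Since $u\mid im$ with $m\in\ZZ$ forces $\M(u)\mid m$, the summand of $T_e$ has period dividing $d:=\operatorname{lcm}(4,\M(u))$, so completing $T_e$ modulo $d$, and expanding the condition mod $4$ into additive characters, yields
\[
|T_e|\ \ll\ \Big(\tfrac{Y^{1-\eta}/e}{\M(u)}+\log(4\M(u))\Big)\,\max_{h'}\Big|\sum_{a\,(\M(u))}\chi(b'+ia)\,e_{\M(u)}(h'a)\Big|,
\]
using $\M(u)\le d\le 4\M(u)$.

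The heart of the matter is the complete sum $g(h')=\sum_{a\,(\M(u))}\chi(b'+ia)\,e_{\M(u)}(h'a)$. By the Chinese remainder theorem over the rational primes $p\mid\M(u)$ it factors as $g(h')=\prod_p g_p$, with $g_p=\sum_{a\,(\M(u_p))}\chi_p(b'+ia)\,e_{\M(u_p)}(h_pa)$ where $\chi_p$ is the $p$-component of $\chi$ and is \emph{primitive} modulo the $p$-primary part $u_p$ of $u$. If $u_p\mid b'$ then $\chi_p(b'+ia)=\chi_p(i)\chi_p(a)$, so $g_p$ is a classical additively twisted Gauss sum for the Dirichlet character $\chi_p|_{\ZZ}$ and $|g_p|\le\M(u_p)$ with no loss. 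If $u_p\nmid b'$ — equivalently $p\mid u/(u,b')$ — then after a unit substitution putting $b'+ia$ in the shape treated in Lemma \ref{gausssumlemma}, and a further standard manipulation to absorb the additive twist $e_{\M(u_p)}(h_pa)$, one obtains $|g_p|\le 2p^{(k_p+\ell_p)/2}$, where $p^{k_p}=\M(u_p)$ and $\ell_p=\min(v_p(b'),k_p)$; here $p^{(k_p-\ell_p)/2}\ge|u_p/(u_p,b')|^{1/2}$ (equality at inert $p$, and for split $p$ one uses $|(u_p,b')|\le p^{\ell_p}$). Multiplying over $p$ gives
\[
\max_{h'}|g(h')|\ \le\ 2^{\omega(u/(u,b'))}\,\M(u)\,|u/(u,b')|^{-1/2}.
\]

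It remains to assemble and tidy up. Inserting this into the completion estimate, summing over $e\mid b$ with $\sum_{e\mid b}e^{-1}\ll\log\log Y$ and $\sum_{e\mid b}1\ll_\eps Y^{\eps}$, and using $Y>|u|^4$ (so $\M(u)\le|u|^2<Y^{1/2}$), the original sum is
\[
\ll\ 2^{\omega(u/(u,b'))}\,|u/(u,b')|^{-1/2}\big(Y^{1-\eta}\log\log Y+Y^{1/2+o(1)}\big),
\]
and the $Y^{1/2+o(1)}$ term is negligible. Every prime factor of $u/(u,b')$ has norm $\ge 3$, so $|u/(u,b')|\ge\prod_{p\mid u/(u,b')}p^{1/2}$ and hence $\omega(u/(u,b'))\le(1+o(1))\log|u/(u,b')|/\log\log|u/(u,b')|$, giving $2^{\omega(u/(u,b'))}\ll_\eps|u/(u,b')|^{\eps}$; with $|u/(u,b')|\ge|u/(u,b)|=|u/v|$ this yields the claimed bound $\ll_\eps Y^{1-\eta}|u/v|^{-1/2+\eps}$ apart from the stray $\log\log Y$, which is harmless once $|u/v|$ exceeds a fixed power of $\log Y$. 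The residual case $|u/v|=O_\eps(1)$ must be handled without that loss: there one writes $\chi=\chi_0\chi_1$ with $\chi_1$ supported on the primes $p$ with $u_p\nmid b$ (so $u_0\mid b$ and $\chi_0(b+ia)=\chi_0(i)\chi_0(a)$ is a fixed Dirichlet character), completes only in $\chi_1$ (of period $\le N(u)$) and in the coprimality, and bounds the resulting complete sum by Lemma \ref{gausssumlemma} — now involving only $O_\eps(1)$ primes, so the constant $2^{\omega}$ is absolutely bounded — with total error $\ll N(u)^{1+o(1)}\ll Y^{1/2+o(1)}\ll Y^{1-\eta}$.

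I expect the main obstacle to be the complete-sum step: reshaping $b'+ia$ to fit Lemma \ref{gausssumlemma}, and above all controlling the extra additive twist produced by completion — one must check that twisting a shifted multiplicative character by an additive character never worsens the local bound, which requires tracking the conductor of the twisted-and-shifted local character at each prime, and in particular at split primes $p=\pi\bar\pi$ where $\chi_p$ carries two independent exponents. A secondary and more tedious point is making sure that all of the $Y^{o(1)}$ and $|u/v|^{o(1)}$ losses (the factors $2^{\omega}$, the divisor sums from the M\"obius step) are genuinely absorbed into the $\eps$, which is precisely what forces the separate loss-free treatment of bounded $|u/v|$ above.
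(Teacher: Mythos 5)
Your route --- M\"obius to strip $(a,b)=1$, complete the short sum in $a$, factor the complete sum by CRT into local sums, bound each local sum via Lemma~\ref{gausssumlemma} --- is the paper's route, and so is the local analysis ($u_p\mid b'$ gives the trivial bound, $u_p\nmid b'$ gives the square-root saving, and the product yields $\M(u)\,|u/(u,b')|^{-1/2}$ up to $2^{\omega}$-type losses). The place you diverge, and where your proof has a genuine gap that you yourself flag, is the completion step: you Fourier-complete (P\'olya--Vinogradov), which forces an additive twist $e_{\M(u)}(h'a)$ into the complete sum, so you then need a \emph{twisted} form of Lemma~\ref{gausssumlemma}, bounding $\sum_{a\,(p^k)}\chi(b'+ia)\,e_{p^k}(h_p a)$. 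You call this "a further standard manipulation" but also, correctly, "the main obstacle". It is not a direct corollary of the stated lemma: its proof begins by averaging over $s\in(\ZZ/q\ZZ)^\times$ and substituting $r\mapsto r/s$ to generate the Gauss-sum structure, and under an additive twist $e_q(hr)$ that step produces $e_q(hr/s)$, which no longer separates from $\chi$, so the entire chain of local manipulations would have to be redone. The twisted bound is very plausibly true, but as written your argument invokes it without proof.

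The paper avoids the twist entirely. For each $c\le Y/\M(u)$ it tiles the interval in $a$ by complete periods of length $4\,\M(u)$ of the summand, so the short sum equals (number of complete periods)\,$\times$\,(the complete \emph{untwisted} sum) plus a boundary error $O(\M(u))$ bounded trivially; the divisors $c>Y/\M(u)$ contribute $O(\M(u)\,\tau(b))$ trivially. Both errors are $\ll Y^{1/2+o(1)}$ because $Y>|u|^4$, hence negligible, and one is left with exactly the complete sum to which Lemma~\ref{gausssumlemma} applies with no twist. If you replace your P\'olya--Vinogradov completion with this period-alignment version, the rest of your write-up --- the CRT split, the case distinction $u_p\mid b'$ versus $u_p\nmid b'$, the $2^{\omega}$- and divisor-loss bookkeeping, and the separate treatment of bounded $|u/v|$ --- goes through and recovers the paper's argument.
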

\begin{proof}
 We have
  \begin{align*}
        \sum_{\substack{a \in (Y,Y+Y^{1-\eta}] \\a \equiv a_0 \, (4) \\ (a,b)=1}} \chi(b+ia) = \sum_{\substack{c | b \\(c,4 u)=1}} \mu(c)  \sum_{\substack{ac  \in (Y,Y+Y^{1-\eta}] \\ ac \equiv  a_0 \, (4)}} \chi(b+i ac).
  \end{align*}
Let $u=m w$ where $m$ consists of all prime factors $p\equiv 3 \, (4)$. Let $n$ denote the smallest integer such that $w| n$. The contribution from $c > Y/mn$ is trivially bounded by
  \[
  \ll  \sum_{\substack{c |b \\c > Y/mn }} \frac{Y}{c} \ll mn \tau(b).
  \]
  For $c \leq Y/mn$ we have
  \[
 \sum_{\substack{ac \in (Y,Y+Y^{1-\eta}] \\ ac \equiv  a_0 \, (4)}}\chi(b+iac) = \frac{Y^{1-\eta}}{16 cm n} \sum_{\substack{r \, ( 4mn) \\ rc  \equiv a_0 \, (4)} }\chi(b+irc) + O(mn) \ll \frac{Y^{1-\eta} (b,mn)^{1/2}}{(mn)^{1/2}}
  \]
once we show that
  \[
   \sum_{\substack{r \, ( 4mn) \\ rc  \equiv a_0 \, (4)} }\chi(b+irc)=  \sum_{\substack{r \, ( 4mn) \\ r  \equiv a_0 \, (4)} }\chi(b+ir)\leq (mn)^{1/2}(b,mn)^{1/2} 
  \] 
To prove this, write
\[
\chi= \prod_{p^k||m} \chi_{p^{k}} \prod_{\pi^k|| w} \chi_{\pi^{k}}.
\]
By the Chinese remainder theorem we get  (denoting $k=\max\{k_1,k_2\}$)
\[
    \sum_{\substack{r \, ( 4mn) \\ r  \equiv a_0 \, (4)} }\chi(b+ir)  =  \bigg(\prod_{p^k|| m} \sum_{r \, (p^k)} \chi_{p^k}(b+ir)  \bigg) \bigg(\prod_{\pi^{k_1} \bar{\pi}^{k_2} || w} \sum_{r \, (p^k)} \chi_{\pi^{k_1}} \chi_{\bar{\pi}^{k_2}}(b+ir)\bigg)
\]
and the claim follows by Lemma \ref{gausssumlemma}.    
\end{proof}

\section{Set-up and statement of the quasi-explicit formula} \label{setupsection}
Let $\lambda_b$ be divisor-bounded coefficients and define
\[
\omega_2(b) := 2\prod_{\substack{ p\mid b \\ p \neq 2  }}\bigg(1- \frac{\rho(p)}{p}\bigg)^{-1}.
\]
Define the sequences over Gaussian integers $\A=(a_z)$ with
\[
a_z :=  \lambda_{\Re(z)} \mathbf{1}_{(z,\overline{z})=1}, \quad a^{\omega}_z := \lambda_{\Re(z)}  \omega_2(\Re(z)) \mathbf{1}_{(z,\overline{z})=1}.
\]
Note that $(z,\overline{z})=1$ implies that $(z,(1+i))=1$ and for $z=b+ia$ that $(a,b)=1$.

For an ideal $\n= (z)$ we define 
\[
a_{\n} = \sum_{u \in \{ \pm 1, \pm i\}} a_{uz}, \quad a^{\omega}_{\n} = \sum_{u \in \{ \pm 1, \pm i\}} a^{\omega}_{uz}
\]
so that for any function $f$ on the ideals we have
\[
\sum_{\N \n \sim X} a_\n f(\n) =  \sum_{|z|^2 \sim X} a_z f((z)) =  \sum_{\substack{a^2+b^2 \sim X \\(a,b)=1\\ (a^2+b^2,2)=1}} \lambda_b f((b+ia)).
\]

In the case that there is a Siegel zero for the character $\chi_1$, for any finite set of integers $B$ let $B_1 \subseteq B$ be the largest subset such that for all $b \in B_1$ we have
\begin{align} \label{B1condition}
 \sum_{\substack{a \sim (X-b^2)^{1/2} \\ (a,b)=1 \\ (a^2+b^2,2)=1}} \chi_1 ((b+ia)) > 0   
\end{align}
and let us denote
\[
\Omega(B) := \sum_{\N \n\sim X} a^{\omega}_\n \quad \text{for} \quad \lambda_b := \mathbf{1}_{B}(b).
\]

We split the proof of Theorem \ref{maintheorem} into two cases depending on whether there is a Siegel zero $\beta_1 > 1-\eps_1/\log X$ or not and according to the size of $\Omega(B_1)$. Theorem \ref{maintheorem} is then an immediate corollary of the following two theorems. 
\begin{theorem}\emph{(Exceptional case).} \label{exceptionaltheorem}
Let $\eps_1 \in (0,1/10)$ . Let $B \subseteq [\eta X^{1/2},(1-\eta)(2X)^{1/2}] \cap \ZZ$ with $|B| = X^{1/2-\delta}$ and let $\lambda_b=\mathbf{1}_{B}(b)$. Suppose that for some $\chi_1$ 
of modulus $\M(u_1) \leq X^{\delta+\eta}$ the $L$-function $L(s,\chi_1)$ has a zero $\beta_1 > 1-\eps_1/\log X$ and that $\Omega(B_1) \geq \Omega(B)/2$.
Suppose that $\delta$ is sufficiently small in terms of $\eps_1$. Then we have for all $\eps > 0$
\[
\sum_{\N \p\sim X} a_{\p}  \gg_\eps X^{1/2-\eps}|B|.
\]
\end{theorem}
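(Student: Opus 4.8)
The plan is to follow the treatment of the exceptional case in \cite[Chapter~24.2]{odc}. A putative zero $\beta_1>1-\eps_1/\log X$ of $L(s,\chi_1)$ forces the exceptional character to mimic $-\mu$ on average over Gaussian primes: one has $\sum_{\N\p^k\le Y}\chi_1(\p)^k\log\N\p=-Y^{\beta_1}/\beta_1+O(Ye^{-c\sqrt{\log Y}})$, so that among the Gaussian primes $\p$ with $\N\p\le X$ all but a proportion $O(\eps_1)$ satisfy $\chi_1(\p)=-1$, i.e.\ the primes with $\chi_1(\p)=+1$ are rare. This extra bit of information, unavailable to a pure sieve, is what lets us detect primes using only the Type~I estimates of Section~\ref{typeisection}. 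As a routine first reduction, since $a_\p\ge0$, $\Omega(B)\gg X^{1/2}|B|$ (from $\omega_2\ge2$ and the trivial count of $a$), and $\Omega(B_1)\ge\Omega(B)/2$, it suffices to prove $\sum_{\N\p\sim X,\ \Re\p\in B_1}a_\p\gg_\eps X^{-\eps}\Omega(B_1)$, and by the smooth weights of Section~\ref{smoothweightsection} we may work at a single finer-than-dyadic scale.

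For $b\in B_1$ introduce the plain and $\chi_1$-twisted sequences
\[
a^{(b)}_n=\sum_{a^2+b^2=n}\mathbf 1_{(a,b)=1}F_X(n),\qquad a^{(b),\chi_1}_n=\sum_{a^2+b^2=n}\chi_1((b+ia))\,\mathbf 1_{(a,b)=1}F_X(n).
\]
Both obey a level-of-distribution statement at level $D=X^{1-\delta-\eps}$: for squarefree $d$,
\[
\sum_{n\equiv0\,(d)}a^{(b)}_n=g(d)\XX_b+r^{(b)}_d,\qquad \sum_{n\equiv0\,(d)}a^{(b),\chi_1}_n=g(d)\,\mathfrak{S}_b+r^{(b),\chi_1}_d,
\]
with $g(d)=\mathbf 1_{(d,2b)=1}\rho(d)/d$, $\XX_b\asymp(X-b^2)^{1/2}$, and $0\le\mathfrak{S}_b\le\XX_b$, the nonnegativity of $\mathfrak{S}_b=\sum_a\chi_1((b+ia))\mathbf 1_{(a,b)=1}F_X$ being exactly the defining property~\eqref{B1condition} of $B_1$; moreover $\sum_{b\in B_1}\sum_{d<D}(|r^{(b)}_d|+|r^{(b),\chi_1}_d|)\ll_A X^{1/2}|B_1|(\log X)^{-A}$ for every $A>0$. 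The bound on $r^{(b)}_d$ is the Type~I input of Section~\ref{typeisection} (Poisson summation in $a$, Lemma~\ref{poissonlemma}, followed by the quadratic large sieve, Lemma~\ref{quadraticlargesievelemma}), valid because $D$ is below the exponent of distribution $1-\delta-o(1)$; the twisted remainder additionally uses the completed character sum bound of Lemma~\ref{charsumboundlemma}, which controls $\sum_a\chi_1((b+ia))$ over short intervals and progressions.

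Now comes the parity-breaking step. Apply the one-dimensional lower-bound (beta) sieve of level $D$ and sifting parameter $z_0=X^{1/3+\eps'}$, so that $s=\log D/\log z_0$ is just below $3$ (where the lower-bound function $f(s)$ is positive) and every surviving $n=a^2+b^2$ with $(a,b)=1$ is prime or a product of two split Gaussian primes of norm $>z_0$, to the \emph{nonnegative} sequence with $n$-th term $\tfrac12(a^{(b)}_n-a^{(b),\chi_1}_n)$. This is sieve-admissible of dimension $1$ at level $D$ with main term $\tfrac12(\XX_b-\mathfrak{S}_b)$, and its survivors are precisely the $n$ with $\chi_1((b+ia))=-1$; by the Siegel zero these are all primes apart from the $n=\p_1\p_2$ for which one of $\chi_1(\p_1),\chi_1(\p_2)$ equals $+1$, and such ``mixed'' products contribute only $O(\eps_1\,\Omega(B_1)/\log X)$ after a complementary upper-bound linear sieve of level $D$, because Gaussian primes with $\chi_1=+1$ have relative density $O(\eps_1)$. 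Recognising $\sum_b\XX_b V^{(b)}(z_0)\asymp\Omega(B_1)/\log X$ by the identity $\sum_b V^{(b)}(z_0)=V(z_0)\sum_b\prod_{p\mid b}(1-\rho(p)/p)^{-1}$ of Remark~\ref{flsieveremark}, and using $\sum_b\mathfrak{S}_b V^{(b)}(z_0)=o(\Omega(B_1)/\log X)$ when $B$ is unbiased, summation over $b\in B_1$ yields
\[
\sum_{\N\p\sim X,\ \Re\p\in B_1}a_\p\ \ge\ \Bigl(\tfrac12 c\,f(s)-O(\eps_1)\Bigr)\frac{\Omega(B_1)}{\log X}-o\bigl(\Omega(B_1)/\log X\bigr)
\]
for an absolute $c>0$; taking $\eps_1$ a small fixed constant and then $\delta$ small in terms of $\eps_1$ makes the right side $\gg\Omega(B_1)/\log X\gg_\eps X^{1/2-\eps}|B|$.

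Two points will be delicate. First, the constants: the positivity of $f(s)$ at $s\approx3$ must outweigh both the loss from the upper-bound sieve on the two-almost-primes and the saving $O(\eps_1)$ from the exceptional zero, so the admissible range of $\eps_1$ — and hence of $\delta$ — has to be checked numerically against the sieve functions, the same sort of optimisation flagged for $c_1,c_2,c_3$. Second, and the main obstacle, is the ``biased'' sub-case in which a positive proportion of $B_1$ lies in a fixed progression modulo the rational part of $u_1$: then $\mathfrak{S}_b$ is comparable to $\XX_b$, the main term $\tfrac12(\XX_b-\mathfrak{S}_b)$ degenerates, and the primes counted all carry $\chi_1=+1$, their expected number being deflated by a factor $\asymp L(1,\chi_1)$. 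Here one must instead sieve $\tfrac12(a^{(b)}_n+a^{(b),\chi_1}_n)$, whose main term $\tfrac12(\XX_b+\mathfrak{S}_b)$ is large, and control the now non-rare two-almost-prime contribution more carefully, the margin over the parity barrier coming from the strict positivity $L(1,\chi_1)>0$ — effectively $L(1,\chi_1)\gg|u_1|^{-1/2}$ from the class number formula, ineffectively $\gg_\eps|u_1|^{-\eps}$ from Siegel's theorem — which is exactly the source of the ineffective implied constant in Theorems~\ref{maintheorem} and~\ref{exceptionaltheorem}.
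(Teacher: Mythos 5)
There is a genuine gap, and it is exactly at the parity-breaking step you flag as delicate. Your plan oscillates between sieving $\tfrac12(a^{(b)}_n-a^{(b),\chi_1}_n)$ and, in the biased sub-case, sieving $\tfrac12(a^{(b)}_n+a^{(b),\chi_1}_n)$. The first of these has main term proportional to $\XX_b-\mathfrak{S}_b$, which degenerates precisely on $B_1$ (where $\mathfrak{S}_b>0$ can be of size $\XX_b$, e.g.\ when $B$ lies in a bad progression modulo $\M(u_1)$), and that is the only case Theorem~\ref{exceptionaltheorem} is concerned with. The fallback then sieves the weight $\tfrac12\bigl(1+\chi_1((b+ia))\bigr)=\tfrac12\bigl(1+\chi_1(\n)\bigr)$, but this is a \emph{pointwise} function of $\n$, not a multiplicative one: for $\n=\p_1\p_2$ with $\chi_1(\p_1)=\chi_1(\p_2)=-1$ (the typical situation under a Siegel zero) one has $\chi_1(\n)=+1$, so such $P_2$'s survive with full weight. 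Those two-almost-primes are \emph{not} rare, the Buchstab subtraction remains of the same order as the main term, and the linear sieve at $z_0=X^{1/3+\eps'}$ cannot beat the parity barrier. The appeal to the positivity $L(1,\chi_1)>0$ at the end does not rescue this: the margin from $L(1,\chi_1)$ appears only if the \emph{main term itself} is already proportional to $L(1,\chi_1)$, which the pointwise weight $1+\chi_1(\n)$ does not produce.

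The crucial missing idea is that the weight must be the Dirichlet convolution $\lambda_1(\n):=(1\ast\chi_1)(\n)$, which the paper (following \cite[Chapter~24.2]{odc}) applies via the sequence $a'_\n=a_\n\lambda_1(\n)\mathbf 1_{(\n,u_1)=1}$. Since $\lambda_1$ is multiplicative with $\lambda_1(\p)=1+\chi_1(\p)\in\{0,2\}$, one has for squarefree $\n=\p_1\p_2$ that $\lambda_1(\n)=(1+\chi_1(\p_1))(1+\chi_1(\p_2))$, which vanishes whenever \emph{any} prime factor has $\chi_1=-1$. Hence the weighted sequence is supported essentially on products of the $O(\eps_1)$-rare $\chi_1=+1$ primes, the sieve density $g'(p)\propto\rho(p)\lambda_1(\p)/p$ is $0$ for $\chi_1(\p)=-1$, the Buchstab tail
\[
\sum_{Z\le\N\p<2X^{1/2}}\frac{\lambda_1(\p)}{\N\p}\ <\ (1-\beta_1)(\log X+O(\log Z))\ \ll\ \eps_1
\]
is genuinely small, and the Fundamental Lemma suffices — this is how parity is broken. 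The restriction to $b\in B_1$ then serves only to lower-bound the main term $\YY\gg L(1,\chi_1)\,\Omega(B_1)$ by discarding the positive $\chi_1$-twisted contribution, after which Siegel's theorem gives $L(1,\chi_1)\gg_\eps|u_1|^{-\eps}$ and hence the claimed $\gg_\eps X^{1/2-\eps}|B|$. The technical heart (Lemma~\ref{exceptionaltypeilemma}) is then showing that $a_\n\lambda_1(\n)$ has Type~I distribution at level $X^{1-2\delta-4\eta}/q^4$, which uses the identity $\lambda_1(\d\m)=\sum_{\c|(\d,\m)}\mu(\c)\chi_1(\c)\lambda_1(\d/\c)\lambda_1(\m/\c)$ together with the hyperbola trick $\lambda_1(\m)=(1+\chi_1(\m))\sum_{\a\b=\m,\,\N\a<\N\b}\chi_1(\a)+O(\cdot)$ and two applications of Proposition~\ref{typeiprop}; none of this is visible in your level-of-distribution claim, which treats $a^{(b),\chi_1}$ as having main term $g(d)\mathfrak{S}_b$ with the untwisted $g(d)$ — an assertion that also needs justification when $(d,u_1)>1$.
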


\begin{theorem}\emph{(Regular  case).} \label{regulartheorem}
Let $\eps_1 \in (0,1/10)$. Let $B \subseteq [\eta X^{1/2},(1-\eta)(2X)^{1/2}] \cap \ZZ$ with $|B| = X^{1/2-\delta}$ and let $\lambda_b=\mathbf{1}_{B}(b)$. Suppose that the  $L$-functions $L(s,\chi)$ have no zeros $\beta > 1-\eps_1/\log X$ or that $\Omega(B_1) \leq \Omega(B)/2$ and suppose that $\delta$ is sufficiently small in terms of $\eps_1$. Then
\[
\sum_{\N \p\sim X} a_\p  \gg \eps_1  \frac{1}{\log X} \sum_{\N \n\sim X} a^{\omega}_\n.
\]
\end{theorem}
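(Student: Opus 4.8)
The plan is to deduce Theorem~\ref{regulartheorem} from the quasi-explicit formula Theorem~\ref{asympregulartheorem}, feeding in the classical zero information for Hecke $L$-functions on $\QQ(i)$ in place of the strong hypothesis~\eqref{zeroassumptionstrong} of Theorem~\ref{GRHtheorem}: the Landau--Page zero-free region (Lemmas~\ref{zerofreeregionlemma} and~\ref{differentmodulilemma}), the log-free zero density estimate (Lemma~\ref{zerodensitylemma}), and the bound $|L(s,\xi_k\chi)|^{-1}\ll\log Q$ just inside the zero-free region (Lemma~\ref{Lnearzerolemma}). Before applying the formula we perform a reduction whose purpose is to render the zero closest to $1$ harmless. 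If no $L(s,\xi_k\chi)$ with $\M(u)\le X^{\delta+\eta}$ has a zero $\beta>1-\eps_1/\log X$, put $B':=B$; otherwise such a zero $\beta_1$ exists, by Lemma~\ref{differentmodulilemma} it belongs to a unique pair $(u_1,\chi_1)$ with $\chi_1$ real and $k=0$, we are then in the sub-case $\Omega(B_1)\le\Omega(B)/2$, and we put $B':=B\setminus B_1$. In this second sub-case $\Omega(B')\ge\Omega(B)/2$ by construction, while the definition of $B_1$ gives $\sum_a\chi_1((b+ia))\le0$ for every $b\in B'$, so that the bias attached to $\beta_1$ in the quasi-explicit formula applied to $B'$ points in the direction that \emph{increases} the prime count and may simply be discarded in a lower bound. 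Since $\omega_2(b)\ll_\eps X^{\eps}$ and $\#\{a:a^2+b^2\sim X\}\gg X^{1/2-o(1)}$ uniformly over the relevant $b$, we get $\Omega(B')\gg X^{1/2-o(1)}|B'|$ and $|B'|\gg X^{1/2-\delta-o(1)}$ -- in particular $\sum_b\mathbf{1}_{B'}(b)\ge X^{2/5+\eps}$, so Theorem~\ref{asympregulartheorem} applies -- and as $\Omega(B')\ge\Omega(B)/2$ it suffices to prove $\sum_{\N\p\sim X}a'_\p\gg\eps_1(\log X)^{-1}\Omega(B')$ for the sequence $a'$ attached to $\lambda_b=\mathbf{1}_{B'}(b)$.

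Theorem~\ref{asympregulartheorem} -- whose proof combines the Type I evaluation of Section~\ref{typeisection}, valid up to level $X^{1-\delta-o(1)}$ by Proposition~\ref{typeiprop}, with the Type II estimates of Sections~\ref{typeiipreliminarysection}--\ref{sharptypeiipropsection} -- then writes $\log X\cdot\sum_{\N\p\sim X}a'_\p$ as a main term of the sieve-expected shape and of size $\asymp\Omega(B')$, minus bias terms indexed by the zeros $\rho=\beta+i\gamma$ of $L(s,\xi_k\chi)$ with $\M(u)\le X^{\delta+\eta}$ and $|k|,|\gamma|\le X^{O(\eta)}$, plus an admissible error $O_C(\Omega(B')(\log X)^{-C})$, which is $\ll\eps_1\Omega(B')$ for $X$ large. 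The key structural feature is that the bias term of a given $\rho$ is, up to the scalar factor $X^{\beta-1}$, of the same shape as the main term -- a $\chi$-twist of the same Type I/Type II sums -- and hence is $\ll X^{\beta-1}\times(\text{main term})$ in absolute value.

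It remains to control the surviving bias terms. By the reduction above no undiscarded zero has $\beta>1-\eps_1/\log X$; by Lemma~\ref{differentmodulilemma} at most one lies in $(1-1/(\sqrt\delta\log X),\,1-\eps_1/\log X]$, and it contributes in absolute value at most $X^{-\eps_1/\log X}=e^{-\eps_1}$ times the main term; and for the zeros with $\beta\le1-1/(\sqrt\delta\log X)$, a partial summation against the log-free bound $N^\ast(\alpha,T,K,Q)\le c_3(Q^2KT)^{c_2(1-\alpha)}$ of Lemma~\ref{zerodensitylemma} (applied with $Q=X^{2(\delta+3\eta)}$, $K=T=X^{O(\eta)}$) bounds their total contribution by $Ce^{-c/\sqrt\delta}$ times the main term, for absolute constants $c,C>0$. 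Therefore
\[
\log X\cdot\sum_{\N\p\sim X}a'_\p\;\ge\;\Big(1-e^{-\eps_1}-Ce^{-c/\sqrt\delta}\Big)\times(\text{main term})\;-\;O\!\left(\frac{\Omega(B')}{(\log X)^{C}}\right),
\]
and choosing $\delta$ small in terms of $\eps_1$ so that $Ce^{-c/\sqrt\delta}\le\tfrac12\big(1-e^{-\eps_1}\big)$, and using $1-e^{-\eps_1}\ge\tfrac12\eps_1$ together with $(\text{main term})\asymp\Omega(B')$, the right-hand side is $\gg\eps_1\Omega(B')$, as required. The factor $\eps_1$ enters because the near-edge zero produces a bias of the \emph{same} leading constant as the main term, so only the relative deficit $1-e^{-\eps_1}$ remains.

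The main obstacle lies inside Theorem~\ref{asympregulartheorem}, chiefly the Type II analysis of Sections~\ref{typeiipreliminarysection}--\ref{sharptypeiipropsection}: after unique factorisation in $\ZZ[i]$, Cauchy--Schwarz, and the multiplicative large sieve (Lemma~\ref{largesievelemma}), one must evaluate $T_B(z_1,z_2)$ with a main term $M_B(z_1,z_2)$ \emph{uniformly} in the common divisor $b_0=(b_1,b_2)$ -- this uniformity is exactly what prevents a set concentrated on multiples of a fixed $q\le X^{\delta}$ from destroying the estimate -- and then exhibit cancellation in $\sum\mu(|z_1|^2)\mu(|z_2|^2)M_B(z_1,z_2)$ via Heath-Brown's identity and the zero information above. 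In the regular case this is the easier scenario, since the crude bound $X^{\beta-1}\le e^{-\eps_1}$ suffices and the delicate $\mu^{\#}/\mu^{\flat}$ splitting of the unconditional case is not needed; the only genuinely new ingredient beyond Theorem~\ref{GRHtheorem} is the log-free density estimate, which is also what forces $\delta$ to be small in terms of the constants $c_1,c_2$ and, through the reduction, in terms of $\eps_1$.
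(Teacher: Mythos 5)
Your proposal follows essentially the same route as the paper's Section~\ref{sec:regularthmproof}: split into the same two cases, apply Theorem~\ref{asympregulartheorem} to either $B$ or $B\setminus B_1$, control a single near-edge zero via Lemma~\ref{differentmodulilemma} and the bound $X^{\beta-1}\le e^{-\eps_1}$, and absorb the remaining zeros via the log-free density estimate (Lemma~\ref{zerodensitylemma}), choosing $\delta$ small in terms of $\eps_1$.

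One step is stated too loosely. In the sub-case $\Omega(B_1)\le\Omega(B)/2$, you claim that because $\sum_a\chi_1((b+ia))\le 0$ for $b\in B'=B\setminus B_1$, the bias term attached to the Siegel zero is nonnegative and ``may simply be discarded.'' This is not quite right: the bias in Theorem~\ref{asympregulartheorem} carries the extra multiplicative weight $(\N\n)^{\beta_1-1}$, which varies with $a$ and is not constant across the range $a^2+b^2\sim X$, so the sign of $\sum_a\chi_1((b+ia))(a^2+b^2)^{\beta_1-1}$ need not be inherited from $\sum_a\chi_1((b+ia))\le 0$. The paper handles this by using the pointwise estimate $\chi_1((b+ia))(a^2+b^2)^{\beta_1-1}\le\chi_1((b+ia))+2\eps_1$ (valid since $\beta_1>1-\eps_1/\log X$ and $a^2+b^2\sim X$), which, combined with the sign condition on $B'$, bounds the bias from below by $-2\eps_1\,\Omega(B')$ rather than by $0$. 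Your final estimate survives -- the resulting lower bound in this case is $(1-2\eps_1-O(e^{-c/\sqrt\delta}))\,\Omega(B')\gg\Omega(B')$ since $\eps_1<1/10$, which is stronger than what you claimed -- but the claim that the bias is nonnegative as stated is not correct and should be replaced by this $O(\eps_1)$-slack argument.
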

In Theorem \ref{regulartheorem} the possible exceptional zero is not an Siegel zero, the potential zeros are only somewhat close to the line $\Re(s)=1$ and we show that these zeros can have only a small influence on the main term. This means that in all error terms it suffices to save only a power of $\log X$ instead of a power of $X$. Theorem \ref{exceptionaltheorem} is proved in Section \ref{exceptionalsection}. Theorem \ref{regulartheorem} is proved in Section \ref{sec:regularthmproof} and is a quick consequence of the following result. Recall that $\M(u)$ denotes the smallest integer $m$ with $u|m$.
\begin{theorem} \emph{(Quasi-explicit formula).}\label{asympregulartheorem}
Let $\eta > 0$ be small. Let $\delta \in (0,1/10)$ and $X \gg 1$. For every $C_1> 0$ there is some $C_2>$ such that for some 
 \[
 J \leq (\log X)^{C_2}
 \]
there is a set of primitive Hecke characters $\{\xi_{k_j} \chi_j\}_{j \leq J}$ with Dirichlet characters $\chi_j$ to moduli $u_j \in\ZZ[i]$ with $\M(u_j) \leq X^{\delta+\eta}$ and $|k_j| \leq X^\eta$  such that the following holds. Let $\lambda_b$ be  coefficients with $|\lambda_b| \, \leq X^{o(1)}$, supported on  $ [\eta X^{1/2},(1-\eta)(2X)^{1/2}] \cap \ZZ$, and satisfying
\[
\sum_{b} |\lambda_b| \, \geq X^{1/2-\delta}.
\] 
Then
\begin{align*}
  \sum_{\N \n\sim X} a_\n \Lambda(\N \n)  =  \frac{4}{\pi} \sum_{\N \n\sim X} a^{\omega}_\n \bigg(1- \sum_{j\leq J} \overline{\xi_{k_j}\chi_j}(\n) \sum_{\substack{\rho_j \\ L(\rho_j,\xi_{k_j}\chi_j) = 0 \\ |\emph{\Im}(\rho_j)| \leq X^\eta }}(\N \n)^{\rho_j-1}\bigg) \\
  + O\bigg( \frac{1}{(\log X)^{C_1}} X^{1/2} \sum_{b} |\lambda_b|\bigg).  
\end{align*}

\end{theorem}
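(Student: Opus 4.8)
The plan is to insert a Vaughan-type combinatorial identity for $\Lambda(\N\n)$ and then feed the resulting pieces into the Type I estimate of Section \ref{typeisection} and the Type II estimate of Sections \ref{typeiipreliminarysection}--\ref{sharptypeiipropsection}. After applying the identity and the finer-than-dyadic smoothing of Section \ref{smoothweightsection} (replacing the cutoff $\N\n\sim X$ by the smooth weight $H_N$, and on the Type II side introducing also an angular weight $G$ in $\arg z$), the left side reduces, up to a negligible error, to Type I sums of the shape \eqref{typeisketchintro} with level of distribution $D=X^{1-\delta-\eta}$, together with Type II sums $S(\alpha,\beta)$ as in \eqref{typeiisketchintro} with $MN\asymp X$ and $M$ in an admissible range around $X^{1/2}$; it suffices to treat $\beta=\mu$ there.

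For the Type I sums I would invoke Proposition \ref{typeiprop}: Poisson summation in the free variable $a$ (Lemma \ref{poissonlemma}) followed by the large sieve for roots of quadratic congruences (Lemma \ref{quadraticlargesievelemma}) delivers the exponent of distribution $D=X^{1-\delta-\eta}$, while the weight $\mathbf 1_{(\N\n,P(W))=1}$ produced by the identity is removed with the Fundamental lemma of the sieve (Lemma \ref{flsievelemma}). Summing the resulting main terms over $b$ as in Remark \ref{flsieveremark} yields exactly $\frac{4}{\pi}\sum_{\N\n\sim X}a_\n^\omega$, with the constant $\frac{4}{\pi}$ as in \eqref{piproduct} and the arithmetic weight $\omega_2$ emerging from $\sum_b V^{(b)}(W)$; the total Type I error is $O(X^{1/2}(\log X)^{-C}\sum_b|\lambda_b|)$.

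The heart of the argument is the Type II sum. Let $\{\xi_{k_j}\chi_j\}_{j\le J}$ be the primitive Hecke characters of modulus $\M(u_j)\le X^{\delta+\eta}$ with $|k_j|\le X^\eta$ whose $L$-functions have a zero in the box $\sigma>1-\frac{c\log\log X}{\log X}$, $|t|\le X^\eta$; by the log-free zero density estimate (Lemma \ref{zerodensitylemma}) there are only $J\le(\log X)^{O(1)}$ of them. Split $\mu_z=\mu_z^{\#}+\mu_z^{\flat}$ with $\mu_z^{\#}=\sum_{j\le J}\overline{\xi_{k_j}\chi_j}(z)\,\C(\mu,\overline{\xi_{k_j}\chi_j})$. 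Then $S(\alpha,\mu^{\#})$ is a linear combination of $O((\log X)^{O(1)})$ sums of the form $\sum_{\m\n=(b+ia)}\alpha(\m)\overline{\xi_{k_j}\chi_j}(\n)\mathbf 1_B(b)$; since $\overline{\xi_{k_j}\chi_j}$ is completely multiplicative of small modulus, each such sum is accessible to Type I information alone, and combining these with the correlation coefficients $\C(\mu,\overline{\xi_{k_j}\chi_j})$ and applying Perron's formula the zeros $\rho_j$ of $L(s,\xi_{k_j}\chi_j)$ with $|\Im(\rho_j)|\le X^\eta$ assemble into the bias $-\frac{4}{\pi}\sum_{\N\n\sim X}a_\n^\omega\sum_{j\le J}\overline{\xi_{k_j}\chi_j}(\n)\sum_{\rho_j}(\N\n)^{\rho_j-1}$. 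For $S(\alpha,\mu^{\flat})$ I would open $\m\n=(b+ia)$ via unique factorization in $\ZZ[i]$, apply Cauchy--Schwarz in $\m$, and reduce to controlling $\sum_{|z_1|^2,|z_2|^2\asymp N}\mu_{z_1}^{\flat}\overline{\mu_{z_2}^{\flat}}\,T_B(z_1,z_2)$, where $T_B(z_1,z_2)$ counts $(b_1,b_2)\in B^2$ with $b_1z_2\equiv b_2z_1\pmod{\Im(z_2\overline{z_1})}$. Factoring out $b_0=(b_1,b_2)$ and expanding the residual congruence over Dirichlet characters of modulus dividing $\Im(z_2\overline{z_1})/b_0$, the conductors $d\ge X^{\delta+\eta}/b_0$ are bounded by the multiplicative large sieve (Lemma \ref{largesievelemma}) using $|B|\le X^{1/2-\delta}$, while the conductors $d<X^{\delta+\eta}/b_0$ produce a main term $M_B(z_1,z_2)$; the contribution of $M_B$ to the $(z_1,z_2)$-sum is killed by the balance estimate \eqref{eq:balanceddef} for $\mu^{\flat}$ along progressions of modulus $f=db_0\le X^{\delta+\eta}$, which follows from the removal of the $J$ offending characters in $\mu^{\#}$ together with Lemmas \ref{zerodensitylemma} and \ref{Lnearzerolemma}, and which is available once $N>X^\eta f^3$, as holds throughout the admissible range of $M$.

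\textbf{Main obstacle.} The delicate points are threefold. First, one must prove the balance estimate \eqref{eq:balanceddef} for $\mu^{\flat}$ \emph{uniformly} over all $f\le X^{\delta+\eta}$ and $a\in(\ZZ/f\ZZ)^{\times}$, which forces $J$ and the box defining the exceptional characters to be calibrated through Lemma \ref{zerodensitylemma} so that every Hecke character of modulus up to $X^{\delta+\eta}$ carrying a zero too near $\Re(s)=1$ is absorbed into $\mu^{\#}$. Second, one must track the dependence on $b_0=(b_1,b_2)$ inside $T_B(z_1,z_2)$: a large subset of $B\times B$ may share a common large factor (for instance if $B\subseteq q\ZZ$ with $q\le X^\delta$), so the large-conductor bound has to be genuinely uniform in $b_0$ --- this is precisely why moduli are measured by $\M(u)$ rather than $\N(u)$ and why the threshold is $\M(u_j)\le X^{\delta+\eta}$. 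Third, one must match the Perron-type extraction of the zero sum from $S(\alpha,\mu^{\#})$ against the combinatorial decomposition of $\Lambda$ so that the cross terms combine exactly into the factor $1-\sum_{j\le J}\overline{\xi_{k_j}\chi_j}(\n)\sum_{\rho_j}(\N\n)^{\rho_j-1}$, with every remaining error term saving a full power of $\log X$ --- a power-of-$X$ saving being unnecessary here, as we are in the regular rather than the exceptional case.
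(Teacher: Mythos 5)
Your proposal reproduces the strategy of the paper's own proof, closely paraphrasing the overview in Section~\ref{typeiisketch}: Vaughan's identity, Type~I from Proposition~\ref{typeiprop} combined with the Fundamental lemma of the sieve, Type~II via the splitting $\mu=\mu^\#+\mu^\flat$ with the large sieve for large conductors and the balance estimate for small conductors, and an explicit-formula extraction of the zero sum. The three delicate points you list at the end are exactly the ones the paper must handle, so the plan is sound and the obstacles are correctly identified.

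Two places where the sketch is lighter than what the paper actually does and would need to be filled in. First, the approximation $\mu^\#_z=\sum_{j\le J}\overline{\xi_{k_j}\chi_j}(z)\,\C(\mu,\overline{\xi_{k_j}\chi_j})$ as you write it is the simplified form from the introduction; in the actual argument (Section~\ref{approximationsection}) the correlation coefficient must be taken with the finer-than-dyadic norm partition $H_{N'}$ and must be the $W$-rough normalized correlation $\C_W(\beta,\overline{\xi_{k_j}\chi_j}H_{N'})$. These refinements are not cosmetic: the $H_{N'}$-dependence is what lets Lemma~\ref{stronggallagherlemma} absorb the radial smooth weight $F_M$ that appears after Cauchy--Schwarz, and the $W$-roughness is what makes the functions $\xi\chi H_{N'}$ approximately orthogonal so the ``expansion'' interpretation of $\mu^\#$ holds (Lemma~\ref{flapproximationlemma}). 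Second, rather than applying Perron directly to the correlation constants inside $S(\alpha,\mu^\#)$ and trying to re-assemble the factor $1-\sum_j\overline{\xi_{k_j}\chi_j}(\n)\sum_{\rho_j}(\N\n)^{\rho_j-1}$ by hand, the paper introduces auxiliary sequences $\B_j$ (Section~\ref{asympregularproofsection}) whose Type~II parts are exactly the $j$-th summands produced by Proposition~\ref{typeiiprop}, runs the same Vaughan decomposition simultaneously on $\A$ and on each $\B_j$, matches the resulting three pieces term by term (noting $S_1(\B_j),S_2(\B_j)$ negligible for $j\ge1$ and $S_3(\B_0)$ negligible), and only then evaluates the complete $S(\B_j,\Lambda)$ by the explicit formula. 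This bookkeeping device is what makes the Vaughan cross-terms cancel cleanly; your proposal flags the matching as an obstacle but does not supply a mechanism for it. Beyond these two points your plan coincides with the paper's.
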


The restriction $\M(u_j) \leq X^{\delta+\eta}$ instead of a condition involving $|u_j|$ may appear unusual but in fact it occurs very naturally in the proof of the Type II estimate (Proposition \ref{typeiiprop}), where we consider the distribution of Gaussian integers in arithmetic progressions to moduli $d$ which are regular integers, so  that $u_j|d$.
We will prove Theorem \ref{asympregulartheorem} in Section \ref{asymptoticproofsection}. It is possible that the range $\delta < 1/10$ may be improved a bit with further work but we seem to hit a hard barrier at $\delta=1/6$, cf. Remark \ref{bottleneckremark} for more details. It is also plausible that one could obtain a power saving in the error term by taking into account $J \leq X^\eta$ bad characters. The factors of $X^\eta$ in the ranges for $u_j,k_j,\Im(\rho_j)$ may be replaced by $(\log X)^{C_3}$ for some large $C_3 > 0$ but this is inconsequential for our applications.  We also note that the right-hand side may be expressed as a sum over Gaussian integers by writing
\begin{align*}
    \sum_{\N \n \sim X} a^{\omega}_\n \overline{\xi_{k_j} \chi_j}(\n)&(\N\n)^{\rho_j-1} = \sum_{u \in \{\pm 1, \pm i\}} \sum_{|z|^2 \sim X} a^{\omega}_{uz} \overline{\xi_{k_j} \chi_j}(z)(|z|^2)^{\rho_j-1} \mathbf{1}_{z \equiv 1 \, (2(1+i))} \\
   & = \frac{1}{4} \sum_{\chi \in \ZZ[i]/2(1+i)\ZZ[i]} \sum_{u \in \{\pm 1, \pm i\}} \sum_{|z|^2 \sim X} a^{\omega}_{uz} \overline{\xi_{k_j} \chi_j \chi }(z)(|z|^2)^{\rho_j-1} \\
  &  =\sum_{\chi \in \ZZ[i]/2(1+i)\ZZ[i]} \mathbf{1}_{\xi_{k_j} \chi_j \chi (i)=1} \sum_{|z|^2 \sim X} a^{\omega}_{z} \overline{\xi_{k_j} \chi_j \chi }(z)(|z|^2)^{\rho_j-1},
\end{align*}
where the last equation follows from the change of variables $z \mapsto z/u$ and summing over $u$.

We conclude this section by making elementary reductions for the proof of Theorem \ref{asympregulartheorem}, to reduce to the case when $\lambda_b=\mathbf{1}_B(b)$ with $B$  in a short interval.
\subsection{Reduction to bounded $\lambda_b$}
We can reduce from coefficients satisfying $|\lambda_b| \, \leq X^{o(1)}$ to $|\lambda_b| \, \leq 1$ as follows. Let $\eps> 0$ be small enough so that $\delta+2\eps < 1/10$ and denote
\begin{align*}
    B_0 := \{b: 0<|\lambda_b|\, \leq 1\} \quad \text{and} \quad   B_j:= \{b: 2^{j-1} <|\lambda_b|\, \leq 2^{j}\}, \quad 1 \leq j \leq \log X.
\end{align*}
The contribution from $b \in B_j$ with $|B_j| \leq X^{1/2-\delta-\eps}$ is negligible, since for such $j$  by $|\lambda_b| \, \leq X^{o(1)}$
\begin{align*}
    \sum_{b \in B_j} |\lambda_b| \,\ll X^{o(1)} |B_j|\, \ll X^{1/2-\delta-\eps+o(1)}.
\end{align*}
For each $j$ such that $|B_j|\, > X^{1/2-\delta-\eps}$ we can renormalize $\lambda_b$ by $2^{-j}$ to get bounded coefficients which satisify
\begin{align*}
    \sum_{b \in B_j} \, |2^{-j}\lambda_b | \,\geq \frac{1}{2} |B_j| \geq   X^{1/2-\delta-2\eps}.
\end{align*}
Then the general case follows by applying the bounded case for each such $j$ separately with the weights $\lambda_{b}^{(j)} := 2^{-j}\lambda_b \mathbf{1}_{b \in B_j} $. Indeed, if we denote the claim in Theorem \ref{asympregulartheorem} by $S(\lambda)= \mathrm{M}(\lambda) + O(\mathrm{E}(\lambda))$, then assuming that the theorem holds for the bounded $\lambda^{(j)}_b$ we get by linearity
\begin{align*}
    S(\lambda) = \sum_j  2^j S(\lambda^{(j)}) = \sum_j  2^j  (\mathrm{M}(\lambda^{(j)}) + O(\mathrm{E}(\lambda^{(j)})))  = \mathrm{M}(\lambda) + O(\mathrm{E}(\lambda)).
\end{align*}
Thus, it suffices to prove Theorem \ref{asympregulartheorem} for $|\lambda_b| \leq 1$.
\subsection{Reduction from $\lambda_b$ to $\mathbf{1}_B$}
We can reduce the proof from general bounded weights $\lambda_b$ to the weights of the type $\mathbf{1}_B(b)$ by a finer-than-dyadic decomposition in terms of the values of $\lambda_b$.  That is, we write for $\nu=(\log X)^{-C}$
\[
I_j := \begin{cases} ((1-\nu)^{j+1},(1-\nu')^{j}]\, \quad 0 \leq j < (\log X)/\nu \\
[0,(1-\nu)^{j}], \quad j= \lfloor   (\log X)/\nu\rfloor
\end{cases}
\]
\[
\mathbf{1}_{|z|_\infty \leq 1} = \sum_{j_1,j_2} \mathbf{1}_{z \in I_{j_2}\times I_{j_2}}.
\]
Since $|\lambda_b| \leq 1$, we obtain a partition 
\[
\lambda_b = \sum_{j_1,j_2}  \left(\mathbf{1}_{\lambda_b \in I_{j_2}\times I_{j_2}} ((1-\nu)^{j_1} +i (1-\nu)^{j_2})  + O(\nu |\lambda_b| \mathbf{1}_{\lambda_b \in I_{j_2}\times I_{j_2}})\right),
\]
where the contribution from error term is negligible by crude bounds.
We consider $\mathbf{1}_B$ for
\[
B:= B(j_1,j_2)= \{b \in [\eta X^{1/2},(1-\eta)X^{1/2}]: \lambda_b \in I_{j_2}\times I_{j_2} \}
\]
and note that for $j_1,j_2$ with $|B(j_1,j_2)| < X^{1/2-\delta-\eta}$ we can bound the contribution trivially. Hence, it suffices to show Theorem \ref{asympregulartheorem} for $\lambda_b= \mathbf{1}_B(b)$.
\subsection{Reduction to $B$ in a short interval}
Let $B \subseteq [\eta X^{1/2},(2-\eta)X^{1/2}]$ and split
\begin{align*}
    B= \bigcup_j B_j, \quad B_j := B \cap [j X^{1/2-\eta},(j+1)X^{1/2-\eta}]. 
\end{align*}
The contribution from $B_j$ with $|B_j| \leq X^{-2\eta} |B|$ is negligible by a crude bound. Thus, we only need to deal with $|B_j| \geq |B|X^{-2\eta} = X^{1/2-\delta-2\eta}$. Therefore, it suffices to prove Theorem \ref{asympregulartheorem} for $\lambda_b= \mathbf{1}_B(b)$ with
\begin{align} \label{Bintervalassumption}
 B \subseteq [Y,Y + X^{1/2-\eta}]  \quad  \text{for some} \quad Y\in [\eta X^{1/2},(2-\eta)X^{1/2}].
\end{align}
 We will from now on always assume that 
 $\lambda_b= \mathbf{1}_B(b)$ for a set $B$ satisying (\ref{Bintervalassumption}).

\section{Type I information} \label{typeisection}
For for $b \in B$ and for any function $f$ on the ideals of $\ZZ[i]$ denote
\[
a_{z,f}^{(b)} := \mathbf{1}_{\Re(z)=b}\mathbf{1}_{(z,\overline{z}=1)} f((z)).
\]
Define
\[
g^{(b)}(w) := \mathbf{1}_{(w,b \overline{w})=1} \frac{1}{|w|^2}.
\]
We have Type I information provided by the following proposition. 
\begin{prop} \label{typeiprop}\emph{(Type I information).} Let $\alpha_w$ be divisor-bounded coefficients supported on $(w,2\overline{w})=1$. Let $\chi$ be a Dirichlet character to modulus $u$ and let $\xi=\xi_k$ with $
|k| \ll X^{\eta/1000}.$ Let $q := \M(u) \leq X^{1/4}$. Then 
\begin{align*}
    \sum_{b \in B} \bigg| \sum_{\substack{|w|^2 \leq X^{1-\delta-\eta}/q^2 \\ (w,u)=1}} \alpha_w  \bigg(\sum_{\substack{ |z|^2 \sim X \\ z \equiv 0 \, (w)}} a_{z,\xi\chi}^{(b)} -  g^{(b)}(w) \sum_{\substack{ a \sim (X^2-b^2)^{1/2} \\  (a,b)=1 \\ (a^2+b^2,2)=1}}  \xi\chi((b+ia))\bigg)\bigg| \\
    \ll X^{1/2-\eta/100}|B|.
\end{align*}
The same is true if $|z|^2 \sim X$ is replaced by $|z|^2 \in [X',X'(1+\nu)]$ or by a smooth weight $F(|z|^2/X')$ as in Section \ref{smoothweightsection} with $X'\sim X$ and $\nu \geq X^{-\eta/1000}$, with the same change applied to the condition $a \sim (X^2-b^2)^{1/2}$. 
\end{prop}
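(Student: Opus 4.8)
The plan is to run the Type~I argument of Fouvry--Iwaniec: translate the divisibility $z\equiv 0\,(w)$ into a congruence on the free variable $\Re(z)$, detect that congruence by Poisson summation, check that the zero frequency reproduces exactly the claimed main term $g^{(b)}(w)\cdot(\cdots)$, and estimate the nonzero frequencies using the large sieve for roots of quadratic congruences (Lemma~\ref{quadraticlargesievelemma}).

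By the device of Section~\ref{smoothweightsection} it costs only $X^{o(1)}$ to replace $|z|^2\sim X$ and the matching cutoff on $a$ by a common smooth weight $F(|z|^2/X')$ with $X'\asymp X$ and $\nu\geq X^{-\eta/1000}$, so I would prove that form. Writing $z=b+ia$ with $b\in B$, the constraint $(z,\overline z)=1$ is equivalent to $(a,b)=1$ together with $a^2+b^2$ odd; if $(w,2b\overline w)\neq 1$ then both the inner sum over $z$ and $g^{(b)}(w)$ vanish, so one may assume $(w,2b\overline w)=1$. For such $w$ one has $\ZZ[i]/(w)\cong\ZZ/|w|^2\ZZ$ with $i\mapsto\nu_w$, $\nu_w^2+1\equiv 0\ (|w|^2)$, and $w\mid b+ia\iff a\equiv b\nu_w\ (|w|^2)$; as $w$ ranges over these Gaussian integers the pairs $(d,\nu):=(|w|^2,\nu_w)$ run bijectively over all odd $d\leq X^{1-\delta-\eta}/q^2$ with $\nu^2+1\equiv 0\ (d)$, which is precisely the index set of Lemma~\ref{quadraticlargesievelemma}.

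I would then remove $(a,b)=1$ by $\sum_{e\mid(a,b)}\mu(e)$, write $a=ea'$ (so $(e,d)=1$ since $(w,b)=1$), fold ``$a^2+b^2$ odd'' together with $a'\equiv b\nu_w\overline e\ (d)$ into one progression modulo $2d$, and apply Poisson summation (a weighted version of Lemma~\ref{poissonlemma}) to the $a'$-sum modulo $L:=\mathrm{lcm}(2d,q)$, with smooth weight $\Phi_{b,e}(x):=\xi_k(b+iex)F((e^2x^2+b^2)/X')$, a bump of length $\asymp X^{1/2}/e$ with $|\Phi_{b,e}^{(j)}|\ll_j\big((\nu^{-1}+|k|)e/X^{1/2}\big)^j$, noting that $\chi(b+iea')$ is periodic in $a'$ to a modulus dividing $q$. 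The frequency $h=0$ equals $g^{(b)}(w)$ times the zero frequency of the smoothed model sum $\sum_a\xi\chi((b+ia))F(\cdots)$; since $q\leq X^{1/4}$ is far smaller than the length of the $a$-sum, that model sum has no nonzero frequencies up to a negligible error, so the $h=0$ term is cancelled exactly by $g^{(b)}(w)\cdot(\cdots)$. What remains is the contribution of $0<|h|\ll H_d:=dq(\nu^{-1}+|k|)X^{\eps}/X^{1/2}$, each frequency carrying the weight $\widehat\Phi_{b,e}(h/L)\ll X^{1/2}/e$, a complete exponential sum $\sum_{\rho\,(q)}\chi(b+ie\rho)e_q(\beta\rho)\ll q^{1/2+\eps}(b,q)^{1/2}$ (from the Gauss sum estimates behind Lemma~\ref{gausssumlemma}), and, crucially, the phase $e_d\big(\nu_w\,(b/e)\,h\,\overline{2q}\big)$.

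It then remains to bound, for each dyadic $D'\leq X^{1-\delta-\eta}/q^2$ and each $e$, an expression of the shape $\sum_{d\sim D'}\frac{1}{dq}\sum_{\nu^2+1\equiv0\,(d)}\alpha_d(\nu)\sum_{0<|h|\ll H_{D'}}\sum_{b''}\overline{\eta_{b''}}\,K_{b'',e}(h)\,\widehat\Phi_{b'',e}(h/L)\,e_d(\nu\,(b''h)\,\overline{2q})$ for arbitrary unimodular $\eta_{b''}$, where $b''=b/e$ runs over an interval of length $\ll X^{1/2}/e$ and $\#\{b''\}\leq|B|$. After separating the residual $d$-dependence of the weights by a standard device (as in \cite{fouvryi}) and discarding the terms with $(d,2q)>1$, Cauchy--Schwarz in $(d,\nu)$ peels off $\alpha_d(\nu)$ at cost $\ll q^{-1}(D')^{-1/2}X^{o(1)}$ and reduces matters to $\sum_{d\sim D'}\sum_{\nu^2+1\equiv0\,(d)}\bigl|\sum_m\gamma_m e_d(\nu m\,\overline{2q})\bigr|^2$, where $m=b''h$ ranges over an interval of length $\ll X^{1/2}H_{D'}/e$ and $\sum_m|\gamma_m|^2\ll q^{2}X^{1-\delta}D'e^{-2}X^{o(1)}$; Lemma~\ref{quadraticlargesievelemma} (with twist modulus $2q$) then bounds this by $\ll(qD'+X^{1/2}H_{D'}/e)\,q^{2}X^{1-\delta}D'e^{-2}X^{o(1)}$. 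Collecting the estimates, summing over $e$ with $\sum_e e^{-1}\ll\log X$, over the $\ll\log X$ dyadic $D'$, and inserting $\nu^{-1}+|k|\ll X^{\eta/1000}$, yields the total bound $\ll X^{1-\delta-\eta/2+o(1)}\ll X^{1/2-\eta/100}|B|$, as required. The genuine difficulty is the uniformity: carrying the modulus $q=\M(u)$ and the M\"obius parameter $e$ through the Poisson step, and massaging the input to Lemma~\ref{quadraticlargesievelemma} so that the oscillating factor really has the form $e_d(\nu n\,\overline{q'})$ with $n,q'$ independent of $d$ --- this last point is precisely what forces the exponent of distribution $X^{1-\delta-\eta}/q^2$.
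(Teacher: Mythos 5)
Your high-level plan matches the paper's: translate $w \mid z$ into the congruence $a \equiv b\nu_w\ (|w|^2)$, strip $(a,b)=1$ by M\"obius, Poisson the free variable, verify that the zero frequency is killed exactly by the subtracted main term $g^{(b)}(w)\cdot(\cdots)$, and control the nonzero frequencies with Cauchy--Schwarz and the quadratic large sieve (Lemma~\ref{quadraticlargesievelemma}). That part is correct and is indeed what the paper does.

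The gap is in how you handle the Dirichlet character $\chi$, and it is a genuine one, not just a matter of detail. You propose to carry $\chi(b+iea')$ through Poisson and pick up a complete Gauss--type sum $K_{b,e}(h)=\sum_{\rho\ (q)}\chi(b+ie\rho)e_q(\beta\rho)$. But when you actually execute the Poisson step modulo $L=\mathrm{lcm}(2d,q)$, the CRT decomposition $e_L(-h\sigma_0)=e_{2d}(-h\rho_0\overline q)\,e_q(-h r\,\overline{2d})$ forces $\beta$ to be $-h\,\overline{2d}\ (\mathrm{mod}\ q)$, so $K_{b,e}(h)=K_{b,e}(h,2d \bmod q)$ \emph{depends on $d$}. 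That is fatal for the quadratic large sieve in its stated form: the coefficients $\gamma_n$ in Lemma~\ref{quadraticlargesievelemma} must be independent of $(d,\nu)$, and after your Cauchy--Schwarz in $(d,\nu)$ the $d$-dependent Gauss factor remains on the wrong side. You flag ``separating the residual $d$-dependence'' as the genuine difficulty, but you attribute it to the smooth weight $\widehat\Phi(h/L)$ (which really is handled by a standard Mellin/integral device). The actual source of the $d$-dependence is the Gauss sum, and no ``standard device'' separates it.

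The paper resolves this differently, and the difference is the crux of the proof: it splits $a$ into residue classes modulo $4q$ with the non-obvious normalisation $a\equiv a_0 d\ (4q)$ (note the extra factor of $d$). After Poisson, the $4q$-part of the phase becomes $e_{4q}(a_0 d\,\theta_{b,q}\,h\,\overline{cd})=e_{4q}(a_0\theta_{b,q}h\overline c)$, which is now \emph{free of $d$ and $\nu$} and can simply be dropped. Separately, the cross-condition $\chi(b+ia_0 d)$ between $b$ and $d$ is unravelled by factoring $\chi=\prod_{p^k\|q}\chi_{p^k}$ and, prime power by prime power, substituting $a_0\mapsto a_0 b/p^\ell$ for $p^\ell\|(b,q)$; this splits $\chi(b+ia_0d)$ into a unit-modulus factor depending only on $b$ (irrelevant inside $\sum_b|\cdots|$) and a factor depending on $(a_0,q_1,d)$ but not on $b$, which can be absorbed into $\alpha_w$. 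No Gauss sum bound is ever invoked in this proposition; the losses are purely Poisson-length versus level $qD$ in the large sieve. Your Gauss-sum route is therefore not a cosmetic variant but a structurally different path, and as sketched it does not reach a sum to which Lemma~\ref{quadraticlargesievelemma} applies. To repair it you would either need to reproduce the paper's $a\equiv a_0 d\ (4q)$ device, or further decompose $d$ into residue classes modulo $q$ before Cauchy--Schwarz and re-derive a version of the large sieve along a progression; neither is ``standard'' and neither is in your write-up.
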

\begin{proof}

Let us split $d$ dyadically into $d \sim D \leq X^{1-\delta-\eta}/q^2$ and denote
\[
S^{(b)}(\alpha) :=   \sum_{\substack{|w|^2 \sim D \\ (w,u)=1}} \alpha_w \bigg( \sum_{\substack{ |z|^2 \sim X \\ z \equiv 0 \, (w)}} a_{z,\xi \chi}^{(b)} -  g^{(b)}(w) \sum_{\substack{ a \sim (X^2-b^2)^{1/2} \\  (a,b)=1 \\ (a^2+b^2,2)=1}}  \xi\chi((b+ia))\bigg)\bigg). 
\]
We apply Section \ref{smoothweightsection} with $\nu=X^{-\eta/80}$ to split $a$ into finer than dyadic ranges to get (bounding the part $A \leq X^{1/2-\eta}$  trivially using the divisor bound and dropping the condition $a^2+b^2 \sim X$)
\[
S^{(b)}(\alpha) =  \frac{1}{\nu}\int_{X^{1/2-\eta}}^{2X^{1/2}} S^{(b)}(\alpha,A) \frac{dA}{A} + O(X^{1/2-\eta/100})
\]
with
\begin{align*}
 S^{(b)}(\alpha,A) :=  \sum_{\substack{|w|^2 \sim D \\ (w,u)=1}} \alpha_w \bigg(&\sum_{\substack{a\equiv -bi \, (w) \\ (a,b)=1 \\(a^2+b^2,2)=1 }} F_A(a)\xi\chi((b+ia))  \\
 &- g^{(b)}(w)  \sum_{\substack{  (a,b)=1 \\ (a^2+b^2,2)=1}}  F_A(a)\xi\chi((b+ia)) \bigg).
\end{align*}
Since $b+ia$ is odd, we have
\[
\xi\chi((b+ia)) = \xi\chi(u) \xi\chi(b+ia)
\]
for some unit $u$ depending on $z$ modulo $2(1+i)$. Since $a$ is restricted to a short interval, for a fixed $b$ the function $\xi_k(b+ia)$ with $k \ll X^{\eta/1000}$ is equal to a constant up to a negligible error term and may therefore be dropped.  By splitting $B$ into residue classes modulo $4$ we may assume that $u$ does not depend on $b$. 
Thus, we can split $a$ into congruence classes modulo $4q$ and $d=|w|^2$ with $a \equiv a_0d  \, (4q)$ to get for some unit $u_{a_0d}$ 
\begin{align*}
 S^{(b)}(\alpha,A) = 
\sum_{\substack{|w|^2 \sim D \\ (2bu,w)=1}} \alpha_w &  \sum_{a_0 \, (4q)} \xi\chi(u_{a_0d})\chi(b+i a_0d) \\
& \times \sum_{\substack{\nu \in \ZZ/d\ZZ \\ \nu \equiv -i \, (w)}} \bigg(\sum_{\substack{a \equiv \nu b \, (d) \\ a \equiv  a_0d  \, (4q)\\ (a,b)=1}} F_A(a)  - g^{(b)}(w)  \sum_{\substack{ a \equiv a_0 d\, (4q)\\ (a,b)=1}}  F_A(a)\bigg)  ,
\end{align*}
where the sum over $\nu$ contains just the one element $\nu=-r/s$ for $w=r+is$.
Note that $(a,b)=1$ implies that $(w,b)=1$ and that $(w,\overline{w})=1$, so that $(r,s)=1$ and we may restrict to $\nu \in \ZZ/d\ZZ$ in the above.

It suffices to show that 
\begin{equation*} 
\begin{split}  \sum_{\substack{b \in B}} \bigg| \sum_{\substack{|w|^2 \sim D \\ (2bu,w)=1}} &\alpha_w   \sum_{a_0 \, (4q)} \xi\chi(u_{a_0d}) \chi(b+i a_0d) \\
& \times\sum_{\substack{\nu \, (d) \\ \nu \equiv -i \, (w)}} \bigg(\sum_{\substack{a \equiv \nu b \, (d) \\ a \equiv  a_0d  \, (4q)\\ (a,b)=1}} F_A(a)  - g^{(b)}(w)  \sum_{\substack{ a \equiv a_0 d\, (4q)\\ (a,b)=1}}  F_A(a)\bigg) \bigg| \\
  & \ll X^{1/2-\eta/40}|B|
  \end{split}
\end{equation*}

Let us first deal with the cross-condition $\chi(b+i a_0d)$ between $b$ and $d$. If $(b,q)=1$ we could just make the change of variables $a_0 \mapsto a_0 b$ and $\chi(b)$ would factor out. In general  we have for some characters $\chi_{p^k}$ modulo $p^k$
\[
 \chi(b+i a_0d) = \prod_{p^k|| q}   \chi_{p^k}(b+i a_0d).
\]
Denoting 
\[
(b,q) = q_1 = \prod_{p^\ell || (b,q)} p^\ell
\]
and making the change of variables $a_0 \mapsto a_0 b/p^\ell$ modulo $p^k$ we get
\begin{align*}
 \chi(b+i a_0d) =  \bigg(\prod_{p^\ell || q_1}  \chi_{p^k}(b/p^\ell) \bigg) \prod_{\substack{p^k|| q \\ p^\ell || q_1 }}   \chi_{p^k}(p^\ell+i a_0d) 
\end{align*}

Note that the first factor depends only on $b$ and the second factor no longer depends on $b$ but on $q_1$.  The residue classes $b/p^\ell$ and $p^\ell$ combine to unique residue classes $ \theta_{b,q}$ and $\gamma_{q_1,q}$ modulo $8q$ by the Chinese remainder theorem.
Thus, we get
\begin{align*}
    \sum_{q_1 | q}\sum_{\substack{b \in B \\ (b,q)=q_1}} \bigg| &\sum_{\substack{|w|^2 \sim D \\ (2bu,w)=1}} \alpha_w   \sum_{a_0 \, (4q)}  \xi\chi(u_{a_0d})\chi(\gamma_{q_1,q}+i a_0d)    \\
   &\times \sum_{\substack{\nu \, (d) \\ \nu \equiv -i \, (w)}} \bigg(\sum_{\substack{a \equiv \nu b \, (d) \\ a \equiv  a_0\theta_{b,q} d  \, (4q)\\ (a,b)=1}} F_A(a)  - g^{(b)}(w)  \sum_{\substack{ a \equiv a_0  \theta_{b,q} d\, (4q)\\ (a,b)=1}}  F_A(a)\bigg) \bigg|.  
\end{align*}
Dropping the condition $(b,q)=q_1$, using the divisor bound for $\sum_{q_1 | q} 1 =\tau(q)$, taking the sum  over $a_0$ to the the outside, and absorbing the factor $\chi(u_{a_0d})\chi(\gamma_{q_1,q}+i a_0d) $ into the coefficient $\alpha_w$, it suffices to show that for any $a_0$
\begin{align*}
    \sum_{\substack{b \in B \\ (b+ia_0,u)=1}} \bigg| \sum_{\substack{|w|^2 \sim D \\ (2bu,w)=1}} \alpha_w     \sum_{\substack{\nu \, (d) \\ \nu \equiv -i \, (w)}} &\bigg(\sum_{\substack{a \equiv \nu b \, (d) \\ a \equiv  a_0\theta_{b,q} d  \, (4q)\\ (a,b)=1}} F_A(a)  - g^{(b)}(w) & \sum_{\substack{ a \equiv a_0 \theta_{b,q} d\, (4q)\\ (a,b)=1}}  F_A(a)\bigg) \bigg| 
 \\
 &\ll \frac{X^{1/2-\eta/39}|B|}{q}
\end{align*}

 We expand the condition $(a,b)=1$ using the M\"obius function and use triangle inequality to get (note that  $(b+ia_0,u)=1$ implies that $(c,q)=1$)
\begin{align*}
 \sum_{(c,q)=1} \sum_{\substack{b \in B  \\ b \equiv 0\, (c)}} \bigg| \sum_{\substack{|w|^2 \sim D \\ (2bu,w)=1}} \alpha_w  \sum_{\substack{\nu \, (d) \\ \nu \equiv -i \, (w)}} \bigg(\sum_{\substack{a \equiv \nu b \, (d) \\ a \equiv  a_0 \theta_{b,q} d \, (4q)\\ a \equiv 0 \, (c)}} F_A(a)  &- g^{(b)}(w)  \sum_{\substack{ a \equiv a_0 \theta_{b,q} d \, (4q)\\ a \equiv 0 \, (c)}}  F_A(a)\bigg) \bigg|  \\
 =: S_{\leq \eta} + S_{> \eta},
\end{align*}
where we have partitioned the sum depending on whether $c \leq X^{\eta/20}$ or $c > X^{\eta/20}$. The second sum can be bounded trivially by
\begin{align*}
\sum_{a_0 \, (4q)} S_{> \eta} &\ll  \sum_{c > X^{\eta/20}} \sum_{\substack{b \in B  \\ b \equiv 0\, (c)}}  \sum_{\substack{d \sim D \\ (2bq,d)=1}} \bigg( \sum_{\nu^2+1 \equiv 0 \, (d)} \sum_{\substack{a \equiv \nu b \, (d) \\ a \equiv 0\, (c)}} F_A(a) + g^{(b)}(d)  \sum_{\substack{   a \equiv 0\, (c)}}  F_A(a) \bigg) \\
&\ll \sum_{c > X^{\eta/20}} \sum_{\substack{b \in B  \\ b \equiv 0\, (c)}}  \sum_{\substack{d \sim D\\ (2bq,d)=1}}\bigg( \sum_{\substack{a^2+b^2 \equiv 0  \, (d)\\ a \equiv 0\, (c)}} F_A(a) + g^{(b)}(d)  \sum_{\substack{    a \equiv 0\, (c)}}  F_A(a) \bigg) \\
&\ll  \sum_{c > X^{\eta/20}} \sum_{\substack{b \in B  \\ b \equiv 0\, (c)}} \bigg( \sum_{\substack{  a \equiv 0\, (c)}} F_A(a) \tau(a^2+b^2) + (\log X)^{O(1)}\sum_{\substack{  a \equiv 0\, (c)}}  F_A(a) \bigg)\bigg).
\end{align*}
By the divisor bound for $\tau(a^2+b^2)$ and $\tau(b)$ we obtain
\[
\sum_{a_0 \, (4q)}S_{> \eta} \ll   \sum_{\substack{b \in B }} \sum_{\substack{c|b  \\ c > X^{\eta/20}}}
X^{\eta/80}\bigg(\frac{X^{1/2} }{c} +1\bigg) \ll X^{1/2-\eta/20+\eta/80}  \sum_{\substack{b \in B }} \tau(b) \ll  X^{1/2-\eta/40}|B| .
\] 

Hence, it remains show that for any $a_0$
\[
S_{\leq \eta}  \ll  \frac{X^{1/2-\eta/40}|B|}{q}.
\]
By writing
\[
S_{\leq \eta}  = \sum_{c \leq X^{\eta/20} } S_c
\]
it suffices to show that for every $c \leq X^{\eta/20}$
\[
S_c \ll \frac{X^{1/2-\eta/10}|B|}{q}
\]
Applying Poisson summation (Lemma  \ref{poissonlemma}, note that $d,q,c$ are all pairwise coprime) we get
\[
S_{c}  \leq  T_c(\alpha,A) + X^{\eta/10 }U_c(\alpha,A) + O_\eps(X^{-100}),
\]
where for 
\begin{align} \label{Hdef}
  H:= \frac{qDX^{\eta/10 }}{X^{1/2}} 
\end{align}
we have (denoting $\alpha'_w := \alpha_w\frac{D}{|w|^2} $)
\begin{align*}
T_c(\alpha,A) &:=   \hat{F}(0) A \sum_{\substack{b \in B  \\ b \equiv 0\, (c)}} \bigg| \sum_{\substack{|w|^2 \sim D \\ (2bu,w)=1}} \alpha_w \bigg(\frac{1}{cdq}-  \frac{g^{(b)}(w)}{cq}\bigg) \bigg| \\
U_c(\alpha,A) &:= \frac{1}{H} \sum_{0 < |h| \leq H} \sum_{\substack{b \in B  \\ b \equiv 0\, (c)}} \bigg|   \sum_{\substack{ |w|^2\sim D\\ (2bu,w)=1}} \alpha'_w  \widehat{F} (hA/cdq) \sum_{\substack{\nu \, (d) \\ \nu \equiv -i \, (w)}} e_d(hb\overline{4cq} \nu )e_{4q}(a_0 d \theta_{b,q} h\overline{cd})\bigg|.
\end{align*}
Note that the phase $e_{4q}(a_0 d \theta_{b,q} h\overline{cd}) = e_{4q}(a_0  \theta_{b,q} h\overline{c})$ does not depend on $d$ or  $\nu$ and thus it may be replaced by $1$.
 
 By definition $g^{(b)}(w)$ matches with $1/d$ when $(b,w)=1$, so that in fact $T(\alpha,A) = 0$. Thus, it suffices to show that 
 \begin{align} \label{Uiclaim}
     U_c(\alpha,A) \ll  \frac{X^{1/2-\eta/5}|B|}{q}.
 \end{align}

\subsubsection{Bounding $U(\alpha,A)$}
The smooth cross-condition $\widehat{F} (hA/cdq)$ may be removed by using the Mellin inversion formula (Lemma \ref{improvedmvtlemma}) at a cost of  a factor $  \ll X^{\eta/40}$. 
We expand the condition $(b,d)=1$ using the M\"obius function to get
\[
\mathbf{1}_{(b,d)=1} = \sum_{e} \mu(e) \mathbf{1}_{e|b} \mathbf{1}_{e|d}.
\]
For each $c$ and $e$ let
\begin{align*}
    V_{c,e}(\alpha,A) :=  \sum_{\substack{ |w|^2 \sim D/e \\ (d,2cq)=1}} \sum_{\substack{\nu \, (d) \\ \nu \equiv -i \, (w)}} \bigg| \sum_{n \ll HX^{1/2}/(ce)} \gamma_n^{(ce)}  e_d(\overline{4cq} hb\nu )\bigg|
 \\
 =\sum_{\substack{ d \sim D/e \\ (d,2cq)=1}}  \sum_{\nu^2+1 \equiv 0 \, (d)} \bigg| \sum_{n \ll HX^{1/2}/(ce)} \gamma_n^{(ce)}  e_d(\overline{4cq}hb\nu )\bigg|
\end{align*}
for some coefficients
\[
|\gamma_n^{(ce)}| \leq \frac{1}{H } \sum_{n=hb}  \mathbf{1}_B(bce).
\]
Then by rearranging the sums we have
\[
U_c(\alpha,A) \ll X^{\eta/40} \sum_{e} V_{c,e}(\alpha,A)
\]
and for \eqref{Uiclaim} it suffices to show that
\[
V_{c,e}(\alpha,A) \ll \frac{X^{1/2-\eta/4}|B|}{e q}
\]
By a divisor bound
\[
 \sum_{n\ll HX^{1/2}/(ce)}|\gamma_n^{(ce)}|^2\ll X^{\eta/100}\frac{1}{H} \sum_{n\ll HX^{1/2}/(ce)} |\gamma_n^{(ce)}| \ll  X^{\eta/100} \frac{1}{H} |B|.
\]
By  Cauchy-Schwartz and Lemma \ref{quadraticlargesievelemma}, using (\ref{Hdef}), we have
\begin{align*}
 V_{c,e}(\alpha,A)  &\ll \bigg(\frac{ D}{e}\bigg)^{1/2} \bigg(  \sum_{\substack{ d \sim D/e \\ (d,2cq)=1}}  \sum_{\nu^2+1 \equiv 0 \, (d)} \bigg|  \sum_{n \ll HX^{1/2}/(ce)} \gamma_n^{(ce)}  e_d(hb\overline{4cq}\nu )\bigg|^2\bigg)^{1/2} \\ 
 &\ll X^{\eta/100}  \frac{ D^{1/2}}{eH^{1/2}}  ( qD + HX^{1/2})^{1/2} |B|^{1/2}  \\
 &\ll e^{-1}D^{1/2}X^{1/4+\eta/100} |B|^{1/2} \\
 & \ll  \frac{X^{1-\delta-\eta/4}}{e q}
\end{align*}
by using
\[
D \leq X^{1-\delta-\eta}/q^2. \qedhere
\]
\end{proof}
\section{Proof of Theorem \ref{exceptionaltheorem}} \label{exceptionalsection}
In this section we prove Theorem \ref{exceptionaltheorem} by following a similar strategy as in \cite[Chapter 24.2]{odc}. We first note that by restricting to $b \in B_1$ (recall \eqref{B1condition}) we may assume that for all $b$ we have
\begin{align} \label{positivityassumpt}
    \sum_{\substack{a^2 +b^2 \sim X \\ (a,b)=1 \\ (a^2+b^2,2)=1}} \chi_1 ((b+ia)) \geq 0.
\end{align}

We define the Dirichlet convolution on ideals of the Gaussian integers as
\[
(f \ast g)(\a) := \sum_{\mathfrak{c} \mathfrak{d} = \a} f(\mathfrak{c})g(\mathfrak{d}).
\]
We define the auxiliary function
\[
\lambda_1(\a) := (1\ast \chi_1) (\a),
\]
which assuming the existence of a Siegel zero for $L(1,\chi_1)$ is sparsely supported, precisely, for square-free $\a$ it is supported on $\a$ such that for all $\p|\a$ we have $\chi_1(\p)=+1$.

We set $\A'=(a'_{\n})$ with $a'_{\n}= a_{\n} \lambda_1(\n) \mathbf{1}_{(\n,u_1)=1}$ and define the multiplicative functions
\begin{align*}
g_1(\d) &:= \prod_{\p|\d} g_1(\p), \quad g_1(\p) := \frac{\mathbf{1}_{\p \neq \overline{\p}}}{\N \p}\bigg(1+\chi_1(\p)-\frac{\chi_1(\p) }{\N \p} \bigg) \bigg( 1-\frac{\chi_1(\p)}{\N \p}\bigg)\bigg( 1-\frac{1}{\N \p^2}\bigg)^{-1} \\
g'(d)&:= \sum_{\N \d = d} g_1(\d), \quad g'(p) = \frac{\rho(p)}{p}\bigg(1+\chi_1(\p)- \frac{\chi_1(\p) }{p} \bigg) \bigg( 1-\frac{\chi_1(\p)}{p}\bigg)\bigg( 1-\frac{1}{p^2}\bigg)^{-1}    
\end{align*}
denoting $p= \p \overline{\p}$, which is well defined since $\rho(p) \neq 0$ precisely if $p$ splits in $\ZZ[i]$ and $\chi_1(\p) = \chi_1(\overline{\p})$ since $\chi_1$ is real. 
We also set
\begin{align*}
V'(Z) &:= \prod_{2 < p < Z} \bigg(1-g'(p) \bigg) \\
\YY &:=  L(1,\chi_1)\sum_{\substack{ a^2+b^2 \sim X \\ b\in B \\  (a,b)=1 \\ (b+ia,2u_1)=1}}  (1 +  \chi_1((b+ia))\prod_{\p|b} \bigg( 1-\frac{\chi_1(\p) }{\N \p}\bigg) \prod_{\substack{p|b \\ p\neq 2}}\bigg(1-g'(p) \bigg)^{-1}\bigg(1-\frac{1}{p^2} \bigg).
\end{align*} 
Note that by \eqref{positivityassumpt} the contribution from the terms with $\chi_1((b+ia))$ is positive and therefore we may drop it to conclude
\[
\YY \gg L(1,\chi_1)\sum_{\substack{ a^2+b^2 \sim X\\ b\in B \\  (a,b)=1 \\ (b+ia,2u_1)=1}}  \prod_{p|b} \bigg( 1-\frac{\chi_1(\p) }{\N \p}\bigg) \prod_{\substack{p|b \\ p\neq 2}}\bigg(1-g'(p) \bigg)^{-1}\bigg(1-\frac{1}{p^2} \bigg).
\]
We define
\[
S(\A',Z) := \sum_{\substack{\N \n \sim X \\ (\N\n,P(Z))=1}} a'_\n.
\]
We note for large primes $p$ the function $g'(p)$ is essentially $p^{-1}\rho(p)(1+\chi_1(\p))$. 

Theorem \ref{exceptionaltheorem} is then a direct corollary of the following and the lower bound $L(1,\chi_1) \gg_\eps |u_1|^{-\eps}$. Note that from the assumption $\Omega(B_1) \geq  \Omega(B)/2$ it follows that $|B_1| \, \gg_\eps |B| X^{-\eps}$ since $\omega(b) = X^{\pm o(1)}$

\begin{prop} \label{exceptionalprop} Suppose that the assumptions of Theorem \ref{exceptionaltheorem} hold. Then for $Z = |u_1|^4$ we have
\[
S(\A',2X^{1/2}) =  V'(Z) \YY  \left(1+ O\left( \frac{\log Z}{\log X}  + \delta(Z,X) \right)  \right),
\]
where
\begin{align} \label{lambda1bound}
\delta(Z,X) = \sum_{Z \leq \N \p < 2X^{1/2}} \frac{\lambda_1(\p)}{\N \p} < (1-\beta) (\log X + O(\log Z)).
\end{align}
\end{prop}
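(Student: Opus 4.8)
The plan is to run a sieve of the form given by the Fundamental lemma (Lemma \ref{flsievelemma}) on the sequence $\A' = (a'_\n)$ with $a'_\n = a_\n \lambda_1(\n)\mathbf 1_{(\n,u_1)=1}$, sieving out all prime ideals of norm $< Z$ for $Z = |u_1|^4$, and then to verify by an elementary computation that the residual main term equals $V'(Z)\YY$. First I would establish the ``local density'' identity
\[
\sum_{\substack{\N\n\sim X \\ \n \equiv 0\,(\d)}} a'_\n = g_1(\d)\,\XX + r_\d
\]
for squarefree odd $\d$, where $\XX$ is the smoothed count of $a^2+b^2\sim X$, $b \in B$, coprimality conditions imposed, weighted by $L(1,\chi_1)$ and the relevant Euler product over $p \mid b$; this is exactly where the multiplicative function $g_1$ (hence $g'$) is forced on us, since $\lambda_1 = 1 \ast \chi_1$ and the divisor-closure of $\lambda_1$ under multiplication produces the factor $1+\chi_1(\p)-\chi_1(\p)/\N\p$ and the $(1-\chi_1(\p)/\N\p)(1-\N\p^{-2})^{-1}$ corrections when one passes from ideals $\n$ with $\d \mid \n$ to the free ideal $\n/\d$. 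The point of the condition $(\n,u_1)=1$ is to make $\chi_1(\n)$ genuinely multiplicative on the support and to keep $g_1$ supported away from the ramified prime. The error terms $r_\d$, summed against the sieve weights $\lambda_d$ and over $b\in B$, are controlled by Proposition \ref{typeiprop} (Type I information) applied with $f = \lambda_1 \cdot \mathbf 1_{(\cdot,u_1)=1}$, which is divisor-bounded; this is where we use that the sieve level can be taken as large as $X^{1-\delta-\eta}$ and that $Z = |u_1|^4 \leq X^{4\delta+4\eta}$ is a fixed small power of $X$, so that $D \geq Z^{9\kappa+2}$ holds with plenty of room and the remainder is $\ll X^{1/2-\eta/100}|B|$, negligible compared to the main term which is of size $\asymp X^{1/2}|B|/\log X$ (up to the $L(1,\chi_1)$ and $X^{-o(1)}$ factors).

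Next I would check the two hypotheses of Lemma \ref{flsievelemma} other than the remainder bound: that $0 \le g'(p) < 1$ for all odd $p$ — for $p$ split this follows since $g'(p) = \rho(p)p^{-1}(1+\chi_1(\p)) + O(p^{-2})$ with $\rho(p)=2$, $1+\chi_1(\p)\in\{0,2\}$, so $g'(p)$ is at most something like $4/p + O(p^{-2}) < 1$ for $p \geq 5$, and the small primes $p=3$ (inert, $g'=0$) and the handful of small split primes are checked directly; and that the product $\prod_{W\le p<Z}(1-g'(p))^{-1} \le K(\log Z/\log W)^\kappa$ with $\kappa = 2$ (since on average $g'(p)\approx 2\cdot 2/p = 4/p$ over split primes... actually $\rho(p)$ averages to $1$ so the ``dimension'' is $\kappa$ equal to the average of $\rho(p)(1+\chi_1(\p))$, which is $2$), for a suitable absolute $K$; this is a standard Mertens-type estimate using the prime number theorem in arithmetic progressions mod $4$ and the definition of $\rho$. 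With these in hand the Fundamental lemma yields
\[
S(\A',2X^{1/2}) = \XX' \, V'(Z)\bigl(1 + O(e^{-s})\bigr) + (\text{remainder}),
\]
where $\XX'$ is the main term $\XX$ with the per-$b$ Euler factors incorporated, the remainder is the Type I error above, $s = \log(2X^{1/2})/\log Z \asymp 1/\delta \to \infty$, and a brief bookkeeping computation identifies $\XX' V'(Z)$ with $V'(Z)\YY(1+O(\log Z/\log X))$ after completing the truncated products $\prod_{p\mid b,\,2<p<Z}$ to $\prod_{p\mid b,\,p\neq 2}$ as in Remark \ref{flsieveremark}, the error in completion being $O(\log Z/\log X)$ since the tail involves primes in $[Z, 2X^{1/2}]$ dividing $b$ and there are $O(\log X/\log Z)$ such, each contributing $O(1/Z)$, or more simply is absorbed into the divisor bound.

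It remains to explain the appearance of $\delta(Z,X)$ and the bound \eqref{lambda1bound}: unlike in the classical Linnik setting, $\A'$ is not supported on ideals coprime to all primes below $Z$ after a single application of the sieve — the weight $\lambda_1(\n)$ itself ranges over ideals with small prime factors on which $\chi_1$ takes the value $+1$, and these contribute an additional term. Concretely, writing $S(\A',2X^{1/2})$ and peeling off the smallest prime divisor $\p$ with $\N\p < 2X^{1/2}$ of the $\lambda_1$-part gives a term $\sum_{Z \le \N\p < 2X^{1/2}} \frac{\lambda_1(\p)}{\N\p}$ times the sieved main term, which is exactly $\delta(Z,X)$; this is a Buchstab-type correction. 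The inequality $\delta(Z,X) < (1-\beta)(\log X + O(\log Z))$ is then the key arithmetic input coming from the Siegel zero: since $L(\beta,\chi_1)=0$ with $\beta$ close to $1$, the partial sums $\sum_{\N\p \le x}\frac{1+\chi_1(\p)}{\N\p} = \sum_{\N\p\le x}\frac{\lambda_1(\p)}{\N\p}$ satisfy, by the explicit formula / Mertens estimate for $L(s,\chi_1)$ near $s=1$ and the vanishing at $\beta$, a bound of the shape $(1-\beta)\log x + O(1)$ over the relevant range; summed over $Z \le \N\p < 2X^{1/2}$ this gives the stated estimate. \textbf{The main obstacle} I anticipate is not any single step but the careful verification that the local densities are \emph{exactly} $g_1(\d)$ — i.e.\ teasing apart the interaction of the three conditions $(z,\bar z)=1$, $\mathbf 1_{(\n,u_1)=1}$, and the convolution structure of $\lambda_1$ on ideals of $\ZZ[i]$, keeping track of split versus inert versus ramified primes — and then matching the resulting constant with $V'(Z)\YY$ cleanly enough that the $\log Z/\log X$ and $\delta(Z,X)$ error terms are the only ones surviving; the Siegel-zero input for \eqref{lambda1bound} is standard once the sieve identity is set up correctly.
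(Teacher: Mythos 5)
Your proposal has the right overall architecture --- local density via Type I (Lemma \ref{exceptionaltypeilemma}), the Fundamental lemma at sieve level $Z=|u_1|^4$, a correction over the intermediate primes $[Z,2X^{1/2})$ giving $\delta(Z,X)$, and the Siegel-zero input for \eqref{lambda1bound} --- and this matches the paper's strategy. But the middle step, how $\delta(Z,X)$ actually arises, is misexplained in a way that amounts to a gap. The Fundamental lemma at level $Z$ evaluates $S(\A',Z)$, \emph{not} $S(\A',2X^{1/2})$. To bridge the two one writes Buchstab's identity on the sieve condition for $\n$ itself,
\[
S(\A',2X^{1/2}) = S(\A',Z) - \sum_{Z\le p<2X^{1/2}} S(\A'_p,p),
\]
and your phrase ``peeling off the smallest prime divisor of the $\lambda_1$-part'' confuses the convolution weight $\lambda_1 = 1\ast\chi_1$ with this sieve condition: the prime being removed is a prime factor of $\n$, and $\lambda_1$ enters only through the local density $g_1(\p)\approx\lambda_1(\p)/\N\p$ of the restricted sequence $\A'_p$.

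The step you omit entirely, and cannot wave away, is that for the correction term one has only one-sided (Type I) information about $\A'_p$, so each term must be controlled by an \emph{upper bound sieve}: $S(\A'_p,p)\le S(\A'_p,Z)\ll g_1(\p) V'(Z)\YY + r_p$, with Lemma \ref{flsievelemma} applied with level $D/p>X^{1/4}$ (still admissible since $D=X^{1-2\delta-4\eta}/q^4$ and $p<2X^{1/2}$) and Lemma \ref{exceptionaltypeilemma} controlling the remainders. Only after this does the sum over $Z\le p<2X^{1/2}$ produce $\ll\delta(Z,X)\,V'(Z)\YY + O(X^{1-\delta-\eta/100})$; a ``Buchstab-type correction'' worth $\delta(Z,X)$ times the main term is the shape of the answer, not a proof of it. Minor points: $\kappa=1$ actually suffices (since $p\cdot g'(p)$ averages to $1$ over primes, not $2$); and the $O(\log Z/\log X)$ relative error comes from the Fundamental lemma bound $e^{-s}\ll s^{-1}\ll\log Z/\log X$, not from completing the truncated Euler products over $p\mid b$, which costs far less.
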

\begin{remark}
Note that by introducing the weight $\lambda_1(\n)$ we have already removed all prime factors of $\n$ with $\chi_1(\p) =-1$, which by the Siegel zero assumption is most of the primes if $\eps_1$ is small. Thus, we are in a situation of a low dimensional sieve as we only need to shift out prime divisors with $\chi_1(\p)=+1$.
\end{remark}
We first gather Type I information. Note that $(a,b)=1$ implies that $\d$ is not divisible by a rational prime.
\begin{lemma} \label{exceptionaltypeilemma}
Denote $q=\M(u_1)$. Let $\alpha_\d$ be divisor-bounded coefficients supported on square-free $\d$ with $(\d,\overline{\d})=1$. Let $g_1^{(b)}(\d),\YY^{(b)}$ be defined similarly as in Section \ref{typeisection}. Then 
\[
\sum_{\N \d \leq X^{1-2\delta-4\eta}/q^{4}} \alpha_\d \bigg(\sum_{\substack{\N \n \sim X \\ \n \equiv 0 \, (\d)}} a'_{\n}  - \sum_{b \in B} g_1^{(b)}(\d) \YY^{(b)}\bigg)  \ll X^{1-\delta-\eta/100}.
\]
\end{lemma}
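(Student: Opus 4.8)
The goal is to establish a Type I estimate for the twisted sequence $\A'$, i.e.\ to show that $\sum_{\N\n\sim X,\ \n\equiv 0\,(\d)} a'_\n$ is well approximated by $\sum_{b\in B} g_1^{(b)}(\d)\YY^{(b)}$ on average over $\d$. Since $a'_\n = a_\n\lambda_1(\n)\mathbf 1_{(\n,u_1)=1}$ and $\lambda_1 = 1\ast\chi_1$, the plan is to write $\lambda_1(\n) = \sum_{\c\d'=\n}\chi_1(\d')$ and interchange summation so that $\sum_{\n\equiv 0\,(\d)} a'_\n$ becomes a double sum over divisors $\c$ (carrying the $\chi_1$-twist) and the remaining part of $\n$, which is of the shape handled by Proposition \ref{typeiprop}. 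Concretely, after opening up the convolution one truncates the divisor variable: the portion with $\N\c$ larger than a small power $X^{\eta'}$ of $X$ contributes negligibly because $\lambda_1$ is essentially supported on integers all of whose prime factors $\p$ satisfy $\chi_1(\p)=+1$ (a sparse set, by the Siegel-zero hypothesis and Lemma \ref{smoothlemma}-type smooth-number bounds), so its tail is controlled by $\delta(Z,X)$ as in \eqref{lambda1bound}; the remaining short-divisor part is then a bounded-coefficient combination of the sums appearing in Proposition \ref{typeiprop}, with the modulus of the character being $u_1$ (so $q=\M(u_1)$) and with the level of distribution $\N\d\le X^{1-2\delta-4\eta}/q^4$ chosen so that $\N\c\cdot\N\d \le X^{1-\delta-\eta}/q^2$ stays within the admissible range of that proposition.

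The main steps in order: (i) expand $\lambda_1(\n)=\sum_{\c\d_1=\n}\chi_1(\d_1)$ and impose $(\n,u_1)=1$, writing the inner congruence $\n\equiv 0\,(\d)$ in terms of $\c$ and $\d_1$ after pulling out $(\c,\d)$, $(\d_1,\d)$; (ii) split $\c$ (equivalently the ``$1$'' factor in the convolution) at $\N\c=X^{\eta'}$ for a suitable small $\eta'$; (iii) bound the large-$\c$ part: here $\lambda_1(\n)$ forces the complementary factor to be built from split primes $\p$ with $\chi_1(\p)=+1$, and using that $\sum_{Z\le \N\p<2X^{1/2}}\lambda_1(\p)/\N\p=\delta(Z,X)$ is small together with a Rankin-type / fundamental-lemma bound one gets a contribution $\ll X\,\delta(Z,X)\ll X^{1-\delta-\eta/100}$ once $\eps_1$ is small and $\delta$ small in terms of $\eps_1$; (iv) for the small-$\c$ part, fix $\c$ and $\d_1$ (each of norm $\le X^{\eta'}$, say), absorb the bounded twist $\chi_1(\d_1)$ and the conditions $(\c\d_1,u_1\d)=1$ into the coefficient $\alpha_\d$, and apply Proposition \ref{typeiprop} with $\xi=\xi_0$ and character $\chi_1$ to the rescaled variable $\n/(\c\d_1')$ ranging over norms $\sim X/\N(\c\d_1')$; (v) check that the main terms match: the Euler product defining $g_1(\p)$ is exactly what one obtains after carrying out the local computation of $g^{(b)}(w)$ composed with the $\lambda_1$-weighting and the coprimality conditions, so that summing the Proposition's main terms over $\c,\d_1$ reproduces $\sum_{b\in B}g_1^{(b)}(\d)\YY^{(b)}$ up to the stated error.

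The routine but slightly delicate bookkeeping is in step (v): one must verify that the local factors produced by unfolding $1\ast\chi_1$, by the condition $(\n,\overline\n)=1$ (which restricts to split primes), and by the $(\d_1,\d)$, $(\c,\d)$ coprimality, assemble precisely into the multiplicative functions $g_1,g'$ and the sum $\YY$ as defined before the proposition; the factor $(1-1/\N\p^2)^{-1}$ and the shape $1+\chi_1(\p)-\chi_1(\p)/\N\p$ are exactly the signatures of this computation, and the $\prod_{\p\mid b}(1-\chi_1(\p)/\N\p)$ factor in $\YY$ comes from the coprimality $(w,b)=1$ in $g^{(b)}$. One also needs the elementary observation (noted before the lemma) that $(a,b)=1$ forces $\d$ to contain no rational prime, so $\d=(\d,\overline\d)=1$-type degeneracies do not occur and $g_1$ is supported on genuinely split squarefree ideals.

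The main obstacle I expect is quantitative rather than structural: one must choose the truncation point $X^{\eta'}$ and the level $X^{1-2\delta-4\eta}/q^4$ so that simultaneously (a) the discarded large-divisor tail is smaller than $X^{1-\delta-\eta/100}$, which needs $\delta(Z,X)=o(X^{-\delta})$ and hence forces $\eps_1$ small and $\delta\ll\eps_1$ via \eqref{lambda1bound} and the Siegel-zero lower bound $1-\beta_1<\eps_1/\log X$; and (b) after inflating the level by $\N(\c\d_1)\le X^{2\eta'}$ the resulting modulus still lies in the range $X^{1-\delta-\eta}/q^2$ demanded by Proposition \ref{typeiprop}, which also costs the factor $q^4$ (two copies of $q^2$, one from the Proposition and one to absorb $\c,\d_1$-coprimality losses). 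Balancing these constraints is exactly the kind of optimization the remark after Theorem \ref{maintheorem} alludes to; here it suffices to be generous with the $\eta$'s since we only need the crude bound $X^{1-\delta-\eta/100}$.
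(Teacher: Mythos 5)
Your overall plan (open up $\lambda_1$, reduce to Proposition~\ref{typeiprop}) aims at the right target, but the central step as you describe it would not work: you propose to expand $\lambda_1(\n)=\sum_{\c\d_1=\n}\chi_1(\d_1)$ and truncate at $\N\c\le X^{\eta'}$, bounding the tail using the Siegel-zero sparsity of $\lambda_1$. That tail is \emph{not} small. Once the convolution is opened, the summand for a fixed pair $(\c,\d_1)$ is just $a_{\c\d_1}\chi_1(\d_1)$; sparsity of $\lambda_1$ is a statement about $\n$, not about the size of a divisor of $\n$, and for $\n$ in the support of $\lambda_1$ (all prime factors $\p$ with $\chi_1(\p)=+1$) \emph{every} divisor contributes with weight $+1$, so typical $\c$ has norm comparable to $(\N\n)^{1/2}$. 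The quantity $\delta(Z,X)$ only enters the subsequent sieve step (Proposition~\ref{exceptionalprop}), not the Type~I lemma, which must be proved for all $\d$ up to the stated level with no such savings available.

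What the paper does instead, and what your argument is missing, is two structural ideas. First, it uses the multiplicativity identity $\lambda_1(\d\m)=\sum_{\c\mid(\d,\m)}\mu(\c)\chi_1(\c)\lambda_1(\d/\c)\lambda_1(\m/\c)$ to separate the modulus $\d$ from the rest. Here $\c$ really is genuinely truncatable at $\N\c\le X^{\eta}$, because $\c\mid(\d,\m)$ forces $\c^2\mid\n$, and integers divisible by a large square are sparse — that is a legitimate crude bound, unlike the one you invoke. Second, to deal with the remaining $\lambda_1(\m/\c)$, the paper applies the Dirichlet hyperbola decomposition
\[
\lambda_1(\m/\c)=\bigl(1+\chi_1(\m/\c)\bigr)\sum_{\substack{\a\b=\m/\c\\ \N\a<\N\b}}\chi_1(\a)+O\bigl(\mathbf 1_{\N(\m/\c)=\square}\bigr),
\]
which produces a \emph{genuinely} short free variable $\a$ with $\N\a\le(\N\m/\N\c)^{1/2}$, not a variable bounded by an arbitrary small power. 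This is why the level in the lemma is $X^{1-2\delta-4\eta}/q^4$: one needs $\N(\c\d\a)\le X^{1-\delta-\eta}/q^2$ for Proposition~\ref{typeiprop}, and with $\N\a$ as large as $(X/\N\d)^{1/2}$ this costs roughly a factor $\N\d^{1/2}X^{1/2}$, giving $\N\d\lesssim X^{1-2\delta}/q^4$. Your assertion that the level inflation is only by $\N(\c\d_1)\le X^{2\eta'}$ is incompatible with the $q^4$ (rather than $q^2$) in the level, which should have flagged the problem. Finally, the split into $S_1$ (untwisted) and $S_2$ (twisted by $\chi_1(\n)$) arises precisely from the factor $1+\chi_1(\m/\c)=1+\chi_1(\c\d)\chi_1(\n)$ in the hyperbola step, and this is what produces the $1+\chi_1((b+ia))$ in $\YY^{(b)}$. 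Your sketch of the Euler-product bookkeeping in step (v) is plausible, but it cannot be reached without the hyperbola decomposition, so the proposal as written has a genuine gap.
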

\begin{proof}
The function $\lambda_1(\n)$ is multiplicative and we have similarly to \cite[(24.5)]{odc} (denoting $\n=\d \m$)
\[
\lambda_1(\d \m) = \sum_{\c|(\d,\m)} \mu(\c)\chi_1(\c) \lambda_1(\d/\c) \lambda_1(\m/\c).
\]
We have
\[
\lambda_1(\m/ \c) = \sum_{\a \b  = \m/\c} \chi_1(\a) = (1+\chi_1(\m/\c))\sum_{\substack{\a \b = \m/\c \\ \N \a < \N\b}} \chi_1(\a) + O(\mathbf{1}_{\N \m/\c=\square}).
\]
The contribution from the error term where $\N \m/\c=\square$ may be bounded by crude bounds. We have
\[
\chi_1(\m/\c) = \chi_1(\c \d) \chi_1(\n).
\]
The contribution from $\N \c > X^{\eta}$ can be bounded by crude bounds. We obtain
\begin{align*}
    \sum_{\N \d \leq X^{1-2\delta-4\eta}/q^{4}} &\alpha_\d \sum_{\substack{\N \n \sim X \\ \n \equiv 0 \, (\d)}} a'_{\n}  +  O(X^{1-\delta-\eta/100})\\
   &= \sum_{\N \d \leq X^{1-2\delta-4\eta}/q^{4}} \alpha_\d \sum_{\substack{\c| \d \\ \N \c \leq X^{\eta}}} \mu (\c) \chi_1(\c) \sum_{\N \c\d \a^2 \leq 2X} \chi_1(\a) \\
   & \hspace{70pt} \times \sum_{\substack{\N \n \sim X \\ \N \n >  \N \c \d \a^2 \\ \n \equiv 0 \, (\c \d \a) }} a_{\n}\mathbf{1}_{(\n,u_1)=1}(1+\chi_1(\c\d)\chi_1(\n))  
\end{align*}
We now relax the cross-conditions  $\N \n > \N \c\d \a^2$ and $\N \n \sim X$ by introducing a finer-than-dyadic decomposition for $\N \n$ (using Section \ref{smoothweightsection} with $\nu=X^{-\eta/1000}$) to get for $X' \sim X$ sums of the type
\begin{align*}
    \sum_{\N \d \leq X^{1-2\delta-4\eta}/q^{4}} \alpha_\d \sum_{\substack{\c| \d \\ \N \c \leq X^{\eta}}} \mu (\c) \chi_1(\c) \sum_{\N \c\d \a^2 \leq X'} \chi_1(\a) \hspace{70pt} \\
    \times \sum_{\substack{\n \equiv 0 \, (\c \d \a) }} a_{\n}F_{X'}(\N\n)\mathbf{1}_{(\n,u_1)=1}(1+\chi_1(\c\d)\chi_1(\n)) \\
    =: S_1+S_2
\end{align*}
where 
\begin{align*}
   S_1 &:=  \sum_{\substack{\N \f \leq  X^{1-\delta-\eta}/q^2 \\ (\f,\overline{\f})=1}} \alpha_1(\f) \sum_{\substack{ \n \equiv 0 \, (\f) }} a_{\n}F_{X'}(\N\n)\mathbf{1}_{(\n,u_1)=1}, \\
   S_{2} &:= \sum_{\substack{\N \f \leq  X^{1-\delta-\eta}/q^2 \\ (\f,\overline{\f})=1}} \alpha_2(\f) \sum_{\substack{ \n \equiv 0 \, (\f) }} a_{\n} F_{X'}(\N\n)\chi_1(\n),
\end{align*}
with
\begin{align*}
    \alpha_1(\f) &:= \sum_{\substack{\f = \c \d \a \\ \N \d \leq X^{1-2\delta-4\eta}/q^{4}\\ \c|\d\\ \N \c \leq X^\eta \\ \N \c \d \a^2 \leq  X'}} \alpha_\d \lambda_1(\d/\c) \mu(\c) \chi_1(\c) \chi_1(\a) \\
    \alpha_2(\f) &:= \sum_{\substack{\f = \c\d\a \\\N \d \leq X^{1-2\delta-4\eta}/q^{4} \\ \c|\d \\ \N \c \leq X^\eta \\ \N \c\d\a^2 \leq  X'}} \alpha_\d \chi_1(\d) \lambda_1(\d/\c) \mu(\c)  \chi_1(\a) 
\end{align*}
Note that $\N \d \leq X^{1-2\delta-4\eta}/q^{4}$, $\N \c \leq X^\eta $, and $\N \c \d \a^2 \leq  X'$ imply that $\N \c\d\a \leq X^{1-\delta-\eta}/q^2 $. If $\n=(z)$, then in $S_2$ the character value
\[
\chi_1(\n) = \chi_1((z))
\]
depends only on the residue class of $z$ modulo $4(1+i) u_1$ (note that $(a,b)=1$ implies $2 \nmid z$). Thus, by Proposition \ref{typeiprop} we get
\begin{align*}
S_1 =\sum_{b \in B}\sum_{\substack{\N \f \leq X^{1-\delta-\eta}/q^2 \\ (\f,u_1 \overline{\f})=1 }} \alpha_1(\f)   g^{(b)}(\f) \XX_1^{(b)}  + O( X^{1/2-\eta/100}|B|) \\
S_2 =\sum_{b \in B}\sum_{\substack{\N \f \leq X^{1-\delta-\eta}/q^2 \\ (\f,u_1 \overline{\f})=1}} \alpha_2(\f)   g^{(b)}(\f) \XX_{2}^{(b)} + O( X^{1/2-\eta/100}|B|) ,
\end{align*}
with
\begin{align*}
    \XX_1^{(b)}  &:=  \sum_{\substack{  (a,b)=1}}  F_{X'}(a^2+b^2) \mathbf{1}_{(b+ia,2u_1)=1}\\
    \XX_2^{(b)}  &:=  \sum_{\substack{ (a,b)=1}} F_{X'}(a^2+b^2) \chi_1( (b+ia) )\mathbf{1}_{(b+ia,2)=1}.
\end{align*}
Note that we have picked up the condition $(\f,\overline{\f})$ from $(a,b)=1$ implicit in $a_\n$.
We have
\begin{align*}
 \sum_{\substack{\N \f \leq X^{1-\delta-2\eta}/q^2 \\ (\f,u_1\overline{\f})=1  }} \alpha_1(\f)   g^{(b)}(\f) = \sum_{\N \d \leq X^{1-2\delta-4\eta}/q^{4}} \alpha_\d \sum_{\substack{\c| \d \\ \N \c \leq X^\eta}} \lambda_1(\d/\c)\mu(\c) \chi_1(\c)   \\
 \times\sum_{\substack{\N\a \leq (X'/ \N \c\d)^{1/2} \\ (\a,\overline{\a \d})=1}} \chi_1(\a) g^{(b)}(\a)  
\end{align*}
and
\begin{align*}
   \sum_{\substack{\N \f \leq X^{1-\delta-2\eta}/q^2\\ (\f,u_1\overline{\f})=1 }} \alpha_2(\f)   g^{(b)}(\f) = \sum_{\N \d \leq X^{1-2\delta-4\eta}/q^{4}} \alpha_\d \chi_1(\d) \sum_{\substack{\c| \d \\ \N \c \leq X^\eta}}  \lambda_1(\d/\c) \mu(\c)  \\
   \times \sum_{\substack{\N\a \leq (X'/ \N \c\d)^{1/2} \\ (\a,\overline{\a \d})=1}}\chi_1(\a) g^{(b)}(\a)  
\end{align*}
To evaluate the sum over $\a$,  we have (by applying a M\"obius expansion to $(\a,\overline{\a})=1$)
\begin{align*}
   &\sum_{\substack{\N\a \leq (X'/ \N \c\d)^{1/2} \\ (\a,\overline{\a \d})=1}} \chi_1(\a) g^{(b)}(\a) 
  = \sum_{\substack{\N \a \leq (X'/\N \c\d)^{1/2} \\ (\a,\overline{\a\d}b)=1}} \chi_1(\a) \frac{1}{\N \a} \\
& = \sum_{(k,\overline{\d} b)=1} \frac{\mu(k)}{k^2}\sum_{\substack{k^2\N\a \leq (X'/\N \c\d)^{1/2} \\ (\a,\overline{\d}b)=1}} \chi_1(\a) \frac{1}{\N \a} \\
  & =  \prod_{(p, b\overline{\d})=1} \bigg( 1-\frac{1 }{p^2}\bigg)\prod_{(\p, b\overline{\d})=1} \bigg( 1-\frac{\chi_1(\p) }{\N \p}\bigg)^{-1} + O(X^{-\eta}) \\
  & = \frac{L(1,\chi_1)}{\zeta(2)} \prod_{p|b} \bigg( 1-\frac{1}{p^2}\bigg)^{-1} \prod_{\p|b} \bigg( 1-\frac{\chi_1(\p)}{\N \p}\bigg)  \prod_{\p| \d} \bigg( 1-\frac{\chi_1(\p)}{\N \p}\bigg) \bigg( 1-\frac{1}{\N \p^2}\bigg)^{-1} + O(X^{-\eta}) 
\end{align*}
since $\chi_1(\overline{\p}) = \chi_1(\p)$ and $(\d,\overline{\d})=1$.
Let us denote (noting that the contribution from $ \N \c > X^\eta $ may now be added back in at a negligible cost)
\begin{align*}
    g_1(\d)&:= \frac{1}{\N \d}\prod_{\p|\d} \bigg( 1-\frac{\chi_1(\p)}{\N \p}\bigg)\bigg( 1-\frac{1}{\N \p^2}\bigg)^{-1}\sum_{\substack{\c|\d }}  \lambda_1(\d/\c) \mu(\c) \chi_1(\c)\frac{1}{ \N \c}  \\
    g_2(\d)  &:= \frac{\chi_1(\d)}{\N \d} \prod_{\p| \d} \bigg( 1-\frac{\chi_1(\p)}{\N \p}\bigg)\bigg( 1-\frac{1}{\N \p^2}\bigg)^{-1}\sum_{\substack{\c|\d  }}  \lambda_1(\d/\c) \mu(\c)  \frac{1}{\N \c}
\end{align*}
The functions $g_j$ are multiplicative with
\begin{align*}
    g_1(\p)&= \frac{1}{ \N \p} \bigg(1+\chi_1(\p)- \frac{\chi_1(\p)}{\N \p} \bigg) \bigg( 1-\frac{\chi_1(\p)}{\N \p}\bigg)\bigg( 1-\frac{1}{\N \p^2}\bigg)^{-1}\\
    g_2(\p)  &=  \frac{\chi_1(\p)}{\N \p} \bigg(1+\chi_1(\p)-\frac{1}{\N \p} \bigg) \bigg( 1-\frac{\chi_1(\p)}{\N \p}\bigg)\bigg( 1-\frac{1}{\N \p^2}\bigg)^{-1}= g_1(\p),
\end{align*} 
where the last equality holds since $\chi_1(\p)^2=1$. Hence, we have
\[
S_1+S_2 =  \sum_{b \in B} \sum_{\N \d \leq X^{1-2\delta-4\eta}/q^{4}} \alpha_\d  g_1^{(b)}(\d) \YY^{(b)} + O(X^{-\eta})
\]
with
\begin{align*}
   \YY^{(b)}  =\frac{L(1,\chi_1)}{\zeta(2)}\sum_{\substack{ a \\  (a,b)=1 \\ (b+ia,2u_1)=1}} F_{X'}(a^2+b^2) (1 + & \chi_1((b+ia)) )\prod_{\substack{p|b \\ p\neq 2}} \bigg( 1-\frac{1}{p^2}\bigg)^{-1} \\
   &\times \prod_{\substack{\p|b \\ p\neq 2}}\bigg( 1-\frac{\chi_1(\p)}{\N \p}\bigg) .  \qedhere
\end{align*}

\end{proof}
\subsection{Proof of Proposition \ref{exceptionalprop}}
 We apply a sieve argument to the sequence $\A''= (a''_n)$ over integers defined by
\[
a''_n := \sum_{\N \n = n} a'_\n.
\]
Note that then
\[
S(\A',2X^{1/2}) = S(\A'',2X^{1/2}).
\]
The proof is essentially the same as in \cite[Proof of Proposition 24.1]{odc}, but we give it for completeness as it is short. Let $D:=X^{1-2\delta-4\eta}/q^4$ and denote $s= \log D / \log Z$. By Buchstab's identity we have
\[
S(\A'',2X^{1/2})  = S(\A'',Z) - \sum_{Z \leq p< 2X^{1/2}} S(\A''_p,p) 
\]

By the Fundamental lemma of the sieve (Lemma \ref{flsievelemma}, see also Remark \ref{flsieveremark}) and Lemma \ref{exceptionaltypeilemma} we have
\[
S(\A'',Z)  =  V'(Z) \YY (1+ O( e^{-s})) + O(X^{1-\delta-\eta/100}),
\]
where
\[
e^{-s} \ll s^{-1} \ll \frac{\log Z}{\log X}.
\]
By an upper bound sieve (eg. using Lemma \ref{flsievelemma}) with level $D/p> X^{1/4}$ and Lemma \ref{exceptionaltypeilemma} we have
\[
\sum_{Z \leq   p < 2X^{1/2}} S(\A''_p,p) \leq \sum_{Z \leq \N p < 2X^{1/2}} S(\A''_p,Z) \ll  \delta(Z,X) V'(Z) \YY  +O(X^{1-\delta-\eta/100}).
\]
Combining the two estimates we get Proposition \ref{exceptionalprop}, noting that the bound (\ref{lambda1bound}) can be proved by a similar argument as in \cite[(24.20)]{odc}.

\begin{remark} \label{epsremark}
Instead of taking a small $Z$ and using the Fundamental lemma of the sieve, we can take a larger $Z$ and use the linear sieve lower and upper bounds \cite[Theorem 11.12]{odc}. Taking $Z=X^{1/4}$ we get that the lower bound for $S(\A'',2X^{1/2})$ is proportional to (denoting the linear sieve functions by $f,F$)
\[
4 f(4(1-O(\delta))) - \eps_1 
 4 \int_{1/4}^{1/2} \frac{F(4(1-\alpha - O(\delta)))}{\alpha} d\alpha + O(\delta) = (1-\eps_1) 2 \log (3) + O(\delta).
\]
To make the right-hand side positive we can certainly take  any $\eps_1 < 1$ and $\delta = (1- \eps_1)/100$, for instance.
\end{remark}
\section{Type II information: preliminaries} 
\label{typeiipreliminarysection}
We now fix small parameters $\eta,\eta_1,\eta_2,\eta_3 > 0$ such that $\eta_1$ is small compared to $\eta$, $\eta_2$ is small compared to $\eta_1,$ and $\eta_3$ is small compared to $\eta_2$, that is,
\[
\eta_3 \ll \eta_2 \ll \eta_1 \ll \eta.
\]
For instance, for our purposes it would suffice to take any small $\eta > 0$ and let
\[
\eta_1 = \eta/100, \quad \eta_2 = \eta_1/2, \quad \eta_3 = \eta_2/100.
\]
We denote  
\[
\nu_j = X^{-\eta_j}, \quad j \in \{1,2,3\}.
\] 
We will often refer to these parameters in the course of the following sections. 

Let $G: \RR/2 \pi \ZZ \to \CC$ be a non-negative $C^\infty$-smooth function supported on $[-\nu_1,\nu_1]$ as in Lemma \ref{fourierserieslemma}, satisfying
\[
|G^{(j)}| \ll_j \nu_1^{-j},\, \, j \geq 0 \quad \text{and} \quad \int G(\theta) d \theta = \nu_1.
\]  
and recall that
\[
G(\arg z) = \sum_{k} \check{G}(k) \xi_k (z).
\]

For a set of Hecke characters $\Psi=\{ \xi \chi \}$ and $u \in \ZZ[i]\setminus \{0\}$ we let $\Psi_u$ denote the set of characters induced to modulus $u$.  We say that $\xi \chi$ is primitive if $\chi$ is a primitive Dirichlet character. The following definition depends on the choice of $\eta,\eta_j$ but since these are fixed throughout the argument we omit this dependency in  the notation.
\begin{definition}\emph{($Q$-regularity).}
Let $C_1,C_2>0$ and $Q,N,X \geq 1$. Let $\beta_\n$ be complex coefficients.
We say that $\beta_\n$ is $(Q,N,X,C_1,C_2)$-\emph{regular} if there exists a set of primitive characters $\Psi=\{\xi\chi\}$ with $|\Psi| \leq (\log X)^{C_2}$ such that the following holds. For any $\xi \chi  \in \Psi$ we have
\[
\M(\emph{\cond}(  \chi))  \leq Q \quad \text{and} \quad \xi=\xi_k,\, |k| \leq \nu_1^{-2}
\]
and for any $u \in \ZZ[i]$ with $\M(u) \leq Q$ and $N' \sim N$ we have
\begin{align*} 
  \frac{1}{\varphi_{\ZZ[i]}(u)}  \sum_{\substack{\psi \in \widehat{(\ZZ[i]/u\ZZ[i])^\times}  }} \bigg|\sum_{\substack{k \\ \xi_k \psi  \not \in \Psi_u}} \check{G}(k)  \bigg( \sum_{\substack{\N \n \in [N',N'(1+\nu_2)]  }} \beta_\n  \xi_k \psi(\n)\bigg) \bigg|^2\leq \frac{ \nu_1^2 \nu_2^2 N^2}{\varphi_{\ZZ[i]}(d) (\log X)^{C_1}}.  
\end{align*}

Given a function $Q=Q(N,X)$, we say that $\beta_{\n}$ is $Q$-\emph{regular} if for any $C_1>0$ there is some $C_2>0$ and some $X_0 >0$  such that for all  $X \geq X_0$ and for all $N \geq X^\eta$  the coefficient $\beta_{\n}$ is $(Q,N,X,C_1,C_2)$-regular.
\end{definition}
Informally speaking, coefficient $\beta_\n$ is $Q$-regular if it is equidistributed in residue classes and polar boxes apart from a set of Hecke characters of size $ \leq (\log X)^{O(1)}$, uniformly in the size of the modulus of the $\M(u)  \leq Q $. We will need the fact that the M\"obius function restricted to rough numbers is $Q$-regular for $Q$ close to $N^{1/3}$, which we will prove in Section \ref{mobiusregularitylemmasection} using the zero density estimate (Lemma \ref{zerodensitylemma}). 
\begin{lemma} \label{mobiusregularlemma}
Let $W:= X^{1/(\log\log X)^2}$. The coefficient
\[
\beta_\n :=\mu(\N\n)   \mathbf{1}_{(\n,P(W))=1}
\]
is $Q$-regular for any $Q=Q(N,X)$ with $N \geq X^\eta Q^{3}$. Furthermore, for fixed $Q,X,C_1>0$, a fixed set of characters $\Psi$  with $C_2 \ll C_1$ works for all ranges of $N > X^\eta Q^{3}$ in the definition of $(Q,N,X,C_1,C_2)$-regularity.
\end{lemma}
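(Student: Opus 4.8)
The plan is to follow the strategy of Gallagher's prime number theorem \cite{gallagher}: the set $\Psi$ is assembled from the exceptional zeros of Hecke $L$-functions on $\QQ(i)$ -- whose number is bounded by the log-free density estimate -- and the contribution of all other characters is estimated by combining the zero-free region with the large sieve after a Heath--Brown decomposition of the M\"obius function. Let $\kappa=\kappa(C_1,\eta)$ be a large constant and let $\Psi$ consist of the primitive Hecke characters $\xi_k\chi$ with $\M(\cond(\chi))\leq Q$ and $|k|\leq\nu_1^{-2}$ for which $L(s,\xi_k\chi)$ has a zero with $\sigma>1-\kappa\frac{\log\log X}{\log X}$ and $|t|\leq X^{\eta_1}$. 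Since $N\leq X$ and the hypothesis $N\geq X^\eta Q^3$ force $Q\leq X^{1/3}$, we have $Q^2\nu_1^{-2}X^{\eta_1}\leq X^{O(1)}$, and Lemma \ref{zerodensitylemma} gives $|\Psi|\leq N^\ast\!\big(1-\kappa\tfrac{\log\log X}{\log X},X^{\eta_1},\nu_1^{-2},Q^2\big)\ll(\log X)^{c_2\kappa}=:(\log X)^{C_2}$; here the $\log\log X$ in the zero-height is precisely what turns the power of $X$ inside $N^\ast$ into a power of $\log X$. As $\Psi$ depends only on $Q,X,C_1,\eta$ and not on $N$, this gives the final assertion of the lemma. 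By Lemmas \ref{zerofreeregionlemma} and \ref{Lnearzerolemma}, together with Mertens' bound for the finitely many Euler factors dropped when passing to a non-maximal modulus, for every $u$ with $\M(u)\leq Q$ and every $\psi\in\widehat{(\ZZ[i]/u\ZZ[i])^\times}$ with $\xi_k\psi\notin\Psi_u$ one has $L(s,\xi_k\psi)\neq0$ and $|L(s,\xi_k\psi)|^{\pm1}\ll(\log X)^{O(1)}$ in the region $\sigma>1-\tfrac\kappa2\frac{\log\log X}{\log X}$, $|t|\leq X^{\eta_1}$.

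Fix now $u$ with $\M(u)\leq Q$ and $N'\sim N$, put $I=[N',N'(1+\nu_2)]$, and observe that $\sum_k\check G(k)\xi_k(\n)=G(\arg\n)$ and that the restriction $\xi_k\psi\notin\Psi_u$ removes at most $|\Psi|$ Fourier modes from $G$; thus it suffices to bound $\sum_{\psi\bmod u}\big|\sum_{\N\n\in I}\beta_\n\psi(\n)G'_\psi(\arg\n)\big|^2$ where $G'_\psi=G-\sum_{k:\,\xi_k\psi\in\Psi_u}\check G(k)\xi_k$. I apply Heath--Brown's identity over the ideals of $\ZZ[i]$ to $\beta_\n\mathbf 1_{\N\n\sim N}$; on its support $\mu(\N\n)\mathbf 1_{(\n,P(W))=1}$ equals $\pm1$ on products of distinct split primes of norm $\geq W$, and its twisted Dirichlet series is $A_W(s,\xi_k\psi)/\zeta_{\QQ(i)}(s,\xi_k\psi)$ with $A_W$ holomorphic and $\ll(\log X)^{O(1)}$ in $\Re s>\tfrac12+o(1)$ -- the role of the roughness parameter $W$ being to keep $A_W$ under control and to render divisor bounds harmless. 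The identity decomposes $\beta_\n\mathbf 1_{\N\n\sim N}$ into $O(1)$ Dirichlet convolutions of at most six dyadic blocks, of ``$\mathbf 1$-type'' or ``$\mu$-type'' (the latter on blocks of length $\leq N^{1/3}$). In each piece I detect $\mathbf 1_{\N\n\in I}$ and $G'_\psi(\arg\n)$ by Mellin and Fourier-series expansions truncated at height $T_0\asymp X^{4\eta_1}$, bounding the truncation error in mean square over all of $\widehat{(\ZZ[i]/u\ZZ[i])^\times}$ by the large sieve (Lemma \ref{largesievelemma}) -- which is why $T_0$ need only be a fixed small power of $X$ rather than a power of $Q$. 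What remains is a product of Dirichlet polynomials in the Hecke character $\xi_k\psi$ twisted by $\N\n^{it_0}$, integrated against weights of total $\ell^1$-mass $(\log X)^{O(1)}$.

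Each resulting piece is a Type I sum -- having a ``$\mathbf 1$-type'' or ``$\mu$-type'' factor of length $\geq N^{2/3}$, hence, since $Q\leq X^{1/3}$, longer than $\varphi_{\ZZ[i]}(u)$ -- or a Type II sum splittable into two factors each of length in $[N^{1/3},N^{2/3}]$. For a Type I piece the long character sum is, for each fixed $(t_0,k)$ and each $\psi$ with $\xi_k\psi\notin\Psi_u$, at most $(\text{length})(\log X)^{-C}$ by the zero-free region above (the principal character contributes the main term, which vanishes because $A_W(s,\cdot)/\zeta_{\QQ(i)}(s,\cdot)$ has no pole at $s=1$), while the mean square over $\psi$ is controlled through the explicit formula and the log-free density estimate, exactly as in \cite{gallagher}. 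For a Type II piece I apply Cauchy--Schwarz in the shorter factor followed by the large sieve over $\psi\bmod u$ in the longer factor; after orthogonality this reduces the mean square to a sum over pairs lying in the same residue class modulo $u$, whose diagonal contributes $\ll\varphi_{\ZZ[i]}(u)\cdot\nu_1\nu_2 N\cdot(\text{shorter length})$ -- which is $\ll\nu_1^2\nu_2^2 N^2(\log X)^{-C_1}$ precisely because $Q^3\leq NX^{-\eta}$ -- and whose off-diagonal is absorbed using the large-sieve correlation bound, a shifted-divisor estimate, and the thinness of the annular sector. Summing the $O(1)$ pieces and choosing $\kappa$, hence $C_2$, large enough in terms of $C_1$ and $\eta$ finishes the proof.

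The main obstacle is the (near-)balanced Type II range, where both factors have length $\asymp N^{1/2}$ and at least one of them -- once Cauchy--Schwarzed away -- leaves a character sum whose length must exceed the modulus for the diagonal to beat the target $\nu_1^2\nu_2^2 N^2(\log X)^{-C_1}$; tracing the loss through the one Cauchy--Schwarz and the large sieve is exactly what forces $N\geq X^\eta Q^3$, and the ``$3$'' in this exponent is the same obstruction that underlies the barrier discussed in Remark \ref{bottleneckremark} and confines Theorem \ref{asymptotictheorem} to $\delta<1/10$. A second essential point is that per-character bounds can never suffice -- there are $\asymp Q^2=X^{\Theta(1)}$ characters and only a $(\log X)^{-C}$ saving per character -- so every estimate, including the Perron truncation error and the contribution of the zeros in the Type I pieces, must be executed as a genuine mean value over $\widehat{(\ZZ[i]/u\ZZ[i])^\times}$, for which the large sieve and the log-free zero density estimate are the indispensable tools.
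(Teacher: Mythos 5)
Your overall strategy matches the paper's: build $\Psi$ from the zeros counted by the log-free density estimate (Lemma \ref{zerodensitylemma}), decompose $\mu$ via Heath--Brown with $K=3$, split into Type~I and Type~II pieces, and control the mean square over $\psi\bmod u$ by orthogonality/large sieve together with analytic input on Hecke $L$-functions. Your deduction that $|\Psi|\ll(\log X)^{C_2}$ and that $\Psi$ is independent of $N$ (giving the final assertion of the lemma) is correct, and your Type~II treatment and your remark that the $Q^3$ threshold is tied to Remark~\ref{bottleneckremark} are essentially right.

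There is, however, a genuine gap in your Type~I step. You bound the long Type~I character sum by ``$(\text{length})(\log X)^{-C}$ by the zero-free region.'' But with Heath--Brown at level $K=3$ the $\mu$-type blocks are all $\ll N^{1/3}$, so a factor of length $\geq N^{2/3}$ is necessarily of $\mathbf 1$-type, and for such a factor the zero-free region is not the relevant tool and the resulting saving is not enough. Concretely, when the complementary short factor has length $M$ with $M\ll\varphi_{\ZZ[i]}(u)\nu_1^{-O(1)}$ (and $M=1$ does occur -- take the $n_j$ of length $\sim N$ with all other blocks trivial), the bound $|A(\psi)|\ll A(\log X)^{-C}$ combined with orthogonality on the $M$-sum gives $\sum_\psi|AM(\psi)|^2\ll N^2(\log X)^{-2C}\big(\varphi_{\ZZ[i]}(u)/M+1\big)$, and the factor $\varphi_{\ZZ[i]}(u)/M$ can be as large as $Q^2\asymp N^{2/3}$, destroying the target bound $\nu_1^{O(1)}N^2(\log X)^{-C_1}$. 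What the paper actually uses here, and what is indispensable, is the Poly\'a--Vinogradov/Gauss-sum (convexity) estimate on $\ZZ[i]$, Lemma \ref{pvongaussianlemma}, which for a non-principal $\psi$ gives the point-wise bound $A(t,\xi_k\psi)\ll D(1+|t|)(1+|k|)|u|$ (display \eqref{convexitybound}); this saves a full power $A/|u|\gg A^{1/2-o(1)}$ rather than a logarithm. The zero-free region enters only for the $\mu$-type short factor $M_1\in[N^{1/6},N^{1/3}]$ in the Type~II pieces (display \eqref{vingradovbound}), where a logarithmic saving is all that is needed. Your appeal to ``explicit formula and log-free density, exactly as in Gallagher'' does not repair this: for a $\mathbf 1$-type factor there is no inverse $L$-factor, so the contour shift produces no zeros and simply reduces to the convexity bound, which you then have to actually use. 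A secondary, smaller point: the exponent $3$ in $N\geq X^\eta Q^3$ is forced in the paper from \emph{both} the Type~I bound ($|u|^2D^2M^2\ll N^{2-\eta}$ with $M\ll N^{2/3}$) and the Type~II bound ($M_2M_3\gg N^{2/3}\gg X^\eta|u|^2$), not only from the balanced Type~II range as you suggest.
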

 The exponent $3$  in $N > X^\eta Q^{3}$ is not the best that can be obtained but it suffices for our purposes. This could be improved by using results on large values of Dirichlet polynomials (analogous to \cite{Huxley}). We have Type II information given by the following proposition. We will apply it with $\beta_\n$ as above but it applies equally well to eg. products of $k$ primes.
\begin{prop} \label{typeiiprop}\emph{(Type II information).}
Let $W:= X^{1/(\log\log X)^2}$ and let $\nu= (\log X)^{-C}$ for some $C>0$. For every $C_1>0$  there is some $C_2 \ll_{C_1} 1$ such that the following holds. Let $MN= X' \sim X$ with
\[
X^{3\delta+4\eta} < N < X^{1/2-\delta-\eta}.
\]
 Let $\alpha_\m,\beta_\n$ be bounded coefficients, supported on $(\m\n,P(W))=1$ and 
\[
\N \m \in [M,M(1+\nu)], \quad  \N \n \in [N,N(1+\nu)].
\]
Let $F$ be a smooth function as in Section \ref{smoothweightsection} with the parameter $\nu$.
Suppose that for $Q=X^{\delta + \eta}$ the coefficient $\beta_\n$ is $Q$-regular and let $\xi_{k_j} \chi_j$ denote the corresponding Hecke characters with $j \leq J\leq (\log X)^{C_2}$.  Then
\[
\sum_{\m,\n } \alpha_\m \beta_\n a_{\m\n} =   \sum_{j \leq J } \sum_{\m,\n } \frac{\alpha_\m \beta_\n \xi_{k_j}\chi_j(\m\n)}{\N \m\n}   \sum_{\a} \frac{F(\N\m\n/\N \a)}{\widehat{F}(0) } a^\omega_{\a} \overline{\xi_{k_j}\chi_j}(\a) + O \bigg( \frac{X^{1/2}|B|}{(\log X)^{C_1}}\bigg)
\]
\end{prop}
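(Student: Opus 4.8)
The plan is to run the Friedlander--Iwaniec argument \cite{FI}: parametrize the ideal factorization $\m\n$ by Gaussian integers, split $\beta_\n$ into its projection onto the bad Hecke characters $\xi_{k_j}\chi_j$ plus a balanced remainder, evaluate the projection part by the Type I information of Proposition \ref{typeiprop}, and bound the remainder by Cauchy--Schwarz together with the multiplicative large sieve, using the $Q$-regularity of $\beta$ to annihilate the small moduli that arise. To set up, unique factorization in $\ZZ[i]$ lets one write the ideal factors of $\m\n$ as $(w),(z)$ and take $b+ia$ to be a generator of $\m\n$, so that, up to units, the coprimality conditions present in the sequence, and negligible boundary terms,
\[
\sum_{\m,\n}\alpha_\m\beta_\n a_{\m\n} = \sum_{\substack{|w|^2\in[M,M(1+\nu)]\\ |z|^2\in[N,N(1+\nu)]}}\alpha_{(w)}\beta_{(z)}\,\mathbf{1}_{(wz,\overline{wz})=1}\,\mathbf{1}_B(\Re(wz)).
\]
One then uses the smooth device of Section \ref{smoothweightsection} to subdivide the two norm ranges into boxes of relative width $\nu_2$ and to localize $\arg w,\arg z$ to arcs of width $\nu_1$, and expands the angular weight $G(\arg z)$ as $\sum_{|k|\le\nu_1^{-2}}\check G(k)\xi_k(z)$ up to a negligible tail; the cost is $X^{O(\eta_1)}$ boxes, and it suffices to bound the contribution of each one by a $(\log X)^{-C_1}$-fraction of its share of $X^{1/2}|B|$. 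Write $S$ for such a localized sum.

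To isolate the main term, let $\beta^\sharp_\n$ be the (near-)orthogonal projection of $\beta$ onto the span of $\{\overline{\xi_{k_j}\chi_j}\}$ within the relevant box --- essentially $\sum_{j\le J}\overline{\xi_{k_j}\chi_j}(\n)\,\C(\beta,\overline{\xi_{k_j}\chi_j})$ --- so that $\beta^\flat_\n := \beta_\n-\beta^\sharp_\n$ has negligible correlation with every $\xi_{k_j}\chi_j$, and split $S = S^\sharp + S^\flat$. Then $S^\sharp$ is a combination of $J\le(\log X)^{C_2}$ sums of the shape $\C(\beta,\overline{\xi_{k_j}\chi_j})\sum_w\alpha_{(w)}\sum_z\overline{\xi_{k_j}\chi_j}((z))\,\mathbf{1}_B(\Re(wz))(\cdots)$. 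Expanding the divisibility $w\mid b+ia$ and using the multiplicativity $\xi_{k_j}\chi_j((b+ia))=\xi_{k_j}\chi_j((w))\xi_{k_j}\chi_j((z))$ (up to units and coprimality), each such sum becomes a Type I sum twisted by $\xi_{k_j}\chi_j$, whose conductor $\M(u_j)\le X^{\delta+\eta}$ and frequency $|k_j|\le\nu_1^{-2}$ place it within the scope of Proposition \ref{typeiprop}. That proposition replaces it by the Type I main term $g^{(b)}(w)\sum_a\xi_{k_j}\chi_j((b+ia))$ with error $O(X^{1/2-\eta/100}|B|)$; completing the Euler products $\prod_{p\mid b}(1-\rho(p)/p)^{-1}$ as in Remark \ref{flsieveremark} inserts the weight $\omega_2$, and reassembling yields exactly the asserted main term
\[
\sum_{j\le J}\sum_{\m,\n}\frac{\alpha_\m\beta_\n\,\xi_{k_j}\chi_j(\m\n)}{\N \m\n}\sum_{\a}\frac{F(\N \m\n/\N \a)}{\widehat{F}(0)}\,a^{\omega}_{\a}\,\overline{\xi_{k_j}\chi_j}(\a),
\]
with the precise matching coming from the identification of $\C(\beta,\overline{\xi_{k_j}\chi_j})$ with the localized Dirichlet coefficient $\sum_\n\beta_\n\xi_{k_j}\chi_j(\n)/\N\n$ up to the normalizing factor.

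For $S^\flat$, apply Cauchy--Schwarz in the $w$-variable, discard $\alpha_{(w)}$, and insert a smooth majorant $g(w)$ for its box; expanding the square gives
\[
|S^\flat|^2 \ll \Big(\sum_w|\alpha_{(w)}|^2\Big)\sum_{z_1,z_2}\beta^\flat_{(z_1)}\,\overline{\beta^\flat_{(z_2)}}\sum_w g(w)\,\mathbf{1}_B(\Re(wz_1))\,\mathbf{1}_B(\Re(wz_2)).
\]
For fixed $b_1=\Re(wz_1)$ and $b_2=\Re(wz_2)$ in $B$, the integer $w$ is determined by Cramer's rule and exists precisely when $b_1z_2\equiv b_2z_1\pmod{\Im(z_2\overline{z_1})}$, so the inner double sum collapses to $T_B(z_1,z_2)$ as in the overview. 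One factors out $b_0=(b_1,b_2)$, writes $b_i=b_0b_i'$ with $(b_1',b_2')=1$, and detects $b_1'z_2\equiv b_2'z_1\pmod{\Im(z_2\overline{z_1})/b_0}$ by Dirichlet characters on $\ZZ[i]$, the localization of $w$ being absorbed by Poisson summation (Lemmas \ref{poissonlemma} and \ref{poisson2dlemma}). The characters of conductor $d$ with $f:=db_0\le Q=X^{\delta+\eta}$ --- in particular the principal one, which produces the smooth main term $M_B(z_1,z_2)$ --- reduce the remaining sum over $z_1,z_2$ to a progression modulo $f$; since $N>X^\eta Q^3\ge X^\eta f^3$, the $Q$-regularity of $\beta$ together with the near-orthogonality of $\beta^\flat$ to the $\xi_{k_j}\chi_j$ bounds this by $\ll N^2/(f^2(\log X)^{C})$, which is admissible after summing over $b_0$. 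The characters of conductor $d\ge Q/b_0$ are bounded by the multiplicative large sieve (Lemma \ref{largesievelemma}), and this is the step that forces $N>X^{3\delta+4\eta}$. Collecting both ranges and undoing Cauchy--Schwarz gives $S^\flat\ll X^{1/2}|B|/(\log X)^{C_1}$.

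I expect the main obstacle to be the uniform evaluation of $T_B(z_1,z_2)$ while correctly tracking the dependence on $b_0=(b_1,b_2)$: a priori $B$ could be almost entirely supported on the multiples of a fixed large $q\le X^{\delta}$, in which case $b_0$ is typically large and the naive main term is inflated by an $L(1,\cdot)$-type factor. The fix is to keep $b_0$ an explicit parameter throughout and to apply both the large sieve and the $Q$-regularity estimate at level $f=db_0$ rather than $d$, summing over $b_0$ only at the very end; this is also the point at which the hypotheses $N>X^{3\delta+4\eta}$ and $\M(u_j)\le X^{\delta+\eta}$ (a bound on $\M(u_j)$ rather than on $|u_j|$, since the moduli coming from $\Im(z_2\overline{z_1})/b_0$ are rational integers divisible by the conductors of the bad characters) become essential.
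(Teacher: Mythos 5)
Your high-level plan matches the paper's argument closely: decompose $\beta=\beta^\sharp+\beta^\flat$ via the projection onto the $Q$-regularity characters, evaluate $S^\sharp$ by Type I information (Proposition \ref{typeiprop}), and bound $S^\flat$ by Cauchy--Schwarz in $w$, factoring out $b_0=(b_1,b_2)$, detecting the congruence with Dirichlet characters, and splitting at $f=db_0\le Q$ with the multiplicative large sieve for large $f$ and $Q$-regularity (Lemma \ref{stronggallagherlemma}) for small $f$. You also correctly flag the crucial point that all estimates must be run at modulus $f=db_0$ rather than $d$. However, one step is not right as stated: ``the localization of $w$ [is] absorbed by Poisson summation.'' Once you parametrize the inner sum by the congruence $b_1'z_2\equiv b_2'z_1\ (\Delta/b_0)$, the Gaussian integer $w=b_0(z_2b_1'-z_1b_2')/(i\Delta)$ is completely determined, so there is no free variable left to Poisson-sum. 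The weight $F_M(|w|^2)$ is thus a genuine cross-condition coupling $z_1,z_2,b_1',b_2'$. The paper removes it not by Poisson summation but by restricting $z_1,z_2$ to polar boxes (the $H_{N'}$ and $G$ partitions), observing that when $|\sin(\theta_1-\theta_2)|\gg\nu_3$ the weight is constant to within $O(\nu_2\nu_3^{-2})$ (cf.\ \eqref{ccremoval}), and treating the complementary pseudo-diagonal regime $|\Delta|\ll\nu_3 N$ separately (Lemma \ref{pseudodiaglemma}) by invoking the interval assumption \eqref{Bintervalassumption} on $B$. This is precisely why the approximation $\beta^\sharp$ must record the distribution of $\beta$ in $|z|^2$ and $\arg z$, not just in residue classes, and why the reduction to $B$ in a short interval matters.

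Two further points you pass over deserve explicit attention. First, the diagonal contributions $\Delta=0$ and $(z_1,z_2)>1$ must be separated (Lemmas \ref{diag1lemma} and \ref{diag2lemma}), the latter using the rough-number support $(\n,P(W))=1$ to force $|(z_1,z_2)|^2\ge W$. Second, and more delicately, the condition $(w,\overline w)=1$ must be kept alive through Cauchy--Schwarz: if you discard it and $B$ is concentrated on multiples of a large smooth integer $d_0$, the doubled $w$-sum picks up a divisor factor $\tau(d_0)$ exceeding any fixed power of $\log X$, which swamps the logarithmic saving that Lemma \ref{stronggallagherlemma} affords (see Remark \ref{multiplicity remark}). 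Finally, a small bookkeeping correction: the constraint $N>X^{3\delta+4\eta}$ is forced by the $Q$-regularity requirement $N>X^\eta Q^3$ (and the crude progression estimates like \eqref{z2estimate}), whereas the large sieve imposes the \emph{upper} bound $N<X^{1/2-\delta-\eta}$ via $N\le X^{-\eta}|B|$, not the lower one.
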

\begin{remark}
    Note that on the right-hand side of Proposition \ref{typeiiprop} we have the sequence $a_\a^\omega$ which is twisted by $\omega$, which arises from the assumption $(\m\n,P(W))=1$. In particular, Proposition \ref{typeiiprop} as stated would be false without restricting to $(\m\n,P(W))=1$.
\end{remark}

We set
\[
\alpha_w:=\alpha_{(\overline{w})} \quad \text{and} \quad \beta_z := \beta_{(z)} \mathbf{1}_{z \, \text{primary}}.
    \]
so that in the latter we may swap freely between Gaussian integers $z$ and ideals $\n$ and that
\[
\sum_{\m,\n } \alpha_\m \beta_\n a_{\m\n} = \sum_{w,z } \alpha_w \beta_z a_{\overline{w}z} .
\]
We have included the complex conjugate in $\overline{w}$ to make our notations match with those of \cite{FI}.

\subsection{Sketch of the argument} \label{typeiisketch}
As the proof of Proposition \ref{typeiiprop} is quite technical, we include here a simplified non-rigorous sketch. In Section \ref{approximationsection} we will construct an approximation $\beta^\#_\n$ for $\beta$ such that the difference
\[
\beta_\n^{\flat} := \beta_\n - \beta^\#_\n
\]
is balanced along arithmetic progressions. Write
\[
S(\alpha,\beta) = S(\alpha,\beta^\#)+S(\alpha,\beta^\flat).
\]
The approximation will be simple enough that $S(\alpha,\beta^\#)$ can be evaluated by Type I information, so let us consider 
\[
S(\alpha,\beta^\flat) = \sum_{w}  \sum_z \alpha_w \beta^\flat_z \mathbf{1}_{B}(\Re(\overline{w}z)),
\]
where the aim is to capture the oscillations from $\beta^\flat_z$. By applying Cauchy-Schwarz we get
\[
S(\alpha,\beta^\flat) \ll M^{1/2} U^{1/2}
\]
with
\[
U:= \sum_{z_1,z_2} \beta^\flat_{z_1} \overline{\beta^\flat_{z_2}} \sum_{w} \mathbf{1}_{B}(\Re(\overline{w}z_1)) \mathbf{1}_{B}(\Re(\overline{w}z_2)) F_M(|w|^2)
\]
for some smooth majorant $F_M$ of the interval $[M,2M]$.
The goal is to evaluate the sum over $w$ with a main term $M(z_1,z_2)$ and then show that
\[
\sum_{z_1,z_2} \beta^\flat_{z_1} \overline{\beta^\flat_{z_2}} M(z_1,z_2) \ll \frac{N|B|^2}{(\log X)^C}
\]
is small due to cancellations from the coefficients $\beta^\flat_z$.

Similarly as in \cite{FI}, we note that by denoting $b_j = \Re(\overline{w}z_j)$ and
\[
\Delta = \Im (\overline{z_1} z_2), \quad a \equiv z_2/z_1 \, (\Delta),
\]
we have
\begin{equation} \label{deltaskectheq}
  i \Delta w =z_2 b_1- z_1b_2.  
\end{equation}
Note that typically $|\Delta| \approx N$. The parts where $\Delta=0$ or $|\Delta| < N/ (\log X)^C$ correspond to diagonal contributions and may be bounded by crude estimates. Thus, we assume for simplicity that $|\Delta| \asymp N$. 

Let $b_0:=(b_1,b_2)$ and write $b_j=b_0b_j'$. Note that in the situation that $B \subseteq q_1 \ZZ$ we have $q_1 | b_0$, so that $b_0$ can be quite large for a large subset of $B\times B$. As usual, in most places dealing with greatest common divisors does not cause serious problems but the dependency on $b_0$ will be crucial for our argument. 

We have $b_0 | \Delta w$ by \eqref{deltaskectheq} and for simplicity let us assume that $b_0| \Delta$.
Then $w = (z_2 b_1- z_1b_2)/(i\Delta)$ is fixed once we fix $z_j,b_j$, so that (ingoring the smooth weight $F_M$) we have to bound
\[
V= \sum_{b_0 }\sum_{\substack{z_1,z_2 \\ |\Delta|  \asymp  N \\ b_0| \Delta}} \beta^\flat_{z_1} \overline{\beta^\flat_{z_2}} \sum_{b'_2 \equiv a b'_1 \, (\Delta/b_0)}  \mathbf{1}_B(b_0b_1') \mathbf{1}_B(b_0b_2').
\]
Note that
\[
a \equiv z_2/z_1 \equiv \frac{\Re (\overline{z_1}z_2)}{|z_1|^2} \, (\Delta)
\]
is congruent to an integer, so that the congruence $b_2' \equiv a b_1' \, (\Delta/b_0)$ lives in $\ZZ/(\Delta/b_0) \ZZ$. 

By expansion with Dirichlet characters 
 and sorting into primitive characters  we get (ignoring issues with greatest common divisors)
\[
V = \sum_{b_0 }\sum_{\substack{z_1,z_2 \\ |\Delta|  \asymp  N \\ b_0| \Delta}} \beta^\flat_{z_1} \overline{\beta^\flat_{z_2}} \frac{1}{\varphi(\Delta/b_0)}\sum_{d|\Delta/b_0} \sideset{}{^\ast}\sum_{\chi \, (d)} 
 \overline{\chi}(a)\bigg| \sum_{b_0 b \in B} \chi(b) \bigg|^2,
\]
where morally
\[
\frac{1}{\varphi(\Delta/b_0)} \approx \frac{b_0}{N}.
\]
We split this into two parts, $d b_0 > X^{\delta+\eta}$ and $db_0  \leq X^{ \delta+\eta}$.The contribution from the small $d$ is our main term $M(z_1,z_2)$ referred to in the above. 

For the large $db_0$ we get
\begin{align*}
 \sum_{b_0 } b_0 \sum_{d b_0 > X^{\delta+\eta}} &\sum_{\substack{D  \asymp  N \\ D \equiv 0\, (db_0) }}   \bigg( 
  \frac{1}{N}\sum_{\substack{z_1,z_2 \\ |\Delta| =D}} 1 \bigg)  \sideset{}{^\ast}\sum_{\chi \, (d)} \bigg| \sum_{b_0 b \in B} \chi(b) \bigg|^2  \\
  &\ll N \sum_{b_0 } \sum_{ X^{\delta+\eta} < db_0  \ll N}   \frac{1}{d} \sideset{}{^\ast}\sum_{\chi \, (d)} \bigg| \sum_{b_0 b \in B} \chi(b) \bigg|^2, 
\end{align*}
and applying the large sieve for multiplicative characters (Lemma \ref{largesievelemma}) we get
\[
\ll N (N + X^{1/2-\delta-\eta}) |B| \ll X^{-\eta} N |B|^2
\]
by using $N \ll X^{-\eta} |B|$. Note that we are applying the large sieve to a very sparse set $B/b_0 \cap \ZZ$, which causes a loss in the diagonal terms and we are forced to take $d b_0$ at least a bit bigger than $X^\delta$.

For the small $db_0$ we can rewrite the conditions $b_0 d| \Delta$, $a \equiv z_2/z_1 \, (\Delta)$ as $z_2 \equiv a z_1 \, (b_0 d)$ to get
\begin{align} \label{sketchtypeiiend}
   \frac{1}{N} \sum_{b_0 } b_0 \sum_{db_0  \leq X^{\delta+\eta}} \sum_{a \, (b_0 d)} \sideset{}{^\ast}\sum_{\chi \, (d)} 
 \overline{\chi}(a)\bigg| \sum_{b_0b \in B} \chi(b) \bigg|^2  \sum_{z_2 \equiv a z_1 \, (b_0 d)} \beta^\flat_{z_1} \overline{\beta^\flat_{z_2}}. 
\end{align}
We now see what precisely is required of the balanced function $\beta_z^\flat$, we need
\[
\sum_{z_2 \equiv a z_1 \, (b_0d)} \beta^\flat_{z_1} \overline{\beta^\flat_{z_2}} \ll \frac{N^2}{\varphi_{\ZZ[i]}(b_0 d) (\log X)^{C}},
\]
where the modulus may  be as large as $X^{\delta+\eta}$. This would follow if there were no zeros of $L(s,\chi)$ with a real part $>1-  C'\log \log X / \log X$. Since this is not known, we need to construct the approximation $\beta^\#_{z}$ in a way that takes into account these possible bad characters. By the zero density estimate (Lemma \ref{zerodensitylemma}) this means that the approximation needs to see $\ll (\log X)^{O(1)}$ of the characters. For technical reasons (due to the smooth weight $F_M$) the approximation also needs to see the distribution of $\beta_z$ with respect to $\arg z$ and $|z|^2$.  Note that in the case that  $B \subseteq q_1 \ZZ$ we have always $q_1|b_0$, where $q_1$ can be as large as $X^\delta$.

\subsection{An approximation for $\beta_\n$}\label{approximationsection}
For the approximation it is convenient to use a rough finer-than dyadic partition of unity instead of Section \ref{smoothweightsection}, so that the different parts do not overlap. Let $\nu_2= X^{-\eta_2}$, and let
\[
H_{N'}(\n) := \mathbf{1}_{(N',N'(1+\nu_2)]} (\N \n) 
\]
so that
\[
\mathbf{1}_{(N,2N]} (\N \n) = \sum_{N'= N(1+\nu_2)^{n} \in [N,2N) }  H_{N'}(\n).
\]
We can of course choose $\nu_2$ so that $2=(1+\nu_2)^k$ for some $k \asymp \nu_2^{-2}$.

 Let $\beta_\n$ be $Q$-regular and let $\Psi= \{\xi_{k_j}\chi_j\}$ denote the set of $J \leq (\log X)^{C_2}$ characters, and denote the moduli of the characters by $u_1,\dots,u_J$ and the primitive characters by $\chi_1,\dots,\chi_J$. For any two coefficients $\alpha,\beta$ we define their normalized $W$-rough correlation as
\[
\C_W(\beta,\alpha) := \bigg(\sum_{\substack{ (\n,P(W))=1 \\ (\n,\overline{\n})=1} } \beta_\n\overline{\alpha_\n} \bigg)\bigg(\sum_{\substack{(\n,P(W))=1 \\ (\n,\overline{\n})=1} }  |\alpha_\n| \bigg)^{-1},
\]
if the denominator is non-zero. We then define the approximation $\beta_\n^\#$ for $\beta_\n$
\begin{align*}
   \beta_\n^\#= \beta_\n^\#(\Psi):=  \sum_{N'= N(1+\nu_2)^{j} \in [N,2N]} H_{N'}(\n)&\mathbf{1}_{(\n,\, \overline{\n} P(W))=1} \bigg(\sum_{j \leq J} \overline{\xi_{k_j}\chi_j}(\n)\C_W(\beta, \overline{\xi_{k_j}\chi_j} H_{N'})  \bigg) 
\end{align*}
and the balanced function
\[
\beta_\n^\flat:= \beta_\n-\beta_\n^\#,
\]
so that we have a decomposition
\[
\beta_\n = \beta_\n^\#+\beta_\n^\flat.
\]
Morally the approximation $\beta^\#$ can be viewed as a kind of "expansion" with respect to a "basis", which is justified since the functions  $\xi  \chi H_{N'}$ are approximately orthogonal over $W$-rough number, as the following lemma shows. For the lemma recall that all functions of odd Gaussian integers $z$ are extended to $\n$ by considering the primary generator, for instance, we write $G(\arg \n) = G(\arg z)$ if $z$ is the primary generator of $\n$.
\begin{lemma}\label{flapproximationlemma}
Let $\psi,\chi$ be a characters to coprime moduli $u,u_1$ and let  $\xi=\xi_k$ with $|k| \ll (\nu_1)^{-2}$. Let $N > X^\eta |u|^2|u_1|$. Then for any $C > 0$ we have
\[
\sum_{\substack{\n }}  H_{N'}(\n)\mathbf{1}_{(\n,\overline{\n}P(W))=1} (\xi\psi\chi (\n) - \mathbf{1}_{\xi\psi\chi=1}) \ll_C \frac{\nu_2 N}{(\log N)^C}
\]
and for any $(\m,u)=1$
\[
\sum_{\substack{ (\n,u)=1}}H_{N'}(\n)\mathbf{1}_{(\n,\overline{\n}P(W))=1}\left( \mathbf{1}_{\n \equiv \m \, (u)} G(\arg \n) \xi\chi(\n) - \frac{\mathbf{1}_{\chi=1}\check{G}(-k)}{\varphi_{\ZZ[i]}(u)} \right) \ll_C \frac{\nu_1\nu_2 N}{\varphi_{\ZZ[i]}(u)(\log N)^C}.
\]
\end{lemma}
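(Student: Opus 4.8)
The plan is to deduce the second (``polar box'') estimate from the first, so I first treat the first estimate. Since the summand vanishes identically when the Hecke character $\xi\psi\chi$ is principal, one may assume it is non‑principal, and the point is that \emph{no input on zeros of Hecke $L$‑functions is needed}: the estimate is purely a completion statement for a Dirichlet character of $\ZZ[i]$ over a thin norm‑window (in the spirit of Lemma~\ref{pvongaussianlemma}), combined with the Fundamental Lemma of the sieve for the $W$‑roughness. Concretely I would first replace the sharp cutoff $H_{N'}$ by a smooth minorant and majorant at the finer scale $\nu_2(\log N)^{-O(C)}$; the resulting discrepancy is $\ll\nu_2 N/(\log N)^C$ because $W$‑rough ideals have density $\ll 1/\log W$ in a norm‑window of length $\nu_2 N$. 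Next I would reduce $\psi\chi$ to its primitive part and remove the condition $(\n,P(W))=1$ by the upper/lower bound sieves of level $D$ supplied by Lemma~\ref{flsievelemma} (both auxiliary coprimality conditions this generates involve only prime ideals of norm $\geq W$, hence are harmless): with $D=X^{c/\log\log X}$ the sieve main‑term error is $\ll\nu_2 N e^{-\log D/\log W}/\log W$ and the sieve remainder is $\ll(ND)^{1/2}$, both $\ll\nu_2 N/(\log N)^C$. Finally expanding the condition $(\n,\overline\n)=1$ by Möbius over conjugate prime ideals reduces everything, for each $\d$ with $\N\d<D$, to a smoothed character sum $\sum_{\m}\xi_k\psi\chi(\m)F(\N\d\,\N\m)$ with $F$ a smooth bump of width $\asymp\nu_2 N/\N\d$ about $N/\N\d$ and $\psi\chi$ primitive of modulus $q'$, $|q'|\ll|u|^2|u_1|(\N\d)^{1/2}X^{o(1)}$.

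For each such sum I would write $\m=(z)$ with $z$ primary, split $z$ into residues modulo $q'$, and apply two‑dimensional Poisson summation on the sublattice $q'\ZZ[i]$ (Lemma~\ref{poisson2dlemma}), exactly as in the proof of Lemma~\ref{pvongaussianlemma}. The zero frequency contributes a multiple of $(\int F)\sum_{a\,(q')}(\psi\chi)(a)=0$, since $\psi\chi$ is non‑principal (or, if $k\neq0$, a multiple of the angular average $\int e^{ik\theta}\,d\theta=0$); this vanishing of the would‑be main term is exactly the role of the subtracted quantities $\mathbf{1}_{\xi\psi\chi=1}$ (and $\mathbf{1}_{\chi=1}\check G(-k)$ in the second estimate). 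For the non‑zero frequencies I would use the Gauss‑sum bound $|q'|$ on $\ZZ[i]$ together with the concentration of $\widehat F$ near the origin (which reflects that the window is long in norm compared with $q'$); the hypothesis $N>X^\eta|u|^2|u_1|$, which forces $|q'|<X^{-\eta+o(1)}N$, is precisely what makes this beat the trivial bound by a power of $X$. Summing over $\d$ and over the conjugate‑prime Möbius terms (using $\sum_{\N\d<D}(\N\d)^{1/2}\ll D^{3/2}=X^{o(1)}$) then yields a total $\ll\nu_2 N/(\log N)^C$.

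For the second estimate I would expand $\mathbf{1}_{\n\equiv\m\,(u)}=\varphi_{\ZZ[i]}(u)^{-1}\sum_{\psi\,(u)}\overline\psi(\m)\psi(\n)$ and $G(\arg\n)=\sum_\ell\check G(\ell)\xi_\ell(\n)$ (the tail $|\ell|>\nu_1^{-2}$ being negligible because $\check G(\ell)\ll\nu_1(1+\nu_1|\ell|)^{-A}$, cf. Lemma~\ref{fourierserieslemma}), which reduces the whole quantity to a version of the first estimate in which one also completes over the residue modulus $u$ and over the sector weight $G$. The unique surviving main term is the contribution of $\chi=1$, $\psi$ principal modulo $u$, and $\ell=-k$, which matches the asserted term $\mathbf{1}_{\chi=1}\check G(-k)/\varphi_{\ZZ[i]}(u)$ up to an error $\ll\nu_2 N/W$ coming from the difference between the conditions $(\n,u)=1$ and $(\n,P(W))=1$; all other $(\psi,\ell)$ give non‑principal Hecke characters and are handled by the first estimate.

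I expect the main obstacle to be bookkeeping rather than a single hard idea: one must ensure that none of the decompositions — smoothing $H_{N'}$, the $P(W)$‑sieve, the $(\n,\overline\n)=1$ expansion, the primitivization of $\psi$ (and of $\psi\chi$ in the second estimate) — inflates the effective modulus by more than an $X^{o(1)}$ factor beyond $|u|^2|u_1|$, nor the smoothing scale by more than a fixed power of $\log N$. The tight spot is the choice $W=X^{1/(\log\log X)^2}$: it must be small enough that a sieve level $D=X^{o(1)}$ can be taken with $\log D/\log W\to\infty$, so that the sieve error beats every power of $\log N$, and yet (as used throughout the paper) large enough that $W$‑rough numbers remain genuinely sparse. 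The decisive saving of a power $X^{-\eta}$, as opposed to only a power of $\log N$, is produced entirely by the norm‑window being a power of $X$ longer than the modulus, i.e. by the hypothesis $N>X^\eta|u|^2|u_1|$.
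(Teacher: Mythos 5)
Your proposal takes essentially the same route as the paper's proof: smooth the sharp norm cutoff at a finer scale, strip off the $W$-roughness condition by a truncated M\"obius/sieve expansion (the paper uses a raw M\"obius split into $\N\d \leq N^{\eta_1}$ and $\N\d > N^{\eta_1}$ with Lemma~\ref{smoothlemma} for the tail, which is a bit lighter than invoking Lemma~\ref{flsievelemma}), and then complete via P\'oly\'a--Vinogradov/Poisson summation on a sublattice of $\ZZ[i]$ (Lemma~\ref{pvongaussianlemma}), with the decisive power-of-$X$ saving coming from $N > X^\eta |u|^2|u_1|$; the paper proves the polar-box estimate directly and remarks the first is ``similar but easier,'' whereas you build the first and reduce the second to it, which is an equivalent reordering. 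Two small imprecisions to note: when $\xi\psi\chi$ is principal the summand in the first estimate does not vanish identically but equals $-\mathbf{1}_{(\n,uu_1)>1}$, whose contribution is negligible only because $\n$ is $W$-rough; and the paper smooths at the power-of-$X$ finer scale $\nu_1 = X^{-\eta_1}$ rather than your log-power scale, which makes the bookkeeping (edge contributions, the Poisson tail) considerably cleaner.
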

\begin{proof}
   We prove the second claim, the first is similar but easier. By applying Section \ref{smoothweightsection} with a smooth function $F$ with the parameter $\nu_1$ we split $\N\n$ smoothly into finer-than-dyadic intervals. The contribution from the edges of the support of $H_{N'}$ gives a negligible contribution by trivial bounds. It then suffices to show that for any $N_1 \sim N$
   \[
   \sum_{\substack{ (\n,u)=1}}F_{N_1}(\N\n)\mathbf{1}_{(\n,\overline{\n}P(W))=1}\left( \mathbf{1}_{\n \equiv \m \, (u)} G(\arg \n) \xi\chi(\n) - \frac{\mathbf{1}_{\chi=1}\check{G}(-k)}{\varphi_{\ZZ[i]}(u)} \right) \ll_C \frac{\nu_1^2 N}{\varphi_{\ZZ[i]}(u)(\log N)^C}.
   \]
   We let $z$ denote a primary generator of $\n$.  The condition $(z,\overline{z})=1$ may be dropped with a negligible error term since $z$ is supported on $(z,P(W))=1$. We write
   \[
   \mathbf{1}_{(z,P(W))=1} = \sum_{\substack{v | (z,P(W)) \\ |v|^2 \leq N^{\eta_1} }}\mu(v) +  \sum_{\substack{v | (z,P(W)) \\ |v|^2 > N^{\eta_1} }}\mu(v).
     \]
and write
\begin{align*}
    \sum_{\substack{ (\n,u)=1}} &F_{N_1}(\N \n)\mathbf{1}_{(\n,\overline{\n}P(W))=1}\left( \mathbf{1}_{\n \equiv \m \, (u)} G(\arg \n) \xi \chi(\n) - \frac{\mathbf{1}_{\chi=1}\check{G}(-k)}{\varphi_{\ZZ[i]}(u)} \right)  \\
    =& \sum_{z \equiv 1\, (2(1+i))} F_{N_1}(|z|^2)\mathbf{1}_{(z,P(W))=1}\left( \mathbf{1}_{z \equiv w\, (u)} G(\arg z) \xi\chi (z) - \frac{\mathbf{1}_{\chi=1}\check{G}(-k)}{\varphi_{\ZZ[i]}(u)} \right)  \\
    =& S_{\leq} + S_{>}.
\end{align*}
For the large $v$ we note that by $v|P(W)$ there is some factor $v_0|v$ such that $|v_0|^2 \in (X^{\eta_1}, X^{\eta_1} W]$. Thus, by $\check{G}(-k) \ll \nu_1$
\begin{align*}
    S_{>} \ll &\sum_{\substack{|v_0|^2 \in (X^{\eta_1}, X^{\eta_1} W] \\ v_0| P(W) \\ (v_0,u)=1}} \sum_{z \equiv 0\,(v_0)} \tau(z)^{O(1)} F_{N_1}(|z|^2) \mathbf{1}_{z \equiv w\, (u)} G(\arg z)  \\
    &+ \frac{\nu_1}{\varphi_{\ZZ[i]}(u)}\sum_{\substack{|v_0|^2 \in (X^{\eta_1}, X^{\eta_1} W] \\ v_0| P(W) \\ (v_0,u)=1}} \sum_{z \equiv 0\,(v_0)} \tau(z)^{O(1)} F_{N_1}(|z|^2)  
\end{align*}
Recall  that $\eta$ is large compared to $\eta_1$.  Hence, by counting the sum over $z \equiv 0\, (v_0)$  (using Lemma \ref{divisorlemma} to handle $\tau(z)^{O(1)}$) and applying Lemma \ref{smoothlemma}, we get
\[
S_{>} \ll \frac{\nu_1^2 N}{\varphi_{\ZZ[i]}(u) (\log N)^C}.
\]

For small $v$ we split into two cases depending on $\chi\neq 1$ and $\chi=1$.
For $\chi \neq 1$ we have by writing $z=uvz'+\alpha$
\begin{align*}
    &S_{\leq} = \sum_{\substack{v | P(W) \\ |v|^2 \leq N^{\eta_1}  \\ (v,uu_1)=1}}\mu(v)  \sum_{z \equiv 0\,(v)}  F_{N_1}(|z|^2) \mathbf{1}_{z \equiv w\, (u)} G(\arg z) \xi \chi(z)  \\ 
    &=
     \sum_{\substack{v | P(W) \\ |v|^2 \leq N^{\eta_1} }}\mu(v) \sum_{z'}  F_{N_1}(|uvz'+\alpha|^2)  G(\arg uvz + \alpha)  \xi(uv z' + \alpha) \chi (uvz'+\alpha) \\
     & =  \sum_{\substack{v | P(W) \\ |v|^2 \leq N^{\eta_1} }}\mu(v) \chi(uv) \sum_{z'}  F_{N_1}(|uvz'+\alpha|^2)  G(\arg uvz + \alpha)  \xi(uv z' + \alpha) \chi (z'+\alpha(uv)^{-1})
\end{align*}
since $(uv,u_1)=1$. Treating the weight
\[
 z' \mapsto F_{N_1}(|uvz'+\alpha|^2)  G(\arg uvz + \alpha)  \xi(uv z' + \alpha)  
\]
as a smooth weight we get by the Poly\'a-Vinogradov bound (Lemma \ref{pvongaussianlemma})
\[
S_{\leq}  \ll N^{O(\eta_1)} |u_1| \ll  N^{-\eta/2} \frac{\nu_1^2 N}{\varphi_{\ZZ[i]}(u)}
\]
by $N^{1-\eta} > |u|^2|u_1|$ since $\eta_1$ is small compared to $\eta$.

For $\chi=1$ we have by Lemma \ref{fourierserieslemma} and $\check{(G\xi_k)}(\ell) =\check{G}(\ell-k) $
\begin{align*}
    S_{\leq} =& \sum_{\substack{v | P(W) \\ |v|^2 \leq N^{\eta_1}  \\ (v,uu_1)=1}}\mu(v)  \sum_{z \equiv 0\,(v)}  F_{N_1}(|z|^2) \left( \mathbf{1}_{z \equiv w\, (u)} G(\arg z) \xi(z) - \frac{\check{G}(-k)}{\varphi_{\ZZ[i]}(u)} \right) \\
    =&  \sum_{\substack{v | P(W) \\ |v|^2 \leq N^{\eta_1}  \\ (v,uu_1)=1}}\mu(v) \sum_{\ell\neq 0} \check{G}(\ell-k) \ \sum_{z \equiv 0\,(v)}  F_{N_1}(|z|^2)\mathbf{1}_{z \equiv w\, (u)}  \xi_\ell (z)  \\
   & + \sum_{\substack{v | P(W) \\ |v|^2 \leq N^{\eta_1}  \\ (v,uu_1)=1}}\mu(v)  \check{G}(-k) \ \sum_{z \equiv 0\,(v)}  F_{N_1}(|z|^2) \left( \mathbf{1}_{z \equiv w\, (u)}  - \frac{1}{\varphi_{\ZZ[i]}(u)} \right).
\end{align*}
Estimating the contribution from $|\ell| > (\nu_1)^{-3}$ trivially (by Lemma \ref{fourierserieslemma}) and for  $|\ell| \leq (\nu_1)^{-3}$ applying the Poisson summation formula on $\ZZ[i]$ we get
\[
 S_{\leq} \ll  N^{-\eta/2} \frac{\nu_1^2 N}{\varphi_{\ZZ[i]}(u)}.  \qedhere
\]
\end{proof}

Our main lemma about the approximation is the following, which says that $\beta^\flat_\n$ is balanced over arithmetic progressions restricting to small polar boxes.
\begin{lemma} \label{stronggallagherlemma}
Let $N>  X^\eta Q^3$. For any $C_1> 0$ there is some $C_2 \ll_{C_1} 1$  such that the following holds. Let $\xi=\xi_k$ with $k=(\log N)^{O(1)}$. Let $\beta_\n$ be $Q$-regular, and let $\beta_\n^\flat$ be as above with $|\Psi|=J \leq (\log X)^{C_2}$. Suppose that $\beta_\n$ is supported on $(\n,P(W))=1$.  Let  $N'= N(1+\nu_2)^{j} \in [N,2N]$ and $\theta \in \RR/2\pi \ZZ$.  Then for  any $u \in \ZZ[i]$ with $\M(u) \leq Q$ we have 
\[
S=  \frac{1}{\varphi_{\ZZ[i]}(u)} \sum_{\psi \in \widehat{(\ZZ[i]/u\ZZ[i])^\times}} \bigg| \sum_{\substack{\n}} \beta_\n^\flat  \psi(\n)H_{N'}(\n)G(\arg \n -\theta) \bigg|^2 \ll \frac{\nu_1^2 \nu_2^2 N^2}{\varphi_{\ZZ[i]}(u) (\log X)^{C_1}}.
\]
\end{lemma}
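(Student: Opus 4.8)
The plan is to Fourier-expand the angular weight $G(\arg\n-\theta)$, recognise the bulk of $S$ as the quantity controlled by $Q$-regularity of $\beta_\n$, and show that the contribution of the exceptional characters to $\beta_\n$ is matched by $\beta_\n^\#$ up to admissible errors; the approximate orthogonality in Lemma \ref{flapproximationlemma} is the workhorse for the errors. First I would write $G(\arg\n-\theta)=\sum_k\check G_\theta(k)\,\xi_k(\n)$ with $\check G_\theta(k)=e^{-ik\theta}\check G(k)$ (any factor $\xi=\xi_k$ present in the statement is absorbed into a shift of the summation index) and truncate to $|k|\le\nu_1^{-2}$ at the cost of a large power of $\nu_1$, using the decay of $\check G$ from Lemma \ref{fourierserieslemma}. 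With $\beta_\n^\flat=\beta_\n-\beta_\n^\#$ and $\beta_\n^\#=\sum_{j\le J}c_j\,\overline{\xi_{k_j}\chi_j}(\n)H_{N'}(\n)\mathbf 1_{(\n,\overline\n P(W))=1}$, $c_j=\C_W(\beta,\overline{\xi_{k_j}\chi_j}H_{N'})$, $|c_j|\le1$, and since $\psi(\n)=0$ unless $(\n,u)=1$, the inner sum becomes $\sum_{|k|\le\nu_1^{-2}}\check G_\theta(k)(A^\beta_k(\psi)-A^\#_k(\psi))$ with $A^\beta_k(\psi)=\sum_{\N\n\in[N',N'(1+\nu_2)]}\beta_\n\,\xi_k\psi(\n)$ and $A^\#_k(\psi)=\sum_j c_j\sum_\n\mathbf 1_{(\n,\overline\n P(W))=1}H_{N'}(\n)\,\xi_{k-k_j}\psi\overline{\chi_j}(\n)$. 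I then split the $k$-sum according to whether $\xi_k\psi\in\Psi_u$ (the \emph{matched} range, which forces $k=k_j$ and $\chi_j$ to be the primitive character inducing $\psi$, hence is nonempty for at most $J$ of the characters $\psi$) or not.

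For the unmatched $A^\beta$-part, $\sum_{k:\xi_k\psi\notin\Psi_u}\check G_\theta(k)A^\beta_k(\psi)$ is, after taking $|\cdot|^2$, summing over $\psi$ and dividing by $\varphi_{\ZZ[i]}(u)$, exactly the quantity bounded by $(Q,N,X,C_1,C_2)$-regularity of $\beta$ (with $\check G$ replaced by $\check G_\theta$, harmless since $G(\cdot-\theta)$ obeys the same bounds and the proof of Lemma \ref{mobiusregularlemma} via the zero-density estimate is insensitive to the centre of the bump). For the unmatched $A^\#$-part I would re-sum over $k$ to restore the angular weight, turning $\sum_k\check G_\theta(k)A^\#_k(\psi)$ into $\sum_{j\le J}c_j B_j(\psi)$ with $B_j(\psi)=\sum_\n\mathbf 1_{(\n,\overline\n P(W))=1}H_{N'}(\n)G(\arg\n-\theta)\,\xi_{-k_j}\psi\overline{\chi_j}(\n)$. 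For unmatched $\psi$ the character $\psi\overline{\chi_j}$ is nontrivial for every $j$; expanding it over residue classes modulo $uu_j$ (after the routine reduction to $(u,u_j)=1$), applying the $G$-weighted estimate of Lemma \ref{flapproximationlemma} to each class — valid because $N>X^\eta Q^3$ keeps the composite modulus within range — and then invoking orthogonality of characters mod $u$ in the Parseval form, one gets
\[
\sum_{\psi}\bigl|B_j(\psi)\bigr|^2\ll \frac{\nu_1^2\nu_2^2N^2}{(\log N)^{2C}},\qquad\text{hence}\qquad \frac{1}{\varphi_{\ZZ[i]}(u)}\sum_\psi\Bigl|\sum_j c_jB_j(\psi)\Bigr|^2\ll\frac{J^2\nu_1^2\nu_2^2N^2}{\varphi_{\ZZ[i]}(u)(\log N)^{2C}},
\]
which is admissible once $2C>2C_2+C_1$. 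It is essential that Parseval converts $\sum_\psi$ into $\varphi_{\ZZ[i]}(u)\sum_{\m\,(u)}$, so that the $\varphi_{\ZZ[i]}(u)^{-1}$ in the per-class bound of Lemma \ref{flapproximationlemma} reappears in the final saving.

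For the matched range there are at most $J$ relevant characters $\psi$. For such a $\psi$, with $j^\ast$ the index with $\chi_{j^\ast}=\chi_\psi$, the main terms of $A^\beta_{k_{j^\ast}}(\psi)$ and of the $j=j^\ast$ part of $A^\#_{k_{j^\ast}}(\psi)$ both equal $c_{j^\ast}$ times the count of $W$-rough $\n$ in the relevant polar box — for $A^\beta$ this is essentially the definition of $\C_W$ — and therefore cancel in $\beta^\flat$; what remains is $\ll\nu_2 N/W^{1-o(1)}$ (the mismatch being confined to $\n$ sharing a prime factor $\ge W$ with $u$), together with the $j\ne j^\ast$ cross-terms, which are correlations of nontrivial Hecke characters of small modulus over a polar box and so are $\ll Q^2\ll\nu_2N/(\log X)^{C}$. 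Summing $\check G_\theta(k)$ over the $\le J$ matched $k$ and then $|\cdot|^2$ over the $\le J$ matched $\psi$, this part contributes $\ll J^3\nu_1^2\nu_2^2N^2/(\varphi_{\ZZ[i]}(u)W^{2-o(1)})$ to $S$, which is admissible because $W=X^{1/(\log\log X)^2}$ makes $W^{2-o(1)}$ exceed any fixed power of $\log X$. Collecting the three contributions plus the truncation error and taking $C_2$ large in terms of $C_1$ (and $C$ in Lemma \ref{flapproximationlemma} larger still) yields the claim.

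The step I expect to be the main obstacle is arranging every error term to carry the full saving $\nu_1^2\,\varphi_{\ZZ[i]}(u)^{-1}$ against the trivial size $\nu_1^2\nu_2^2N^2$: one cannot afford to Fourier-expand $G$ and estimate the resulting character sums one frequency $k$ at a time, since that loses the factor $\nu_1$; instead one must re-sum over $k$ to recover the angular localisation and apply the $G$-weighted half of Lemma \ref{flapproximationlemma}, and, for the matched main terms, exploit that only $(\log X)^{O(1)}$ characters are involved so that sub-polynomial errors survive division by $\varphi_{\ZZ[i]}(u)\le Q^2$. The accompanying bookkeeping with common prime factors of $u$ and the exceptional moduli $u_j$ — and the precise verification that the hypothesis $N>X^\eta Q^3$ keeps $\operatorname{lcm}(u,u_j)$ small enough for Lemma \ref{flapproximationlemma} to apply — is where most of the remaining care goes.
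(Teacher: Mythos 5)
Your proposal follows essentially the same route as the paper: Fourier-expand $G$ to pass to Hecke character sums, split the frequency sum according to whether $\xi_k\psi$ lies in the exceptional set $\Psi_u$, bound the unmatched $\beta$-part directly by $Q$-regularity, re-sum over $k$ to recover the angular weight and apply the orthogonality/approximate-independence statement (Lemma \ref{flapproximationlemma}) plus Parseval for the unmatched $\beta^\#$-part, and exploit the cancellation built into $\C_W$ in the matched part. Your diagnosis of the main obstacle (retaining the full $\nu_1^2\varphi_{\ZZ[i]}(u)^{-1}$ saving, hence the need to re-sum over $k$ rather than estimate frequency-by-frequency) matches the mechanism the paper relies on, and the Cauchy--Schwarz over the $J\le(\log X)^{C_2}$ exceptional indices with $C$ taken large relative to $C_1,C_2$ is exactly the paper's bookkeeping. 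Two small inaccuracies worth flagging: the justification ``$\ll Q^2$'' for the $j\neq j^\ast$ cross-correlations is not quite right as stated (Poly\'a--Vinogradov alone does not handle the $W$-rough indicator; one should invoke Lemma \ref{flapproximationlemma}, which you do correctly elsewhere and which gives $\ll \nu_1\nu_2 N/(\log N)^C$, more than enough); and the paper actually performs a preliminary replacement of any $\chi_j$ whose conductor properly divides $u$ by the character it induces mod $u$, which makes the matched-range cancellation exact at negligible cost rather than leaving the $\nu_2N/W^{1-o(1)}$ leftover you track by hand — both roads lead to the same place since that leftover is negligible.
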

\begin{proof}
 Let us first show that in the approximation $\beta_\n^\#$ we can replace the characters with conductor dividing $u$  by characters with modulus $u$. Suppose that $\psi$ is induced by a primitive character $\psi'$ of modulus $u' < u.$ Then
\[
\psi(\n) = \psi'(\n) \mathbf{1}_{(z,u/u')} =  \psi'(\n)  - \psi'(\n) \mathbf{1}_{(\n,u/u') > 1}.
\]
Since we have defined the correlation by sums over $(\n,P(W))=1$, the characters agree unless $\N (\n,(u/u')) > W$, so that we have
\[
\C_W(\beta, \xi_{k_j}\psi'  H_{N'}) = \C_W(\beta,  \xi_{k_j}\psi H_{N'})+ O(W^{-1} (\log X)^{O(1)}).
\]
Thus, if the approximation includes a character whose conductor is a proper divisor of $u$, we may replace it by the character with $(\n,u)=1$ at a negligible cost (by using orthogonality of characters). Let us assume that this has been done, so that the moduli of the characters $\chi_j$ satisfy either $u_j=u$ or $(u_j,u)=1$. Let $\Psi_u$ denote the characters $\xi\psi$ modulo $u$  that are equal to  $\xi_{k_j}\chi_{j}$ for some $j$. By expanding $G$ with Lemma \ref{fourierserieslemma} we get
\begin{align*}
  S=   \frac{1}{\varphi_{\ZZ[i]}(u)}  \sum_{\substack{\psi \in \widehat{(\ZZ[i]/u\ZZ[i])^\times}  }} \bigg|\sum_{\substack{k}} \check{G}(k)  \bigg( \sum_{\substack{\n   }} \beta^\flat_\n  \xi_k \psi(\n) H_{N'}(\n) \bigg) \bigg|^2  
\end{align*}
Denote
\begin{align*}
    S(\Psi_u) := & \frac{1}{\varphi_{\ZZ[i]}(u)}  \sum_{\substack{\psi \in \widehat{(\ZZ[i]/u\ZZ[i])^\times}  }} \bigg|\sum_{\substack{k \\ \xi_k \psi \in \Psi_u}} \check{G}(k)  \bigg( \sum_{\substack{ \n  }} \beta^\flat_\n  \xi_k \psi(\n) H_{N'}(\n)\bigg) \bigg|^2, \\
       S(\Psi_u^{\complement}) := & \frac{1}{\varphi_{\ZZ[i]}(u)}  \sum_{\substack{\psi \in \widehat{(\ZZ[i]/u\ZZ[i])^\times}  }} \bigg|\sum_{\substack{k \\\xi_k \psi \not \in \Psi_u}} \check{G}(k)  \bigg( \sum_{\substack{ \n   }} \beta^\flat_\n  \xi_k \psi(\n) H_{N'}(\n)\bigg) \bigg|^2,
\end{align*}
Then by Cauchy-Schwarz ($(A+B)^2 \leq 2(A^2+B^2)$) we have
\[
S \ll S(\Psi_u)+  S(\Psi_u^{\complement}).
\]
\subsubsection{Bounding $S(\Psi_u)$}
By definition of $\beta^\flat_\n$ we see that for $\xi_k \psi = \xi_{k_{j_0}} \chi_{j_0}$ we have
\begin{align*}
    \sum_{\substack{\n  }} \beta^\flat_\n  \xi_k \psi(\n)H_{N'}(\n) = \sum_{j \neq j_0}  \C_W(\beta, \overline{\xi_{k_j}\chi_j} H_{N'}) \sum_{\substack{\n }}   H_{N'}(\n)\mathbf{1}_{(\n,\overline{\n}P(W))=1} \xi_k\psi \overline{\xi_{k_j}\chi_j}(\n).
\end{align*}
Thus, by Cauchy-Schwarz on $j$ and $k$ (recall that for $\psi \xi_k \in \Psi$ we have $|k| \leq (\nu_1)^{-2}$ by $Q$-regularity) and using $|\Check{G}(k)| \ll \nu_1$ we have
\begin{align*}
    S(\Psi_u) \ll&  \frac{1 }{\varphi_{\ZZ[i]}(u)}   \sum_{\substack{\psi \in \widehat{(\ZZ[i]/u\ZZ[i])^\times}  }}  \bigg|   \sum_{j \leq J} \C_W(\beta, \overline{\xi_{k_j}\chi_j} H_{N'}) \\
    &\times
    \sum_{\substack{k \\ \xi_k \psi \in \Psi_u\\ \xi_k \psi \neq \xi_{k_{j}} \chi_j }}\check{G}(k) \sum_{\substack{\n }}  H_{N'}(\n)\mathbf{1}_{(\n,\overline{\n}P(W))=1} \xi_k\psi \overline{\xi_{k_j}\chi_j}(\n)\bigg|^2 \\
\ll& \sup_{\substack{j \leq J }}\frac{(\log X)^{3C'} }{\varphi_{\ZZ[i]}(u)} \sum_k |\check{G}(k)|^2\sum_{\substack{\psi \in \widehat{(\ZZ[i]/u\ZZ[i])^\times}\\ \xi_k \psi \in \Psi_u \\ \xi_k\psi  \neq \xi_{k_j} \chi_j  }}  \bigg|  \sum_{\substack{\n }}  H_{N'}(\n)\mathbf{1}_{(\n,\overline{\n}P(W))=1} \xi_k\psi \overline{\xi_{k_j}\chi_j}(\n)\bigg|^2 \\
\ll & \sup_{\substack{j \leq J \\ |k| \leq \nu_1^{-2} \\ \xi_k\psi \neq \xi_{k_j} \chi_j }}\frac{\nu_1^2 (\log X)^{4C'} }{\varphi_{\ZZ[i]}(u)}  \bigg|  \sum_{\substack{\n }}  H_{N'}(\n)\mathbf{1}_{(\n,\overline{\n}P(W))=1} \xi_k\psi \overline{\xi_{k_j}\chi_j}(\n)\bigg|^2.
\end{align*}
By Lemma \ref{flapproximationlemma} we get
\[
S(\Psi_u) \ll  \frac{\nu_1^2 \nu_2^2 N^2 }{\varphi_{\ZZ[i]}(u) (\log X)^C} .
\]

\subsubsection{Bounding $S(\Psi_u^{\complement})$}
By Cauchy-Schwarz ($(A+B)^2 \leq 2(A^2+B^2)$) we have
\begin{align*}
    S(\Psi_u^{\complement}) = & \frac{1}{\varphi_{\ZZ[i]}(u)}  \sum_{\substack{\psi \in \widehat{(\ZZ[i]/u\ZZ[i])^\times}  }} \bigg|\sum_{\substack{k \\\xi_k \psi \not \in \Psi_u}} \check{G}(k)  \bigg( \sum_{\substack{ \n   }} \beta^\flat_\n  \xi_k \psi(\n) H_{N'}(\n)\bigg) \bigg|^2 \\
    \ll &  S_1(\Psi_u^{\complement}) + S_2(\Psi_u^{\complement})
\end{align*}
with
\begin{align*}
  S_1(\Psi_u^{\complement}) :=& \frac{1}{\varphi_{\ZZ[i]}(u)}  \sum_{\substack{\psi \in \widehat{(\ZZ[i]/u\ZZ[i])^\times}  }} \bigg|\sum_{\substack{k \\\xi_k \psi \not \in \Psi_u}} \check{G}(k)  \bigg( \sum_{\substack{ \n   }} \beta_\n  \xi_k \psi(\n) H_{N'}(\n)\bigg) \bigg|^2 \\
  S_2(\Psi_u^{\complement}) := &\frac{1}{\varphi_{\ZZ[i]}(u)}  \sum_{\substack{\psi \in \widehat{(\ZZ[i]/u\ZZ[i])^\times}  }} \bigg|\sum_{j \leq J} \C_W(\beta, \overline{\xi_{k_j}\chi_j} H_{N'}) \\
  & \hspace{100pt}\times \sum_{\substack{k \\\xi_k \psi \not \in \Psi_u}} \check{G}(k)  \bigg( \sum_{\substack{ \n   }} H_{N'}(\n)\mathbf{1}_{(\n,\, \overline{\n} P(W))=1} \xi_k\psi\overline{\xi_{k_j}\chi_j}(\n) \bigg) 
   \bigg|^2.
\end{align*}
By the assumption that $\beta_\n$ is $Q$-regular we have
\[
S_1(\Psi_u^{\complement})  \ll   \frac{\nu_1^2 \nu_2^2 N^2 }{\varphi_{\ZZ[i]}(u) (\log X)^{C_1}}.
\]

For $S_2(\Psi_u^{\complement})$ we have by Cauchy-Schwarz on $j$
\[
  S_2(\Psi_u^{\complement}) :=  \sup_{j \leq J}\frac{(\log X)^{C'}}{\varphi_{\ZZ[i]}(u)}  \sum_{\substack{\psi \in \widehat{(\ZZ[i]/u\ZZ[i])^\times}  }} \bigg| \sum_{\substack{k \\\xi_k \psi \not \in \Psi_u }} \check{G}(k)  \bigg( \sum_{\substack{ \n   }} H_{N'}(\n)\mathbf{1}_{(\n,\, \overline{\n} P(W))=1}  \xi_k\psi\overline{\xi_{k_j}\chi_j}(\n)\bigg) 
   \bigg|^2.
\]
we write
\[
\sum_{\substack{k \\\xi_k \psi \not \in \Psi_u}} \check{G}(k)  = \sum_{\substack{k }} \check{G}(k) -  \check{G}(k_j)\mathbf{1}_{ \psi = \chi_j} -    \sum_{\substack{k  \\ \xi_k \psi \in \Psi_u \\ \xi_k \psi \neq \xi_{k_j}\chi_j}} \check{G}(k). 
\]
The contribution from the third sum may be extracted from $  S_2(\Psi_u^{\complement})$ by Cauchy-Schwarz and bounded by the same argument as with $  S(\Psi_u)$. Thus, we are left with bounding
\begin{align*}
     S_3(\Psi_u^{\complement}) := & \sup_{j \leq J}\frac{(\log X)^{C'}}{\varphi_{\ZZ[i]}(u)}  \sum_{\substack{\psi \in \widehat{(\ZZ[i]/u\ZZ[i])^\times}  }} \bigg| \sum_{\substack{k }} \check{G}(k)  \bigg( \sum_{\substack{ \n   }} H_{N'}(\n)\mathbf{1}_{(\n,\, \overline{\n} P(W))=1}  \xi_k\psi\overline{\xi_{k_j}\chi_j}(\n)\bigg)  \\ 
     &\hspace{150pt}-  \check{G}(k_j)\mathbf{1}_{ \psi = \chi_j} \sum_{\substack{ \n   }} H_{N'}(\n)\mathbf{1}_{(\n,\, \overline{\n} P(W))=1} 
   \bigg|^2 \\
   =& \sup_{j \leq J}\frac{(\log X)^{C'}}{\varphi_{\ZZ[i]}(u)}  \sum_{\substack{\psi \in \widehat{(\ZZ[i]/u\ZZ[i])^\times}  }} \bigg|   \sum_{\substack{ \n   }} H_{N'}(\n) G(\arg \n) \mathbf{1}_{(\n,\, \overline{\n} P(W))=1}  \psi\overline{\xi_{k_j}\chi_j}(\n)  \\ 
     &\hspace{150pt}-  \check{G}(k_j)\mathbf{1}_{ \psi = \chi_j} \sum_{\substack{ \n   }} H_{N'}(\n)\mathbf{1}_{(\n,\, \overline{\n} P(W))=1} 
   \bigg|^2.
\end{align*}
The claim now follows by orthogonality of characters and Lemma \ref{flapproximationlemma} since $N^{1-\eta} > Q^3$ and $|u|,|u_j| \leq Q$.
\end{proof}

\begin{remark}
When constructing the approximation $\beta^\#_\n$ we have a choice of using either the physical space or the Fourier space. To approximate $\beta_{\n}$ with respect to arithmetic progressions and sectors of $\ZZ[i]$ we use the Fourier space (ie. characters $\chi,\xi$), where as to approximate $\beta_{\n}$ with respect to the size of $\N \n$ we use the physical space (ie. smooth partition $H_{N'}$). These are the most convenient choice for using existing zero density estimates and information about exceptional characters.
\end{remark}

\subsection{Partitioning the Type II sum}
With the approximation for $\beta_\n$ defined as in Section \ref{approximationsection}, we can extract the main term from the Type II sum  by writing
\[
S(\alpha,\beta) = S(\alpha,\beta^\#)+S(\alpha,\beta^\flat).
\]
 By $(a,b)=1$ we may restrict to $(w,\overline{w})=1$. Since we are working with rough numbers the condition $(a,b)=1$ may be dropped with a negligible error term By definition $a_\n = \sum_{u \in \{\pm 1,\pm i\}} a_{uz}$ so that we have
\[
S(\alpha,\beta^\flat) = \sum_{\substack{|w|^2 \sim M \\ (w,\overline{w})=1}}\sum_{|z|^2 \sim N} \alpha_w \beta^\flat_z \mathbf{1}_{B}(\Re(\overline{w}z)) + O_C\bigg(\frac{X^{1/2} |B|}{(\log X)^C}\bigg).
\]

The two contributions are bounded by the following two propositions, which together imply Proposition \ref{typeiiprop}.
\begin{prop} \label{flattypeiiprop}
Suppose that the assumptions of Proposition \ref{typeiiprop} hold and let $\beta_\n^\flat$ be as in Section \ref{approximationsection}.
Then for every $C_1>0$ there is some $C_2>0$
\[
\sum_{\substack{|w|^2 \sim M \\ (w,\overline{w})=1}} \sum_{|z|^2 \sim N} \alpha_w \beta^\flat_z \mathbf{1}_B (\emph{\Re} (\overline{w} z ) )  \ll \frac{X^{1/2}|B|}{ (\log X)^{C_1}}.
\]
\end{prop}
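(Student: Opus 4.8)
The plan is to follow the strategy of Section~\ref{typeiisketch}. Write $S^\flat$ for the sum in the statement. Applying Cauchy--Schwarz in the $w$-variable and replacing $\mathbf{1}_{|w|^2\sim M}$ by a fixed smooth majorant $F_M$ of the support of $\alpha_w$, one has $|S^\flat|^2\ll M\,U$ with
\[
U:=\sum_{z_1,z_2}\beta^\flat_{z_1}\overline{\beta^\flat_{z_2}}\sum_w F_M(|w|^2)\mathbf{1}_B(\Re(\overline{w}z_1))\mathbf{1}_B(\Re(\overline{w}z_2)),
\]
so using $M\sim X'/N$ it suffices to prove $U\ll N|B|^2/(\log X)^{C_1'}$ for $C_1'$ large in terms of $C_1$. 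Before opening $U$ I would insert the finer-than-dyadic weights of Section~\ref{smoothweightsection}: a partition $H_{N_1'}(z_1)H_{N_2'}(z_2)$ in $|z_j|^2$ (scale $\nu_2$) and a weight $G(\arg z_1-\theta_1)G(\arg z_2-\theta_2)$ in $\arg z_j$ (scale $\nu_1$), at a cost of $(\nu_1\nu_2)^{-O(1)}(\log X)^{O(1)}$, so that $U$ is put into the shape to which Lemma~\ref{stronggallagherlemma} applies.

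For fixed $z_1,z_2$ use the Friedlander--Iwaniec-type identity $i\Delta w=z_2b_1-z_1b_2$, where $b_j=\Re(\overline{w}z_j)$ and $\Delta=\Im(\overline{z_1}z_2)$; thus $w$ is pinned down by $z_1,z_2,b_1,b_2$. The contributions with $\Delta=0$ (forcing $z_1,z_2$ proportional) or $0<|\Delta|\le N(\log X)^{-C}$ are diagonal and are bounded by crude counting via the divisor bound, so one restricts to $|\Delta|\asymp N$. Now factor out $b_0:=(b_1,b_2)$, writing $b_j=b_0b_j'$; the essential point --- crucial when e.g. $B\subseteq q_1\ZZ$ with $q_1$ as large as $X^\delta$, since then $q_1\mid b_0$ for all of $B\times B$ --- is that the dependence on $b_0$ must be retained throughout rather than crudely summed. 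After routine bookkeeping of the relevant greatest common divisors one reduces to the congruence $b_2'\equiv ab_1'\,(\Delta/b_0)$ with $a\equiv\Re(\overline{z_1}z_2)/|z_1|^2$ an integer modulo $\Delta$, and expanding this into primitive Dirichlet characters $\chi$ modulo $d\mid\Delta/b_0$ one splits $U$ according to whether $f:=b_0d$ exceeds $X^{\delta+\eta}$: the small-$f$ part is the main term to be cancelled, the large-$f$ part the off-diagonal remainder.

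For $f>X^{\delta+\eta}$: bound $1/\varphi(\Delta/b_0)\ll b_0/N$, sum over $\Delta\equiv 0\,(f)$ with $|\Delta|\asymp N$ trivially, and apply the multiplicative large sieve (Lemma~\ref{largesievelemma}) to $\sideset{}{^\ast}\sum_{\chi\,(d)}\bigl|\sum_{b:\,b_0b\in B}\chi(b)\bigr|^2$ (with $b$ ranging up to $\asymp X^{1/2}/b_0$); summing over dyadic $d$ with $X^{\delta+\eta}/b_0<d\ll N/b_0$ and over $b_0$, using $\sum_{b_0}|\{b\in B:b_0\mid b\}|\ll X^{o(1)}|B|$, gives a contribution $\ll X^{o(1)}N(N+X^{1/2-\delta-\eta})|B|\ll X^{-\eta'}N|B|^2$ for some $\eta'>0$, by $N<X^{1/2-\delta-\eta}$ and $N\ll X^{-\eta}|B|$ as noted in Section~\ref{typeiisketch}. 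For $f\le X^{\delta+\eta}$: rewrite the two conditions $b_0d\mid\Delta$ and $a\equiv z_2/z_1\,(\Delta)$ as the single Gaussian congruence $z_2\equiv az_1\,(f)$; then, with the weights $H_{N_j'}$ and $G(\arg\,\cdot\,-\theta_j)$ still attached, opening this congruence with the $\varphi_{\ZZ[i]}(f)$ characters modulo $f$ and one more Cauchy--Schwarz produces exactly
\[
\frac{1}{\varphi_{\ZZ[i]}(f)}\sum_{\psi\,(f)}\Bigl|\sum_{z}\beta^\flat_z\,\psi(z)H_{N'}(z)G(\arg z-\theta)\Bigr|^2,
\]
which Lemma~\ref{stronggallagherlemma} bounds by $\ll\nu_1^2\nu_2^2N^2/(\varphi_{\ZZ[i]}(f)(\log X)^C)$, valid since $f\le Q=X^{\delta+\eta}$ and $N>X^\eta Q^3$ by the hypothesis $X^{3\delta+4\eta}<N$. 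Summing this over the $\ll f$ residues $a$ and over $f\le X^{\delta+\eta}$, and recombining the weight losses $(\nu_1\nu_2)^{-O(1)}(\log X)^{O(1)}$, yields $U\ll N|B|^2/(\log X)^{C_1'}$ once $C$ (hence $C_2$) is taken large enough in terms of $C_1$.

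I expect the two delicate points to be the following. First, the bookkeeping of the factor $b_0$ and of the greatest common divisors $(z_1,z_2)$, $(d,b_0)$, $(\Delta/b_0,\cdot)$, together with the coprimality constraints coming from $a_\n$ and from $\beta$ being supported on $W$-rough numbers, so that the large sieve is applied without loss --- this is exactly where the argument must work for an arbitrary $B$, unlike the special sets treated in \cite{FI,hbli,merikoskipoly,pratt}. Second, organizing the smooth weights $F_M$, $H_{N'}$, $G(\arg\,\cdot\,)$ and the Mellin device consistently so that Lemma~\ref{stronggallagherlemma} applies verbatim while all accumulated factors of $(\log X)^{O(1)}$ and $\nu_j^{-O(1)}=X^{O(\eta_j)}$ remain absorbable; this is where the hierarchy $\eta_3\ll\eta_2\ll\eta_1\ll\eta$ and the margin in $N>X^{3\delta+4\eta}$ are used.
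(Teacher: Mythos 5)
Your overall strategy matches the paper's: Cauchy--Schwarz in $w$ with a smooth majorant, reduction to the congruence $b_2'\equiv ab_1'\ (\Delta/b_0)$ keeping track of $b_0=(b_1,b_2)$, a multiplicative large sieve for the large-modulus part, and an appeal to Lemma~\ref{stronggallagherlemma} for $f\le X^{\delta+\eta}$ after rewriting $b_0d\mid\Delta$, $a\equiv z_2/z_1\ (\Delta)$ as $z_2\equiv az_1\ (f)$. The numerics you cite for the large sieve step and the hypothesis $N>X^{\eta}Q^3$ are also correct.

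However, there is a genuine gap in your treatment of the pseudo-diagonal range. You assert that the contribution from $0<|\Delta|\le N(\log X)^{-C}$ is ``bounded by crude counting via the divisor bound,'' but this is not the case, and it is precisely the place where the structure of the problem is most delicate. The paper handles this range (with threshold $\nu_3 N=X^{-\eta_3}N$, not $N(\log X)^{-C}$) in Lemma~\ref{pseudodiaglemma} by exploiting two inputs that pure divisor counting does not see. First, one uses the smooth weight $F_M(|w|^2)$, which restricts $|w|^2\sim M$ and hence forces $|z_2b_1-z_1b_2|=|\Delta|\,|w|\ll\nu_3 N^{1/2}X^{1/2}$. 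Second, one uses the reduction of Section~\ref{setupsection} to $B\subseteq[Y,Y+X^{1/2-\eta}]$, $Y\asymp X^{1/2}$ (equation~\eqref{Bintervalassumption}), so that $b_1,b_2\approx Y$ and the previous inequality becomes $|z_2-z_1|\ll\nu_3 N^{1/2}$. Only after this localization does a counting argument, parallel to the $(z_1,z_2)>1$ case of Lemma~\ref{diag2lemma}, yield $V_{\le\nu_3}(\beta)\ll_\eps X^\eps\nu_3N|B|^2$. Without the short-interval input one cannot control $|z_2-z_1|$ at all (the constraint $|z_2b_1-z_1b_2|$ small says nothing about $|z_2-z_1|$ once $b_1,b_2$ may have very different sizes), and the divisor bound alone does not prevent the count over pairs with small $\Delta$ from being of the main-term size. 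Remark~\ref{smallBremark} in the paper is devoted exactly to how delicate this step becomes when~\eqref{Bintervalassumption} is relaxed. Your choice of threshold $(\log X)^{-C}$ is also too weak: the pseudo-diagonal estimate carries an $X^\eps$ divisor loss, which can only be absorbed by a power saving $\nu_3=X^{-\eta_3}$, not by $\log$ powers.

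Two further points you flag as ``bookkeeping'' but should be spelled out, since they are where the argument can break: (i) removing $F_M$ in the off-diagonal part requires the approximation~\eqref{ccremoval}, $F_M(|b_0(z_2b_1'-z_1b_2')/\Delta|^2)=F_M(\bm N,\bm\theta)+O(\nu_2\nu_3^{-2})$, which forces the hierarchy $\eta_3\ll\eta_2$ and is the reason the angular restriction $|\sin(\theta_1-\theta_2)|>\nu_3$ and the $H_{N'}$ partition are introduced before opening the congruence; (ii) the Gaussian modulus entering Lemma~\ref{stronggallagherlemma} is not simply $f=b_0d$ but rather $f=b_0\cdot[\,[d_1,de_1e_2]/([d_1,de_1e_2],\ell),\,\ell/(b_0,\ell)\,]$, accounting for the M\"obius expansion of $(w,\overline w)=1$ (variable $\ell$), of $1/\varphi(\ell\Delta/b_0)$ (variable $d_1$) and of the coprimality of $b_j'$ to $\Delta/(db_0)$ (variables $e_1,e_2$); the resulting error terms $E_\le$, $Z_\le$, $W_\le$ each require their own estimates.
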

\begin{prop} \label{sharptypeiiprop}  Suppose that the assumptions of Proposition \ref{typeiiprop} hold and let $\beta^\#_\n$ be as in  Section \ref{approximationsection}.
Then for any $C>0$
\begin{align*}
  \sum_{\m,\n }  \alpha_\m \beta^{\#}_\n a_{\m\n} =    \sum_{j \leq J } \sum_{\m,\n } \frac{\alpha_\m \beta_\n \xi_{k_j}\chi_j(\m\n)}{\N \m\n}   \sum_{\a}  \frac{F(\N\m\n/\N \a)}{\widehat{F}(0) }  a^\omega_{\a} \overline{\xi_{k_j}\chi_j}(\a) + O \bigg( \frac{X^{1/2}|B|}{(\log X)^C}\bigg).
  \end{align*}
\end{prop}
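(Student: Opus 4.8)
The plan is to reduce $S(\alpha,\beta^{\#})=\sum_{\m,\n}\alpha_\m\beta^{\#}_\n a_{\m\n}$ to the Type I estimate of Proposition~\ref{typeiprop} together with the fundamental lemma of the sieve, exploiting that $\beta^{\#}_\n$ is, apart from the finer-than-dyadic weights $H_{N'}$ and the roughness indicator, a short linear combination of the Hecke characters $\overline{\xi_{k_j}\chi_j}$. Substituting the definition of $\beta^{\#}_\n$ and interchanging summations gives
\[
S(\alpha,\beta^{\#})=\sum_{j\leq J}\ \sum_{N'=N(1+\nu_2)^{m}\in[N,2N]}\C_W\!\big(\beta,\overline{\xi_{k_j}\chi_j}H_{N'}\big)\,T_{j,N'},
\]
with $T_{j,N'}:=\sum_{\m,\n}\alpha_\m H_{N'}(\n)\mathbf{1}_{(\n,\overline\n P(W))=1}\overline{\xi_{k_j}\chi_j}(\n)\,a_{\m\n}$. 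Since $\alpha_\m,\beta_\n$ are supported on $W$-rough ideals while $\M(u_j)\leq X^{\delta+\eta}$ has only $O((\log\log X)^2)$ prime ideal factors of norm $\geq W$, inserting $(\m\n,u_j)=1$ costs $O_C(X^{1/2}|B|(\log X)^{-C})$; on this support $\overline{\xi_{k_j}\chi_j}(\n)=\xi_{k_j}\chi_j(\m)\overline{\xi_{k_j}\chi_j}(\m\n)$, so the $\m$-dependence passes into $\alpha_\m$ and, writing $\a:=\m\n$, the surviving character $f:=\overline{\xi_{k_j}\chi_j}$ is evaluated at $\a$. With $\n=\a/\m$ the conditions on $\n$ become $\m\mid\a$, a roughness condition $(\a,P(W))=1$ (the same as $(\a/\m,P(W))=1$ since $\m$ is $W$-rough), and a size window $\N\a\in\N\m\cdot[N',N'(1+\nu_2)]$; note $\N\m\leq M< X^{1-3\delta-4\eta}$, within the Type I range $X^{1-\delta-\eta}/q_j^{2}$ with $q_j=\M(u_j)$.

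I would then remove the roughness of $\a$ using Lemma~\ref{flsievelemma}, applied separately for each $b\in B$ with sifting parameter $Z=W$ and level $D=X^{\eta/2}$: since $\log D/\log W\gg(\log\log X)^2$, the sieve main term is correct up to a relative error $O_C((\log X)^{-C})$, and by Remark~\ref{flsieveremark} the sieve weights $\lambda_d$ do not depend on $b$, so the remainder summed over $B$ equals $\sum_{b\in B}\big|\sum_{d<D}\lambda_d r^{(b)}_d\big|$, which is of the shape controlled by Proposition~\ref{typeiprop} — the Type I divisor variable absorbing both $\m$ and the sieve modulus $d$ (combined norm $\leq DM<X^{1-\delta-\eta}/q_j^{2}$), and the $\m$-dependent size window handled by an extra finer-than-dyadic splitting of $\N\m$ and the interval version of Proposition~\ref{typeiprop}. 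Hence $T_{j,N'}$ equals its sieve main term up to $O(X^{1/2-\eta/100}|B|)$, and summing this over $j\leq J\leq(\log X)^{C_2}$ and the $\ll\nu_2^{-1}\log X=X^{\eta_2}\log X$ boxes $N'$ gives a total error $\ll X^{\eta_2+o(1)}(\log X)^{C_2}X^{1/2-\eta/100}|B|\ll X^{1/2}|B|(\log X)^{-C}$, since $\eta_2\ll\eta$; this determines $C_2$ in terms of $C$.

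There remains the reassembly of the sieve main terms. For each $(j,N')$ that main term has the shape
\[
\sum_{b\in B}V^{(b)}(W)\!\!\sum_{(\m,b\overline\m)=1}\!\!\frac{\alpha_\m\xi_{k_j}\chi_j(\m)}{\N\m}\!\!\sum_{\substack{(a,b)=1,\ (a^2+b^2,2)=1\\ a^2+b^2\in\N\m[N',N'(1+\nu_2)]}}\!\!\overline{\xi_{k_j}\chi_j}((b+ia)),
\]
and one uses $V^{(b)}(W)=\tfrac12 V(W)\,\omega_2(b)(1+o(1))$ (completing the Euler product, with the $p=2$ factor and the $p\geq W$ tail handled directly, cf. Remark~\ref{flsieveremark}), the definition $\C_W(\beta,\overline{\xi_{k_j}\chi_j}H_{N'})=D_{N'}^{-1}\sum_\n\beta_\n\xi_{k_j}\chi_j(\n)H_{N'}(\n)\mathbf{1}_{(\n,\overline\n P(W))=1}$, and the asymptotic evaluation of $D_{N'}=\#\{\n:\N\n\in[N',N'(1+\nu_2)],\ (\n,\overline\n P(W))=1\}$. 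Then the $N'$-sum collapses (the box of $\n$ being forced to coincide with the box of $(a^2+b^2)/\N\m$); replacing the resulting sharp cutoff by the smooth weight $F(\N\m\n/\N\a)/\widehat F(0)$ — legitimate because a $\nu_2$-local average of $\beta_\n\xi_{k_j}\chi_j(\n)$ agrees with a $\nu$-local one up to acceptable error, the only obstruction to equidistribution being the bias from a possible exceptional zero, which is linear in the window length — and recollecting $a^\omega_\a=\mathbf{1}_B(\Re(\cdot))\,\omega_2(\Re(\cdot))\,\mathbf{1}_{(\cdot,\overline{\cdot})=1}$ while using $\tfrac4\pi=\prod_p(1-\rho(p)/p)(1-1/p)^{-1}$, the arithmetic constants ($V(W)$, $D_{N'}$, etc.) cancel, leaving the asserted main term $\sum_{j\leq J}\sum_{\m,\n}\frac{\alpha_\m\beta_\n\xi_{k_j}\chi_j(\m\n)}{\N\m\n}\,\frac1{\widehat F(0)}\sum_\a F(\N\m\n/\N\a)\,a^\omega_\a\,\overline{\xi_{k_j}\chi_j}(\a)$.

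The main obstacle is precisely this last reassembly: the normalizer $D_{N'}$ and the sifting density $V^{(b)}(W)$ must be evaluated with sufficient precision and uniformly in $b\in B$; the two width parameters $\nu_2$ (of the boxes $H_{N'}$) and $\nu$ (of $F$) must be reconciled when the $N'$-sum is collapsed; and the arithmetic-density constants must be matched exactly against the normalization built into $a^\omega_\a$ and $\tfrac4\pi$, all while keeping the accumulated error below $X^{1/2}|B|(\log X)^{-C}$.
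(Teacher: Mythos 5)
Your proposal follows essentially the same route as the paper: expand $\beta^\#$ into the $J$ characters, peel off $\C_W$, factor $\overline{\xi_{k_j}\chi_j}(\n) = \xi_{k_j}\chi_j(\m)\overline{\xi_{k_j}\chi_j}(\m\n)$, remove the roughness of $\a=\m\n$ via the Fundamental Lemma (Lemma~\ref{flsievelemma}) with $b$-independent sieve weights and control the remainders by Proposition~\ref{typeiprop}, then reassemble. The only real divergence is cosmetic: for matching the $H_{N'}(\a/\m)/\nu_2$ weight to $F(\N\m\n/\N\a)/\widehat F(0)$ the paper simply invokes Poisson summation on the free variable $a$ in $z=b+ia$, whereas you argue via comparing $\nu_2$- and $\nu$-local averages; both are fine, though the Poisson route is cleaner and avoids your slightly misdirected remark about exceptional-zero bias, which is irrelevant at this purely Type~I step.
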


\begin{remark} \label{multiplicity remark}
We want to carry the condition $(w,\overline{w})=1$ through the application of Cauchy-Schwarz. To see why, consider a situation where $B \subseteq d_0 \ZZ$ with $d_0 > X^\eta$ being very smooth so that $\tau(d_0)$ is larger than any fixed power of $\log X$. Let us split the Type II sum according to the gcd of $w$ and $d_0$, which gives us
\[
\sum_{e|d_0} \sum_{w \equiv 0\, (e)} \sum_{z} \mathbf{1}_B (\Re (\overline{w} z ) ).
\]
Now in the inner sum $e| w$ means that $e|b$ is automatic,  so that the density on the inside is bumped up, that is, we morally have
\[
\sum_{w \equiv 0\, (e)} 1 \approx \frac{M}{e^2}, \quad \sum_{z} \mathbf{1}_B (\Re (\overline{w} z ) ) \approx e \frac{X^{1/2}|B|}{M} 
\]
Prior to Cauchy-Schwarz this is not an issue since we still get  converging sum $\sum_{e|d_0} e^{-1}$. However, after applying Cauchy-Schwarz to $w$ we get
\[
\sum_{e|d_0} \sum_{w \equiv 0\, (e)} \bigg| \sum_{z}\mathbf{1}_B  (\Re (\overline{w} z ) ) \bigg|^2 \approx N |B|^2 \sum_{e|d_0} 1,
\]
which means that we have picked up a large divisor function $\tau(d_0)$. This would be problematic since we can only save a fixed power of $\log X$ from Lemma \ref{stronggallagherlemma}.
We resolve this issue by keeping the condition $(w,\overline{w})=1$ so that $w$ has no non-trivial integer divisors, but there are also other ways to deal with this.
\end{remark}
\subsection{Proof of Lemma \ref{mobiusregularlemma}} \label{mobiusregularitylemmasection}
 Note that $\M(u) \leq Q$ implies that $|u| \,\leq Q$, so that by $N > X^\eta Q^{3}$ we have $N > X^{\eta} |u|^{3}$. By Lemma \ref{zerodensitylemma} we can take for $\sigma_Q := 1- \frac{C_2' \log \log Q}{ \log Q}$ with some large $C_2'> 0$ to get
\[
J\leq N^\ast(\sigma_Q,X^\eta,X^{\eta},Q^2) \leq (\log X)^{C_2}
\]
and let $\Psi$ be the set of primitive characters $\xi\psi$ such that $L(s,\xi\psi)$ has a zero counted in the above with $\M(u) \leq Q$.
Recall that we now specify
\[
\beta_\n := \mathbf{1}_{\N \n \sim N} \mu(\N\n) \mathbf{1}_{(\n,P(W))=1}.
\]
Then for Lemma \ref{mobiusregularlemma} we need to show that if $\Psi_u$ denotes the set of characters modulo $u$ which are induced by $\Psi$, then for any $N' \in [N,2N]$ for any $C_1>0$ there is some $C_2>0$ such that
  \begin{align} \label{mobiusclaim}
       \frac{1}{\varphi_{\ZZ[i]}(u)}  \sum_{\psi \in \widehat{(\ZZ[i]/u\ZZ[i])^\times}} \bigg| \sum_{\substack{k \\ \xi_k \psi \not \in \Psi_u}} \check{G}(k) \bigg(\sum_{\N \n \in (N',N'(1+\nu_2)]} \beta_{ \n}  \xi_k\psi(\n)\bigg) \bigg|^2 \ll \frac{\nu_1^2\nu_2^2 N^2}{\varphi_{\ZZ[i]}(d) (\log X)^{C_1}}.
  \end{align}
The contribution from the trivial character $\psi=\psi_0$ is bounded by a similar but easier argument as below, using Heath-Brown's identity and the Vinogradov strength zero-free region of Coleman \cite{coleman_1990}. We then restrict to $\psi \neq \psi_0.$ We apply Section \ref{smoothweightsection} to $\N\n$ with $\nu_1=X^{-\eta_1}$, using the assumption that $\eta_2$ is small compared to $\eta_1$ to replace $\N \n \in ( N',N'(1+\nu_2)]$ with a smooth weight. It then suffices to show that for any $N_1 \sim N$ we have
\[
 \frac{1}{\varphi_{\ZZ[i]}(u)}  \sum_{\substack{\psi \in \widehat{(\ZZ[i]/u\ZZ[i])^\times}\\ \psi\neq \psi_0}} \bigg| \sum_{\substack{k \\ \xi_k \psi \not \in \Psi_u}} \check{G}(k)  \bigg(\sum_{\n } \beta_{ \n}  \xi_k\psi(\n) F_{N_1}(\N\n) \bigg) \bigg|^2 \ll \frac{\nu_1^4 N^2}{\varphi_{\ZZ[i]}(d) (\log X)^{C_1}}
\]
 
The proof strategy is classical so we will be brief. We use the Heath-Brown identity \cite[(13.58)]{IK} with $K=3$
\[
\mu(n) =\sum_{k=1}^3 (-1)^{k+1}\binom{3}{k}\sum_{\substack{n=m_1m_2m_2n_1n_2n_3 \\ m_j \leq 2 N^{1/3}}} \mu(m_1) \mu(m_2) \mu(m_3)
\]
Let $F_1$ be as in Section \ref{smoothweightsection} with $\nu=1/2$. Using the Heath-Brown identity and a dyadic decomposition (smooth for the free variable $\a$) we get sums of Type I 
\[
S_I :=  \frac{1}{\varphi_{\ZZ[i]}(u)} \sum_{\substack{\psi \in \widehat{(\ZZ[i]/u\ZZ[i])^\times}\\ \psi\neq \psi_0}} \bigg| \sum_{\substack{k \\ \xi_k \psi \not \in \Psi_u}} \check{G}(k) \bigg(    \sum_{\substack{\a \\ \N \m \sim M \\  \\ (\m\a,P(W))=1}}F_{N_1}(\N\m \a)\alpha(\m) F_{1,A}(\N\a)  \xi_k\psi(\m \a)\bigg)\bigg|^2 
\]
with $M  \ll 2N^{2/3}$ and $AM \sim N$ and sums of Type II
\begin{align*}
  S_{II}  := \frac{1}{\varphi_{\ZZ[i]}(u)}  \sum_{\substack{\psi \in \widehat{(\ZZ[i]/u\ZZ[i])^\times}\\ \psi\neq \psi_0}} \bigg| \sum_{\substack{k \\ \xi_k \psi \not \in \Psi_u}} \check{G}(k) \bigg( 
  \sum_{\substack{  \N \m_j \sim M_j \\ (\m_j,P(W))=1}} F_{N_1}(\N\m_1\m_2\m_3)  \hspace{40pt}  \\ \times \alpha_1(\m_1)  \alpha_2(\m_2) \alpha_3(\m_3)\xi_k\psi(\m_1 \m_2 \m_3)\bigg) \bigg|^2   
\end{align*}
with $\alpha_1(\m) = 1$ or $\alpha_1(\m) = \mu(\N \m)$, $M_1M_2M_3 \sim M$ and
\[
N^{1/6} \ll M_1 \ll N^{1/3}.
\]
To see this note that we are in the Type I case unless all of the variables $n_j$ are $\ll N^{1/3}$, and in that case we can take  the $M_1$ for the Type II sum to be the range of largest variable $m_j,n_j$, which must be $\gg N^{1/6}$.

To show \eqref{mobiusclaim} it then suffices to show that
\[
S_I, \,  S_{II} \,\ll \frac{\nu_1^4 N^2}{\varphi_{\ZZ[i]}(d) (\log X)^{C_1}}.
\]
For $S_I$ we let $D= X^{\eta_1}$ and write 
\[
\mathbf{1}_{(\a,P(W))=1} = \sum_{\d | (\a,P(W))} \mu(\d) =  \sum_{\substack{\d | (\a,P(W)) \\ \N\d \leq D}} \mu(\N\d) +\sum_{\substack{\d | (\a,P(W)) \\ \N\d > D}} \mu(\N\d).
\]
The contribution from $\N\d > D$ is bounded by using Lemma \ref{smoothlemma}, after using orthogonality of characters and Lemma \ref{fourierserieslemma}.
For $S_I$ we then need to bound
\[
S_I' :=  \frac{1}{\varphi_{\ZZ[i]}(u)} \sum_{\substack{\psi \in \widehat{(\ZZ[i]/u\ZZ[i])^\times}\\ \psi\neq \psi_0}} \bigg| \sum_{\substack{k \\ \xi_k \psi \not \in \Psi_u}} \check{G}(k) \bigg(    \sum_{\substack{ \a \\ \N \m \sim M  \\ (\m,P(W))=1}} \sum_{\substack{\d | (\n,P(W)) \\ \N\d \sim D}} \mu(\d) F_{1,A}(\N \d \a)\alpha(\m)   \xi_k\psi(\d\m \a)\bigg)\bigg|^2 
\]
with  $D \ll X^{\eta_1}$ and $M \ll N^{2/3}$.

 Denote
\begin{align*}
    M(t,\xi\psi) &:= \sum_{\N \m \sim M} \mathbf{1}_{(\m,P(W))=1} \alpha(\m)  (\N \m)^{-it}\xi\psi(\m),  \\
     M_j(t,\xi\psi) &:= \sum_{\N \m \sim M_j} \mathbf{1}_{(\m,P(W))=1} \alpha_j(\m)  (\N \m)^{-it}\xi\psi(\m), \\
     A(t,\xi\psi) &:= \sum_{\a} \sum_{\substack{\d|(\a,P(W)) \\ \N\d \leq D}}\mu(\d) F_{1,A}(\N\d\a) (\N \d \a)^{-it}\xi\psi(\a).
\end{align*}
Then for $\psi \neq \psi_0$  we have the standard point-wise bounds
\begin{align} \label{convexitybound}
    A(t,\xi_k\psi) \ll D(1+|t|) (1+|k|)|u|
\end{align}
and for $\alpha_1=1$ or $\alpha_1=\mu$ with $M_1 \gg X^{1/6}$ once $C_2$ is large compared to $C_1$ with $\xi\psi \not \in \Psi_u$
\begin{align} \label{vingradovbound}
   M_1(t,\xi\psi) \ll \frac{M_1}{(\log X)^{C_1}}
\end{align}
The bound \eqref{convexitybound} follows by the Poly\'a-Vinogradov  bound (ie. the convexity bound for $L(s,\xi\psi)$ in the $u$ aspect, Lemma \ref{pvongaussianlemma}). The bound \eqref{vingradovbound} follows by the truncated Perron's formula and shifting the contour to $(1+\sigma_Q)/2$ (justified by $\xi \psi \not \in \Psi_u$), using the bound Lemma \ref{Lnearzerolemma} for $1/L(s,\xi \psi)$, and taking $C_2'>0$ in the definition of $\sigma_Q$ sufficiently large.

By Mellin inversion (Lemma \ref{improvedmvtlemma})  we get
\[
F_{N_1}(x) = \frac{1}{2\pi i}\int \Dot{F}(s) N_{1}^{s} x^{-s} ds.
\]
with
\[
|\Dot{F}(s)| \ll_C \nu_1 (1+\nu_1|t|)^{-C}.
\]
Hence, we have
\[
S_I' \ll J_I \quad \text{and} \quad S_{II} \ll   J_{II}
\]
with
\begin{align*}
  J_I &:=  \frac{1}{\varphi_{\ZZ[i]}(u)} \sum_{\substack{\psi \in \widehat{(\ZZ[i]/u\ZZ[i])^\times}\\ \psi\neq \psi_0}} \bigg| \sum_{\substack{k \\ \xi_k \psi \not \in \Psi_u}} |\check{G}(k)|\int |\Dot{F}(it)||MA(t,\xi_k \psi)| \,dt \bigg|^2 
 \\
 J_{II}& := \frac{1}{\varphi_{\ZZ[i]}(u)}\sum_{\substack{\psi \in \widehat{(\ZZ[i]/u\ZZ[i])^\times}\\ \psi\neq \psi_0}} \bigg| \sum_{\substack{k \\ \xi_k \psi \not \in \Psi_u}} |\check{G}(k)|\int |\Dot{F}(it)||M_1M_2 M_3(t,\xi_k \psi) |\,dt \bigg|^2 
\end{align*}

To bound $J_I$  we apply  Cauchy-Scwarz on $t,k$, \eqref{convexitybound} and orthogonality of characters to get for some coefficients $\gamma_\n$ and for some $t,\xi$
\begin{align*}
  J_I& \ll \nu_1^{O(1)}    \frac{D^2|u|^2}{\varphi_{\ZZ[i]}(u)} \sum_{\psi \in \widehat{(\ZZ[i]/u\ZZ[i])^\times}} \bigg|   M (t,\xi \psi)\bigg|^2 \\
   & \ll \nu_1^{O(1)}  D^2 |u|^2 \sum_{\substack{\n_1 \equiv \n_2 \, (u) \\ \N \n_1, \N \n2 \ll M }} |\gamma_{\n_1} \overline{\gamma_{\n_2}}| \\
  & \ll \nu_1^{O(1)} D^2 M^2  + \nu_1^{O(1)} |u| D^2 M.
\end{align*}
By $N > N^{\eta} |u|^{3} $, $M \ll N^{2/3}$, and $D =X^{\eta_1}$ we get 
\[
J_I \ll N^{4/3 +  O(\eta_1)} + |u| N^{2/3 + O(\eta_1)}   \ll  N^{-\eta}\frac{N^2}{\varphi_{\ZZ[i]}(u)}
\]
since $\eta_1$ is small compared to $\eta$, which is sufficient for bounding $J_I$.

For $J_{II}$ we apply the bound \eqref{vingradovbound} for $M_1$ and Cauchy-Schwarz in the $t,k$ variables to get
\begin{align*}
 J_{II} \ll &\frac{M_1^{2}}{(\log X)^{C_1}}  \iint |\Dot{F}(it_1)\Dot{F}(it_2)| \sum_{k_1,k_2} |\check{G}(k_1)\check{G}(k_2) | \\
& \times \frac{1}{\varphi_{\ZZ[i]}(u)}  \sum_{\psi \in \widehat{(\ZZ[i]/u\ZZ[i])^\times}}    |M_2(t_1,\xi_{k_1} \psi)M_3(t_2,\xi_{k_2} \psi)|^2   dt_1 d t_2.  
\end{align*}

By orthogonality of characters and Lemmas \ref{fourierserieslemma} and \ref{improvedmvtlemma} we get
\begin{align*}
    J_{II} &\ll \frac{M_1^{2}}{(\log X)^{C_1}} T^2 \sum_{\substack{\m_{21},\m_{22},\m_{31} \m_{32} \\ \N \m_{jk} \sim M_j \\  \m_{21} \m_{31} \equiv   \m_{22} \m_{32} \, (u)   \\ |\N \m_{21} - \N \m_{22}  |\ll \nu_1 M_2 \\|\arg \m_{21} - \arg \m_{22}  |\ll \nu_1 \\  | \N\m_{31} - \N\m_{32}  | \ll \nu_1 M_3 \\ | \arg\m_{31} - \arg \m_{32}  | \ll \nu_1}} 1 
\end{align*}
Let us denote $\m_{2j} = (w_j)$ and $\m_{3j}=(z_{j})$
so that
\[
  J_{II} \ll \frac{M_1^{2}}{(\log X)^{C_1}} T^2 \sum_{\substack{w_1,w_2,z_1,z_2  \\  |w_j|^2 \sim M_2 \\ |z_j|^2 \sim M_3 \\ w_1z_1 \equiv w_2z_2 \, (u)   \\ | w_1 - w_2  |^2 \ll \nu_1 M_2 \\ |z_1 - z_2 |^2 \ll \nu_1 M_3 }} 1. 
\]
Writing $w_2=w_1+u$, $z_1=z_2+v$ we get (using Lemma \ref{divisorlemma} to handle $\tau(w) \tau(z) $)
\begin{align*}
    J_{II} &\ll \frac{M_1^{2}}{(\log X)^{C_1}} T^2 \sum_{\substack{w_1,u,v,z_2  \\  |w_1|^2 \sim M_2 \\ |u|^2 \ll \nu_1 M_2  \\ |z_2|^2 \sim M_3 \\ v \ll \nu_1 M_3 \\ w_1v \equiv uz_2 \, (u) \\  }}  1 \\
 & \ll \frac{M_1^{2}}{(\log X)^{C_1}} T^2 \sum_{\substack{ |w|^2,|z|^2 \ll \nu_1 M_2 M_3 \\ w \equiv z \, (u) \\  }}  \tau(w) \tau(z) 
\\
 &\ll  \frac{ \nu^4 N^2}{\varphi_{\ZZ[i]}(u) (\log N)^{C_1-O(1)}},
\end{align*}
using $M_1 \ll N^{1/3}$, $M_1M_2 M_3 \asymp N$ to get $M_2M_3 \gg N^{2/3} \gg X^{\eta} |u|^2$. \qed

\section{Type II information: proof of Proposition \ref{flattypeiiprop}} \label{typeiimainsection}
\subsection{Cauchy-Schwarz}
   Let $F_M(m)=F(m/M)$ with a fixed smooth majorant $F$ for the interval $[1,2]$, supported on $[1/2,3]$.  By applying Cauchy-Schwarz we get
\begin{align*}
\sum_{\substack{|w|^2 \sim M \\ (w,\overline{w})=1}} \sum_{|z|^2 \sim N} \alpha_w \beta^\flat_z \mathbf{1}_B (\Re (\overline{w} z ))  \ll M^{1/2} U(\beta)^{1/2},
\end{align*}
where
\[
U(\beta) := \sum_{|z_1|^2, |z_2|^2 \sim N} \beta^\flat_{z_1} \overline{\beta^\flat_{z_2}} \sum_{(w,\overline{w})=1} F_M(|w|^2)\mathbf{1}_B (\Re (\overline{w} z_1 ) ) .
\]
It then suffices to show that
\begin{align} \label{Uclaim}
U(\beta) \ll \frac{N |B|^2}{(\log X)^{C_1}}.
\end{align}
Define 
\[
\Delta=\Delta(z_1,z_2) := \Im (\overline{z_1} z_2) = |z_1z_2| \sin (\arg z_2 - \arg z_1).
\]
Note that typically $|\Delta| \asymp N$.
We partition the sum into a main term and diagonal terms by writing
\[
U(\beta) = V(\beta) + O(U_0(\beta) + U_1(\beta) ),
\]
where
\begin{align*}
V(\beta)& :=  \sum_{\substack{|z_1|^2, |z_2|^2 \sim N \\ \Delta \neq 0 \\ (z_1,z_2) = 1}} \beta^\flat_{z_1} \overline{\beta^\flat_{z_2}} \sum_{(w,\overline{w})=1} F_M(|w|^2)\mathbf{1}_B (\Re  (\overline{w} z_1 ) ) \mathbf{1}_B (\Re  (\overline{w} z_2 ) ) \\
U_0(\beta) &:=  \sum_{\substack{|z_1|^2, |z_2|^2 \sim N \\ \Delta = 0}} |\beta^\flat_{z_1} \beta^\flat_{z_2} |\sum_{(w,\overline{w})=1} F_M(|w|^2)\mathbf{1}_B (\Re  (\overline{w} z_1 ) ) \mathbf{1}_B (\Re (\overline{w} z_2 ) )\\
U_1(\beta) &:=  \sum_{\substack{|z_1|^2, |z_2|^2 \sim N \\ \Delta \neq 0 \\ (z_1,z_2) > 1}} |\beta^\flat_{z_1} \beta^\flat_{z_2}|\sum_{(w,\overline{w})=1} F_M(|w|^2)\mathbf{1}_B (\Re  (\overline{w} z_1 ) ) \mathbf{1}_B (\Re (\overline{w} z_2 ) .
\end{align*}
  For $V(\beta)$ we apply Section \ref{smoothweightsection} with $G:\RR/2\pi\ZZ \to \RR$ being a non-negative smooth function with the parameter $\nu_1=X^{-\eta_1}$ to the variables $\arg z_1$ and $\arg z_2$  to get
\[
V(\beta) = \nu_1^{-2} \int_{(\RR/2\pi\ZZ)^2} V(\beta,\bm{\theta}) d \theta_1 d \theta_2
\]
with
\[
V(\beta,\bm{\theta}) =  \sum_{\substack{|z_1|^2, |z_2|^2 \sim N  \\ \Delta \neq 0 \\(z_1,z_2) = 1}} \beta^\flat_{z_1,\theta_1} \overline{\beta^\flat_{z_2,\theta_2}} \sum_{(w,\overline{w})=1} F_M(|w|^2)\mathbf{1}_B (\Re  (\overline{w} z_1 ) ) \mathbf{1}_B (\Re  (\overline{w} z_2 ) ) 
\]
and 
\[
\beta^\flat_{z,\theta} = \beta^\flat_{z} G( \arg z -\theta ).
\]
We now further partition $V(\beta)$ according to the size of $|\Delta|$. Let $\nu_3=X^{-\eta_3}$ and write 
\[
V(\beta) =  V_{> \nu_3}(\beta) + V_{\leq \nu_3}(\beta)
\]
where $V_{> \nu_3}(\beta)$ is the part where $|\sin (\theta_1-\theta_2) | > \nu_3$, which implies (since $\eta_3$ is small compared to $\eta_1$)
\[
|\Delta| = |z_1z_2|\, |\sin(\arg z_2-\arg z_1)| \gg \nu_3 N.
\]
We then have the following lemmas, which together imply Proposition \ref{flattypeiiprop}.
\begin{lemma} \label{offdiaglemma}\emph{(Off-diagonal contribution).} For $|\sin (\theta_1-\theta_2) | > \nu_3$
\[
V(\beta,\bm{\theta}) \ll   \frac{\nu_1^2 N |B|^2}{|\sin (\theta_1-\theta_2) | (\log X)^{C_1}}.
\]
\end{lemma}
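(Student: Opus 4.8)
The plan is to follow the sketch of Section~\ref{typeiisketch}, carried out with the twisted coefficients $\beta^\flat_{z_j,\theta_j}=\beta^\flat_{z_j}G(\arg z_j-\theta_j)$ and with careful tracking of the dependence on $|\sin(\theta_1-\theta_2)|$. First I would fix $z_1,z_2$ with $(z_1,z_2)=1$ and $\Delta\neq 0$ and parametrise the inner sum over $w$: writing $b_j=\Re(\overline{w}z_j)\in B$, the identity $i\Delta w=z_2b_1-z_1b_2$ shows that $w$ is uniquely determined by the pair $(b_1,b_2)$, which is admissible exactly when $z_2b_1\equiv z_1b_2\pmod{i\Delta}$; multiplying by $\overline{z_1}$ and using $\overline{z_1}z_2\equiv\Re(\overline{z_1}z_2)\pmod{i\Delta}$, this turns into a congruence $b_2\equiv ab_1$ to a modulus dividing $|\Delta|$ with $a$ congruent to a rational integer. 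The smooth weight $F_M(|w|^2)$ is separated from the $b_j$-sum at negligible cost by the Mellin-transform device of Lemma~\ref{improvedmvtlemma}. Setting $b_0=(b_1,b_2)$ and $b_j=b_0b'_j$, this reduces $V(\beta,\bm{\theta})$ to boundedly many pieces of the shape
\[
\sum_{b_0}\sum_{\substack{|z_1|^2,|z_2|^2\sim N\\ b_0\mid\Delta,\ \Delta\neq 0\\ (z_1,z_2)=1}}\beta^\flat_{z_1,\theta_1}\overline{\beta^\flat_{z_2,\theta_2}}\sum_{\substack{b_0b'_1,\,b_0b'_2\in B\\ b'_2\equiv ab'_1\,(\Delta/b_0)}}1,
\]
with, as in the sketch, a case distinction on how $b_0$ divides $\Delta$ versus $w$ handled by routine gcd manipulations.

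Next I would detect $b'_2\equiv ab'_1\pmod{\Delta/b_0}$ with Dirichlet characters and sort into primitive characters of conductor $d\mid\Delta/b_0$, using $\varphi(\Delta/b_0)\asymp N/b_0$, to reach a sum essentially equal to
\[
\frac1N\sum_{b_0}b_0\sum_{\substack{|z_1|^2,|z_2|^2\sim N\\ b_0\mid\Delta}}\beta^\flat_{z_1,\theta_1}\overline{\beta^\flat_{z_2,\theta_2}}\sum_{d\mid\Delta/b_0}\ \sideset{}{^\ast}\sum_{\chi\,(d)}\overline{\chi}(a)\Bigl|\sum_{b_0b\in B}\chi(b)\Bigr|^2,
\]
which I split at $db_0=X^{\delta+\eta}$. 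For $db_0>X^{\delta+\eta}$ I would bound $\beta^\flat$ and $G$ trivially and count pairs $z_1,z_2$ in their sectors with $b_0d\mid\Delta$: the condition $\Im(\overline{z_1}z_2)=D$ confines $z_2$ to a line whose direction makes angle $\asymp|\theta_1-\theta_2|$ with the radial direction of the $z_2$-sector, so, using $|\sin(\theta_1-\theta_2)|>\nu_3\gg\nu_1$, there are $\ll\nu_1\sqrt N/|\sin(\theta_1-\theta_2)|$ admissible $z_2$ per $z_1$ and per value $D\asymp N$, while there are $\asymp N|\sin(\theta_1-\theta_2)|/(b_0d)$ admissible $D$; inserting this and applying the multiplicative large sieve (Lemma~\ref{largesievelemma}) to $\sideset{}{^\ast}\sum_{\chi\,(d)}\bigl|\sum_{b_0b\in B}\chi(b)\bigr|^2$, the tail sums to $\ll\nu_1^2N|B|^2/(|\sin(\theta_1-\theta_2)|\,X^{\eta'})$ for some $\eta'>0$, exactly as in the sketch, using $|B|=X^{1/2-\delta}$ and $N<X^{1/2-\delta-\eta}=X^{-\eta}|B|$. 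For $db_0\le X^{\delta+\eta}$ I would rewrite $b_0d\mid\Delta$ together with the congruence defining $a$ as the single Gaussian congruence $z_2\equiv az_1\pmod{b_0d}$; expanding the angular weights with Lemma~\ref{fourierserieslemma} and grouping, the resulting bilinear form in $\beta^\flat_{z_1},\beta^\flat_{z_2}$ is precisely the quantity estimated in Lemma~\ref{stronggallagherlemma} with modulus $u=b_0d\le Q=X^{\delta+\eta}$, so by $Q$-regularity of $\beta_\n$ and $N>X^\eta Q^3$ (the hypothesis $N>X^{3\delta+4\eta}$) it is $\ll\nu_1^2\nu_2^2N^2/(\varphi_{\ZZ[i]}(b_0d)(\log X)^C)$ for every $C$; summing trivially over $b_0,d,\chi,a$ and using $|\{b:b_0b\in B\}|\le|B|$ yields $\ll\nu_1^2N|B|^2/(\log X)^{C_1}$, the factor $1/|\sin(\theta_1-\theta_2)|\ge 1$ being harmless here.

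The main obstacle is the small-conductor step together with its interface with the construction of $\beta^\flat$: one must verify that, after rewriting the pair of conditions on $\Delta$ as the single progression $z_2\equiv az_1\pmod{b_0d}$ and expanding $G(\arg z_j-\theta_j)$, the bilinear sum is literally the object controlled by Lemma~\ref{stronggallagherlemma} — this is exactly what $\beta^\flat$ was built for, and it is what forces the range $N>X^{3\delta+4\eta}$. A secondary but genuinely delicate point is the bookkeeping around $b_0=(b_1,b_2)$, which can be as large as $X^\delta$ (for instance when $B\subseteq q_1\ZZ$): one must check that this largeness, which simultaneously inflates the local density and shrinks the effective modulus $\Delta/b_0$, is still absorbed in the large-sieve step by the hypothesis $N<X^{1/2-\delta-\eta}$ uniformly in $b_0$, and one must treat carefully the case in which $b_0$ divides $w$ rather than $\Delta$ (glossed over in the sketch). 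The geometric lattice-point counts and the separation of the weight $F_M$ are routine.
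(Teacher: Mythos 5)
Your overall strategy does match the paper's — parametrize the $w$-sum by $b_j=\Re(\overline{w}z_j)$, use $(w,\overline w)=1$ to deduce $b_0:=(b_1,b_2)\mid\Delta$, detect $b'_2\equiv ab'_1\pmod{\Delta/b_0}$ with Dirichlet characters, split at conductor near $X^{\delta+\eta}$, then apply the multiplicative large sieve above and Lemma~\ref{stronggallagherlemma} below. But two of your steps as written would fail. First, the Mellin device of Lemma~\ref{improvedmvtlemma} cannot detach $F_M(|w|^2)$: once you parametrize, $w$ is determined via $i\Delta w=z_2b_1-z_1b_2$, so $|w|^2=|z_2b_1-z_1b_2|^2/\Delta^2$ is a \emph{quadratic form} in $(b_1,b_2)$, not a product, and Mellin gives $|w|^{-2s}=|\Delta|^{2s}|z_2b_1-z_1b_2|^{-2s}$ which still couples all four variables. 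What the paper actually does (equation \eqref{ccremoval}) is to tile $|z_j|^2$ into windows of relative width $\nu_2$ via the $H_{N'}$ of Section~\ref{approximationsection} and observe that, given $\arg z_j\in(\theta_j-\nu_1,\theta_j+\nu_1]$, $b_j\in[Y,Y+X^{1/2-\eta}]$ (from \eqref{Bintervalassumption}) and $|\sin(\theta_1-\theta_2)|>\nu_3$, the number $|w|^2$ is constant to within relative error $O(\nu_2\nu_3^{-2})$, so $F_M$ is frozen at a constant with the error routed into the trivially bounded term $W(\beta,\bm{N},\bm\theta)$ of Lemma~\ref{typeiitrivialboundlemma}. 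This is exactly what forces the hierarchy $\eta_3\ll\eta_2\ll\eta_1$.

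Second, the modulus in your small-conductor branch is not $b_0d$. Removing $(w,\overline w)=1$ requires $\mathbf{1}_{(w,\overline w)=1}=\sum_{\ell\mid w}\mu(\ell)$ (and this condition must be carried through Cauchy--Schwarz, cf.\ Remark~\ref{multiplicity remark}); the $z$-dependent factor $1/\varphi(\ell\Delta/b_0)$ — for which $\asymp b_0/(\ell N)$ is not uniform — has to be rewritten as $\frac{b_0}{\ell|\Delta|}\sum_{d_1\mid\ell\Delta/b_0}\frac{|\mu(d_1)|}{\varphi(d_1)}$; and expanding $(b'_1b'_2,\ell\Delta/(db_0))=1$ introduces $e_1,e_2$. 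The congruence on $z_1,z_2$ that results has modulus $f=b_0\bigl[\tfrac{[d_1,de_1e_2]}{([d_1,de_1e_2],\ell)},\tfrac{\ell}{(b_0,\ell)}\bigr]$, which can be as large as $N$ even when $db_0\le X^{\delta+\eta/2}$. Lemma~\ref{stronggallagherlemma} covers only $\M(u)\le X^{\delta+\eta}$, so the range $f>X^{\delta+\eta}$ — the term $E_{\le}$ in \eqref{Eerrorterm} — requires a separate Cauchy--Schwarz estimate that exploits $f^{-1}\le X^{-\eta/2}(b_0d)^{-1}$. Collapsing the $\ell,d_1,e_j$ bookkeeping into ``routine gcd manipulations'' and asserting the modulus is $b_0d$ omits this range entirely; you would also need the further error terms in \eqref{werrorterm} and \eqref{zerrorterm} coming from replacing $|\Delta|$ by its box-approximation $D$ and from dropping $(z_1,z_2)=1$.
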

\begin{lemma} \label{diag1lemma} \emph{(Diagonal contribution).}
\[
U_0(\beta) \ll_\eps  X^{1/2+\eps} |B| 
\]
\end{lemma}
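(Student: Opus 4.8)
The plan is to estimate the diagonal sum $U_0(\beta)$, where $\Delta = \Im(\overline{z_1}z_2) = 0$, by crude bounds exploiting that $\Delta = 0$ forces $z_1$ and $z_2$ to be (real) scalar multiples of one another. First I would drop the coefficients $\beta^\flat_{z_j}$ by the trivial bound $|\beta^\flat_z| \ll X^{o(1)}$ (which follows since $\beta_\n = \mu(\N\n)\mathbf{1}_{(\n,P(W))=1}$ is bounded and the approximation $\beta^\#$ is a sum of $J \leq (\log X)^{O(1)}$ bounded terms, each normalized correlation being $\leq 1$ in absolute value). So it suffices to show
\[
\sum_{\substack{|z_1|^2, |z_2|^2 \sim N \\ \Delta(z_1,z_2) = 0}} \sum_{(w,\overline{w})=1} F_M(|w|^2)\mathbf{1}_B (\Re(\overline{w} z_1))\mathbf{1}_B (\Re(\overline{w} z_2)) \ll_\eps X^{1/2+\eps}|B|.
\]
The condition $\Delta = 0$ means $\overline{z_1}z_2 \in \RR$, i.e. $z_1, z_2$ lie on the same line through the origin; writing $z_j = g_j z_0$ with $z_0$ a primitive Gaussian integer and $g_j \in \ZZ_{>0}$, both $|z_j|^2 \sim N$ pins down $g_1 \asymp g_2$ and there are $O(N/|z_0|^2)$ choices. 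Then for each fixed pair $z_1, z_2$ the inner sum over $w$ is controlled by fixing $b_1 = \Re(\overline{w}z_1) \in B$: the relation $b_1 = \Re(\overline{w}z_1)$ is one linear equation on the two integer coordinates of $w$, so $w$ ranges over an arithmetic-progression-like set, and combined with $|w|^2 \sim M$ the number of such $w$ is $O(\sqrt{M}/|z_1| + 1)$. Summing $|B|$ choices of $b_1$ and $O(\sqrt N)$ for the remaining freedom gives the bound.

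More precisely, the main step is the following counting: I would parametrize by $b_1 \in B$, then by $w$ with $\Re(\overline{w}z_1) = b_1$ and $|w|^2 \sim M$, then note that $z_2 = (g_2/g_1) z_1$ is essentially determined up to $O(\sqrt N)$ choices of the ratio, and the condition $\mathbf{1}_B(\Re(\overline{w}z_2))$ is then automatically satisfied or not but contributes at most $1$. The total count is $\ll |B| \cdot (\sqrt M/|z_1| + 1) \cdot \sqrt N \cdot \tau(\cdot)^{O(1)}$; using $MN \asymp X$, $|z_1|^2 \sim N$, and absorbing divisor factors into $X^\eps$, this is $\ll_\eps X^{\eps}|B|(\sqrt{MN}/\sqrt N + \sqrt N) \cdot (\text{small}) \ll_\eps X^{1/2+\eps}|B|$ once one checks $\sqrt{MN} = X^{1/2}$ dominates (since $N < X^{1/2-\delta-\eta}$ forces $\sqrt N \ll X^{1/4} \ll X^{1/2}$), and the first term $\sqrt{MN} = X^{1/2}$ gives the claimed bound.

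The main obstacle is bookkeeping the greatest common divisors cleanly: when $z_1, z_2$ share a large common factor, or when $w$ and $z_1$ are not coprime, the naive "one linear equation cuts the count by $|z_1|$" heuristic degrades by a factor of $(w, z_1)$ or $(z_1, z_2)$, and one must sum these gcd losses. I expect to handle this exactly as in Lemma \ref{gcdlemma} and Lemma \ref{divisorlemma}: the gcd-weighted sums telescope to a divisor-function factor $\tau(N)^{O(1)} = X^{o(1)}$, which is harmless given the $X^\eps$ slack in the statement. The condition $(w,\overline{w})=1$ is actually helpful here (cf. Remark \ref{multiplicity remark}) since it prevents $w$ from having rational integer divisors, so only the $z$-side gcds need careful treatment. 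Everything else is elementary lattice-point counting, so no deep input (no large sieve, no zero density) is needed for this lemma — it is genuinely a "crude bound" as the section text promises.
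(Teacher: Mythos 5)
Your approach is sound and genuinely different from the paper's, though the bookkeeping in the write-up has errors that should be fixed. The paper's own argument is slicker: from $i\Delta w = z_2 b_1 - z_1 b_2$ with $\Delta=0$ it reads off $z_1 b_2 = z_2 b_1$, multiplies by $\overline w$ and takes imaginary parts to get $a_1 b_2 = a_2 b_1$ with $a_j := \Im(\overline w z_j)$, and then bounds $U_0$ by the number of quadruples $(a_1,a_2,b_1,b_2)$ with $a_1b_2 = a_2b_1$, $b_j \in B$, $a_j \ll X^{1/2}$ (the fibre of the map $(z_1,z_2,w)\mapsto(a_1,a_2,b_1,b_2)$ has size $X^{o(1)}$ by a divisor bound on $\overline w z_1 = b_1 + ia_1$). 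Fixing $b_1 \in B$ and $a_2 \ll X^{1/2}$ and applying $\tau(a_2 b_1) \ll_\eps X^\eps$ gives $X^{1/2+\eps}|B|$ at once, with no explicit lattice geometry. Your explicit parametrization by the primitive direction $z_0$ and the lattice-point count for $w$ on the affine line $\Re(\overline w z_1)=b_1$ is a valid alternative; and you are right that the gcd issues are benign here because $(z_j,\overline{z_j})=1$ from the support of $\beta^\flat_z$ forces $\gcd(\Re z_j, \Im z_j)=1$, so the lattice of solutions genuinely has spacing $\asymp |z_1|$ with no degradation.

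The quantitative accounting, however, is off by a factor of about $\sqrt N$. The number of collinear pairs $(z_1,z_2)$ with $\Delta(z_1,z_2)=0$ and $|z_j|^2\sim N$ is $\asymp N\log N$, not $\sqrt N$: once $z_1$ (which ranges over $\asymp N$ values) and its primitive part $z_0$ are fixed, there are $\asymp \sqrt N/|z_0|$ collinear $z_2$, and $\sum_{z_1}\sqrt N/|z_0(z_1)| \asymp N\log N$. Your formula $|B|\cdot(\sqrt M/|z_1|+1)\cdot\sqrt N$ evaluates (with $|z_1|\sim\sqrt N$) to $|B|(\sqrt M+\sqrt N)$, which is not correct and would in any case be stronger than the truth; there is also a slip where $\sqrt{MN}/\sqrt N$ is treated as if it were $\sqrt{MN}$ rather than $\sqrt M$. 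The corrected count is
\[
U_0(\beta) \ll X^{o(1)}\cdot |B|\,(\sqrt{M/N}+1)\cdot(N\log N) \ll X^{o(1)}|B|(\sqrt{MN}+N) \ll_\eps X^{1/2+\eps}|B|,
\]
which is what you need. So the method works, but the intermediate bound as stated is false and should be repaired by carrying the sum over $z_1$ explicitly as above.
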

\begin{lemma} \label{diag2lemma} \emph{(Pseudo-diagonal contribution I).}
\[
U_1(\beta) \ll   \frac{N |B|^2 }{W^{1-\eta}}
\]
\end{lemma}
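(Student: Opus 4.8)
The plan is to bound $U_1(\beta)$ by entirely elementary means: divisor bounds for $\beta^\flat$, a lattice-point count for the $w$-sum using the nondegeneracy $\Delta\neq0$, and the observation that a common prime factor of $z_1,z_2$ forces a large factor in $\Delta$. Since $|\beta_\n|\le1$ we have $|\C_W(\beta,\cdot)|\le1$, hence $|\beta^\#_\n|\le J\le(\log X)^{C_2}$ and $|\beta^\flat_\n|\ll(\log X)^{C_2}$; moreover $\beta^\flat_\n$ is supported on $(\n,P(W))=1$. Bounding $F_M$ by $O(1)$ on $|w|^2\asymp M$, dropping $(w,\overline w)=1$, and absorbing coefficients into a harmless $(\log X)^{O(1)}$ factor,
\[
U_1(\beta)\ll(\log X)^{O(1)}\!\!\sum_{\substack{|z_1|^2,|z_2|^2\sim N\\(z_j,P(W))=1\\ \Delta\neq0,\ (z_1,z_2)>1}}\ \sum_{|w|^2\asymp M}\mathbf 1_B(\Re(\overline w z_1))\,\mathbf 1_B(\Re(\overline w z_2)).
\]
When $(z_1,z_2)>1$ while $(z_j,P(W))=1$, the ideal $(z_1,z_2)$ is divisible by some prime $\p$ of $\ZZ[i]$ with $\N\p\ge W$; fixing such a $\p$ for each pair and over-counting each pair by at most its number of prime ideal divisors $\ll\log N/\log W\ll(\log\log X)^2$, it suffices to bound, for each prime $\p$ with $W\le\N\p\ll N$, the corresponding sum over $z_1,z_2$ with $\p\mid z_1$, $\p\mid z_2$, $|z_j|^2\sim N$, $\Delta\neq0$.

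For fixed $z_1,z_2$ with $\Delta=\Im(\overline{z_1}z_2)\neq0$, the map $w\mapsto\bigl(\Re(\overline w z_1),\Re(\overline w z_2)\bigr)$ is an injective $\ZZ$-linear map $\ZZ^2\to\ZZ^2$ of determinant $\pm\Delta$, so its image $\Lambda=\Lambda_{z_1,z_2}\subseteq\ZZ^2$ has index $|\Delta|$. Since each pair $(b_1,b_2)$ has at most one $w$-preimage, $\sum_{|w|^2\asymp M}\mathbf 1_B\mathbf 1_B\le\#\bigl(\Lambda\cap(B\times B)\bigr)$, and projecting to the first coordinate (with $\Lambda\cap(\{0\}\times\ZZ)=\{0\}\times m_2\ZZ$ and $m_1m_2=|\Delta|$) gives
\[
\#\bigl(\Lambda\cap(B\times B)\bigr)\ll\Bigl(\tfrac{|B|}{m_1}+1\Bigr)\Bigl(\tfrac{|B|}{m_2}+1\Bigr)\ll|B|+\frac{|B|^2}{|\Delta|}.
\]
This leaves two contributions. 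The counting term: for fixed $\p$ the number of admissible $(z_1,z_2)$ is $\ll(N/\N\p)^2$, and $\sum_{\N\p\ge W}(N/\N\p)^2\ll N^2/W$, so this term is $\ll(\log X)^{O(1)}|B|N^2/W\ll N|B|^2/W^{1-\eta}$ because $N\le|B|X^{-\eta}\le|B|W^{\eta}$. For the term $|B|^2\sum|\Delta|^{-1}$, the crucial point is that $\p\mid z_1$ and $\p\mid z_2$ force $\N\p\mid\Delta$ (as $\N\p$ is a rational integer dividing $\overline{z_1}z_2$), so $|\Delta|\ge\N\p\ge W$; writing $z_j=\varpi y_j$ with $\varpi$ a generator of $\p$ (so $|y_j|^2\sim N/\N\p$ and $\Delta=\N\p\,\Im(\overline{y_1}y_2)$), a divisor/dyadic argument on the linear form $\Im(\overline{y_1}y_2)$ in $y_2$ shows $\sum_{|y_2|^2\sim N/\N\p,\ \Im(\overline{y_1}y_2)\neq0}|\Im(\overline{y_1}y_2)|^{-1}\ll\log X$ uniformly in $y_1$; summing over $y_1$ gives $\ll(N/\N\p)\log X$, hence the contribution for fixed $\p$ is $\ll N(\log X)/\N\p^2$, and $\sum_{\N\p\ge W}\N\p^{-2}\ll W^{-1}$ yields $\sum|\Delta|^{-1}\ll N(\log X)^{O(1)}/W$. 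Combining, $U_1(\beta)\ll(\log X)^{O(1)}N|B|^2/W\ll N|B|^2/W^{1-\eta}$.

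The only genuinely delicate step is the term $|B|^2\sum|\Delta|^{-1}$: without exploiting $\N\p\mid\Delta$ one only gets $\sum|\Delta|^{-1}\ll N(\log X)^{O(1)}$ and loses the essential factor $W$. Everything else is soft bookkeeping — lattice-point counting and convergent sums over prime ideals of norm $\ge W$ — and we only ever need to win a fixed power of $\log X$, all of which is comfortably absorbed by $W^{\eta}$.
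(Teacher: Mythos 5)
There is a genuine gap in the lattice-point count. You claim
\[
\#\bigl(\Lambda\cap(B\times B)\bigr)\ll\Bigl(\tfrac{|B|}{m_1}+1\Bigr)\Bigl(\tfrac{|B|}{m_2}+1\Bigr),
\]
but this would only be valid if $B$ were an \emph{interval} of length $|B|$. In fact $B$ is an arbitrary subset of an interval of length $H\asymp X^{1/2-\eta}$ (after the reduction in Section~\ref{setupsection}), with $|B|$ possibly as small as $X^{1/2-\delta-2\eta}\ll H$. A sublattice $\Lambda\subseteq\ZZ^2$ of index $|\Delta|$ need not thin $B\times B$: if $B\subseteq q\ZZ$ and $q\mid m_1$ then $|B\cap m_1\ZZ|=|B|$, not $\ll |B|/m_1+1$, and likewise for the residue class mod $m_2$. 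The correct elementary bound is $(H/m_1+1)(H/m_2+1)\ll H^2/|\Delta|+H+1$, and substituting $H$ for $|B|$ in your two contributions gives
\[
U_1(\beta)\ll(\log X)^{O(1)}\Bigl(\frac{H^2N}{W}+\frac{HN^2}{W}\Bigr),
\]
which overshoots the target $N|B|^2/W^{1-\eta}$ by roughly $(H/|B|)^2\gg X^{2\delta}$. The saving $W^\eta=X^{\eta/(\log\log X)^2}$ is subpolynomial in $X$ and cannot absorb a genuine power $X^{2\delta}$, so this route cannot close.

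This is not a fixable boundary term but exactly the central obstruction the paper is built around: $B$ may be concentrated in multiples of a large fixed $q$, in which case the arithmetic progression dictated by $\Lambda$ is no restriction on $B$ at all. The paper's proof of Lemma~\ref{diag2lemma} does not bound the inner $w$-sum pointwise. It factors $z_j=z_0z_j'$ with $|z_0|^2\ge W$, passes to $w_0=\overline{z_0}w$ (costing only $W^\eps$ via a divisor bound), tracks $b_0=(b_1,b_2)$, expands the induced congruence $b_2'\equiv ab_1'\,(\Delta'/b_0)$ into Dirichlet characters, and then splits at $db_0\lessgtr X^{\delta+\eta/2}$, using the multiplicative large sieve (Lemma~\ref{largesievelemma}) for large moduli and an explicit count for small ones; the saving of order $W$ arises because $z_j'$ now lives in the shrunken range $|z_j'|^2\sim N/Z$ with $Z\ge W$. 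Your observation that $\N\p\mid\Delta$ whenever $\p\mid(z_1,z_2)$ is correct and indeed used (it forces $|\Delta|\ge W$), but the naive lattice count attached to it cannot capture the interaction between $B$ and $(b_1,b_2)$ that the character expansion is designed to handle.
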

\begin{lemma} \label{pseudodiaglemma} \emph{(Pseudo-diagonal contribution II).}
\[
V_{\leq \nu_3}(\beta) \ll_\eps X^\eps   \nu_3 N |B|^2  
\]
\end{lemma}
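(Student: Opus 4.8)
\emph{Proof strategy.} I would begin by stripping the angular weights. Since $G\geq 0$ with $\int G=\nu_1$, and since $|\sin(\theta_1-\theta_2)|\leq\nu_3$ together with $|\arg z_j-\theta_j|\leq\nu_1$ forces (using $\nu_1\ll\nu_3$) the bound $|\Delta(z_1,z_2)|=|z_1z_2|\,|\sin(\arg z_2-\arg z_1)|\ll\nu_3 N$, integrating out $\bm{\theta}$ gives
\[
V_{\leq\nu_3}(\beta)\ll \sum_{\substack{|w|^2\sim M,\ (w,\overline w)=1\\ |z_1|^2,|z_2|^2\sim N,\ 0<|\Delta|\ll\nu_3 N}} |\beta^\flat_{z_1}|\,|\beta^\flat_{z_2}|\,F_M(|w|^2)\,\mathbf 1_B(\Re(\overline w z_1))\,\mathbf 1_B(\Re(\overline w z_2)).
\]
Using the crude pointwise bound $|\beta^\flat_z|\ll(\log X)^{O(1)}\ll X^{o(1)}$ (valid because $\beta_z$ is bounded and $\beta^\#_z$ is a sum of $J\leq(\log X)^{C_2}$ bounded terms), it then suffices to show that the number $T$ of tuples $(w,z_1,z_2,b_1,b_2)$ with $b_j=\Re(\overline w z_j)\in B$, the stated norm and coprimality conditions, and $0<|\Delta|\ll\nu_3 N$, is $\ll X^{o(1)}\nu_3 N|B|^2$. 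The degenerate ranges $\Delta=0$ and $(z_1,z_2)>1$ are already excluded here, being absorbed in $U_0(\beta)$ and $U_1(\beta)$.

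Next I would use the determinant identity $i\Delta w=z_2b_1-z_1b_2$, or equivalently pass to the variables $y_j:=\overline w z_j=b_j+ia_j$, under which the conditions read $\overline w\mid y_j$, $|y_j|^2\in[MN,MN(1+O(\nu))]$, $b_j\in B$, and $\Delta=(b_1a_2-b_2a_1)/|w|^2$, so that $0<|\Delta|\ll\nu_3 N$ becomes $0<|b_1a_2-b_2a_1|\ll\nu_3 X$. Crucially $(w,\overline w)=1$ forces $\M(w)=|w|^2\asymp M$, so for each $w$ and each $b_j$ the condition $\overline w\mid b_j+ia_j$ pins $a_j$ to a single residue class modulo $|w|^2$; since $b_j$ lies in a short interval and $|y_j|^2$ in a short interval (length $\asymp MN\nu$), the admissible $a_j$ is confined to an interval of length $\ll X^{1/2+\eta}\nu$, which is dwarfed by $|w|^2\asymp M$, whence there is at most one admissible $a_j$ — equivalently at most one admissible $z_j$ — for each pair $(w,b_j)$.

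I would then carry out the count by fixing $b_1,b_2\in B$, extracting at the outset the common divisor $b_0:=(b_1,b_2)$ as in the treatment of the Type II sums elsewhere in the paper (so that a sparse $B$ contained in $b_0\ZZ$ does not spoil the estimate), and introducing the integer $\ell:=(b_1a_2-b_2a_1)/|w|^2=\Delta$, which is an integer because $\overline w\mid(b_1a_2-b_2a_1)$ and $\M(\overline w)=|w|^2$, with $0<|\ell|\ll\nu_3 N$. For fixed $(b_1,b_2,\ell)$ one has $a_2=(|w|^2\ell+b_2a_1)/b_1$ once $w$ (hence $a_1$) is chosen, so the number of admissible $(w,a_1,a_2)$ is controlled by summing, over $a_1$ in its short interval, the Gaussian divisor bound $\tau_{\ZZ[i]}(b_1+ia_1)\ll X^{o(1)}$ for divisors $\overline w\mid b_1+ia_1$ of norm $\sim M$, while additionally imposing that $a_2$ fall in its own short interval; the gain over trivial bounds comes from the fact that the main term $b_1(MN-b_2^2)^{1/2}-b_2(MN-b_1^2)^{1/2}$ of $b_1a_2-b_2a_1$ equals $MN(b_1^2-b_2^2)/\bigl(b_1(MN-b_2^2)^{1/2}+b_2(MN-b_1^2)^{1/2}\bigr)\ll X^{1-\eta}$ (using $b_j$ bounded away from $X^{1/2}$, i.e.\ $a_j\asymp X^{1/2}$; the complementary narrow-sector range is a technical variant), which is far smaller than $\nu_3 X$, so most of the short $a_2$-interval is cut out. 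Summing over $\ell$ (invoking Lemma~\ref{gcdlemma} to absorb $\ell$ sharing a large factor with $b_1$), then over $b_1,b_2\in B$, and finally over the dyadic weight $F_M$, yields the claimed bound.

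The hard part is the last step. One must not decouple the count as (number of $z_1$)~$\times$~(number of $z_2$ given $z_1$): the number of pairs $(w,z)$ with $|w|^2\sim M$, $|z|^2\sim N$ and $\Re(\overline w z)\in B$ is genuinely $\asymp|B|\,X^{1/2}$, which exceeds $|B|^2$ by a power of $X$, so such a decoupling is too lossy. Instead the information on $b_1$ and $b_2$ must be used simultaneously, via the divisibility $\overline w\mid(b_1a_2-b_2a_1)$ (which is what forces $\ell$ to be a small integer) together with the short ranges for $a_1,a_2$ coming from the finer-than-dyadic supports $\N\m\in[M,M(1+\nu)]$, $\N\n\in[N,N(1+\nu)]$ and $B\subseteq[Y,Y+X^{1/2-\eta}]$; and, as always in this argument, one must ensure that a structured $B$ (for instance one contained in $q\ZZ$) cannot concentrate the count, which is the reason for peeling off $b_0=(b_1,b_2)$ before estimating.
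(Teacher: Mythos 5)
There is a genuine gap in the final counting step; the route you sketch does not reach the bound.

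A preliminary set-up issue: after Cauchy--Schwarz the weight on $w$ is the fixed dyadic majorant $F_M$ supported on $[M/2,3M]$, so $|y_j|^2=|w|^2|z_j|^2$ ranges over an interval of length $\asymp MN$, not $\asymp MN\nu$. Your claim that there is at most one admissible $a_j$ per $(w,b_j)$ is correct \emph{for a fixed $w$} (since $|z_j|^2$ is confined to an interval of length $\asymp N\nu$), but when you then sum over $a_1$ before fixing $w$, the relevant interval for $a_1$ has length $\asymp X^{1/2}$, and that is what the divisor count sees.

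The main problem is that the count does not come out to the required size. Fix $(b_1,b_2)\in B^2$. Summing over $a_1$ in an interval of length $\asymp X^{1/2}$, then over the $\ll\tau_{\ZZ[i]}(b_1+ia_1)\ll X^{o(1)}$ divisors $\overline w$ of norm $\asymp M$, and noting that $a_2$ is then pinned (by $\overline w\mid b_2+ia_2$ together with the $|z_2|^2$ range) to at most one value, and $\ell=\Delta$ is thereby also determined, one obtains at most $X^{1/2+o(1)}$ admissible triples $(w,z_1,z_2)$ per pair $(b_1,b_2)$. The target is $\ll X^\eps\nu_3 N$ per pair, and since $N\le X^{1/2-\delta-\eta}$ one has $X^{1/2}/(\nu_3 N)\gg X^{\delta+\eta+\eta_3}$: the proposed count overshoots by a power of $X$, not a power of $\log X$. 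In particular, treating $\ell$ as an independent parameter ranging over $\asymp\nu_3 N$ values is not legitimate — it is already determined once $(b_1,b_2,a_1,w)$ are fixed — and the constraint $0<|\ell|\ll\nu_3 N$ can only be used as an indicator, whose effect on the count you do not quantify. Your observation that the main term of $b_1a_2-b_2a_1$ is $\ll X^{1-\eta}\ll\nu_3 X$ only shows the configuration is non-empty near the centres of the intervals; it does not supply the missing factor $X^{\delta+\eta+\eta_3}$. In short, the "cut" you invoke to beat the trivial bound has not been located.

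The paper's proof avoids this by converting the constraint $0<|\Delta|\ll\nu_3 N$, together with the $F_M$-support $|w|^2\asymp M$ and the short-interval condition $B\subseteq[Y,Y+X^{1/2-\eta}]$ with $Y\asymp X^{1/2}$, into the restriction $|z_2-z_1|\ll\nu_3 N^{1/2}$, and then re-running verbatim the argument of Section~\ref{diag2section} with the two lattice-point estimates for $(z_1,z_2)$ replaced by their $\nu_3$-weighted analogues (which carry the factor $\nu_3 N$ in place of $N$). That argument proceeds through the Dirichlet-character expansion of the congruence $b_2'\equiv ab_1'\pmod{\Delta/b_0}$, the multiplicative large sieve (Lemma~\ref{largesievelemma}) for moduli with $db_0>X^{\delta+\eta/2}$, and Lemma~\ref{primitivecharactersum} together with a pointwise lattice-point count for small $db_0$; that is where the $\nu_3 N$ saving is actually realised, and it is entirely absent from your proposal.
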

\subsection{Proof of Lemma \ref{offdiaglemma}}  \label{offdiagsection}
It suffices to show that for $|\theta_1 - \theta_2\pmod{\pi}| > \nu_3$ we have
\begin{equation} \label{vclaim1}
\begin{split}
          V(\beta,\bm{\theta}) =&  \sum_{\substack{|z_1|^2, |z_2|^2 \sim N   \\ (z_1,z_2) = 1}} \beta^\flat_{z_1,\theta_1} \overline{\beta^\flat_{z_2,\theta_2}}  \\
          & \times \sum_{(w,\overline{w})=1} F_M(|w|^2)\mathbf{1}_B (\Re  (\overline{w} z_1 ) ) \mathbf{1}_B (\Re  (\overline{w} z_2 ) ) \\
   & \ll \frac{\nu_1^2 N |B|^2}{|\sin (\theta_1-\theta_2) |(\log X)^C}
\end{split}
\end{equation}

We note that $(z_1,z_2)=(z_1,\overline{z_1}) = (z_2,\overline{z_2}) = 1$ implies $(\Delta,|z_1|^2|z_2|^2)=1.$ By symmetry this can be seen from
\[
\overline{(\Delta,\overline{z_1})} = (\Delta,z_1) = (\Im(z_2 \overline{z_1}),z_1) = (z_2 \overline{z_1},z_1) = 1.
\]

Denoting $\Re  (\overline{w} z_1 ) = b_j$, we have 
\begin{align*} 
    i \Delta w = z_2 b_1 -z_1b_2.
\end{align*}
Let $b_0:= (b_1,b_2)$. Since $(w,\overline{w})=1$, we know that $b_0|\Delta$.  Thus,
 \begin{align} \label{deltaeq1}
  i (\Delta/b_0) w = z_2 b_1/b_0 -z_1b_2/b_0.    
 \end{align}
 Denoting  $b_j= b_0b_j'$ we have
\[
 V(\beta,\bm{\theta}) =  \sum_{\substack{b_0\geq 1 }} \sum_{\substack{|z_1|^2, |z_2|^2 \sim N   \\ (z_1,z_2) = 1 \\ \Delta \equiv 0 \, (b_0)}} \beta^\flat_{z_1,\theta_1} \overline{\beta^\flat_{z_2,\theta_2}} \sum_{\substack{b'_2 \equiv a b'_1 \, ( \Delta/b_0) \\ (b'_1b'_2, \Delta/b_0)=1 \\ (b'_1,b'_2)=1 \\ (w,\overline{w})=1}} \mathbf{1}_B(b_0b'_1)\mathbf{1}_B(b_0b'_2)F_M\bigg(\bigg|b_0 \frac{z_2b'_1-z_1b'_2}{\Delta}\bigg|^2\bigg)
\]
We now wish to remove the smooth  weight $F_M$. Recall that already $b_0b'_j \in [Y,Y + X^{1/2-\eta}]$ by \eqref{Bintervalassumption}. We introduce a rough finer-than-dyadic partition for $|z_1|^2,|z_2|^2$ by using $H_{N'}$ as in Section \ref{approximationsection} with $\nu_2=X^{-\eta_2}$.  Let $N_{1},N_{1}\sim N$, and denote 
\[
\beta^\flat_{z,i} := \beta^\flat_{z}H_{N_{i}}(z)G(\arg z-\theta_i).
\] 
To prove \eqref{vclaim1} it then suffices to prove that for $N_{1},N_{1}\sim N$ and for $|\sin (\theta_1-\theta_2) | \geq \nu_3$ we have
\begin{equation} \label{vclaim2}
\begin{split}
          V'(\beta,\bm{N},\bm{\theta}) :=&  \sum_{\substack{|z_1|^2, |z_2|^2 \sim N    \\ (z_1,z_2) = 1}} \beta^\flat_{z_1,1} \overline{\beta^\flat_{z_2,2}}  \\
          & \times \sum_{(w,\overline{w})=1} F_M(|w|^2)\mathbf{1}_B (\Re  (\overline{w} z_1 ) ) \mathbf{1}_B (\Re  (\overline{w} z_2 ) ) \\
   & \ll \frac{\nu_1^2 \nu_2^2 N |B|^2}{|\sin (\theta_1-\theta_2) |(\log X)^C}.
\end{split}
\end{equation}
Using $|\sin (\theta_1-\theta_2) | > \nu_3$ we have $\Delta = |z_1z_2| \sin (\arg_2-\arg z_1) \gg \nu_3 N$,  so that for some constant $F_M(\bm{N},\bm{\theta})$
\begin{equation} \label{ccremoval}
F_{M} \bigg( \bigg| b_0 \frac{z_2b'_1 -z_1b'_2}{\Delta} \bigg|^2 \bigg) = F_M(\bm{N},\bm{\theta}) + O(\nu_2\nu_3^{-2} ).
\end{equation}
Therefore, we have
\[
  V'(\beta,\bm{N},\bm{\theta}) =  F_M(\bm{N},\bm{\theta})V(\beta,\bm{N},\bm{\theta}) +  \nu_2 \nu_3^{-2}  O(W(\beta,\bm{N},\bm{\theta}) )
\]
where
\begin{align*}
V(\beta,\bm{N},\bm{\theta}) :=   \sum_{b_0}\sum_{\substack{|z_1|^2, |z_2|^2 \sim N  \\ (z_1,z_2) = 1 \\   \Delta \equiv 0 \, (b_0)}} \beta^\flat_{z_1,1} \overline{\beta^\flat_{z_2,2}}  \sum_{\substack{b'_2 \equiv a b'_1 \, (\Delta/b_0) \\ (b'_1b'_2, \Delta/b_0)=1\\ (b'_1,b'_2)=1 \\ (w,\overline{w})=1}} \mathbf{1}_B(b_0b'_1)\mathbf{1}_B(b_0b'_2) \\
W(\beta,\bm{N},\bm{\theta}) :=  \sum_{b_0}\sum_{\substack{(|z_1|^2, |z_2|^2 \sim N \\ (z_1,z_2) = 1 \\   \Delta \equiv 0 \, (b_0)}} |\beta^\flat_{z_1,1}| |\beta^\flat_{z_2,2} |\sum_{\substack{b'_2 \equiv a b'_1 \, (\Delta/b_0) \\ (b'_1b'_2,  \Delta/b_0)=1\\ (b'_1,b'_2)=1 \\ (w,\overline{w})=1}} \mathbf{1}_B(b_0b'_1)\mathbf{1}_B(b_0b'_2)  
\end{align*} 
Then \eqref{vclaim2} follows from the following two lemmas once $\eta_3$ is small compared to $\eta_2$ so that $\nu_2 \nu_3^{-3}=X^{-\eta_2+3\eta_3} < X^{-\eta_2/2}$, say.
\begin{lemma} \label{maintypeiilemma}
For any $C_1 >0$ there is some $C_2>0$ for $\beta^\flat_\n$ as in Section \ref{approximationsection} such that for $|\sin (\theta_1-\theta_2)|> \nu_3$ we have
\[
V(\beta,\bm{N},\bm{\theta}) \ll  \nu_1^2\nu_2^2\frac{N |B|^2}{|\sin (\theta_1-\theta_2)| (\log X)^{C_1}}
\]
\end{lemma}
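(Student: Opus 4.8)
The plan is to open the congruence $b_2'\equiv a b_1'\pmod{\Delta/b_0}$ into Dirichlet characters, sort them by conductor, treat short conductors via the Gallagher-type balance of $\beta^\flat$ (Lemma~\ref{stronggallagherlemma}) and long conductors via the multiplicative large sieve (Lemma~\ref{largesievelemma}), keeping all estimates uniform in $b_0=(b_1,b_2)$. First the elementary preparation: since $|\sin(\theta_1-\theta_2)|>\nu_3$ and $z_1,z_2$ are confined to the polar boxes cut out by $H_{N_i}$ and $G(\arg\cdot-\theta_i)$, one has $|\Delta|=|z_1||z_2||\sin(\arg z_2-\arg z_1)|\asymp|\sin(\theta_1-\theta_2)|N$, constant on the boxes up to a relative error $O(\nu_2+\nu_1\nu_3^{-1})$ (the mechanism of \eqref{ccremoval} again), so the weight $1/\varphi(\Delta/b_0)$ is $\ll b_0(\log\log N)/(|\sin(\theta_1-\theta_2)|N)$. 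As already noted, $(w,\overline w)=1$ forces $(b_0,w)=1$ and hence $b_0\mid\Delta$; writing $b_i=b_0b_i'$, the identity $i(\Delta/b_0)w=z_2b_1'-z_1b_2'$ gives $b_2'\equiv ab_1'\pmod{\Delta/b_0}$ with $(b_1'b_2',\Delta/b_0)=1$, where $a\equiv\Re(z_2\overline{z_1})\,|z_1|^{-2}\pmod\Delta$ is a genuine integer modulo $\Delta$. Opening this congruence over all characters modulo $\Delta/b_0$, sorting into primitive characters $\chi$ of conductor $d\mid\Delta/b_0$ and setting $f:=b_0d$, one is left with
\[
V(\beta,\bm{N},\bm{\theta})\ll\frac{1}{|\sin(\theta_1-\theta_2)|N}\sum_{b_0}\sum_{d}\sideset{}{^\ast}\sum_{\chi\,(d)}\bigg|\sum_{b_0 b\in B}\chi(b)\bigg|^2\,\mathcal Z_{b_0,d}(\chi),
\]
where $\mathcal Z_{b_0,d}(\chi)$ collects, with weight $\beta^\flat_{z_1,1}\overline{\beta^\flat_{z_2,2}}\,\overline\chi(a)$, the pairs $z_1,z_2$ in the boxes with $b_0d\mid\Delta$, and where $b_0d\ll N$ since $b_0d\mid\Delta$.

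Next I would split according to $f=b_0d\le Q:=X^{\delta+\eta}$ versus $f>Q$. For the long conductors, bound $\mathcal Z_{b_0,d}(\chi)$ trivially --- using $|\beta^\flat_\n|\ll(\log X)^{O(1)}$ and lattice-point counting for the $z_2$ in a box subject to $b_0d\mid\Delta$ --- and apply Lemma~\ref{largesievelemma} dyadically in $d$ to the sparse sequence $(\mathbf 1_{b_0 b\in B})_b$, supported in an interval of length $\ll X^{1/2-\eta}/b_0$ with $\ell^2$-mass $|B_{b_0}|:=|\{b: b_0 b\in B\}|$; using $N<X^{1/2-\delta-\eta}=X^{-\eta}|B|$ to beat the off-diagonal, this contributes $\ll X^{-\eta/2}\nu_1^2\nu_2^2N|B|^2/|\sin(\theta_1-\theta_2)|$. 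For the short conductors, use that $b_0d\mid\Delta$ (given $(z_1,f)=1$) is equivalent to $z_2\equiv az_1\pmod f$ with $a$ reducing to $z_2z_1^{-1}\bmod f$; detecting this Gaussian congruence by characters $\psi$ modulo $f$, collapsing the $a$-average against $\overline\chi$ by orthogonality, and applying Cauchy--Schwarz in $\psi$, one reduces $\mathcal Z_{b_0,d}(\chi)$ to exactly the quantity bounded by Lemma~\ref{stronggallagherlemma} with $u=f$ (an integer, so $\M(f)=f\le Q$, and $N>X^\eta Q^3$ because $N>X^{3\delta+4\eta}$), giving $\mathcal Z_{b_0,d}(\chi)\ll\frac{\varphi(f)}{\varphi_{\ZZ[i]}(f)}\cdot\frac{\nu_1^2\nu_2^2N^2}{(\log X)^{C_1}}$ uniformly in $\chi$ and $a$. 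Substituting back and bounding the remaining sum over $b_0,d$ and characters --- controlling $\sideset{}{^\ast}\sum_{\chi(d)}|\sum_{b_0 b\in B}\chi(b)|^2$ once more by Lemma~\ref{largesievelemma}, using $\frac{b_0\varphi(f)}{\varphi_{\ZZ[i]}(f)}\ll(\log X)^{O(1)}/d$ and $\sum_{b_0}|B_{b_0}|/b_0\ll|B|\log\log X$, and retaining $(w,\overline w)=1$ so that the $b_0$-summation does not produce an unbounded divisor function (cf.\ Remark~\ref{multiplicity remark}) --- yields a bound $\ll\nu_1^2\nu_2^2N|B|^2/(|\sin(\theta_1-\theta_2)|(\log X)^{C_1-O(1)})$, which together with a large enough $C_1$ and the power savings coming from the range $X^{3\delta+4\eta}<N<X^{1/2-\delta-\eta}$ gives the lemma.

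The main obstacle will be the short-conductor range. Two points make it delicate. First, passing cleanly from the bilinear $z$-sum with weight $\beta^\flat_{z_1,1}\overline{\beta^\flat_{z_2,2}}\,\overline\chi(a)$ to the exact form bounded by Lemma~\ref{stronggallagherlemma}: reconciling the rational integer $a$ with the Gaussian modulus $f$, tracking the ratio $\varphi(f)/\varphi_{\ZZ[i]}(f)$, the coprimality conditions $(b_1'b_2',\Delta/b_0)=1$ and the restriction $(z_1,z_2)=1$ (removed by a M\"obius expansion whose terms of modulus $>1$ are negligible by the roughness of the support, Lemma~\ref{smoothlemma}), and the compatibility of the smooth and rough weights $H_{N'},G(\arg\cdot-\theta)$ with the character twist. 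Second, and more seriously, the point emphasised in the introduction: since $B$ may be essentially supported on $q_1\ZZ$ for $q_1$ as large as $X^\delta$, the correlation sums behind $\sideset{}{^\ast}\sum_{\chi(d)}|\sum_{b_0 b\in B}\chi(b)|^2$ need not decay in $b_0$ and could carry an unbounded divisor-function loss; the argument must therefore be organised --- factoring out the \emph{full} gcd $b_0=(b_1,b_2)$, so that $(b_1',b_2')=1$, and retaining $(w,\overline w)=1$ so that $w$ has no nontrivial rational divisor (cf.\ Remark~\ref{multiplicity remark}) --- so that this loss cancels and the $b_0$-summation converges against the power savings afforded by the admissible range of $N$.
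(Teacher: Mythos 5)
Your high-level plan---open the congruence $b_2'\equiv ab_1'\pmod{\Delta/b_0}$ into Dirichlet characters sorted by conductor $d$, treat large $db_0$ with the multiplicative large sieve (Lemma~\ref{largesievelemma}) and small $db_0$ with Lemma~\ref{stronggallagherlemma} after Cauchy--Schwarz in $\psi$, keeping everything uniform in $b_0=(b_1,b_2)$---is the paper's plan, and you correctly identify most of the key structural points (factoring out the full gcd $b_0$, $b_0\mid\Delta$ from $(w,\overline w)=1$, the size of $1/\varphi(\Delta/b_0)$, the rough support to discharge $(z_1,z_2)>1$, the role of Remark~\ref{multiplicity remark}). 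However, there is a genuine gap in the execution. The quantity $V(\beta,\bm N,\bm\theta)$ still carries the indicator $\mathbf 1_{(w,\overline w)=1}$, and once $w$ is eliminated in favour of $(z_1,z_2,b_1',b_2')$ this becomes a nontrivial arithmetic condition which, in a \emph{signed} sum, must be expanded exactly rather than bounded or dropped. The paper opens it as $\mathbf 1_{(w,\overline w)=1}=\sum_{\ell\mid w}\mu(\ell)$ over rational integers $\ell$, which upgrades the congruence to $b_2'\equiv ab_1'\pmod{\ell\Delta/b_0}$. For the same reason, $1/\varphi(\ell\Delta/b_0)$ cannot be replaced by an upper bound; it is written exactly as $\frac{b_0}{\ell|\Delta|}\sum_{d_1\mid\ell\Delta/b_0}|\mu(d_1)|/\varphi(d_1)$, and the condition $(b_1'b_2',\ell\Delta/(db_0))=1$ is opened with variables $e_1,e_2$.

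The consequence you miss is that the modulus on which $z_2\equiv az_1$ is detected---hence the modulus $u$ that enters Lemma~\ref{stronggallagherlemma}---is not $f=b_0d$ but
\[
f=b_0\cdot\bigg[\frac{[d_1,de_1e_2]}{([d_1,de_1e_2],\ell)},\ \frac{\ell}{(b_0,\ell)}\bigg],
\]
which can be as large as $N$ even when $b_0d\le X^{\delta+\eta/2}$. Your single dichotomy $b_0d\lessgtr X^{\delta+\eta}$ therefore does not cover the regime ``$b_0d$ small but $f$ large'', for which Lemma~\ref{stronggallagherlemma} is inapplicable (as $f^2$ can vastly exceed $N$). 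The paper handles this as a separate piece $E_{\le}$: using $f>X^{\delta+\eta}$ and $db_0\le X^{\delta+\eta/2}$ to gain a factor $X^{-\eta/2}$, then a Cauchy--Schwarz in $\chi$ together with orthogonality and the divisor bound on $b_1-b_1'$ to close. The paper also needs a term $Z_{\le}$ from dropping $(z_1,z_2)=1$, bounded trivially via roughness, roughly as you anticipate. So the skeleton you propose is right, but reducing the short-conductor contribution to ``exactly Lemma~\ref{stronggallagherlemma} with $u=b_0d$'' is not an identity once the coprimality conditions and $(w,\overline w)=1$ are carried through correctly; the argument needs the M\"obius variable $\ell$, the exact $1/\varphi$ expansion, the larger modulus $f$, and a three-way split.
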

\begin{lemma} \label{typeiitrivialboundlemma}
For $|\sin (\theta_1-\theta_2)| > \nu_3$ we have
\[
W(\beta,\bm{N},\bm{\theta})  \ll \frac{\nu_1^2 \nu_2^2 N |B|^2  (\log  X)^{O(1)}}{|\sin (\theta_1-\theta_2)|}.
\]
\end{lemma}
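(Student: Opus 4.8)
The plan is to bound $W(\beta,\bm{N},\bm{\theta})$ by pure counting, discarding the oscillation of $\beta^\flat$ and using only the trivial bound $|\beta^\flat_{z,i}| \ll \tau(z)^{O(1)} |G(\arg z - \theta_i)| H_{N_i}(z)$, together with the fact that the smooth angular weights $G(\arg z_j - \theta_j)$ confine $\arg z_j$ to intervals of length $\ll \nu_1$. So the first step is to replace $|\beta^\flat_{z_1,1}||\beta^\flat_{z_2,2}|$ by $\tau(z_1)^{O(1)}\tau(z_2)^{O(1)} \mathbf{1}_{|\arg z_j - \theta_j| \ll \nu_1}\, H_{N_1}(z_1) H_{N_2}(z_2)$, and reduce to showing
\[
\sum_{b_0}\,\sum_{\substack{|z_1|^2,|z_2|^2 \sim N \\ |\arg z_j - \theta_j| \ll \nu_1 \\ \Delta \equiv 0\,(b_0)}} \tau(z_1 z_2)^{O(1)} \sum_{\substack{b_2' \equiv a b_1'\,(\Delta/b_0) \\ (b_1',b_2')=1,\ (w,\overline{w})=1}} \mathbf{1}_B(b_0 b_1')\mathbf{1}_B(b_0 b_2') \ll \frac{\nu_1^2 \nu_2^2 N |B|^2 (\log X)^{O(1)}}{|\sin(\theta_1-\theta_2)|}.
\]

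Next I would exploit the key rigidity already used in the sketch: from \eqref{deltaeq1}, once $z_1,z_2,b_1',b_2',b_0$ are fixed, $w$ is determined, so there is no genuine sum over $w$. Hence it suffices to count quadruples. I would first sum over $b_1',b_2'$: given $z_1,z_2,b_0$ with $b_0 \mid \Delta$, the $b_1'$ ranges over $\ll X^{1/2-\eta}/(b_0 \cdot \text{something})$ values inside the short interval from \eqref{Bintervalassumption} (after dividing by $b_0$), and for each $b_1'$ the congruence $b_2' \equiv a b_1'\,(\Delta/b_0)$ pins $b_2'$ to at most $1 + |B|/(|\Delta|/b_0)$ choices in the relevant interval. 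Since $|\Delta| \gg \nu_3 N$ and $|B| \geq X^{1/2-\delta} \gg N$ in the Type II range, the count of $(b_1',b_2')$ pairs is $\ll b_0 |B|^2/(|\Delta|\, |B_0|)$-type, but the cleanest route is: the number of admissible $(b_1',b_2')$ with $b_0 b_1', b_0 b_2' \in B$ and $b_2' \equiv a b_1'\,(\Delta/b_0)$ is $\ll |B_{b_0}| \cdot (1 + b_0 |B_{b_0}| / |\Delta|)$ where $B_{b_0} := \{ b : b_0 b \in B\}$, and one then sums the contribution $\sum_{b_0} \sum_{z_1,z_2} \tau^{O(1)} |B_{b_0}|(1 + b_0|B_{b_0}|/|\Delta|)$ over $z_1,z_2$ confined to $\nu_1$-sectors and $b_0 \mid \Delta$.

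The summation over $z_1, z_2$ is the heart of the estimate. Fixing $b_0$, the number of $z_1$ with $|z_1|^2 \sim N$ and $\arg z_1 \in$ an interval of length $\nu_1$ is $\ll \nu_1 N$; for each such $z_1$, the number of $z_2$ with $|z_2|^2 \sim N$, $\arg z_2 \in$ a $\nu_1$-interval, and $\Delta = \Im(\overline{z_1} z_2) \equiv 0\,(b_0)$ and $|\Delta| \asymp |\sin(\theta_1-\theta_2)| N$ is $\ll \nu_1 N \cdot (1 + 1/b_0)$ after imposing $b_0 \mid \Delta$ (the congruence $\Delta \equiv 0\,(b_0)$ in the variable $z_2$ is one linear condition mod $b_0$, saving a factor $b_0$ unless $b_0$ exceeds the box side). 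Combining, the total is $\ll \sum_{b_0 \leq X^{1/2}} \nu_1^2 N^2 \cdot |B_{b_0}| \cdot (1 + b_0 |B_{b_0}|/(\nu_3 N)) / b_0$ together with the $\nu_2^2$ factor from the finer-than-dyadic partition; using $\sum_{b_0} |B_{b_0}| \ll |B|(\log X)^{O(1)}$ (each $b \in B$ has $\tau(b) \ll X^{o(1)}$ divisors, but more carefully $\sum_{b_0}|B_{b_0}| = \sum_{b \in B}\tau(b)$), $|B_{b_0}| \leq |B|/b_0$, and $|B| \leq X^{1/2-\delta} \ll X^{-\eta} N / \nu_3$ in our range after adjusting parameters, one gets $W(\beta,\bm{N},\bm{\theta}) \ll \nu_1^2 \nu_2^2 N |B|^2 (\log X)^{O(1)} / |\sin(\theta_1-\theta_2)|$ as claimed.

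The main obstacle I anticipate is the bookkeeping around the common factor $b_0$: one must check that the gain of $b_0$ from the congruence $\Delta \equiv 0\,(b_0)$ in $z_2$ genuinely compensates the loss of $b_0$ coming from the density increase of $B_{b_0}$ (the pairs $b_0 b_1', b_0 b_2'$), exactly the phenomenon flagged in Remark \ref{multiplicity remark}; keeping $(w,\overline{w})=1$ is what makes $b_0 \mid \Delta$ available and hence makes this balance work. A secondary technical point is ensuring the divisor factors $\tau(z_1 z_2)^{O(1)}$ and $\sum_{b \in B}\tau(b)$ only cost powers of $\log X$ — handled by Lemma \ref{divisorlemma} and the divisor bound — which is acceptable since we only need to beat $|\sin(\theta_1-\theta_2)|^{-1}$ times $\nu_1^2\nu_2^2 N|B|^2$ up to $(\log X)^{O(1)}$, not a saving.
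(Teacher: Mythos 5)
Your proposal takes a genuinely different route: pure lattice-point counting with no Dirichlet-character expansion at all. The paper's actual proof of Lemma \ref{typeiitrivialboundlemma} is a near-verbatim rerun of the proof of Lemma \ref{maintypeiilemma}: expand the congruence $b_2' \equiv a b_1' \,(\ell\Delta/b_0)$ into primitive Dirichlet characters $\chi$ of conductor $d$, split at $db_0 \gtrless X^{\delta+\eta/2}$, treat the large-conductor part with the multiplicative large sieve (Lemma \ref{largesievelemma}), and for the small-modulus part replace the appeal to Lemma \ref{stronggallagherlemma} with the trivial polar-box count $\sum_{z_2 \equiv a z_1\,(f)} |\beta^\flat_{z_1,1}||\beta^\flat_{z_2,2}| \ll (\log X)^{O(1)}\nu_1^2\nu_2^2 N^2/f^2$.

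There is a genuine gap in your argument, located exactly where you flagged your main worry. The pointwise claim that, for fixed $z_1,z_2,b_0$, the number of admissible pairs $(b_1',b_2')$ is $\ll |B_{b_0}|\bigl(1 + b_0|B_{b_0}|/|\Delta|\bigr)$ is false. The correct trivial bound is $\ll |B_{b_0}|\bigl(1 + X^{1/2-\eta}/|\Delta|\bigr)$, since for each $b_1'$ the congruence pins $b_2'$ into one residue class mod $\Delta/b_0$, and the number of integers in an interval of length $\ll X^{1/2-\eta}/b_0$ (recall \eqref{Bintervalassumption}) in a fixed class is $\ll 1 + X^{1/2-\eta}/|\Delta|$ — the sparsity of $B_{b_0}$ inside that interval does not shrink this, because $B_{b_0}$ could in principle cluster inside a single residue class. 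Plugging the honest bound into your sum produces a term of size
\[
\nu_1^2\nu_2^2\, \frac{N X^{1/2-\eta}}{|\sin(\theta_1-\theta_2)|}\sum_{b_0}\frac{|B_{b_0}|}{b_0} \asymp \nu_1^2\nu_2^2\, \frac{N X^{1/2-\eta}}{|\sin(\theta_1-\theta_2)|}\, |B|\,(\log X)^{O(1)},
\]
and to match the target one would need $X^{1/2-\eta} \ll |B|$, i.e.\ $\eta \geq \delta$, which we cannot assume. The bound you actually want requires averaging over the residue class $a = z_2/z_1\,(\Delta/b_0)$ as $(z_1,z_2)$ varies; that averaging is precisely what the character expansion plus the multiplicative large sieve supply in the large-conductor regime, which is why the paper keeps that scaffolding and only drops to trivial counting when the modulus $f$ is $\leq X^{\delta+\eta}$ (where the congruence on $z_2\equiv az_1\,(f)$ alone already saves enough). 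Your intuition that one should "only need a trivial estimate" is correct in spirit, but it applies only after the character split reduces the modulus below $X^{\delta+\eta}$; it is not available for the initial congruence on $b_1',b_2'$ to a modulus of size $\Delta/b_0 \asymp N/b_0$.
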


\subsubsection{Proof of Lemma \ref{maintypeiilemma}} \label{maintypeiilemmasection}
The condition $(w,\overline{w})=1$ has served its purpose so we remove it by expanding
 \[
 \mathbf{1}_{(w,\overline{w})=1} = \sum_{\ell|w} \mu (\ell),
 \]
 where $\ell$ runs over integers.  Writing $w=\ell w'$ we get from \eqref{deltaeq1}
 \begin{align} \label{deltaeq2}
    i (\Delta/b_0)  \ell w' = z_2 b_1' -z_1b_2'.   
 \end{align}

We have
\begin{align} \label{coprimeclaim}
\left(\frac{\ell \Delta}{b_0},b_j' \right) = 1
\end{align}
To see this, by using $(b_1',b_2')=1$ and (\ref{deltaeq2}) we get that for $j \in \{1,2\}$
\[
\left(\frac{\ell \Delta}{b_0},b_j' \right) 
 \, |\,\left(\frac{\ell \Delta}{b_0},z_j\right),
 \]
and \eqref{coprimeclaim} follows since the left-hand side is an integer and by $(z_j,\overline{z_j})=1$ the only integer dividing $z_j$ is $1$. Similarly we also get $(z_1z_2,\ell) =1$ by \eqref{deltaeq2}. Thus,
 \[
a:=  z_2/z_1 \equiv b_2'/b_1' \, (\ell)
 \]
 is congruent to a rational integer, which is equivalent to saying that $\Delta \equiv 0 \, (\ell)$. Thus, we get
 \[
 V(\beta,\bm{N},\bm{\theta}) =   \sum_{b_0,\ell }\mu(\ell)\sum_{\substack{|z_1|^2, |z_2|^2 \sim N \\ (z_1,z_2) = 1 \\   \Delta \equiv 0 \, ([b_0,\ell])}} \beta^\flat_{z_1,1} \overline{\beta^\flat_{z_2,2}}  \sum_{\substack{b'_2 \equiv a b'_1 \, (\ell\Delta/b_0) \\ (b'_1b'_2,\ell \Delta/b_0)=1\\ (b'_1,b'_2)=1}} \mathbf{1}_B(b_0b'_1)\mathbf{1}_B(b_0b'_2) 
 \]
 By expanding the congruence $b'_2 \equiv a b'_1 \, (\ell\Delta/b_0) $ into Dirichlet characters and splitting into primitive characters we get
\begin{align*}
   V(\beta,\bm{N},\bm{\theta}) = \sum_{b_0, \ell }\mu(\ell)&\sum_{\substack{|z_1|^2, |z_2|^2 \sim N \\ (z_1,z_2) = 1 \\ \Delta \equiv 0 \, ([b_0,\ell])}} \beta^\flat_{z_1,1} \overline{\beta^\flat_{z_2,2}} \\
   &\times\frac{1}{\varphi (\ell \Delta/b_0)}  \sum_{d| \ell \Delta/b_0} \sideset{}{^\ast}\sum_{\chi\, (d)} \chi(a)  \bigg(  \sum_{\substack{b_0b'_1,b_0b'_2 \in B \\  (b'_1,b'_2)=1 \\ (b_1'b_2',\ell \Delta/(db_0))=1}} \chi(b'_1) \overline{\chi}(b'_2) \bigg).
\end{align*}
We separate $db_0 > X^{\delta+\eta/2}$ and $ db_0 \leq X^{ \delta +\eta/2}$, that is, write
\[
V(\beta,\bm{N},\bm{\theta}) = V_{ \leq}(\beta,\bm{N},\bm{\theta})+ V_{>}(\beta,\bm{N},\bm{\theta}).
\]

For the large $db_0$ we use the expansion
\[
\mathbf{1}_{(b_1',b_2')=1} = \sum_{c|(b_1',b_2')} \mu(c)
\]
to get
\begin{align*}
    V_{>}(\beta,\bm{N},\bm{\theta}) =   \sum_{b_0,\ell }  & \mu (\ell)\sum_{\substack{|z_1|^2, |z_2|^2 \sim N \\ (z_1,z_2) = 1 \\ \Delta \equiv 0 \, ([b_0,\ell])}} \beta^\flat_{z_1,1} \overline{\beta^\flat_{z_2,2} }  \\
& \times \frac{1}{\varphi (\ell \Delta/b_0)}  \sum_{\substack{d| \ell\Delta/b_0  \\ db_0 > X^{\delta+\eta/2}}}\sum_{(c,d)=1} \mu(c) \sideset{}{^\ast}\sum_{\chi\, (d)} \chi(a)  \bigg|  \sum_{\substack{cb_0b \in B \\ (cb,\ell \Delta/(db_0))=1}} \chi(b)  \bigg|^2.
\end{align*}
We split the sum according to $a_0=(b_0,\ell)$ which gives us
\begin{align*}
    V_{>}(\beta,\bm{N},\bm{\theta}) = &  \sum_{\substack{b_0  \\ a_0 | b_0}}  \sum_{(\ell,b_0)=a_0}   \mu (\ell)\sum_{\substack{|z_1|^2, |z_2|^2 \sim N \\ (z_1,z_2) = 1 \\ \Delta \equiv 0 \, (b_0\ell/a)}} \beta^\flat_{z_1,1} \overline{\beta^\flat_{z_2,2} }  \\
& \times \frac{1}{\varphi (\ell \Delta/b_0)}  \sum_{\substack{d| \ell\Delta/b_0  \\ db_0 > X^{\delta+\eta/2}}}\sum_{(c,d)=1} \mu(c) \sideset{}{^\ast}\sum_{\chi\, (d)} \chi(a)  \bigg|  \sum_{\substack{cb_0b \in B \\ (cb,\ell \Delta/(db_0))=1}} \chi(b)  \bigg|^2.
\end{align*}
Recall that by $|\sin (\theta_1-\theta_2)| \gg \nu_3$ we have $|\Delta| \gg \nu_3 N$. For any fixed $D$ with $\nu_3 N \ll D \ll N$ we have (combining the variables $z_2\overline{z_1}=z$, using a divisor bound, and recalling that $\Delta= \Im (z_2\overline{z_1})$)
\[
 \sum_{\substack{|z_1|^2,|z_2|^2 \sim N \\ |\Delta|=D \gg \nu_3 N  }} \frac{1}{\varphi (\ell \Delta/b_0)}  \ll X^{\eta/200}\sum_{\substack{|z|^2 \sim N^2\\ |\Im (z)| = D \gg \nu_3 N  }} \frac{b_0 }{\ell D}  \ll \frac{  X^{\eta/200} b_0 }{\nu_3 \ell },
\]
which gives us (since $\eta_3$ is small compared to $\eta$)
\begin{align*}
    V_{>}(\beta,\bm{N},\bm{\theta})  \ll \sup_{L \ll X} \frac{X^{\eta/100}}{L} \sum_{\substack{b_0 \\ a_0| b_0  }} \sum_{\substack{\ell \sim L \\ \ell \equiv 0 \, (a_0)}}  b_0  \sum_{\substack{D \ll N \\ D \equiv 0 \, (b_0 \ell/a_0)}} 
 \sum_{\substack{d| \ell D/b_0 \\ db_0 > X^{\delta+\eta/2}}}\sum_{(c,d)=1} \\
\times \sideset{}{^\ast}\sum_{\chi\, (d)}   \bigg|  \sum_{\substack{cb_0b \in B \\ (cb,\ell D/(db_0))=1}} \chi(b)  \bigg|^2.
\end{align*}
By a dyadic partition we get (denoting $D'=\ell D/(db_0) $ 
\begin{align*}
  V_{>}(\beta,\bm{N},\bm{\theta})  \ll    & \sup_{\substack{B_0,D_0,L  \\ X^{\delta+\eta/2} \ll D_0 B_0 \ll LN}} \frac{X^{\eta/50}  B_0}{L}  \sum_{\substack{b_0 \sim B_0 \\ a_0| b_0  }} \sum_{\substack{\ell \sim L \\ \ell \equiv 0 \, (a_0)}}  \sum_{\substack{D' \sim L N / B_0 D_0   \\ D'  \equiv 0 \, (\ell^2/a_0)}}  
 \\
& \hspace{100pt} \times\sum_{d \sim D_0} \sum_{(c,d)=1} \sideset{}{^\ast}\sum_{\chi\, (d)}   \bigg|  \sum_{\substack{cb_0b \in B \\ (cb,D')=1}} \chi(b)  \bigg|^2 \\
  \ll  &   \sup_{\substack{B_0,D_0,L  \\ X^{\delta+\eta/2} \ll D_0 B_0 \ll LN}} \frac{X^{\eta/50}  B_0}{L}  \sum_{\substack{b_0 \sim B_0 \\ a_0| b_0  }} \sum_{\substack{\ell \sim L \\ \ell \equiv 0 \, (a_0)}}  \sum_{\substack{D' \sim L N / B_0 D_0   \\ D'  \equiv 0 \, (\ell^2/a_0)}} \\
  & \hspace{100pt} \times \sup_{D''}\sum_{d \sim D_0} \sum_{(c,d)=1} \sideset{}{^\ast}\sum_{\chi\, (d)}   \bigg|  \sum_{\substack{cb_0b \in B \\ (cb,D'')=1}} \chi(b)  \bigg|^2 \\\\ 
  \ll & X^{\eta/40} N \sup_{\substack{ B_0,D_0,L  \\ X^{\delta+\eta/2} \ll D_0 B_0 \ll LN}} \frac{1}{D_0 L} \sum_{b_0 \sim B_0 } \sum_c \sup_{D''} \sum_{d \sim D_0}   \sideset{}{^\ast}\sum_{\chi\, (d)}   \bigg|  \sum_{\substack{cb_0b \in B \\ (cb,D'')=1}} \chi(b)  \bigg|^2.
\end{align*}
By the large sieve for multiplicative characters  (Lemma \ref{largesievelemma}) we obtain
\begin{align*}
    V_{>}(\beta,\bm{N},\bm{\theta}) \ll X^{\eta/30}  N (N +  X^{1/2-\delta-\eta/2} ) |B| \,  \ll X^{-\eta/4}  N |B|^2
\end{align*}
since $N \leq X^{-\eta} |B|$. This is sufficient for Lemma \ref{maintypeiilemma} since $\eta_1,\eta_2$ are small compared to $\eta$.

To bound the contribution from the small $d b_0$ recall that
\begin{align*}
 V_{\leq}(\beta,\bm{N},\bm{\theta})  =   \sum_{b_0,\ell } \mu(\ell) & \sum_{\substack{|z_1|^2, |z_2|^2 \sim N \\ (z_1,z_2) = 1 \\ \Delta \equiv 0 \, ([b_0,\ell])}} \beta^\flat_{z_1,1}  \overline{\beta^\flat_{z_2,2} }  \\
& \times \frac{1}{\varphi (\ell \Delta/b_0)} \sum_{\substack{ d| \ell \Delta/b_0 \\ db_0 \leq X^{\delta+\eta/2}}} \sideset{}{^\ast}\sum_{\chi\, (d)} \chi(a)  \bigg(  \sum_{\substack{b_0b'_1,b_0b'_2 \in B \\  (b'_1,b'_2)=1\\ (b_1'b_2', \ell \Delta/(db_0))=1}} \chi(b'_1) \overline{\chi}(b'_2) \bigg).  
\end{align*}
We expand the conditions $(b_1'b_2',\ell \Delta/(db_0))=1$ by using the M\"obius function to get
\begin{align*}
 V_{\leq}(\beta,\bm{N},\bm{\theta}) =   \sum_{(e_1,e_2)=1} &\mu (e_1) \mu(e_2)\sum_{b_0,\ell } \mu(\ell)\sum_{\substack{|z_1|^2, |z_2|^2 \sim N \\ (z_1,z_2) = 1 \\ \Delta \equiv 0 \, ([b_0,\ell])}} \beta^\flat_{z_1,1}  \overline{\beta^\flat_{z_2,2} }  \\
&  \times \frac{1}{\varphi (\ell \Delta/b_0)} \sum_{\substack{de_1e_2|\ell  \Delta/b_0 \\ d b_0\leq X^{\delta+\eta/2}}} \sideset{}{^\ast}\sum_{\chi\, (d)} \chi(a)  \bigg(  \sum_{\substack{b_0e_1 b'_1,b_0e_2 b'_2 \in B \\  (b'_1,b'_2)=1 \\ e_j|b_j'}} \chi(e_1b''_1) \overline{\chi}(e_2b''_2) \bigg).  
\end{align*}
We have
\begin{align*}
  \frac{1}{\varphi (\ell \Delta/b_0)} &= \frac{b_0}{\ell |\Delta|}  \prod_{p| \ell \Delta/b_0} \bigg(1+\frac{1}{p-1}\bigg) = \frac{b_0}{\ell |\Delta|}  \sum_{\substack{ d_1 | \ell\Delta/b_0}} \frac{|\mu(d_1)|}{\varphi(d_1)} 
\end{align*}
Since $z_1,z_2$ are restricted to polar boxes, we have
\begin{align} \label{Ddefinition}
  |\Delta| = N_1^{1/2} N_2^{1/2} |\sin (\theta_2-\theta_2)| (1+O(\nu_2))  := D(1+O(\nu_2)).
\end{align}
Pluggin this in we get
\begin{align} \label{werrorterm}
 V_{\leq}(\beta,\bm{N},\bm{\theta}) =    V'_{\leq}(\beta,\bm{N},\bm{\theta})+O(W_{\leq }(\beta,\bm{N},\bm{\theta})) 
\end{align}
with
\begin{align*}
    V'_{\leq}(\beta,\bm{N},\bm{\theta}) := &\frac{1}{D} \sum_{\substack{ d_1 }} \frac{|\mu(d_1)|}{\varphi(d_1)} \sum_{(e_1,e_2)=1} \mu (e_1) \mu(e_2)\sum_{b_0,\ell } \frac{b_0\mu(\ell)}{\ell}\sum_{\substack{|z_1|^2, |z_2|^2 \sim N \\ (z_1,z_2) = 1 \\ \Delta \equiv 0 \, ([b_0,\ell])}} \beta^\flat_{z_1,1}  \overline{\beta^\flat_{z_2,2} }  \\
&  \times \sum_{\substack{ db_0 \leq X^{\delta+\eta/2} \\ [d_1,de_1e_2]|\ell  \Delta/b_0}} \sideset{}{^\ast}\sum_{\chi\, (d)} \chi(a)  \bigg(  \sum_{\substack{b_0e_1 b'_1,b_0e_2 b'_2 \in B \\  (b'_1,b'_2)=1 \\ e_j|b_j'}} \chi(e_1b''_1) \overline{\chi}(e_2b''_2) \bigg),
\end{align*}
\begin{align*}
  W_{\leq }(\beta,\bm{N},\bm{\theta}): =& \frac{\nu_2}{D} \sum_{\substack{ d_1 }} \frac{1}{\varphi(d_1)} \sum_{(e_1,e_2)=1}\sum_{b_0,\ell } \frac{b_0}{\ell}\sum_{\substack{|z_1|^2, |z_2|^2 \sim N \\ (z_1,z_2) = 1 \\ \Delta \equiv 0 \, ([b_0,\ell])}} |\beta^\flat_{z_1,1}  \overline{\beta^\flat_{z_2,2} }|  \\
&  \times \sum_{\substack{ db_0 \leq X^{\delta+\eta/2} \\ [d_1,de_1e_2]|\ell  \Delta/b_0}} \bigg|\sideset{}{^\ast}\sum_{\chi\, (d)} \chi(a)  \bigg(  \sum_{\substack{b_0e_1 b'_1,b_0e_2 b'_2 \in B \\  (b'_1,b'_2)=1 \\ e_j|b_j'}} \chi(e_1b''_1) \overline{\chi}(e_2b''_2) \bigg)\bigg|.   
\end{align*}

Let us first consider $V'_{\leq}(\beta,\bm{N},\bm{\theta})$. We note that the sums over $d_1,e_2,e_2,\ell$ converge quickly, so that we expect to be able to bound the contribution from large ranges of $d_1,e_2,e_2$ by crude estimates.  We have
\[
\frac{\ell}{(b_0,\ell)} \,  | \,\Delta/b_0 \quad \text{and} \quad \frac{[d_1,de_1e_2]}{([d_1,de_1e_2],\ell)} \, |\, \Delta/b_0.
\]
Denoting
\[
f:=f(\ell,d_1,d,e_1,e_2) :=  b_0 
 \cdot \bigg[\frac{[d_1,de_1e_2]}{([d_1,de_1e_2],\ell)} , \frac{\ell}{(b_0,\ell)}\bigg] 
\]
we get $f | \Delta$, which is equivalent to saying that
\[
z_2 \equiv a z_1 \, (f)
\]
for some $a \, (f)$. Note that $|f| \ll N$. Thus,
\begin{align*}
   V'_{\leq}(\beta,\bm{N},\bm{\theta}) = &\frac{1}{D}\sum_{\substack{ d_1 }} \frac{|\mu(d_1)|}{\varphi(d_1)} \sum_{\substack{(e_1,e_2)=1 }} \mu (e_1) \mu(e_2)\sum_{b_0,\ell} \frac{b_0\mu(\ell)}{\ell} \\
    & \times  \sum_{d b_0 \leq X^{\delta+\eta/2}} \sum_{a \, (f)}    \sideset{}{^\ast}\sum_{\chi\, (d)} \chi(a)   \bigg(  \sum_{\substack{b_0b'_1,b_0b'_2 \in B \\  (b'_1,b'_2)=1 \\  e_j|b_j'}} \chi(b'_1) \overline{\chi}(b'_2) \bigg)  \sum_{\substack{z_2 \equiv a z_1 \, (f) \\ (z_1,z_2)=1}} \beta^\flat_{z_1,1} \overline{\beta^\flat_{z_2,2} }. 
\end{align*}
We write
\begin{align} \label{Eerrorterm}
    V'_{ \leq}(\beta,\bm{N}, \bm{\theta}) = V''_{\leq}(\beta,\bm{N}, \bm{\theta}) + E_{ \leq}(\beta,\bm{N}, \bm{\theta}),
\end{align}
where $E_{\leq}(\beta,\bm{N}, \bm{\theta})$
is the part where $f > X^{\delta +\eta}$. 

For $f \leq X^{\delta + \eta}$  we note by $(z_1,z_2)=1$ we have $(z_1z_2,f)=1$. By dropping the condition $(z_1,z_2)=1$ we get
\begin{align} \label{zerrorterm}
   V''_{ \leq}(\beta,\bm{N}, \bm{\theta})  = V'''_{ \leq}(\beta,\bm{N}, \bm{\theta}) + O(Z_{ \leq}(\beta,\bm{N}, \bm{\theta}))
\end{align}
with
\begin{align*}
   V'''_{\leq}(\beta,\bm{N},\bm{\theta}) = &\frac{1}{D}\sum_{\substack{ d_1 }} \frac{|\mu(d_1)|}{\varphi(d_1)} \sum_{\substack{(e_1,e_2)=1 }} \mu (e_1) \mu(e_2)\sum_{b_0,\ell} \frac{b_0\mu(\ell)}{\ell} \\
    & \times  \sum_{\substack{ d b_0 \leq X^{\delta+\eta/2} \\ f \leq X^{\delta+\eta}}} \sum_{a \, (f)}    \sideset{}{^\ast}\sum_{\chi\, (d)} \chi(a)   \bigg(  \sum_{\substack{b_0b'_1,b_0b'_2 \in B \\  (b'_1,b'_2)=1 \\  e_j|b_j'}} \chi(b'_1) \overline{\chi}(b'_2) \bigg)  \sum_{\substack{z_2 \equiv a z_1 \, (f) \\ (z_1z_2,f)=1}} \beta^\flat_{z_1,1} \overline{\beta^\flat_{z_2,2} }. 
\end{align*}
and
\begin{align*}
    Z_{ \leq}(\beta,\bm{N}, \bm{\theta}) := &\frac{1}{D}\sum_{\substack{ d_1 }} \frac{1}{\varphi(d_1)} \sum_{\substack{(e_1,e_2)=1 }} \sum_{b_0,\ell} \frac{b_0}{\ell} \\
    & \times  \sum_{\substack{ d b_0 \leq X^{\delta+\eta/2} \\ f \leq X^{\delta+\eta}}} \sum_{a \, (f)}    \bigg|\sideset{}{^\ast} \sum_{\chi\, (d)} \chi(a)   \bigg(  \sum_{\substack{b_0b'_1,b_0b'_2 \in B \\  (b'_1,b'_2)=1 \\  e_j|b_j'}} \chi(b'_1) \overline{\chi}(b'_2) \bigg)\bigg|  \sum_{\substack{z_2 \equiv a z_1 \, (f) \\ (z_1z_2,f)=1\\ |(z_1,z_2)|^2 \geq W }} |\beta^\flat_{z_1,1} \overline{\beta^\flat_{z_2,2} } |
\end{align*}

By expanding $z_2 \equiv a z_1 \, (f)$ with Dirichlet characters we have
\begin{align*}
V'''_{\leq}(\beta,\bm{N})  \leq & \frac{1}{D} \sum_{\substack{ d_1 }} \frac{1}{d_1} \sum_{\substack{(e_1,e_2)=1}} \sum_{b_0, \ell } \frac{b_0}{\ell}\sum_{d b_0\leq X^{\delta+\eta/2}} \sum_{\substack{a \, (f)}}  \bigg| \sideset{}{^\ast}\sum_{\chi\, (d)} \chi(a)   \bigg(  \sum_{\substack{b_0b_1',b_0b_2' \in B \\  (b_1',b_2')=1 \\ e_j| b_j'}} \chi(b_1') \overline{\chi}(b_2') \bigg)   \bigg|\\
&\times\frac{\mathbf{1}_{f \leq X^{\delta+\eta}}}{\varphi_{\ZZ[i]}(f)}\sum_{\psi \in \widehat{(\ZZ[i]/f \ZZ[i])^\times}}  \bigg|\sum_{z_1,z_2} \beta^\flat_{z_1,1} \psi (z_1) \overline{\beta^\flat_{z_2,2}   \psi (z_2)} \bigg|
\end{align*}
By applying Cauchy-Schwarz on $\psi$ and using Lemma \ref{stronggallagherlemma} we get
\begin{align*}
|V'''_{\leq}(\beta,\bm{N}) | \ll \frac{\nu_1^2\nu_2^2 N^2}{D (\log X)^{C_1}}&\sum_{\substack{d_1,e_1,e_2 }} \sum_{b_0 ,\ell }\sum_{db_0 \leq X^{\delta+\eta/2}} \frac{b_0}{d_1 \ell f^2}  \\
&\times\sum_{\substack{a \, (f) }}    \bigg|\sideset{}{^\ast}\sum_{\chi\, (d)} \chi(a)  \bigg(  \sum_{\substack{b_0b_1',b_0b_2' \in B \\  (b'_1,b'_2)=1 \\ e_j | b_j'}} \chi(b'_1) \overline{\chi}(b'_2) \bigg)  \bigg|.
\end{align*}
By Lemma \ref{primitivecharactersum} (with the argument $ab'_1/b'_2$) and Lemma \ref{gcdlemma} this is bounded by 
\begin{align*}
 \ll \frac{\nu_1^2\nu_2^2 N^2}{D (\log X)^{C_1}} &\sum_{d_1,b_0,\ell } \sum_{\substack{b_0 b'_1,b_0 b'_2 \in B  \\(b'_1,b'_2)=1}}\sum_{\substack{e_1|b'_1 \\ e_2|b'_2}}  \sum_{\substack{ d b_0\leq X^{\delta+\eta/2} \\ (b'_1b'_2,d)=1 }}\frac{b_0}{d_1\ell f^2} \sum_{\substack{ a \, (f)\\(a,d)=1}} (d,b'_2-ab'_1) \\
 & \ll  \frac{ \nu_1^2 \nu_2^2 N^2}{D(\log X)^{C_1}} \sum_{d_1,b_0,\ell } \sum_{\substack{b_0 b'_1,b_0 b'_2 \in B  \\(b'_1,b'_2)=1}}\sum_{\substack{e_1|b'_1 \\ e_2|b'_2}}  \sum_{\substack{ d b_0\leq X^{\delta+\eta/2} \\ (b_1b_2,d)=1 }}\frac{\tau(d)b_0}{d_1\ell f}  \\
\end{align*}
Using
\[
f =   b_0 
 \cdot \bigg[\frac{[d_1,de_1e_2]}{([d_1,de_1e_2],\ell)} , \frac{\ell}{(b_0,\ell)}\bigg] \geq \frac{b_0 d}{(d,\ell)}
\]
we get by Lemma \ref{gcdlemma} and \eqref{Ddefinition}
\begin{align*}
 |V'''_{\leq}(\beta,\bm{N}) | & \ll  \frac{\nu_1^2\nu_2^2 N^2}{D(\log X)^{C_1}}\sum_{\substack{b_1,b_2 \in B }} \sum_{\substack{ d,\ell \leq X }}\frac{\tau(d) (\ell,d)}{d \ell}  \\
& \ll   \frac{\nu_1^2\nu_2^2 N|B|^2}{|\sin(\theta_2-\theta_1)|(\log X)^{C_1}}.
\end{align*}
which is sufficient.

The argument for bounding $ Z_{ \leq}(\beta,\bm{N}, \bm{\theta}) $ from \eqref{zerrorterm} is the same except that instead of expanding into Hecke characters and Lemma \ref{stronggallagherlemma} we use the trivial  bound
\begin{align*}
    \sum_{\substack{z_2 \equiv a z_1 \, (f) \\ (z_1z_2,f)=1 \\ |(z_1,z_2)|^2 \geq W }} |\beta^\flat_{z_1,1} \overline{\beta^\flat_{z_2,2} } | =& \sum_{|z_0|^2 \geq  W} \sum_{\substack{z'_2 \equiv a z'_1 \, (f) \\ (z_1z_2,f)=1  }} |\beta^\flat_{z_0z_1,1} \overline{\beta^\flat_{z_0z_2,2} } |  \\
    \ll& (\log X)^{O(1)} \sum_{ W < |z_0|^2  \ll N} \bigg( \frac{\nu_1 \nu_2 N}{|z_0|^2} + 1\bigg)\bigg( \frac{\nu_1 \nu_2 N}{|z_0|^2 f^2} + 1\bigg) \\
    \ll& (\log X)^{O(1)} \bigg(\frac{\nu_1^2 \nu_2^2 N^2}{W f^2}+ \nu_1 \nu_2 N (\log X)  + N \bigg)\\
    \ll &  (\log X)^{O(1)}  \frac{\nu_1^2 \nu_2^2 N^2}{W f^2},
\end{align*}
where the last bound holds since $N > X^{3\delta+3\eta}$ and $f^2 \leq X^{\delta+2\eta}$. This gives us
\[
Z_{ \leq}(\beta,\bm{N}, \bm{\theta})  \ll  (\log X)^{O(1)}  \frac{\nu_1^2\nu_2^2 N |B|^2}{|\sin \theta_2-\theta_1| W },
\]
which suffices for Lemma \ref{maintypeiilemma}.

For $E_{\leq}(\beta,\bm{N},\bm{\theta})$  from \eqref{Eerrorterm} with large $f$ we'll need to use a slightly different argument since the modulus $f$ can be as large as $N$ which would make $f^2$ much bigger than $N$. We write
\begin{align*}
     E_{\leq}(\beta,\bm{N},\bm{\theta}) = &\frac{1}{D}\sum_{\substack{ d_1 }} \frac{|\mu(d_1)|}{\varphi(d_1)} \sum_{\substack{(e_1,e_2)=1 }} \mu (e_1) \mu(e_2)\sum_{b_0,\ell} \frac{b_0\mu(\ell)}{\ell} \\
    & \times  \sum_{\substack{d b_0 \leq X^{\delta+\eta/2} \\ f > X^{\delta + \eta}}} \sum_{a \, (f)}    \sideset{}{^\ast}\sum_{\chi\, (d)} \chi(a)   \bigg(  \sum_{\substack{b_0b'_1,b_0b'_2 \in B \\  (b'_1,b'_2)=1 \\  e_j|b_j'}} \chi(b'_1) \overline{\chi}(b'_2) \bigg)  \sum_{z_2 \equiv a z_1 \, (f)} \beta^\flat_{z_1,1} \overline{\beta^\flat_{z_2,2} } \\
    \ll&  \frac{1}{D}\sum_{\substack{ d_1 }} \frac{1}{\varphi(d_1)} \sum_{\substack{e_1,e_2 }} \sum_{b_0,\ell} \frac{b_0}{\ell} \\
    & \times  \sum_{\substack{d b_0 \leq X^{\delta+\eta/2} \\ f > X^{\delta + \eta}}}  \sideset{}{^\ast}\sum_{\chi\, (d)}\bigg|   \sum_{\substack{b_0b'_1,b_0b'_2 \in B \\  (b'_1,b'_2)=1 \\  e_j|b_j'}} \chi(b'_1) \overline{\chi}(b'_2)  \bigg|   \sum_{\substack{z_1,z_2 \\ f| \Delta }} |\beta^\flat_{z_1,1} \overline{\beta^\flat_{z_2,2} } | \\
    \ll & \frac{N^2}{D} \sum_{\substack{ d_1 }} \frac{1}{\varphi(d_1)} \sum_{\substack{e_1,e_2}} \sum_{b_0,\ell} \frac{b_0}{\ell} \\
    & \times   \sum_{\substack{d b_0 \leq X^{\delta+\eta/2} \\ f > X^{\delta + \eta}}} \frac{1}{|f|} \sideset{}{^\ast}\sum_{\chi\, (d)}\bigg|   \sum_{\substack{b_0b'_1,b_0b'_2 \in B \\  (b'_1,b'_2)=1 \\  e_j|b_j'}} \chi(b'_1) \overline{\chi}(b'_2) \bigg|.
\end{align*}
By using $f > X^{\delta + \eta}$ and $db_0 \leq X^{\delta + \eta/2}$ we get $f^{-1} \leq X^{-\eta/2} (b_0 d)^{-1}$, which gives us
\begin{align*}
E_{\leq}(\beta,\bm{N},\bm{\theta})  \ll& X^{-\eta/2}  \frac{N^2}{D} \sum_{\substack{ d_1  \ll X}} \frac{1}{\varphi(d_1)} \sum_{\substack{e_1,e_2 }}\sum_{b_0,\ell \ll X} \frac{1}{\ell} \\
    & \times   \sum_{\substack{db_0 \leq X^{\delta+\eta} }} \frac{1}{d} \sideset{}{^\ast}\sum_{\chi\, (d)}\bigg|   \sum_{\substack{b_0b'_1,b_0b'_2 \in B \\  (b'_1,b'_2)=1 \\  e_j|b_j'}} \chi(b'_1) \overline{\chi}(b'_2) \bigg|
\end{align*}
By using the expansion
\[
\mathbf{1}_{(b_1',b_2')=1} = \sum_{c|(b_1',b_2')} \mu(c),
\] 
Cauchy-Schwarz,  orthogonality of characters, and a divisor bound we have
\begin{align*}
\sideset{}{^\ast}\sum_{\chi\, (d)}& \sum_{e_1, e_2} \bigg|   \sum_{\substack{b_0b'_1,b_0b'_2 \in B \\  (b'_1,b'_2)=1 \\  e_j|b_j'}} \chi(b'_1) \overline{\chi}(b'_2) \bigg| \leq  \sum_{c}  \sum_{\chi\, (d)} \sum_{e_1 ,e_2} \bigg|   \sum_{\substack{b_0b'_1,b_0b'_2 \in B  \\  e_j|b_j' \\ c|b_j'}} \chi(b'_1) \overline{\chi}(b'_2) \bigg| \\
   \ll_\eps & X^{\eps}   \bigg(  \sum_{e_1 ,e_1'} \sum_{\substack{b_0 b_1,b_0b_1' \in B \\ e_1| b_1 \\ e_1'|b_1' }} d\mathbf{1}_{b_1 \equiv b_1' \, (d)}  \bigg)^{1/2}  \bigg(  \sum_{e_2 ,e_2'} \sum_{\substack{b_0 b_2,b_0 b_2' \in B \\ e_2| b_2 \\ e_2'|b_2' }}d \mathbf{1}_{b_2 \equiv b_2' \, (d)}  \bigg)^{1/2} \\
   \ll&  X^{\eta/4}    \sum_{e_1 ,e_1'} \sum_{\substack{b_0 b_1,b_0b_1' \in B \\ e_1| b_1 \\ e_1'|b_1' }} d\mathbf{1}_{b_1 \equiv b_1' \, (d)} ,
\end{align*}
where the last bound follows by symmetry.
Hence, we have
\begin{align*}
    E_{\leq}(\beta,\bm{N},\bm{\theta}) \ll & X^{-\eta/4}  \frac{N^2}{D}
    \sum_{\substack{b_0,d,e_1,e_1' \\ db_0 \leq X^{\delta+\eta/2}   }} \sum_{\substack{ b_0b_1,b_0b_1' \in B \\ e_1| b_1 \\ e_1' |b_1' }} \mathbf{1}_{b_1 \equiv b_1' \, (d)}.
    \end{align*}
The contribution from $b_1'=b_1$ is bounded by
\[
\ll  X^{-\eta/8}  \frac{N^2}{D}|B| X^{\delta+\eta/2} \ll X^{-\eta/8}  \frac{N^2}{D}|B|^2.
\]
The contribution from $b_1'\neq b_1$ is bounded by
\[
\ll X^{-\eta/4}  \frac{N^2}{D}
    \sum_{\substack{b_0,e_1,e_1'   }} \sum_{\substack{ b_0b_1,b_0b_1' \in B \\ e_1| b_1 \\ e_1' |b_1' }} \mathbf{1}_{b_1 \neq b_1'} \tau(b_1'-b_1)  \ll X^{-\eta/8}  \frac{N^2}{D}|B|^2
\]
by applying the divisor bound $\tau(n) \ll_\eps n^\eps$. Therefore, by \eqref{Ddefinition} we get (recall that $\nu_3$ is small compared to $\nu$)
\[
 E_{\leq}(\beta,\bm{N},\bm{\theta})  \ll X^{-\eta/10} N|B|^2.
\]

Finally, the error term $W_{\leq }(\beta,\bm{N},\bm{\theta})$  from \eqref{werrorterm} is bounded by exactly the same argument as above except that in the part $f \leq X^{\delta+\eta}$ we use the trivial estimate
\[
\sum_{\substack{|z_1|^2, |z_2|^2 \sim N \\ z_2 \equiv a z_1 \, (f)}}  |\beta^\flat_{z_1,1}| |\beta^\flat_{z_2,2} | \ll (\log X)^{O(1)}\frac{\nu_1^4 N^2}{|f|^2}
\]
instead of expanding into Hecke characters and Lemma \ref{stronggallagherlemma}.
\subsubsection{Proof of Lemma \ref{typeiitrivialboundlemma}}
The argument is exactly the same as in Section \ref{maintypeiilemmasection}, except that in the part $f \leq X^{\delta+\eta}$ we use the trivial estimate
\[
\sum_{\substack{|z_1|^2, |z_2|^2 \sim N \\ z_2 \equiv a z_1 \, (f)}}  |\beta^\flat_{z_1,1}| |\beta^\flat_{z_2,2} | \ll (\log X)^{O(1)}\frac{\nu_1^4 N^2}{|f|^2}
\]
instead of expanding into Hecke characters and Lemma \ref{stronggallagherlemma}.
\subsection{Proof of Lemma \ref{diag1lemma}}
Since $\Delta=0$ we have
\[
z_1b_2 =  z_2b_1.
\]
Multiplying both sides by $\overline{w}$ and taking the imaginary parts we find
\[
a_1b_2=a_2b_1.
\]
Hence, we get
\[
U_{0}(\gamma) \ll  \sum_{\substack{a_1,a_2 \ll X^{1/2} \\ b_1,b_2 \ll X^{1/2} \\ a_1b_2=a_2b_1}} \mathbf{1}_B(b_1)\mathbf{1}_B(b_2)\ll_\eps X^{1/2+\eps} |B|.
\]
by using the divisor bound $\tau(\ell) \ll_\eps \ell^\eps$. 

\subsection{Proof of Lemma \ref{diag2lemma}} \label{diag2section}
Since $(z_1 z_2, P(W)) =1$, having $(z_1,z_2) > 1$ implies $|(z_1,z_2)|^2 \geq W$. Let $z_0 = (z_1,z_2)$, and $z_j=z_0z_j' $. Denoting $w_0=\overline{z_0} w$ and $\Delta'=\Im(\overline{z_1'} z_2')$, we have 
\[
b_j = \Re (\overline{w_0} z_j'),
\]
which implies that
\begin{align} \label{deltaprime}
    i\Delta' w_0 = z_2' b_1-z_1'b_2
\end{align}
so that $w_0$ is fixed once we fix $z_j',b_j$. Furthermore, we have
\[
\sum_{(z_0,P(W))=1} \mathbf{1}_{\overline{z_0}| w_0} \ll_\eps W^{\eps}. 
\]
Note also that $(w_0,\overline{w_0})=1$ implies that $(b_1,b_2)=b_0 | \Delta'$.
Then denoting $b_j=b_0b_j'$ and $a'=z_2'/z_1'$ we have
\begin{align*}
 &U_1(\beta) =  \sum_{\substack{|z_1|^2, |z_2|^2 \sim N \\ \Delta \neq 0 \\ (z_1,z_2) > 1 \\ (z_j',\overline{z_j'})=1}} \beta^\flat_{z_1} \overline{\beta^\flat_{z_2}} \sum_{(w,\overline{w})=1} F_M(|w|^2)\mathbf{1}_B (\Re  (\overline{w} z_1 ) ) \mathbf{1}_B (\Re (\overline{w} z_2 ) ) \\
   & \ll \sup_{W \ll Z \ll N}  \sum_{\substack{|z_1'|^2, |z_2'|^2 \asymp N/Z \\ \Delta \neq 0 \\ (z_1',z_2') = 1\\ (z_j',\overline{z_j'}P(W))=1 }} \sum_{\substack{z_0,w_0 \\(w_0,\overline{w_0})=1 \\ (z_0,P(W))=1 \\ \overline{z_0}|w_0 \\ |z_0|^2 \sim Z }}|\beta^\flat_{z_0z_1'} \overline{\beta^\flat_{z_0z_2'}} |\mathbf{1}_B (\Re  (\overline{w_0} z_1' ) ) \mathbf{1}_B (\Re (\overline{w_0} z_2' ) )  \\
&\ll_\eps W^\eps
\sup_{\substack{W \ll Z \ll N  }}  \sum_{b_0}\sum_{\substack{|z_1'|^2, |z_2'|^2 \sim N/Z \\ \Delta' \neq 0 \\ (z_1',z_2') = 1 \\  (z_j',\overline{z_j'}P(W))=1 \\ \Delta' \equiv 0 \, (b_0)}} \sum_{\substack{b_2' \equiv a b_1'  \, (\Delta'/b_0) \\ (b_1'b_2',\Delta'/b_0)=1\\(b_1',b_2')=1  }}\mathbf{1}_B(b_0b_1') \mathbf{1}_B(b_0b_2').
   \end{align*}
Thus, denoting
\[
V_Z(\beta) := \sum_{b_0}\sum_{\substack{|z_1'|^2, |z_2'|^2 \sim N/Z \\ \Delta' \neq 0 \\ (z_1',z_2') = 1\\  (z_j',\overline{z_j'}P(W))=1 \\  \Delta' \equiv 0 \, (b_0)}} \sum_{\substack{b_2' \equiv a b_1'  \, (\Delta'/b_0) \\ (b_1'b_2',\Delta'/b_0)=1\\(b_1',b_2')=1  }}\mathbf{1}_B(b_0b_1') \mathbf{1}_B(b_0b_2'),
\]
it suffices to show that
\begin{align} \label{Wboundclaim}
    V_Z(\beta)  \ll_\eps W^\eps \frac{N |B|^2}{ W}.
\end{align}
We apply a similar argument as in Section \ref{maintypeiilemma} except that certain parts will be easier by positivity. By expanding into Dirichlet characters we get
\begin{align*}
    V_Z(\beta)  \ll  (\log X) \sum_{b_0,  }&\sum_{\substack{|z'_1|^2, |z'_2|^2 \sim N/Z \\ \Delta'\neq 0\\ (z'_1,z'_2) = 1  \\ (z_j',\overline{z_j'}P(W))=1 \\ \Delta' \equiv 0 \, (b_0)}} \frac{b_0}{|\Delta|}  \sum_{d|\Delta'/b_0} \bigg|\sideset{}{^\ast}\sum_{\chi\, (d)} \chi(a)  \bigg(  \sum_{\substack{b_0b'_1,b_0b'_2 \in B \\  (b'_1,b'_2)=1 \\ (b_1'b_2', \Delta'/(db_0))=1}} \chi(b'_1) \overline{\chi}(b'_2) \bigg) \bigg|.
\end{align*}
We split into the parts $db_0 \leq X^{\delta+\eta/2}$ and $db_0 > X^{\delta+\eta/2}$
\[
V_Z(\beta)  \ll (\log X) (V_{Z,\leq}(\beta)  +  V_{Z,>}(\beta)).
\]

For large $db_0$ we expand the condition $(b_1',b_2')$ to get
\[ 
 V_{Z,>}(\beta) \ll  \sum_c \sum_{b_0,  }\sum_{\substack{|z'_1|^2, |z'_2|^2 \sim N/Z\\ \Delta'\neq 0 \\ (z'_1,z'_2) = 1  \\  (z_j',\overline{z_j'}P(W))=1\\ \Delta' \equiv 0 \, (b_0)}} \frac{b_0}{|\Delta|}  \sum_{\substack{d|\Delta'/b_0 \\ db_0 > X^{\delta+\eta}}} \sideset{}{^\ast}\sum_{\chi\, (d)} \bigg|  \sum_{\substack{cb_0b \in B  \\ (b, \Delta'/(db_0))=1}} \chi(b) \overline{\chi}(b'_2) \bigg|^2
\]
By using the estimate
\begin{align} \label{z1estimate}
 \sum_{\substack{|z'_1|^2, |z'_2|^2 \sim N/Z \\ (z'_1,z'_2) = 1  \\ (z'_j,\overline{z'_j})=1 \\ \Delta' =D}}  1  \ll_\eps X^\eps \frac{N}{Z}   
\end{align}
we get (denoting $D=D' db_0$)
\begin{align*}
  V_{Z,>}(\beta) \ll_\eps & X^\eps  \frac{N}{Z} \sum_c \sum_{b_0,  } b_0 \sum_{db_0 > X^{\delta+\eta}} \sum_{\substack{D \ll N/Z \\ db_0|D}} \frac{1}{D} \sideset{}{^\ast}\sum_{\chi\, (d)} \bigg|  \sum_{\substack{cb_0b \in B  \\ (b, D/(db_0))=1}} \chi(b) \overline{\chi}(b'_2) \bigg|^2    \\
 \ll_\eps &  X^\eps  \frac{N}{Z} \sum_c \sum_{b_0,  } b_0 \sum_{db_0 > X^{\delta+\eta}} \sum_{\substack{D' \ll N/(Zdb_0) }} \frac{1}{D'db_0} \sideset{}{^\ast}\sum_{\chi\, (d)} \bigg|  \sum_{\substack{cb_0b \in B  \\ (b, D')=1}} \chi(b) \overline{\chi}(b'_2) \bigg|^2 \\
 \ll_\eps & X^\eps \frac{N}{Z}  \sum_c \sum_{b_0,  } \sup_{D'} \sum_{X^{\delta+\eta}<db_0 \ll N/Z}\frac{1}{d}\sideset{}{^\ast}\sum_{\chi\, (d)} \bigg|  \sum_{\substack{cb_0b \in B  \\ (b, D')=1}} \chi(b) \overline{\chi}(b'_2) \bigg|^2.
 \end{align*}
By applying the multiplicative large sieve (Lemma \ref{largesievelemma}) similarly as in Section \ref{maintypeiilemmasection} we get
\[
V_{Z,>}(\beta)  \ll  (\log X)^{O(1)}\frac{N |B|^2}{ W},
\]
which suffices for \eqref{Wboundclaim}.

For small $db_0$ we write by Lemma \ref{primitivecharactersum}
\begin{align*}
    V_{Z,\leq}(\beta) &\ll   \sum_{db_0 \leq X^{\delta+\eta}} b_0  \sum_{a \, (db_0)} \sum_{\substack{|z'_1|^2, |z'_2|^2 \sim N/Z\\ \Delta'\neq 0 \\ (z'_1,z'_2) = 1 \\  (z_j',\overline{z_j'}P(W))=1  \\  z'_2 \equiv a z'_1 \, (d b_0)}}  \frac{1}{|\Delta'|} \bigg| \sideset{}{^\ast}  \sum_{\chi\, (d)} \chi(a)  \bigg(  \sum_{\substack{b_0b'_1,b_0b'_2 \in B \\  (b'_1,b'_2)=1 \\ (b_1'b_2', \Delta'/(db_0))=1}} \chi(b'_1) \overline{\chi}(b'_2) \bigg) \bigg|  \\
    &\ll  \sum_{db_0 \leq X^{\delta+\eta}} b_0  \sum_{a \, (db_0)}  \sum_{\substack{b_0b'_1,b_0b'_2 \in B \\  (b'_1,b'_2)=1 \\ (b_1'b_2',d)=1 }} (d,b_2'-ab_1') \sum_{\substack{|z'_1|^2, |z'_2|^2 \sim N/Z\\ \Delta'\neq 0 \\ (z'_1,z'_2) = 1 \\  (z_j',\overline{z_j'}P(W))=1  \\  z'_2 \equiv a z'_1 \, (d b_0)}}  \frac{1}{|\Delta'|}  
\end{align*}
We note that $z'_2 \equiv a z'_1 \, (db_0)$ implies that for $z=z_2\overline{z_1}$ we have $z \equiv \overline{z} \, (db_0)$, that is, denoting $z=r+is$ we get $s\equiv 0 \, (db_0)$. Thus, using the divisor bound
\[
\sum_{\substack{z= z_2 \overline{z_1} \\ (z_j,P(W))=1}} 1 \ll_\eps W^{\eps}
\]
we get
\begin{align}
 \sum_{\substack{|z'_1|^2, |z'_2|^2 \sim N/Z\\ \Delta'\neq 0 \\ (z'_1,z'_2) = 1 \\  (z_j',\overline{z_j'}P(W))=1  \\  z'_2 \equiv a z'_1 \, (d b_0)}}  \frac{1}{|\Delta'|} \ll_\eps &W^\eps \sum_{\substack{|r_1|, |r_2| \ll (N/Z)^{1/2} \\r_2 \equiv a r_1 \, (db_0)}} \sum_{\substack{0 < |s| \ll N/Z  \\ s \equiv 0\, (db_0)}} \frac{1}{s}   \nonumber
 \\   \nonumber
 \ll_\eps& W^{\eps}\bigg(\frac{N}{Z d^2 b_0^2}  +  \frac{N^{1/2}}{Z^{1/2} db_0}\bigg)  \\ \label{z2estimate}
 \ll_\eps&  W^{\eps}\frac{N}{W d^2 b_0^2}, 
\end{align}
where the last step follows from $N > X^{3\delta+3\eta}$ and $d b_0 \leq X^{\delta+\eta}$. Therefore, we get by Lemma \ref{gcdlemma}
\begin{align*}
  V_{Z,\leq}(\beta) &\ll   W^{\eps}\frac{N}{W } \sum_{db_0 \leq X^{\delta+\eta}}   \sum_{\substack{b_0b'_1,b_0b'_2 \in B \\  (b'_1,b'_2)=1 \\ (b_1'b_2',d)=1}} \frac{1}{d^2 b_0}\sum_{a \, (db_0)} 
 (d,b_2'-ab_1')    \\
 & \ll_\eps W^{\eps}\frac{N |B|}{W} ,
\end{align*}
which completes the proof of \eqref{Wboundclaim}.

\subsection{Proof of
 Lemma \ref{pseudodiaglemma}}
By recombining the finer-than-dyadic partitions for  $\theta_1,\theta_2$, we get 
\[
V_{\leq \nu_3}(\beta) \leq U_{\ll \nu_3}(\beta)
\]
with
\[
U_{\ll \nu_3}(\beta) :=  \sum_{\substack{|z_1|^2, |z_2|^2 \sim N \\ 0 <|\Delta| \ll \nu_3 N \\ (z_1,z_2) = 1}} |\beta^\flat_{z_1} \overline{\beta^\flat_{z_2}}| \sum_{(w,\overline{w})=1} F_M(|w|^2)\mathbf{1}_B (\Re (\overline{w} z_1 ) ) \mathbf{1}_B (\Re  (\overline{w} z_2 ) ) 
\]
Similarly as in Section \ref{offdiagsection}, we write
\[
U_{\ll\nu_3}(\beta) \ll (\log X)^{O(1)}\sum_{b_0}\sum_{\substack{|z_1|^2, |z_2|^2 \sim N \\ 0 < |\Delta| \ll \nu_3 N \\ (z_1,z_2) = 1 \\(z_j',\overline{z_j'})=1\\ \Delta \equiv 0\, (b_0)}} \sum_{\substack{b'_2 \equiv a b'_1 \, (\Delta/b_0) \\ (b_1'b_2',\Delta/b_0)=1\\(b_1',b_2')=1}} \mathbf{1}_B(b_0 b_1')\mathbf{1}_B(b_0 b_2')F_M\bigg(\bigg|b_0 \frac{z_2b_1'-z_1b_2'}{\Delta}\bigg|^2\bigg).
\]
Using $\Delta \ll \nu_2 N$ and $MN \sim X$ we see that the smooth weight $F_M$ restricts $z_2b_1-z_1b_2$ to a small disc, that is, 
\[
U_{\ll \nu_3}(\beta) \ll   
 \sum_{b_0}\sum_{\substack{|z_1|^2, |z_2|^2 \sim N \\ 0 < |\Delta| \ll  \nu_3 N \\ (z_1,z_2) = 1 \\(z_j',\overline{z_j'})=1\\ \Delta \equiv 0\, (b_0)}} \sum_{\substack{b'_2 \equiv a b'_1 \, (\Delta/b_0) \\ (b_1'b_2',\Delta/b_0)=1\\(b_1',b_2')=1}} \mathbf{1}_B(b_0 b_1')\mathbf{1}_B(b_0 b_2')\mathbf{1}_{|z_2b_0b_1'-z_1 b_0 b_2'| \ll \nu_3 N^{1/2} X^{1/2}}.
\]
Recall now that by \eqref{Bintervalassumption} we have $B \subseteq [Y,Y+X^{1/2-\eta}]$ for some $Y \asymp  X^{1/2}$. Hence, we obtain
\[
 U_{\ll \nu_3}(\beta) \ll \sum_{b_0}\sum_{\substack{|z_1|^2, |z_2|^2 \sim N \\ |\Delta| \neq 0 \\ (z_1,z_2) = 1 \\ (z_j',\overline{z_j'})=1\\ \Delta \equiv 0\, (b_0)}} \sum_{\substack{b'_2 \equiv a b'_1 \, (\Delta/b_0) \\ (b_1'b_2',\Delta/b_0)=1\\(b_1',b_2')=1}} \mathbf{1}_B(b_0 b_1')\mathbf{1}_B(b_0 b_2')\mathbf{1}_{|z_2-z_1| \ll \nu_3 N^{1/2} }.
\]
This can now be bounded by the same argument as in Section \ref{diag2section}, 
replacing the bounds \eqref{z1estimate} and \eqref{z2estimate}
respectively by
\begin{align*}
 \sum_{\substack{|z_1|^2, |z_2|^2 \sim N \\ (z_1,z_2) = 1  \\ (z_j,\overline{z_j})=1 \\ \Delta =D}}  \mathbf{1}_{|z_2-z_1| \ll \nu_3 N^{1/2}}  \ll_\eps X^\eps \nu_3 N  
\end{align*}
and
\begin{align*}
     \sum_{\substack{|z_1|^2, |z_2|^2 \sim N\\ \Delta\neq 0 \\ (z_1,z_2) = 1  \\  z_2 \equiv a z_1 \, (d b_0)}}  \frac{ \mathbf{1}_{|z_2-z_1| \ll \nu_3 N^{1/2}} }{|\Delta|} \ll_\eps &X^\eps \sum_{\substack{|r_1|, |r_2| \ll (N/Z)^{1/2}\\ |r_2-r_1| \ll \nu_3 N^{1/2} \\r_2 \equiv a r_1 \, (db_0)}} \sum_{\substack{0 < |s| \ll \nu_3 N  \\ s \equiv 0\, (db_0)}} \frac{1}{s}   
 \\  
 \ll_\eps& X^{\eps}\bigg(\frac{\nu_3 N}{ d^2 b_0^2}  +  \frac{\nu_3 N^{1/2}}{db_0}\bigg)  \\ \label{z2estimate}
 \ll_\eps&  X^{\eps}\frac{\nu_3 N}{d^2 b_0^2}.
\end{align*}
We get
\[
U_{\ll \nu_3}(\beta) \ll_\eps  X^\eps \nu_3 N |B|^2,
\]
which gives us Lemma \ref{pseudodiaglemma}.
\begin{remark} \label{smallBremark}
 Without the assumption $B \subset [\eta_1 X^{1/2},(2-\eta)X^{1/2}]$ we would need to take $\nu_3=X^{-\delta-\eps}$ to get savings in this argument, since it is possible that $Y \asymp X^{1/2-\delta}$. This means that in the approximation for $\beta_\n$ we need to track the distribution of $\beta_\n$ in sectors with an angle $X^{-\delta-\eta}$. It is possible to do so by a more careful argument but we do not pursue this issue here. For this we also note that the smooth weight
 \[
 F_M\bigg( \bigg|\frac{b_1z_2-b_2z_1}{\Delta}\bigg|^2\bigg)  =F_{M/Y^2}\bigg( \bigg|\frac{z_2-z_1}{\Delta}\bigg|^2\bigg) + O(X^{-\eta'})
 \]
 could be handled more efficiently in terms of the dependency on $\arg z_j$ since it is a function of the difference $\arg z_2 -\arg z_1$, so that we need an expansion to $\xi_k$ only once instead of twice.  Note that $B$ cannot simultaneously be multiples of a large fixed $q$ and restricted to a narrow interval. Thus, for this extension the condition $\M(u) \leq X^{\delta+\eta}$ ought to be replaced by $|k| \M(u) \leq X^{\delta+\eta}$.
\end{remark}

\section{Type II information: proof of Proposition \ref{sharptypeiiprop}} \label{sharptypeiipropsection}
Recall that we are trying to evaluate
\begin{align*}
    &\sum_{\N \m\sim M} \sum_{\N \n \sim N} \alpha_\m \beta^\sharp_\n a_{\m\n} \\
    &= \sum_{N'} \sum_{\N \m \sim M}\sum_{\n} H_{N'}(\n) \alpha_\m \mathbf{1}_{(\n,\overline{\n} P(W))=1} \bigg(\sum_{j \leq J} \overline{\xi_{k_j}\chi_j}(\n)\C_W(\beta, \overline{\xi_{k_j}\chi_j}  H_{N'})  \bigg)a_{\m\n} \\
    &=: \sum_{ j \leq J} S_j.
\end{align*}
The condition $(\n, \overline{\n})$ may be dropped since we have $(z,\overline{z})=1$ in the definition of $a_z$. 
We have
\begin{align*}
  S_j &=  \sum_{N'}   \C_W(\beta, \overline{\xi_{k_j}\chi_j} H_{N'}) \sum_{\N \m \sim M}\sum_{\n} H_{N'}(\n) \alpha_\m \mathbf{1}_{(\n, P(W))=1} \overline{\xi_{k_j}\chi_j}(\n)  a_{\m\n} \\
 & = \sum_{N'}   \C_W(\beta, \overline{\xi_{k_j}\chi_j}  H_{N'})\sum_{\N \m \sim M}\sum_{\n} H_{N'}(\n) \alpha_\m \xi_{k_j}\chi_j(\m)\mathbf{1}_{(\n, P(W))=1} \overline{\xi_{k_j}\chi_j}(\m\n)a_{\m\n}.
\end{align*}
Here for any $C>0$
\begin{align*}
  \C_W(\beta, \overline{\xi_{k_j}\chi_j} H_{N'}) &= \bigg(\sum_{(\n,\overline{\n})=1 } H_{N'}(\n) \beta_\n \xi_{k_j}\chi_j (\n)\bigg) \bigg( \sum_{(\n,\overline{\n} P(W))=1 } H_{N'}(\n) |\xi_{k_j}\chi_j (\n)|\bigg)^{-1} \\
  &= \frac{1}{\nu_2} \prod_{p \leq W} \bigg( 1-\frac{\rho(p)}{p}\bigg)^{-1} \sum_{\n } H_{N'}(\n)\frac{ \beta_\n \xi_{k_j}\chi_j (\n)}{\N \n} + O_C((\log X)^{-C})
\end{align*}
Then Proposition \ref{typeiprop} follows from applying the Fundamental lemma of the sieve (Lemma \ref{flsievelemma}, see also Remark \ref{flsieveremark}) and Proposition \ref{typeiprop} to handle  $\mathbf{1}_{(\n, P(W))=1}$ in $S_j$. Note that by $N > X^{3\delta+3\eta}$ and $\M(u_j) \leq Q \leq X^{\delta+\eta}$ we get that $\N\m \ll X^{1-\delta-\eta}/\M(u_j) ^2$ which is required for Proposition \ref{typeiprop}. This gives us a main term of the form
\[
  \sum_{N'} \sum_{j \leq J } \sum_{\m,\n } \frac{\alpha_\m \beta_\n H_{N'} (\n)\xi_{k_j}\chi_j(\m\n)}{\N \m\n}   \sum_{\a} \frac{H_{N'} (\a/\m)}{\nu_2 } a^\omega_{\a} \overline{\xi_{k_j}\chi_j}(\a). 
\]
The weight $\frac{H_{N'} (\a/\m)}{\nu_2 }$ may be replaced (with a negligible error term) by $\frac{F(\N\m\n/\N \a)}{\widehat{F}(0) } $ by a further application of Poisson summation (Lemma \ref{poissonlemma}) on the free variable $a$ in $z=b+ia$, completing the proof.

\begin{remark} \label{bottleneckremark} There are two potential bottle-necks for improving the range of $\delta  < 1/10$ in Theorem \ref{asympregulartheorem}, namely, the exponent $3$ in Lemma \ref{mobiusregularlemma} and the exponent $2$ in $X^{1-\delta-\eta}/q^2$ in Proposition \ref{typeiprop}.  It is plausible that with more work these exponents may be improved to $2$ and $1$, respectively, which would suffice to prove Theorem \ref{asympregulartheorem} for $\delta < 1/8$. Both of these improvements run into quite delicate issues and we have decided not to pursue this here. It is not clear if  the boundary $1/8$ can be improved, but we certainly hit a hard barrier at $\delta =1/6$ as this when even the most optimistic the Type II range $[N^{2\delta}, N^{1/2-\delta}]$ becomes empty.
\end{remark}

\section{Proof of Theorem \ref{asympregulartheorem}} \label{asympregularproofsection} 
We apply a sieve argument to the sequence $\A=(a_n)$ over integers
\[
a_n := F(n/X')\sum_{\N \n = n} a_\n,
\]
where for convenience we have split $n$ into finer-than-dyadic intervals (as in Section \ref{smoothweightsection}) with $\nu=(\log X)^{-C}$ and $X'\sim X$. We also define an auxiliary sequence $\B_j$ by
\[
b^{(j)}_n :=  \sum_{\N \n = n}  \sum_{\m} \frac{F(\N \n /\N \m) \xi_{k_j} \chi_j(\n)}{ \hat{F}(0) \N \n } F(\N \m / X') a^{\omega}_\m \overline{\xi_{k_j}\chi_j}(\m),
\]
denoting $\xi_{k_0}\chi_0 = 1$ for $j=0.$
Then Theorem \ref{asympregulartheorem} follows by using the explicit formula to evaluate the sums
\[
\sum_{\n}  \Lambda(\N \n) \frac{F(\N \n /\N \m) \xi_{k_j} \chi_j(\n)}{\hat{F}(0) \N \n },
\]
 once we prove that for any $C_1>0$ there is some $C_2 >0$ such that
\begin{align} \label{sieveclaim}
   S(\A,\Lambda) = \sum_{n} a_n \Lambda(n) =  \sum_{0\leq j \leq J_1} S(\B_j,\Lambda)  + O\bigg( \frac{X^{1/2}|B|}{(\log X )^{C_1}}\bigg).
\end{align}

Let $Y=X^{3 \delta + 4\eta}$ and $Z=X^{1/2 + \delta+\eta}$. Then $YZ \leq X^{1-\delta-\eta}$ for some $\eta >0$ by $\delta <1/10$. By Vaughan's identity \cite{vaughan} for $n > Y $  we have
\begin{align} \label{vaughanid}
   \Lambda(n) = \sum_{\substack{b|n \\ b \leq Y}}  \mu(b) \log \frac{n}{b} - \sum_{\substack{bc|n  \\ b \leq Y \\c \leq Z}} \mu(b)\Lambda(c) +  \sum_{\substack{bc|n  \\ b > Y \\c > Z}} \mu(b)\Lambda(c). 
\end{align}
Applying this with both sides multiplied by $(n,P(W))=1$  we get
\[
S(\A,\Lambda) = S_1(\A) +S_2(\A) + S_3(\A) +  O\bigg( \frac{X^{1/2}|B|}{(\log X )^{C_1}}\bigg).
\]
and similarly for $j \leq J$ write
\[
S(\B_j,\Lambda) = S_1(\B_j) +S_2(\B_j) + S_3(\B_j) +  O\bigg( \frac{X^{1/2}|B|}{(\log X )^{C_1}}\bigg)
\]
The sums $S_1,S_2$ correspond to Type I sums and $S_3$ is a Type II sum, with all variables coprime to $P(W)$.

By Fundamental lemma of the sieve (Lemma \ref{flsievelemma}) and Type I information (Proposition \ref{typeiprop})  we get
for $k=1,2$
\[
S_k(\A) = S_k(\B_0) +O\bigg( \frac{X^{1/2}|B|}{(\log X )^{C_1}}\bigg) =  \sum_{0\leq j \leq J_1} S_k(\B_j)  + O\bigg( \frac{X^{1/2}|B|}{(\log X )^{C_1}}\bigg).
\]
 since for $j \geq 1$, $k=1,2$ for any $C > 0$
 \[
S_k(\B_j)  \ll_{C} X^{1/2}|B|(\log X)^{-C}.
 \]
 By Type II information (Proposition \ref{typeiiprop}, note that $Y < b \ll X/Z$) we get
 \[
 S_3(\A) = \sum_{1\leq j \leq J_j} S_3(\B_j)  + O\bigg( \frac{X^{1/2}|B|}{(\log X )^{C_1}}\bigg) = \sum_{0\leq j \leq J_j} S_3(\B_j) + O\bigg( \frac{X^{1/2}|B|}{(\log X )^{C_1}}\bigg)
 \]
since for any $C > 0$
 \[
S_3(\B_0)  \ll_{C} X^{1/2}|B|(\log X)^{-C}.
 \]
By recombining the Vaughan's identity for the $\B_j$ we get (\ref{sieveclaim}).
\section{Proof of Theorem \ref{regulartheorem}}  \label{sec:regularthmproof}
We have two cases, no zeros $\beta > 1-\eps_1/\log X$ or that in the case of a zero   $\beta > 1-\eps_1/\log X$ we have $\Omega(B_1) \leq \Omega(B)/2$.
\subsection{No zeros  $\beta > 1-\eps_1/\log X$} \label{sec:nozerocase}
Let us denote  by $a_\n, a^{\omega}_\n$ the sequences corresponding to $\lambda=\mathbf{1}_{B}$. By Theorem \ref{asympregulartheorem} we have 
 \begin{align} \label{thm33} \begin{split}
    \sum_{\N \n\sim X} a_\n  \Lambda(\N \n)  = & \frac{4}{\pi} \sum_{\N \n\sim X} a^{\omega}_\n \bigg(1- \sum_{j\leq J} \overline{\xi_{k_j}\chi_j}(\n) \sum_{\substack{\rho_j \\ L(\rho_j,\xi_{k_j}\chi_j) = 0 \\ |\Im(\rho_j)| \leq X^\eta }}(\N \n)^{\rho_j-1}\bigg) 
  \\
  & \hspace{100pt}+ O\bigg( \frac{1}{(\log X)^{C_1}} X^{1/2} \sum_{b} |\lambda_b|\bigg).   
 \end{split}
\end{align}
If there is a zero $\beta_1 \geq 1-\frac{1}{\sqrt{\delta} \log X}$ as in Lemma \ref{differentmodulilemma} corresponding to $\chi_1 $ real and $\xi_{k_1}=1$, then  $\beta_1 \leq 1-\eps_1/\log X$  and the contribution from that zero is
\begin{align*}
 -   \frac{4}{\pi} \sum_{\N \n\sim X} a^{\omega}_\n   \chi_1(\n) (\N \n)^{\beta_1-1}  \geq & - \frac{4}{\pi} \sum_{\N \n\sim X} a^{\omega}_\n (\exp(-\eps_1)  + o(1)) \\
 \geq&  - \frac{4}{\pi} \sum_{\N \n\sim X} a^{\omega}_\n (1-\eps_1/2) 
\end{align*}
since $\eps_1 < 1/10$. Therefore, the contribution from the first two terms in \eqref{thm33} is
\begin{align*}
 \frac{4}{\pi} \sum_{\N \n\sim X} a^{\omega}_\n  \bigg(1-\chi_1(\n) (\N \n)^{\beta_1-1} \bigg)   \gg \eps_1  \sum_{\N \n\sim X} a^{\omega}_\n 
\end{align*}
Denoting $\sigma_0=1-\frac{1}{\sqrt{\delta} \log X}$, by Lemma \ref{differentmodulilemma} the remaining zeros satisfy $\beta_j 
\leq  \sigma_0$ and they contribute at most
\begin{align*}
    & \ll  \sum_{\N \n\sim X} a^{\omega}_\n \sum_{j\leq J} \sum_{\substack{\rho_j \\ L(\rho_j,\xi_{k_j}\chi_j) = 0 \\ |\Im(\rho_j)| \leq X^\eta \\  \beta_j 
\leq \sigma_0 }} X^{\beta_j-1}  \\
&= \Omega(B)\sum_{j\leq J} \sum_{\substack{\rho_j \\ L(\rho_j,\xi_{k_j}\chi_j) = 0 \\ |\Im(\rho_j)| \leq X^\eta \\  \beta_j 
\leq \sigma_0 }} \bigg( \int_{1/2}^{\beta_j} X^{\sigma-1} \log X d \sigma  +  X^{-1/2}  \bigg) \\
 &\ll \Omega(B) \int_{1/2}^{\sigma_0} N^\ast(\sigma, X^\eta,X^\eta, X^{2\delta + 2\eta}) X^{\sigma-1} \log X d \sigma + \Omega(B) X^{-1/2+ O(\delta+\eta)}.
\end{align*}
The last term is negligible once $\delta,\eta$ are sufficiently small. By Lemma \ref{zerodensitylemma} the integral is bounded by
 \begin{align*}
     &\int_{1/2}^{\sigma_0} N^\ast(\sigma, X^\eta,X^\eta, X^{2\delta + 2\eta}) X^{\sigma-1} \log X d \sigma \ll      \int_{1/2}^{\sigma_0} X^{ c_2(4\delta+6\eta)(1-\sigma)+\sigma-1} \log X d \sigma  \\
     &\ll X^{(\sigma_0-1)/2}= \exp(-\tfrac{1}{2 \sqrt{\delta}} )
 \end{align*}
once $\delta,\eta$ are sufficiently small compared to the constant $c_2$ in  Lemma \ref{zerodensitylemma}. Combining all of the above estimates we have
\begin{align*}
     \sum_{\N \p\sim X} a_\p  \gg  \frac{1}{\log X} (\eps_1 - O(\exp(-\tfrac{1}{2 \sqrt{\delta}} ) ))  \sum_{\N \n\sim X} a^{\omega}_\n  \gg  \frac{1}{\log X} \eps_1  \sum_{\N \n\sim X} a^{\omega}_\n 
\end{align*}
once $\delta$ is small enough compared to $\eps_1$.

\subsection{Zero  $\beta > 1-\eps_1/\log X$ and $\Omega(B_1) \leq \Omega(B)/2$}
Let us call $B_2= B \setminus B_1 $ so that $\Omega(B_2) \geq \Omega(B)/2$ and for all $b \in B_2$ we have
\begin{align} \label{B2condition}
 \sum_{\substack{a \sim (X-b^2)^{1/2} \\ (a,b)=1 \\ (a^2+b^2,2)=1}} \chi_1 ((b+ia)) \leq 0.  
\end{align}
Let us denote  by $a_\n, a^{\omega}_\n$ the sequences corresponding to $\lambda=\mathbf{1}_{B}$ and  $a^{(2)}_\n,a^{(2) \omega}_\n$ the sequences corresponding to $\lambda=\mathbf{1}_{B_2}$. Then by Theorem \ref{asympregulartheorem} we have
\begin{align*}
    & (\log X)\sum_{\N \p\sim X} a_\p \geq      \log(X)\sum_{\N \p\sim X} a_\p^{(2)}  \\
    &\gg    \sum_{\N \n\sim X} a_\n^{(2)}  \Lambda(\N \n)  =  \frac{4}{\pi} \sum_{\N \n\sim X} a^{(2)\omega}_\n \bigg(1- \sum_{j\leq J} \overline{\xi_{k_j}\chi_j}(\n) \sum_{\substack{\rho_j \\ L(\rho_j,\xi_{k_j}\chi_j) = 0 \\ |\Im(\rho_j)| \leq X^\eta }}(\N \n)^{\rho_j-1}\bigg) 
  \\
  & \hspace{200pt}+ O\bigg( \frac{1}{(\log X)^{C_1}} X^{1/2} \sum_{b} |\lambda_b|\bigg).
\end{align*}
The first term contributes 
\begin{align*}
   \sum_{\N \n\sim X} a^{(2)\omega}_\n = \Omega(B_2) \geq \frac{1}{2}\Omega(B) =    \frac{1}{2}\sum_{\N \n\sim X} a^{\omega}_\n. 
\end{align*}
The contribution from $2 \leq j \leq J$ is handled similarly as in Section \ref{sec:nozerocase} and similarly for $j=1$ the zeros $\beta \leq 1-\eps_1/\log X$. The contribution from the Siegel zero $\beta > 1-\eps_1/\log X$  for $j=1$ is essentially positive, since $\xi_{k_1}=1$, $\chi_1$ is real, and by 
\eqref{B2condition}
\begin{align*}
   - \sum_{\N \n\sim X} a^{(2)\omega}_\n  \chi_1(\n) (\N \n)^{\beta-1} &= -\sum_{b \in B_2}  \sum_{\substack{a \sim (X-b^2)^{1/2} \\ (a,b)=1 \\ (a^2+b^2,2)=1}} \chi_1 ((b+ia)) (a^2+b^2)^{\beta -1}  \\
  & \geq -\sum_{b \in B_2}  \sum_{\substack{a \sim (X-b^2)^{1/2} \\ (a,b)=1 \\ (a^2+b^2,2)=1}} \big(\chi_1 ((b+ia)) +2\eps_1 \big) \\
  & \geq  - 2\eps_1 \Omega(B_2) \geq  - 2\eps_1 \sum_{\N \n\sim X} a^{\omega}_\n 
\end{align*}
since $\eps_1 < 1/10$. Therefore, we conclude that also in the second case
\begin{align*}
    \sum_{\N \p\sim X} a_\p \gg  \frac{1}{\log X} (1-2 \eps_1 O(\exp(-\tfrac{1}{2 \sqrt{\delta}} ) ))  \sum_{\N \n\sim X} a^{\omega}_\n \gg  \sum_{\N \n\sim X} a^{\omega}_\n 
\end{align*} 
once $\delta$ is sufficiently small. \qed
\section{Proof of Theorem \ref{asymptotictheorem}} 
\label{asymptoticproofsection}
By similar reductions as in Section \ref{setupsection} it suffices to consider $\lambda_b= \mathbf{1}_B(b)$. The goal is to show that if $u_j$ is a modulus of one of the characters $\xi_{k_j} \chi_j$ and $b$ does not have a large common factor with $u_j$, then the sum over the free variable $a$ of $\xi_{k_j} \chi_j(b+ia)$ exhibits cancellation.   By Theorem \ref{asympregulartheorem} and the Siegel-Walfisz bound \cite[Lemma 
 16.1]{FI} for small moduli $u_j$, it suffices to show that for any $j\leq J_1$ and $Y \in [X^{1/2-\eta},2X^{1/2}]$ and any $|u_j|^2 \gg (\log X)^{C''}$ with $C''$ large compared to $C'$ we have
\begin{align*}
    S'_j := \sum_{b \in B}\bigg|\sum_{\substack{a \in (Y,Y(1+X^{-\eta})] \\(a,b)=1  \\ a \equiv a_0 \, (4)}}   \chi_j(b+ia) \bigg| \ll \frac{X^{-\eta}Y|B|}{(\log X)^C}.
\end{align*}
Note that the weight $\xi_j((b+ia))$ has been removed by splitting $a$ into finer-than-dyadic intervals and using \eqref{Bintervalassumption} to note that then $b+ia$ lives in a small box.

We write
\[
S'_j = \sum_{v | u_j} \sum_{\substack{b \in B \\(b,u_1)=v}}\bigg|\sum_{\substack{a (Y,Y(1+X^{-\eta})] \\ (a,b)=1  \\ a \equiv a_0 \, (4)}}   \chi_j(b+ia) \bigg| .
\]
For $|v| > |u_j|/(\log X)^{C'/2}$ we use the assumption (\ref{lambdaassumption})  to get
\[
 \ll \frac{X^{-\eta}Y |B|}{(\log X)^{C'}}\sum_{\substack{v | u_j \\ |v| > |u_1|/(\log X)^{C'/2}}} 1 =  \frac{X^{-\eta}Y |B|}{(\log X)^{C'} }\sum_{\substack{v | u_j \\ |v| \leq (\log X)^{C'/2}}} 1\ll  \frac{X^{-\eta}Y|B|}{(\log X)^{C+C_2} }
\]
once $C'$ is large compared to $C_1$ and $C$.
For  $|v| \leq |u_j|/(\log X)^{C'/2}$ we use Lemma \ref{charsumboundlemma} to get
\[
S_{>} \ll X^{-\eta} Y \sum_{\substack{v|u_j \\  |v| \leq |u_1|/(\log X)^{C'/2}}} \frac{1}{|u/v|^{1/3}} \sum_{\substack{b\in B \\ (b,u_1)=v}} 1 \ll  \frac{X^{-\eta}Y |B|}{(\log X)^{C+C_2} }.
\]
by taking $C'$ is large compared to $C_1$ and $C$.
To evaluate the main term we note that
\[
\sum_{\substack{a^2+b^2 \sim X \\ (a,b)=1 \\ (b+ia,2)=1}} 2 \omega_2(b) = (1+O(X^{-\eta})) \sum_{\substack{a^2+b^2 \sim X \\ (a,b)=1}} \omega(b). \qed
\]
\section{Proof of Theorem \ref{GRHtheorem}} \label{grhsection}
This follows immediately from Theorem \ref{asympregulartheorem} with the zero-density estimate Lemma \ref{zerodensitylemma}  once $C'$ is sufficiently large, via similar arguments as in Section \ref{sec:nozerocase} \qed

\subsection*{Acknowledgements}
I am grateful to Akshat Mudgal for numerous discussions and to Lasse Grimmelt for suggestions on constructing an approximation for primes, as well as to James Maynard for encouragement and helpful comments. I also wish to thank the anonymous referee for comments. This project has
received funding from the European Research Council (ERC) under the European Union's Horizon 2020 research and innovation programme (grant agreement No 851318).

\bibliography{gaussianprimes}
\bibliographystyle{abbrv}
\end{document}